\DeclareMathOperator\supp{supp}
\DeclareMathOperator{\sign}{sign}
\numberwithin{equation}{section}
\newtheorem{lemma}{Lemma}[section]
\newtheorem{theorem}{Theorem}[section]
\newtheorem{corol}{Corollary}[section]
\newtheorem{prop}{Proposition}[section]
\newtheorem{claim}{Claim}
\newtheorem*{remarks}{Remarks}
\date{}
\title{\textbf{Well-posedness for a two-dimensional dispersive model arising from capillary-gravity flows}}
\author{Oscar G. Ria\~no  \thanks{IMPA - Instituto de Matem\'atica Pura e Aplicada,
E-mail: {\tt ogrianoc@impa.br}}}
\begin{document}

\maketitle 

\begin{abstract}  
This paper is aimed to establish well-posedness in several settings for the Cauchy problem associated to a model arising in the study of capillary-gravity flows. More precisely, we determinate local well-posedness conclusions in classical Sobolev spaces and some spaces adapted to the energy of the equation. A key ingredient is a commutator estimate involving the Hilbert transform and fractional derivatives. We also study local well-posedness for the associated periodic initial value problem. Additionally, by determining well-posedness in anisotropic weighted Sobolev spaces as well as some unique continuation principles, we characterize the spatial behavior of solutions of this model. As a further consequence of our results, we derive new conclusions for the Shrira equation which appears in the context of waves in shear flows. 
\end{abstract}

\textit{Keywords: Two-dimensional Benjamin-Ono equation; Cauchy problem; Local well-posedness; Weighted Sobolev spaces.} 


\section{Introduction}

This work concerns the initial value problem (IVP) for the equation:
\begin{equation}\label{EQBO}
\begin{cases}
  \partial_t u +\mathcal{H}_xu-\mathcal{H}_x\partial_x^2u\pm \mathcal{H}_x\partial_y^2u+u\partial_{x} u=0,\hskip 15pt (x,y)\in \mathbb{R}^2 \, (\text{or } (x,y)\in \mathbb{T}^2 ),\,  t\in \mathbb{R}, \\
  u(x,0)= u_0,
  \end{cases}
\end{equation}
where $\mathcal{H}_x$ denotes the Hilbert transform in the $x$-direction defined via the Fourier transform as $\mathcal{F}(\mathcal{H}_x \phi )(\xi,\eta)=-i\sign(\xi)\widehat{\phi}(\xi,\eta)$ for $\phi \in \mathcal{S}(\mathbb{R}^2)$, and its periodic equivalent $\mathcal{F}(\mathcal{H}_x \phi )(m,n)=-i\sign(m)\widehat{\phi}(m,n)$ for $\phi \in C^{\infty}(\mathbb{T}^2)$. This model was derived in \cite{AkersMile} as an approximation to the equations for deep water gravity-capillary waves. Numerical results confirming existence of line solitary waves (solutions of the form $u(x,y,t)=\varphi(x-ct,y)$, $c>0$ and $\varphi$ real valued with suitable decay at infinity) as well as wave packet lump solitary waves were also presented in \cite{AkersMile}.

We are also interested in studying the IVP associated to the Shrira equation:
\begin{equation}\label{EQSH}
\begin{cases}
  \partial_t u -\mathcal{H}_x\partial_x^2u-\mathcal{H}_x\partial_y^2u+u\partial_{x} u=0,\hskip 15pt (x,y)\in \mathbb{R}^2 \, (\text{or } (x,y)\in \mathbb{T}^2 ),\,  t\in \mathbb{R}, \\
  u(x,0)= u_0.
  \end{cases}
\end{equation}
This equation was deduced as a simplified model to describe a two-dimensional weakly nonlinear long-wave perturbation on the background of a boundary-layer type plane-parallel shear flow (see \cite{PShrira}). Existence and asymptotic behavior of solitary-wave solutions were studied in \cite{Esfahani2018}. 

The models in \eqref{EQBO} and \eqref{EQSH} can be regarded, at least from a mathematical point of view, as two-dimensional versions of the Benjamin-Ono equation (see, \cite{AbBOnaFellSaut,FonPO,Kenig,MolinetPeriod,molinetPilodBO,Ponce1991,TaoBO} and the references therein):
\begin{equation}\label{BO}
    \partial_tu -\mathcal{H}_x\partial_x^2u +u\partial_x u=0.
\end{equation}

Alternatively, the equation in \eqref{EQBO} can be considered as a two-dimensional extension of the so called Burgers-Hilbert equation (see, \cite{Biello, BuHil}):
\begin{equation}\label{BHEQ}
    \partial_tu +\mathcal{H}_xu +u\partial_x u=0.
\end{equation}

This manuscript is intended to analyze well-posedness issues for the IVP \eqref{EQBO} and \eqref{EQSH}. 
Here we adopt Kato's notion of \emph{well-posedness}, which consists of  existence, uniqueness, persistence property (i.e.,  if the data $u_0\in X$ a function space, then the corresponding solution $u(\cdot)$ describes a continuous curve in $X$, $u \in C([0,T ];X), T > 0$), and continuous dependence of the map data-solution. In this regard, referring to the IVP \eqref{EQBO}, by implementing a parabolic regularization argument (see \cite{Iorio}) local well-posedness (LWP) in $H^s(\mathbb{R}^2)$ and $Y^s(\mathbb{R}^2)=\{f\in H^s: \|f\|_{Y^s}=\|f\|_{H^s}+\|\partial_x^{-1} f\|_{H^s}<\infty \}$, $s>2$ were inferred in \cite{Omarths}. It was also showed in \cite{Omarths} that the IVP \eqref{EQBO} is LWP in weighted Sobolev spaces $Y^s(\mathbb{R}^2)\cap L^2((|x|^{2r}+|y|^{2r} )\, dx dy)$, $0\leq r\leq 1$ and $s>2$. 
 
Concerning the IVP \eqref{EQSH},  by adapting the short-time linear Strichartz estimate approach employed in \cite{KenigKP,LinarFKP}, LWP in $H^s(\mathbb{R}^2)$ $s>3/2$ was deduced in \cite{Paisas1}. In \cite{paisas2}, inspired by the works in \cite{IonescuKenigPerioKP,perioZK}, LWP was established in $H^s(\mathbb{T}^2)$ $s>7/4$ assuming that the initial data satisfy $\int_{0}^{2\pi} u_0(x,y)\, dx=0$ for almost every $y$. Recently, in \cite{SCHIPPAShreq}, by employing short-time bilinear Strichartz estimates the conclusion on the periodic setting was improved to regularity $s>3/2$ without any assumption on the initial data. Now, with respect to weighted spaces, in \cite{JulioLi} LWP was deduced in $H^{s_1,s_2}(\mathbb{R}^2)\cap L^2((|x|^{2\theta}+|y|^{2r}) \, dx dy)$ $s_1\geq 2$ and $s_2\geq r$, where $0< \theta<1/2$ for arbitrary initial data, and $1/2< \theta<1$ assuming that $\widehat{u}(0,\eta)=0$ for almost every $\eta$.

It is worth pointing out that the equation in \eqref{EQBO} does not enjoy scale-invariance. In contrast, if $u$ solves the equation in \eqref{EQSH}, $u_{\lambda}(x,y,t)=\lambda u(\lambda x,\lambda y, \lambda^2 t)$ solves \eqref{EQSH} whenever $\lambda>0$, and so this equation is $L^2$-critical. On the other hand, real solutions of the IVP \eqref{EQBO} formally satisfy the following conserved quantities (time invariant):
\begin{align} 
    M(u)&=\int u^2(x,y,t)\, dxdy, \label{mass}\\
    E(u)&=\frac{1}{2} \int |D_x^{1/2}u(x,y,t)|^2+|D_x^{-1/2}u(x,y,t)|^2 \mp |D_x^{-1/2}\partial_y u(x,y,t)|^2-\frac{1}{3}u^3(x,y,t) \, dxdy, \label{Energy}
\end{align}
and real solutions of \eqref{EQSH} preserve the quantity $M(u)$ and 
\begin{equation}\label{Energy2}
        \widetilde{E}(u)=\frac{1}{2} \int |D_x^{1/2}u(x,y,t)|^2+ |D_x^{-1/2}\partial_y u(x,y,t)|^2-\frac{1}{3}u^3(x,y,t) \, dxdy,
\end{equation}
where $D_x^{\pm 1/2}$ is the fractional derivative operator in the $x$ variable defined by its Fourier transform as $\mathcal{F}(D_x^{\pm 1/2}u)(\xi,\eta)=|\xi|^{\pm 1/2}\widehat{u}(\xi,\eta)$. 

The aim of this paper is to obtain new well-posedness conclusions for both models \eqref{EQBO} and \eqref{EQSH} in the spaces $H^{s}(\mathbb{K}^2)$, $\mathbb{K}\in\{\mathbb{R},\mathbb{T}\}$ and some spaces adapted to \eqref{Energy} and \eqref{Energy2}. Furthermore, by establishing well-posedness in anisotropic spaces and some unique continuation principles, we will study the spatial behavior of solutions, determining that in general arbitrary polynomial type decay in the $x$-spatial variable is not preserved by the flow of these equations. 

Let us now state our results. We will mainly work on equation \eqref{EQBO}  without distinguishing between the signs of the term $\pm \mathcal{H}_x\partial_y^2 u$. Firstly, to justify the quantity \eqref{Energy}, we consider the spaces $X^s(\mathbb{R}^2)$ defined by 
\begin{equation}\label{EberSpa}
    \left\|f\right\|_{X^s}=\left\|J_x^s f\right\|_{L^2_{xy}}+\|D_x^{-1/2} f\|_{L^2_{xy}}+\|D_x^{-1/2}\partial_y f\|_{L^2_{xy}}.
\end{equation}
Our first conclusion establishes local well-posedness in the spaces $H^s(\mathbb{R}^2)$ and $X^s(\mathbb{R}^2)$.
\begin{theorem}\label{Improwellp}
Let $s>3/2$ and let $\mathfrak{X}^s(\mathbb{R}^2)$ be any (fixed) of the spaces $H^s(\mathbb{R}^2)$ and $X^s(\mathbb{R}^2)$. Then for any $u_0\in \mathfrak{X}^s(\mathbb{R}^2)$, there exist a time $T=T(\|u_0\|_{\mathfrak{X}^s})$ and a unique solution $u$ to the equation in \eqref{EQBO} that belongs to
\begin{equation}\label{classolu1}
    C([0,T];H^s(\mathbb{R}^2))\cap L^1([0,T];W^{1,\infty}(\mathbb{R}^2))
\end{equation}
if $u_0 \in H^s(\mathbb{R}^2)$, or it belongs to
\begin{equation}\label{classolu2}
    C([0,T];X^s(\mathbb{R}^2))\cap L^1([0,T];W^{1,\infty}_x(\mathbb{R}^2))
\end{equation}
if $u_0 \in X^s(\mathbb{R}^2)$. Moreover, the flow map $u_0 \mapsto u(t)$ is continuous from $\mathfrak{X}^s(\mathbb{R}^2)$ to $\mathfrak{X}^s(\mathbb{R}^2)$.
\end{theorem}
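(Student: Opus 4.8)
The plan is to construct the solution by an energy method combined with a parabolic (or Bona–Smith–type frequency truncation) regularization, since the linear part of \eqref{EQBO} contributes nothing to the energy estimates (the operator $\mathcal{H}_x - \mathcal{H}_x\partial_x^2 \pm \mathcal{H}_x\partial_y^2$ is skew-adjoint) and the only genuine difficulty comes from the nonlinearity $u\partial_x u$. First I would mollify the equation, e.g.\ replacing the nonlinear term by $J_\varepsilon(J_\varepsilon u\,\partial_x J_\varepsilon u)$ where $J_\varepsilon$ is a standard Friedrichs mollifier, obtaining for each $\varepsilon>0$ a globally-defined ODE in $H^s$ whose solutions $u^\varepsilon$ satisfy uniform-in-$\varepsilon$ a priori bounds on a time interval $[0,T]$ with $T=T(\|u_0\|_{\mathfrak{X}^s})$. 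The a priori estimate is obtained by applying $J_x^s$ (and, in the $X^s$ case, additionally $D_x^{-1/2}$ and $D_x^{-1/2}\partial_y$) to the equation, pairing with the corresponding derivative of $u$ in $L^2_{xy}$, and using the commutator estimate highlighted in the abstract together with the Kato–Ponce product rule to control $\langle [J_x^s, u]\partial_x u, J_x^s u\rangle$ and similar terms, closing via Gronwall with the bound $\frac{d}{dt}\|u^\varepsilon\|_{\mathfrak{X}^s}^2 \lesssim \|\nabla u^\varepsilon\|_{L^\infty}\|u^\varepsilon\|_{\mathfrak{X}^s}^2$ (only $\|\partial_x u^\varepsilon\|_{L^\infty_x}$ in the $X^s$-case, which is why the class \eqref{classolu2} asks only for $W^{1,\infty}_x$ integrability in time). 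The Sobolev embedding $H^s(\mathbb{R}^2)\hookrightarrow W^{1,\infty}(\mathbb{R}^2)$ for $s>2$ would force $s>2$; to reach $s>3/2$ I would instead keep the $L^1_t W^{1,\infty}$ norm as an independent quantity, estimating it through a maximal-function / Strichartz-type or logarithmic (Brezis–Gallouet–type) inequality so that the energy growth is driven by a quantity that is itself controlled on a short time interval — this is the standard mechanism by which one descends below the embedding threshold for Benjamin–Ono–type equations, and it is the step I expect to be the main obstacle.

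Next I would pass to the limit $\varepsilon\to 0$. From the uniform bounds, $\{u^\varepsilon\}$ is bounded in $L^\infty([0,T];\mathfrak{X}^s)$ and, using the equation, $\{\partial_t u^\varepsilon\}$ is bounded in $L^\infty([0,T];\mathfrak{X}^{s-2})$, so by Aubin–Lions and a diagonal argument a subsequence converges strongly in $C([0,T];\mathfrak{X}^{s'}_{loc})$ for $s'<s$, which suffices to pass to the limit in the nonlinearity and identify a solution $u\in L^\infty([0,T];\mathfrak{X}^s)$. Uniqueness and the $L^1_t W^{1,\infty}$ membership of the solution follow from an energy estimate at the $L^2$ level for the difference of two solutions: if $u,v$ are two solutions then $w=u-v$ satisfies $\frac{d}{dt}\|w\|_{L^2}^2\lesssim (\|\partial_x u\|_{L^\infty}+\|\partial_x v\|_{L^\infty})\|w\|_{L^2}^2$, and Gronwall gives $w\equiv 0$; one must check here that the commutator structure does not require more than one derivative in $L^\infty$.

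Finally, for the continuity of the flow map and the strengthening of weak to strong continuity in time, I would use the Bona–Smith argument: compare the solution emanating from $u_0$ with those emanating from mollified data $u_0^\delta = \rho_\delta * u_0$, show that $\|u^\delta(t)-u(t)\|_{\mathfrak{X}^{s'}}\to 0$ uniformly on $[0,T]$ for $s'<s$ with an explicit rate in $\delta$, and combine this with the higher-regularity bound $\|u^\delta(t)\|_{\mathfrak{X}^{s+1}}\lesssim \delta^{-1}\|u_0\|_{\mathfrak{X}^s}$ and the almost-Lipschitz dependence at level $s'$ to upgrade to convergence at level $s$; this simultaneously yields $u\in C([0,T];\mathfrak{X}^s)$ (right-continuity at $t=0$ being the delicate point, handled by $\liminf/\limsup$ of the norm together with weak continuity) and the continuous dependence of $u_0\mapsto u$. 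Throughout, the only place where the $X^s$ case diverges from the $H^s$ case is in checking that the operators $D_x^{-1/2}$ and $D_x^{-1/2}\partial_y$ commute acceptably with the nonlinearity — this is where the Hilbert-transform/fractional-derivative commutator estimate of the paper is invoked, and once that estimate is in hand the two cases are handled in parallel.
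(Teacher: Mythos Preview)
Your overall architecture --- energy estimates closed via a quantity like $\|u\|_{L^1_T W^{1,\infty}_x}$, regularization, Bona--Smith for continuous dependence --- matches the paper's. The paper regularizes by truncating the initial data ($P_{\leq N}^x u_0$) and invoking a parabolic-regularization LWP result at high regularity ($s\geq 4$), then shows the resulting sequence of smooth solutions is Cauchy directly in $C([0,T];X^s)\cap L^1([0,T];W^{1,\infty}_x)$, rather than passing through Aubin--Lions compactness; but your Bona--Smith step would recover the same strong continuity, so this is a difference of packaging, not of substance.

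The genuine gap is the step you yourself flag as ``the main obstacle'': how to control $\|u\|_{L^1_T W^{1,\infty}_x}$ when $s>3/2$ is below the Sobolev threshold. A Brezis--Gallouet logarithmic inequality will not close this loop --- the log loss is not summable and you end up needing $s>2$ again. What the paper actually uses is a \emph{refined (short-time) Strichartz estimate} in the spirit of Kenig and Linares--Pilod--Saut: one chops $[0,T]$ into small intervals on which the Duhamel representation and the linear Strichartz estimate $\|S(t)f\|_{L^q_t L^p_{xy}}\lesssim\|f\|_{L^2}$ yield, for $0<\delta\ll 1$,
\[
\|\partial_x u\|_{L^1_T L^\infty_{xy}} \lesssim_\delta T^{\kappa_\delta}\Big(\|J_x^{3/2+2\delta}u\|_{L^\infty_T L^2}+\|J_x^{3/2+\delta}D_y^\delta u\|_{L^\infty_T L^2}+\int_0^T\|J_x^{1/2+2\delta}(u\partial_x u)\|_{L^2}+\|J_x^{1/2+\delta}D_y^\delta(u\partial_x u)\|_{L^2}\Big),
\]
and then one checks via product estimates (Lemma~\ref{fraLR}, Lemma~\ref{commtwovar}, and the interpolation inequalities of Lemma~\ref{lemmainterine}) that every term on the right is controlled by $(1+\|u\|_{L^1_T W^{1,\infty}_x})\|u\|_{L^\infty_T X^s}$ for $s>3/2$. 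This produces the closed inequality
\[
\|u\|_{L^1_T W^{1,\infty}_x}\leq c_s T^{\kappa_\delta}\big(1+\|u\|_{L^1_T W^{1,\infty}_x}\big)\|u\|_{L^\infty_T X^s},
\]
which together with the energy inequality gives the a~priori bound on a short time interval. Without this Strichartz input your scheme stalls; once you supply it, the rest of your plan goes through essentially as written. Note also that in the $X^s$ case the paper needs an additional estimate on $\|\partial_x^2 u_N\|_{L^1_T L^\infty}$ (controlled by $\|u_N\|_{L^\infty_T X^{s+1}}\lesssim N\|u_0\|_{X^s}$) to close the Cauchy argument for the $D_x^{-1/2}\partial_y$-component, a wrinkle your outline does not anticipate.
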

The Sobolev space $W^{1,\infty}(\mathbb{R}^2)$ is defined as usual with norm $\|f\|_{W^{1,\infty}}:=\|f\|_{L^{\infty}_{xy}}+\|\nabla f\|_{L^{\infty}_{xy}}$, and $W_x^{1,\infty}(\mathbb{R}^d)$ by $\|f\|_{W_x^{1,\infty}}:=\|f\|_{L^{\infty}_{xy}}+\|\partial_x f\|_{L^{\infty}_{xy}}$. The proof of Theorem \ref{Improwellp} is adapted from the ideas of Kenig \cite{KenigKP} and Linares, Pilod and Saut \cite{LinarFKP}. A novelty in the present work is the study of the operators $D_x^{-1/2}$ and $D_x^{-1/2}\partial_y$ which yields additional difficulties in contrast with the operator $\partial_x^{-1}\partial_y$ considered in the previous references. Among them, we required to deduce the following commutator  relation:
\begin{prop}\label{CalderonCom}
Let $1<p<\infty$, $0\leq \alpha, \beta \leq 1$, $\beta>0$ with $\alpha+\beta= 1$, then
\begin{equation}\label{Comwell}
    \|D_x^{\alpha}[\mathcal{H}_x,g]D_x^{\beta}f\|_{L^p(\mathbb{R})} \lesssim_{p,\alpha.\beta} \|\partial_x g\|_{L^{\infty}(\mathbb{R})}\|f\|_{L^p(\mathbb{R})}.
\end{equation}
\end{prop}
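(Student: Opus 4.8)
The plan is to prove the commutator estimate \eqref{Comwell} by reducing it, via the Fourier transform, to an $L^p$-boundedness statement for a pseudodifferential/Fourier multiplier operator and then invoking a Coifman--Meyer or Kato--Ponce type bilinear multiplier theorem. First I would write $\mathcal{H}_x$ and the fractional derivatives on the Fourier side: with $\widehat{h}$ denoting the Fourier transform of $h=D_x^\alpha[\mathcal{H}_x,g]D_x^\beta f$, one has
\begin{equation}\label{planFourier}
\widehat{h}(\xi)=c\int |\xi|^{\alpha}\bigl(-i\sign(\xi)+i\sign(\xi-\zeta)\bigr)|\xi-\zeta|^{\beta}\,\widehat{g}(\zeta)\,\widehat{f}(\xi-\zeta)\,d\zeta,
\end{equation}
so the operator is the bilinear Fourier multiplier with symbol
\begin{equation}\label{plansymbol}
\sigma(\xi,\zeta)=\frac{|\xi|^{\alpha}\bigl(\sign(\xi-\zeta)-\sign(\xi)\bigr)|\xi-\zeta|^{\beta}}{i\zeta}
\end{equation}
acting on $(\partial_x g, f)$; the factor $1/(i\zeta)$ appears because I want to put the derivative hitting $g$ so that $\partial_x g\in L^\infty$ is the natural quantity, matching the right-hand side of \eqref{Comwell}.

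The key observation is that $\sign(\xi-\zeta)-\sign(\xi)$ is supported on the region where $\xi$ and $\xi-\zeta$ have opposite signs (or one vanishes), and there $|\zeta|=|\xi|+|\xi-\zeta|\ge\max(|\xi|,|\xi-\zeta|)$. Hence on the support of $\sigma$ we have $|\xi|^\alpha|\xi-\zeta|^\beta\le |\zeta|^{\alpha+\beta}=|\zeta|$, which exactly cancels the $1/|\zeta|$, so $\sigma$ is bounded. More is true: I would check that $\sigma$ is a Coifman--Meyer symbol on the relevant frequency cone, i.e. it satisfies the Hörmander--Mihlin estimates $|\partial_\xi^{a}\partial_\zeta^{b}\sigma(\xi,\zeta)|\lesssim (|\xi|+|\zeta|)^{-a-b}$ away from the origin — this requires some care precisely because of the jump discontinuities of $\sign$, but those are confined to the hyperplanes $\xi=0$ and $\xi-\zeta=0$; on the open region where the two signs differ, $\sigma$ is smooth and homogeneous of degree $0$, and the homogeneity plus smoothness on the sphere gives the derivative bounds. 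A clean way to organize this is a Littlewood--Paley/paraproduct decomposition: decompose $g$ and $f$ into dyadic pieces $g_j$, $f_k$; the sign-difference factor forces $|j-k|\le 2$ (the two frequencies must be comparable for their signs to be able to differ after subtracting), which lands us in the "high-high to low" paraproduct regime, and there the bound $\|D_x^\alpha(\mathcal{H}_x(g_jf_k)-g_j\mathcal{H}_x f_k - \dots)\|_{L^p}\lesssim 2^{-j}\|\partial_x g_j\|_{L^\infty}\|f_k\|_{L^p}$ follows from the mean value theorem applied to the symbol together with the boundedness of $\mathcal{H}_x$ on $L^p$ for $1<p<\infty$; summing the geometric series in $j$ yields \eqref{Comwell}.

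Alternatively, and perhaps more in the spirit of the classical Calderón commutator, I would prove it by writing $[\mathcal{H}_x,g]$ in physical space as $([\mathcal{H}_x,g]\psi)(x)=\text{p.v.}\,\frac1\pi\int \frac{g(x)-g(y)}{x-y}\psi(y)\,dy$ and exploiting that $\frac{g(x)-g(y)}{x-y}=\int_0^1 g'(y+t(x-y))\,dt$, so the kernel is controlled by $\|g'\|_{L^\infty}$; the first Calderón commutator theorem gives $\|[\mathcal{H}_x,g]\psi\|_{L^p}\lesssim \|g'\|_{L^\infty}\|\psi\|_{L^p}$, and the content here is the gain of the full derivative $D_x^{\alpha+\beta}=D_x$ distributed as $D_x^\alpha(\cdot)D_x^\beta$, which is again the statement that the symbol $\sigma$ above is a bounded (indeed Coifman--Meyer) multiplier. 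I would state and use a Kato--Ponce/Coifman--Meyer commutator lemma (e.g. in the form found in Kenig--Ponce--Vega or Grafakos--Oh) to close this cleanly. The main obstacle I anticipate is handling the endpoint cases $\alpha=0$ (where $D_x^\alpha$ is the identity and the estimate is essentially the classical Calderón commutator, so this is the easy case) versus the genuinely fractional cases $0<\alpha<1$: there one must be careful that the fractional derivative $D_x^\alpha$ does not see the low-frequency part where $g$'s frequency dominates — but as noted, the sign-difference factor kills exactly that region, so the only real work is verifying the symbol regularity across the jump set and justifying the paraproduct manipulations rigorously (density of Schwartz functions, convergence of the p.v. integrals). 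I would expect the verification that $\sigma$ meets the Coifman--Meyer hypotheses — in particular getting uniform constants as $\alpha\to 0$ or $\alpha\to1$ — to be the technically delicate point, everything else being a standard reduction.
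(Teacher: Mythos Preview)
Your starting point matches the paper's: write the commutator on the Fourier side, note that the factor $\sign(\xi_1+\xi_2)-\sign(\xi_2)$ forces the output frequency and the frequency of $f$ to have opposite signs, and observe that on this support the symbol is bounded since $|\xi_1+\xi_2|^\alpha|\xi_2|^\beta\le |\xi_1|$. However, two concrete steps in your plan do not go through as written.

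First, your paraproduct claim is wrong. The support condition gives $|\xi_1|=|\xi_1+\xi_2|+|\xi_2|$, so the frequency of $g$ always dominates that of $f$, but nothing forces them to be comparable: the case $|\xi_2|\ll|\xi_1|$ (low-frequency $f$, high-frequency $g$, output at frequency $\sim|\xi_1|$) is very much present. Thus the decomposition is not only ``high-high to low''; there is also a genuine ``high $g$ -- low $f$ -- high output'' regime. The paper treats both: the terms $\mathcal{A}_1,\mathcal{A}_2$ handle $P_N^x g\cdot P_{\ll N}^x D_x^\beta f$, and $\mathcal{A}_3,\mathcal{A}_4$ handle $P_N^x g\cdot\widetilde P_N^x D_x^\beta f$.

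Second, the Coifman--Meyer route has a real obstacle. In the bilinear variables $(\zeta,\eta)$ (frequencies of $\partial_x g$ and $f$), on the support one has, say, $\sigma(\zeta,\eta)=c\,(\zeta+\eta)^\alpha(-\eta)^\beta/\zeta$ for $\zeta>0>\eta$, $\zeta+\eta>0$. Then $\partial_\eta\sigma$ behaves like $(-\eta)^{\beta-1}\zeta^{\alpha-1}$ as $\eta\to 0^-$, and like $(\zeta+\eta)^{\alpha-1}\zeta^{-1}$ as $\zeta+\eta\to 0^+$; neither is controlled by $(|\zeta|+|\eta|)^{-1}$. So the standard H\"ormander--Mihlin/Coifman--Meyer symbol hypotheses fail precisely at the boundary of the support, and you cannot simply invoke the off-the-shelf theorem. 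The homogeneity-of-degree-zero observation is correct but insufficient: smoothness on the sphere fails at the two points where the sphere meets $\{\eta=0\}$ and $\{\zeta+\eta=0\}$.

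The paper bypasses this by doing the Littlewood--Paley analysis by hand: after the Bony decomposition, each piece is estimated using the pointwise maximal bound $|\overline P_N^x h|\lesssim \mathcal M(h)$, the Fefferman--Stein vector-valued maximal inequality, and the square-function characterization of $L^p$. In the high-low block, the key arithmetic is $N^\alpha\cdot N^{-1}\cdot\sum_{M\ll N}M^\beta\sim 1$ (using $\alpha+\beta=1$, $\beta>0$); in the high-high block, it is $\sum_{N\gtrsim M}M^\alpha N^{-\alpha}\lesssim 1$ (using $\alpha>0$). Your bound ``$\lesssim 2^{-j}\|\partial_x g_j\|_{L^\infty}\|f_k\|_{L^p}$'' is essentially the right dyadic estimate in the high-high case, but you still need the high-low case and the square-function summation to close.
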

This estimate can be regarded as a nonlocal version of Calderon's first commutator estimate deduced in \cite[Lemma 3.1]{DawsonMCPON} and \cite[Proposition 3.8]{Dli} (see Proposition \ref{CalderonComGU} in the present document). Proposition \ref{CalderonCom} is proved in the appendix, and it is useful to perform energy estimates involving the operator $D_x^{-1/2}\partial_y$ and the nonlinearity in the equation in \eqref{EQBO}. 

 We remark that Theorem \ref{Improwellp} improves the conclusion in \cite{Omarths}, lowering the regularity in the Sobolev scale  to $s>3/2$ and obtaining well-posedness conclusion in spaces well-adapted to \eqref{Energy}. Furthermore, we believe that these results could certainly be used to study existence and stability of solitary wave solutions, where one employs the quantity $E(u)$ 
 (see for instance \cite{Esfahani2018}).
\\ \\
Next, we present our result in the periodic setting.
\begin{theorem}\label{LocalwellTorus}
Let $s>3/2$. Then for any $u_0\in H^s(\mathbb{T}^2)$, there exist $T=T(\|u_0\|_{H^s})$ and a unique solution $u$ of the IVP \eqref{EQBO} that belongs to
\begin{equation*}
    C([0,T];H^s(\mathbb{T}^2))\cap F^s(T)\cap B^s(T).
\end{equation*}
Moreover, for any $0<T'<T$, there exists a neighborhood $\mathcal{U}$ of $u_0$ in $H^s(\mathbb{T}^2)$ such that
the flow map data-solution, $$v\in \mathcal{U}\mapsto v\in C([0,T'];H^s(\mathbb{T}^2))$$ is continuous.
\end{theorem}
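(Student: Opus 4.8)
The plan is to run the short-time Fourier restriction norm method in the periodic setting, following the scheme of Ionescu--Kenig and the Zakharov--Kuznetsov-type adaptations cited in the excerpt, but now with the dispersion relation $\omega(m,n)=-\sign(m)(m+m^3 \mp m n^2)$ (so the third-order term $\sign(m)m^3$ provides the smoothing in $x$, exactly as for the 2D Benjamin--Ono type models). First I would record the symbol and identify the resonance function: for frequencies $m_1,m_2,m_3$ with $m_1+m_2+m_3=0$ the quantity $\omega(m_1,n_1)+\omega(m_2,n_2)+\omega(m_3,n_3)$ gains a factor comparable to $|m_{\min}||m_{\max}|^2$ when the $x$-frequencies are not all comparable, but degenerates when $|m_1|\sim|m_2|\sim|m_3|$; this dichotomy is what forces the short-time (frequency-localized time-interval) formulation. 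I would then define the spaces $F^s(T)$ and $B^s(T)$: $F^s(T)$ built from dyadic pieces $P_N u$ measured in $X^{1/2,1}$-type norms on time intervals of length $\sim N^{-2}$ (and aggregated with $\langle N\rangle^s$ weights via a Besov-type $\ell^2$ sum in $N$), and $B^s(T)$ the corresponding "energy" space $C([0,T];H^s)$-compatible norm used to close the iteration; the companion space $N^s(T)$ for the nonlinearity is defined the same way with an extra $\langle\tau-\omega(m,n)\rangle^{-1}$.

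The key steps, in order, are: (i) \emph{Linear estimates} --- $\|u\|_{F^s(T)}\lesssim \|u\|_{B^s(T)}+\|\partial_t u+(\text{linear part})u\|_{N^s(T)}$ and $\|u\|_{B^s(T)}\lesssim \|u_0\|_{H^s}+\|u\|_{F^s(T)}\|u\|_{F^{s}(T)}$ --- the latter being the energy estimate obtained by differentiating $\|J^s u\|_{L^2}^2$, symmetrizing the trilinear term and using the commutator structure (here Proposition~\ref{CalderonCom} and the Calderón-type commutator are exactly what is needed to handle the worst interaction, where a derivative falls on the highest-frequency factor). (ii) \emph{The crucial bilinear/nonlinear estimate} $\|u\partial_x v\|_{N^s(T)}\lesssim \|u\|_{F^s(T)}\|v\|_{F^{s}(T)}$ for $s>3/2$: this is proved by dyadic decomposition into high-low, low-high and high-high$\to$low cases, using a short-time $L^4_{xy,t}$ (or $L^2$ bilinear) Strichartz/Bourgain-space estimate on intervals of length $N^{-2}$; the nonresonant interactions are absorbed by the $|m_{\min}||m_{\max}|^2$ gain, and the (near-)resonant region $|m_1|\sim|m_2|\sim|m_3|$ is controlled by the $1/2$-derivative to spare coming from $s>3/2$ together with the high modulation forced there. (iii) \emph{Contraction/iteration and continuity}: combine (i) and (ii) to get an a priori bound $\|u\|_{F^s(T)}+\|u\|_{B^s(T)}\lesssim \|u_0\|_{H^s}$ for $T=T(\|u_0\|_{H^s})$ small, then prove existence and uniqueness for smooth data and pass to the limit by a Bona--Smith-type argument (the flow is not expected to be uniformly continuous, so one only proves continuity of the flow map on $[0,T']$, $T'<T$, as stated), using differences of solutions estimated in $F^{s'}$ for some $s'<s$ together with frequency-truncation of the data.

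The main obstacle I expect is step (ii) in the (near-)resonant regime $|m_1|\sim|m_2|\sim|m_3|\sim N$ with all $x$-frequencies large: there the dispersive gain is only $O(N)$ rather than $O(N^3)$, so one must extract smoothing from the $y$-dispersion and from high modulation, and the periodicity in $y$ means the usual transversality/rotation tricks available on $\mathbb{R}^2$ are replaced by counting-type bounds on the number of lattice points in thin annuli --- this is precisely where the $7/4$ versus $3/2$ thresholds in the literature came from, and getting down to $s>3/2$ requires the sharper short-time bilinear Strichartz estimate (cf. \cite{SCHIPPAShreq}) rather than the linear one. A secondary technical point is verifying that the operators $D_x^{-1/2}$ and $D_x^{-1/2}\partial_y$ appearing in the linear part of \eqref{EQBO} (the $\mathcal{H}_x u$ and $\pm\mathcal{H}_x\partial_y^2 u$ terms) are harmless on $\mathbb{T}^2$ at these frequencies --- the $\mathcal{H}_x u$ term is a bounded (order-zero) perturbation once the zero-in-$x$ mode is handled separately, and the $\mathcal{H}_x\partial_y^2 u$ term is already built into $\omega(m,n)$, so the commutator estimates of Proposition~\ref{CalderonCom} transfer to the torus by the usual transference principle.
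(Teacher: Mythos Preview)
Your overall scheme (Ionescu--Kenig--Tataru short-time Bourgain spaces, linear estimate + nonlinear estimate in $\mathcal{N}^s$ + energy estimate in $B^s$, then Bona--Smith for continuity) is exactly the paper's strategy. However, several concrete inputs are misidentified and would derail the actual computations.

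First, the dispersion relation is quadratic, not cubic: from \eqref{EQBO} one has $\omega(m,n)=\sign(m)+\sign(m)m^{2}\mp\sign(m)n^{2}$ (see \eqref{lieareqsym}), not $-\sign(m)(m+m^{3}\mp mn^{2})$. The operator $-\mathcal{H}_x\partial_x^2$ has symbol $\sign(\xi)\xi^2=|\xi|\xi$, i.e.\ Benjamin--Ono-type, not KdV-type. Consequently the resonance function \eqref{resonF} is only quadratic: in the non-parallel regime $|\partial_{m_2}\Omega|\sim|m_1|$ and $|\partial_{n_2}\Omega|\sim|n_1|$ (see \eqref{eqbili7}), so the gain is $O(N_{\max})$, not $O(|m_{\min}||m_{\max}|^{2})$. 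This forces the short-time localization to be at scale $N^{-1}$, not $N^{-2}$: the paper's $F_N$ norms use $\psi_1(N(\cdot-t_N))$. With your cubic symbol and $N^{-2}$ intervals the bilinear estimates of Proposition~\ref{lembilinEST} (which are proved by direct counting via Lemma~\ref{cardinal} on the level sets of $\Omega$) would simply be false for this equation.

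Second, Proposition~\ref{CalderonCom} plays no role in the periodic proof. The energy estimate (Proposition~\ref{propEner1}) is not obtained by differentiating $\|J^su\|_{L^2}^2$ and using commutator bounds; it is obtained by estimating the frequency-localized trilinear form $\int P_N(\partial_x u\,u)P_Nu$ via Lemmas~\ref{lemmaEner1}--\ref{lemmaEner2}, which in turn rest on the $L^2$ bilinear estimates of Proposition~\ref{lembilinEST}. Similarly, the operators $D_x^{-1/2}$ and $D_x^{-1/2}\partial_y$ do not enter the $H^s(\mathbb{T}^2)$ argument at all; they belong to the $X^s(\mathbb{R}^2)$ theory in Section~\ref{SectionReal}. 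Finally, the paper does \emph{not} invoke the short-time bilinear Strichartz estimates of \cite{SCHIPPAShreq}; the $s>3/2$ threshold is reached here purely through the $L^2$ bilinear bounds \eqref{eqbili2}--\eqref{eqbili3.1} combined with the time localization at scale $N^{-1}$ and the interpolation in \eqref{eqshortim2}--\eqref{eqshortim3} for the $High\times High\to High$ and $High\times High\to Low$ interactions. Your identification of the near-resonant $|m_1|\sim|m_2|\sim|m_3|$ region as the bottleneck is correct, but the mechanism that closes it is the modulation bound $L_{\max}\gtrsim N$ on intervals of length $N^{-1}$ together with \eqref{eqbili3.1}, not an auxiliary Strichartz input.
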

The function spaces $F^s(T)$ and $B^s(T)$ are defined in the Section \ref{sectionPeri} below. Theorem \ref{LocalwellTorus} is proved by means of the short-time Fourier restriction norm method developed by Ionescu, Kenig and Tataru \cite{IonescuKeniTata}, see also \cite{RibaVento,ZhangKP}. Mainly, this technique combines energy estimates with linear and nonlinear estimates in short-time Bourgain's spaces $F^s(T)$ and their dual $\mathcal{N}^s(T)$ (see Section \ref{sectionPeri}), where the former spaces enjoy the $X^{s,b}$ structure with localization in small time intervals whose length is of order $2^{-j}$, $j\in \mathbb{Z}^{+}\cup\{0\}$. We emphasize that up to our knowledge, Theorem \ref{LocalwellTorus} seems to be the first non-standard result dealing with the periodic equation \eqref{EQBO}.

Regarding the periodic quantity $E(u)$, we consider the Sobolev spaces
\begin{equation*}
    X^s(\mathbb{T}^2)=\{f\in H^s(\mathbb{T}^2): \, \widehat{f}(0,n)=0, \text{ for all } n\in \mathbb{Z}\}
\end{equation*}
equipped with the norm $\|f\|_{X^s(\mathbb{T}^2)}=\|f\|_{H^s(\mathbb{T}^2)}$. Then, since $X^s(\mathbb{T}^2)$ is a closed subspace of $H^s(\mathbb{T}^2)$, replacing the spaces $H^s(\mathbb{T}^2)$ by $X^s(\mathbb{T}^2)$ in Section \ref{sectionPeri} below, the same proof of Theorem \ref{LocalwellTorus} yields: 
\begin{corol}
Let $s>3/2$. Then the IVP \eqref{EQBO} is locally well-posed in $X^s(\mathbb{T}^2)$.
\end{corol}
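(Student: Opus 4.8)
The plan is to observe that the Corollary is essentially a triviality once Theorem~\ref{LocalwellTorus} is in hand, because the only real content is to check that the subspace $X^s(\mathbb{T}^2)$ of mean-zero-in-$x$ functions is preserved by the flow. So the proof reduces to two observations: first, that $X^s(\mathbb{T}^2)$ is a closed subspace of $H^s(\mathbb{T}^2)$ (this is immediate, since for each fixed $n$ the functional $f\mapsto\widehat{f}(0,n)$ is continuous on $H^s$, and $X^s$ is the intersection of their kernels), and second, that if $u_0\in X^s(\mathbb{T}^2)$ then the solution $u(t)$ furnished by Theorem~\ref{LocalwellTorus} satisfies $\widehat{u}(t)(0,n)=0$ for all $n\in\mathbb{Z}$ and all $t\in[0,T]$.

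For the invariance, I would argue at the level of the equation. Writing the equation in \eqref{EQBO} as $\partial_t u = -\mathcal{H}_x u+\mathcal{H}_x\partial_x^2 u\mp\mathcal{H}_x\partial_y^2 u-\tfrac12\partial_x(u^2)$, apply the Fourier transform in $x$ and evaluate at frequency $\xi=m=0$. Every term on the right-hand side carries either a factor $\sign(m)$ (from $\mathcal{H}_x$, which vanishes at $m=0$) or a factor $m$ (from $\partial_x$ of the nonlinearity); hence $\partial_t\widehat{u}(t)(0,n)=0$ for each $n$, and since $\widehat{u_0}(0,n)=0$ we get $\widehat{u}(t)(0,n)=0$ for all $t$. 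This computation is formal, but it is justified rigorously for the constructed solution $u\in C([0,T];H^s(\mathbb{T}^2))$ with $s>3/2$: the nonlinear term $u\partial_x u$ lies in $C([0,T];H^{s-1})\subset C([0,T];L^2)$, so all terms make sense in, say, $H^{s-2}$, and the identity $\tfrac{d}{dt}\widehat{u}(t)(0,n)=0$ holds as an ODE for each $n$. Consequently $u\in C([0,T];X^s(\mathbb{T}^2))$, and the flow map restricted to a neighborhood in $X^s$ is the restriction of the flow map in $H^s$, hence continuous, and the $F^s(T)\cap B^s(T)$ regularity is inherited because those spaces are built on top of $H^s$-type norms.

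The remaining point is that the entire machinery behind Theorem~\ref{LocalwellTorus} -- the short-time Bourgain spaces $F^s(T)$ and $\mathcal{N}^s(T)$, the bilinear estimates, and the energy estimates -- goes through verbatim when $H^s(\mathbb{T}^2)$ is replaced by its closed subspace $X^s(\mathbb{T}^2)$ throughout Section~\ref{sectionPeri}, as asserted in the text just before the Corollary. The point is that nothing in that argument uses the $\xi=0$ mode in an essential way; the function spaces restricted to $X^s$ are again Banach spaces, the linear propagator $e^{-t(\mathcal{H}_x-\mathcal{H}_x\partial_x^2\pm\mathcal{H}_x\partial_y^2)}$ preserves the mean-zero-in-$x$ condition (its symbol is a function of $\sign(m)$ and $m^2,n^2$ which annihilates the $m=0$ frequency trivially in the sense that it acts as the identity there, but the zero-Fourier-coefficient constraint is preserved), and the bilinear map $(u,v)\mapsto\partial_x(uv)$ maps $X^s\times X^s$ into $X^{s-1}$ since $\partial_x$ produces a factor $m$. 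Hence one obtains the same contraction as before, now in a ball of $C([0,T];X^s(\mathbb{T}^2))\cap F^s(T)\cap B^s(T)$ with the spaces understood as the corresponding subspaces, giving local well-posedness in $X^s(\mathbb{T}^2)$.

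\textbf{The main obstacle} is essentially bookkeeping rather than mathematics: one must make sure that the short-time function spaces $F^s(T)$, $B^s(T)$, and $\mathcal{N}^s(T)$, when intersected with the mean-zero-in-$x$ constraint, remain closed (so that the fixed-point argument stays inside the class) and that none of the nonlinear or energy estimates in Section~\ref{sectionPeri} secretly exploited interactions with the $m=0$ frequency. Both are true -- the $m=0$ mode only ever appears through harmless lower-order or vanishing contributions -- but a careful reading of the proof of Theorem~\ref{LocalwellTorus} is what is really needed, which is why the statement is phrased as a corollary obtained by ``the same proof.''
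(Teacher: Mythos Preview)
Your proposal is correct and follows essentially the same approach as the paper, which simply notes that $X^s(\mathbb{T}^2)$ is a closed subspace of $H^s(\mathbb{T}^2)$ and that the entire argument of Section~\ref{sectionPeri} goes through verbatim with $X^s$ in place of $H^s$. You add the explicit verification that the mean-zero-in-$x$ constraint is preserved by the flow (via the vanishing of $\sign(m)$ and $m$ at $m=0$), which the paper leaves implicit; this is a helpful clarification but not a genuinely different route.
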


\begin{remarks}\begin{itemize}
    \item[(i)] Our local theory is still not sufficient to reach the energy spaces $X^1(\mathbb{K}^2)$, $\mathbb{K}\in\{\mathbb{R}, \mathbb{T}\}$ determined by \eqref{Energy}.  
    \item[(ii)] For the one dimensional Benjamin-Ono equation \eqref{BO}, many authors, see \cite{Kenig,molinetPilodBO,TaoBO} for instance, have applied the gauge transformation to establish local and global results. Unfortunately, we do not know if there exists such gauge transformation for \eqref{EQBO}. Additionally, we do not know if there is a maximal norm estimate available for solutions of \eqref{EQBO}, which would allow us to argue as in \cite{KenigKo} to improve the results in Theorem \ref{Improwellp}.
    \item[(iii)] Concerning $\mathbb{R}^2$ solutions of \eqref{EQBO}, we do not have a standard approach to derive bilinear estimates in the spaces $F^s(T)$ and $\mathcal{N}^s(T)$. As a consequence, the short-time Fourier restriction norm method applied to this case leads the same regularity attained in Theorem \ref{Improwellp}. For this reason, we have proved Theorem \ref{Improwellp} by employing the short-time linear Strichartz approach instead, which also provides solutions in the class $ L^{1}([0,T];W^{1,\infty}(\mathbb{R}^2))$. The advantage of using this consequence lies in its application to methods based on energy estimates as the one we employ here to deduce well-posedness in weighted spaces.
\end{itemize}
\end{remarks}
Next, we study LWP issues in anisotropic weighted Sobolev spaces:
\begin{equation}\label{weightespace}
Z_{s,r_1,r_2}(\mathbb{R}^2)=H^{s}(\mathbb{R}^2)\cap L^{2}((| x|^{2r_1}+|y|^{2r_2}) \, dx dy), \hspace{0.5cm} s,r_1,r_2 \in \mathbb{R}
\end{equation}
and 
\begin{equation}\label{weightespacedot}
\dot{Z}_{s,r_1,r_2}(\mathbb{R}^2)=\left\{f\in H^{s}(\mathbb{R}^2)\cap L^{2}((| x|^{2r_1}+|y|^{2r_2}) \, dx dy):\, \widehat{f}(0,\eta)=0 \right\}, \hspace{0.5cm} s,r_1,r_2 \in \mathbb{R}.
\end{equation}
To motivate our results, we observe that for a function $f$ sufficiently regular with enough decay, 
 $x(\mathcal{H}_xu\pm\mathcal{H}_{x}\partial_y^2)f\in L^2(\mathbb{R}^2)$ requires the condition $\int f(x,y)e^{iy\eta}\, dx dy=0$ for almost every $\eta$. Thus, formally transferring this idea to the equation in \eqref{EQBO}, we do not expect that in general solutions of this model propagate weights of arbitrary order in the $x$-variable. Additionally, for arbitrary initial data, we contemplate to propagate weights of order $|x|^{\alpha}$ for some $0<\alpha<1$. In this regard, we have:
\begin{theorem}\label{localweigh}
\begin{itemize}
\item[(i)] If $r_1\in[0,1/2)$ and $r_2\geq 0$ with $s \geq \max\{(3/2)^{+}, r_2\} $, then the IVP associated to \eqref{EQBO} is locally well-posed in $Z_{s,r_1,r_2}(\mathbb{R}^2)$.
\item[(ii)] Let $r_2\geq 0$, $s\geq \max\{(3/2)^{+},r_2\}$. Then the IVP \eqref{EQBO} is locally well-posed in the space
\begin{equation*}
    ZH_{s,1/2,r_2}(\mathbb{R}^2)=\{f\in Z_{s,1/2,r_2}(\mathbb{R}^2): \|f\|_{Z_{s,1/2,r_2}}+\||x|^{1/2}\mathcal{H}_xf\|_{L^2_{xy}}<\infty\}.
\end{equation*}
\item[(iii)] If $r_1\in(1/2,3/2)$ and $r_2\geq 0$ with $s \geq \max\{(3/2)^{+}, r_2\} $, then the IVP associated to \eqref{EQBO} is locally well-posed in $\dot{Z}_{s,r_1,r_2}(\mathbb{R}^2)$.
\end{itemize}
\end{theorem}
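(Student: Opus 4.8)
The plan is to prove Theorem \ref{localweigh} by combining the local well-posedness result in $H^s(\mathbb{R}^2)$ from Theorem \ref{Improwellp} (specifically the solution class $C([0,T];H^s)\cap L^1([0,T];W^{1,\infty})$, which is what makes the energy method for weighted estimates tractable) with weighted energy estimates obtained by commuting suitable weights through the equation. Concretely, I would first establish the result via a priori estimates on smooth solutions and then pass to the general case by an approximation/limiting argument, using the continuity of the flow in $H^s$ and the Banach–Alaoglu theorem to recover the weighted norms of the limit. The workhorse is to apply the operators $\langle x \rangle^{r_1}$ (or a truncated, smoothed version $w_N(x)$ with $w_N \uparrow |x|^{r_1}$ and uniformly controlled derivatives, to justify the computations rigorously) and $\langle y\rangle^{r_2}$ to the equation, multiply by $w_N^2 u$, integrate, and control all resulting terms by $\|u\|_{H^s}$, $\|\partial_x u\|_{L^\infty}$, and the weighted norm itself, closing a Gronwall inequality on $[0,T]$ for the $T$ already furnished by Theorem \ref{Improwellp}.

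The key technical step is handling the dispersive terms $\mathcal{H}_x u - \mathcal{H}_x\partial_x^2 u \pm \mathcal{H}_x\partial_y^2 u$ after multiplication by the $x$-weight. For the $y$-weight $\langle y\rangle^{r_2}$ this is comparatively benign: the weight commutes with $\mathcal{H}_x$ and $\partial_x$, and commuting it past $\partial_y^2$ produces lower-order-in-$y$-weight terms that are absorbed using the hypothesis $s \geq r_2$ (so that $\|\langle y\rangle^{r_2-1}\partial_y u\|_{L^2}$, etc., are controlled by interpolation between $\|u\|_{H^s}$ and $\|\langle y\rangle^{r_2} u\|_{L^2}$). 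For the $x$-weight the difficulty is genuinely nonlocal: $\langle x\rangle^{r_1}\mathcal{H}_x = \mathcal{H}_x \langle x\rangle^{r_1} + [\langle x\rangle^{r_1},\mathcal{H}_x]$, and one must show the commutator $[\langle x\rangle^{r_1},\mathcal{H}_x]$, together with its interaction with $\partial_x^2$ and $\partial_y^2$, maps into $L^2$ with a bound in terms of lower-order quantities. This is precisely the regime where the Calderón-type commutator estimates (Proposition \ref{CalderonCom}, and the pointwise weighted-commutator lemmas analogous to those of Nahas–Ponce and Fonseca–Ponce for the Benjamin–Ono setting) enter, and where the threshold $r_1 < 1/2$ versus $r_1 \geq 1/2$ becomes visible: for $r_1 < 1/2$ the weight $|x|^{r_1}\mathcal{H}_x f \in L^2$ imposes no cancellation condition on $f$, so part (i) holds for arbitrary data; at $r_1 = 1/2$ one needs the extra hypothesis $\||x|^{1/2}\mathcal{H}_x f\|_{L^2}<\infty$ built into the space $ZH_{s,1/2,r_2}$ (part (ii)); and for $r_1 \in (1/2,3/2)$ the condition $\widehat{f}(0,\eta)=0$ is required and must be shown to be preserved by the flow (part (iii)) — this last point follows by integrating the equation in $x$, since $\partial_t \widehat u(0,\eta,t) = 0$ up to the nonlinear term $\widehat{u\partial_x u}(0,\eta,t) = \frac{1}{2}\widehat{\partial_x(u^2)}(0,\eta,t)=0$.

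I expect the main obstacle to be the rigorous treatment of the nonlocal commutators $[|x|^{r_1},\mathcal{H}_x]D_x^{2}$ and $[|x|^{r_1},\mathcal{H}_x]D_x^{-1/2}\partial_y^2$-type expressions — i.e., showing that after the weight passes through $\mathcal{H}_x\partial_x^2$ the error is not merely formally lower order but genuinely bounded in $L^2$ by $C(\|u\|_{H^s})(1+\|u\|_{Z_{s,r_1,r_2}})$, which seems to require writing $|x|^{r_1}$ via its action on the Fourier side, decomposing into low/high frequency pieces, and invoking Proposition \ref{CalderonCom} with the roles of $\alpha,\beta$ chosen to distribute the two $x$-derivatives. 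A secondary obstacle is the interpolation lemma needed throughout: one must verify that $\|\langle x\rangle^{\theta r_1}\langle D_x\rangle^{(1-\theta)s} f\|_{L^2} \lesssim \|f\|_{H^s}^{1-\theta}\||x|^{r_1}f\|_{L^2}^{\theta}$ and its $y$-analogue (a Stein–Weiss / three-lines type statement), which for the non-integer and fractional-weight cases requires care — this is the analogue of the interpolation results used in the weighted Benjamin–Ono and KP theories, and I would state and use it as a lemma, proving it by complex interpolation between $L^2(\langle x\rangle^{2r_1}dx)$ and $H^s$.
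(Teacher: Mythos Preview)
Your overall architecture (start from the $H^s$ solution in $C([0,T];H^s)\cap L^1_T W^{1,\infty}$, run weighted energy estimates with truncated weights, close by Gronwall, pass to the limit) matches the paper, and the treatment of the $y$-weight and of the preservation of $\widehat u(0,\eta)=0$ is correct. However, your plan for the $x$-weight has a real gap.

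You propose to multiply the equation by $w_N^{2r_1}u$ and handle the dispersive terms via commutators $[\langle x\rangle^{r_1},\mathcal H_x]$. The obstruction is the term $\pm\mathcal H_x\partial_y^2u$: after one integration by parts in $y$ and the skew-symmetry of $\mathcal H_x$, you are left with $\tfrac12\int \partial_y u\,[\mathcal H_x,w^{2r_1}_{n,x}]\partial_y u\,dxdy$, which couples an $x$-weight to a $y$-derivative. Neither Proposition~\ref{CalderonCom} nor Proposition~\ref{CalderonComGU} (both require $\partial_x g\in L^\infty$ and trade \emph{$x$}-derivatives) nor the $A_2$-bound of Proposition~\ref{propapcond} converts this into something controlled by $\|u\|_{H^s}$ and $\|w_{n,x}^{r_1}u\|_{L^2}$; your interpolation lemma only interpolates weight and derivative in the \emph{same} variable, so it does not help here. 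The same issue arises with the zeroth-order $\mathcal H_x u$ term once $r_1\ge 1/2$ (where $|x|^{2r_1}$ is no longer $A_2$).

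The paper circumvents this by the Fonseca--Ponce ``doubling'' device you allude to but do not use: one also applies $\mathcal H_x$ to the equation (obtaining the \emph{local} equation $\partial_t\mathcal H_x u - u + \partial_x^2 u \mp \partial_y^2 u + \mathcal H_x(uu_x)=0$), multiplies by $\mathcal H_x u\, w_{n,x}^{2r_1}$, and \emph{adds} the two identities. Then the $\partial_y^2$ contributions and the $\mathcal H_x u\cdot u$ cross terms cancel exactly, leaving only $Q_1=\int(\mathcal H_x\partial_x^2u\,u-\partial_x^2u\,\mathcal H_xu)w^{2r_1}_{n,x}$ and the nonlinear $Q_2$, both of which are handled by elementary integration by parts together with Proposition~\ref{propapcond} (for $r_1<1/2$) or Proposition~\ref{CalderonComGU} (for $r_1\ge 1/2$). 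Proposition~\ref{CalderonCom} is \emph{not} used in this proof; it belongs to the $X^s$ theory of Theorem~\ref{Improwellp}. So: keep your outline, but replace the single energy identity by the coupled pair for $u$ and $\mathcal H_x u$; that is the missing idea.
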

In particular, Theorem \ref{localweigh} shows that the IVP \eqref{EQBO} admits weights of arbitrary order in the $y$-variable. The proof of these results follows the ideas of Fonseca, Linares and Ponce \cite{FLinaPonceWeBO,FLinaPioncedGBO,FonPO}. We emphasize that our conclusions involve further difficulties, since here we deal with anisotropic spaces in two spatial variables, and the $x$-spatial decay allowed by \eqref{EQBO} for arbitrary initial data does not even reach an integer number (cf.  \cite[Theorem 1]{FonPO} for the BO equation). 
Finally, we remark that Theorem \ref{localweigh} improves the range of weights determined in the work of \cite{Omarths}, and we do not require  the assumption $\partial_x^{-1}u \in H^s(\mathbb{R}^2)$.

Next, we state some unique continuation principles for solutions of the IVP \eqref{EQBO}.

\begin{theorem}\label{sharpdecay}
Let $r_1\in (1/4,1/2)$, $r_2\geq r_1$ and $s\geq \max\{ \frac{ 2r_1}{(4r_1-1)^{-}},r_2\}$. Let $u$ be a solution of the IVP \eqref{EQBO} such that $u\in C([0,T];Z_{s,r_1,r_2}(\mathbb{R}^2))\cap L^{1}([0,T];W_{1,x}^{\infty}(\mathbb{R}^2))$. If there exist two different times $t_1<t_2$ in $[0,T]$ for which
\begin{equation*}
u(\cdot,t_1)\in Z_{s,(1/2)^{+},r_2}(\mathbb{R}^2) \text{ and } u(\cdot,t_2)\in Z_{s,1/2,r_2}(\mathbb{R}^2),
\end{equation*} 
then $\widehat{u}(0,\eta,t)=0$ for all $t\in[t_1,T]$ and almost every $\eta$.
\end{theorem}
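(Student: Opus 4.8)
The plan is to argue by contradiction using a weighted energy estimate that detects the obstruction $\widehat{u}(0,\eta,t)=0$. Suppose $u$ is a solution with the stated regularity and $\widehat{u}(0,\eta,t_0)\neq 0$ on a set of positive $\eta$-measure for some $t_0\in[t_1,T]$. The starting point is the observation that, by time-translation, it suffices to work on an interval starting at $t_1$, where $|x|^{(1/2)^{+}}u(\cdot,t_1)\in L^2_{xy}$. The key structural fact — already highlighted in the motivation preceding Theorem \ref{localweigh} — is the identity $x\,\mathcal{H}_x\phi = \mathcal{H}_x(x\phi) - \mathcal{F}^{-1}_\xi\!\big(\tfrac{1}{i}\,\delta(\xi)\,\widehat{\phi}(0,\eta)\big)$ in the distributional sense; the singular term at $\xi=0$ is exactly what forces the mean-zero condition when one wants $|x|u\in L^2$. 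I will localize this away from the origin to make it rigorous: introduce a truncated weight $w_\epsilon(x)$ behaving like $|x|^{1/2}$ for $|x|\le 1/\epsilon$ and constant thereafter, multiply the equation by $w_\epsilon^2 u$, and integrate.

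First I would carry out the weighted energy computation for $\tfrac{d}{dt}\||x|^{1/2}u\|_{L^2_{xy}}^2$ using the truncated weights. The terms coming from $\mathcal{H}_xu$ and $u\partial_x u$ are controlled by the $L^1_T W^{1,\infty}_x$ norm together with $\|u\|_{Z_{s,r_1,r_2}}$, using the Calderón-type commutator estimate in Proposition \ref{CalderonCom} (or its homogeneous variant) to move $|x|^{1/2}$-type weights past $\mathcal{H}_x$ at the cost of $\|\partial_x u\|_{L^\infty}$; this is precisely the mechanism that already appeared in the proof of Theorem \ref{localweigh}(ii), which guarantees $|x|^{1/2}u$ is propagated and hence finite on $[t_1,T]$. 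The delicate term is $\mathcal{H}_x\partial_x^2 u$: pairing $w_\epsilon^2 u$ against it and integrating by parts twice, one produces a term of the form $\int (\partial_x w_\epsilon)^2 u\,\mathcal{H}_x u$ plus lower-order pieces; since $(\partial_x w_\epsilon)^2 \sim |x|^{-1}$ this is a Hardy-type term, controlled by interpolating between $\||x|^{(1/2)^+}u\|_{L^2}$ at $t_1$ and $\||x|^{r_1}u\|_{L^2}$ via $1/4<r_1<1/2$, which is exactly why the hypothesis $r_1>1/4$ enters. The term $\pm\mathcal{H}_x\partial_y^2 u$ contributes $\int w_\epsilon^2 u\,\mathcal{H}_x\partial_y^2 u = \int (\mathcal{H}_x(w_\epsilon^2 u))\partial_y^2 u$ and, after integrating by parts in $y$ and commuting, is estimated using $\|u\|_{H^s}$ with $s$ large enough — this dictates the lower bound $s\ge \frac{2r_1}{(4r_1-1)^-}$ coming from the interpolation exponents.

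Assembling these bounds gives a differential inequality
\[
\||x|^{1/2}u(t)\|_{L^2_{xy}}^2 \lesssim \||x|^{1/2}u(t_1)\|_{L^2_{xy}}^2 + \int_{t_1}^{t}\big(1+\|u(\tau)\|_{W^{1,\infty}_x}\big)\||x|^{1/2}u(\tau)\|_{L^2_{xy}}^2\,d\tau + C(\|u\|_{L^\infty_T Z_{s,r_1,r_2}}),
\]
valid uniformly in $\epsilon$; Gronwall's inequality then shows $|x|^{1/2}u\in L^\infty([t_1,T];L^2_{xy})$. In particular $|x|^{1/2}u(\cdot,t)\in L^2_{xy}$ for every $t\in[t_1,T]$, including $t=t_2$. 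Now I bring in the hypothesis at $t=t_2$, which says $u(\cdot,t_2)\in Z_{s,1/2,r_2}$, i.e.\ the \emph{non-strict} weight $|x|^{1/2}$ — but then I push one notch further: the identity above shows that if additionally $|x|^{(1/2)^+}u(\cdot,t_1)\in L^2$ and one tries to close the estimate at the weight $(1/2)^+$ at a \emph{later} time, the singular contribution $\mathcal{F}^{-1}_\xi\big(\delta(\xi)\widehat{u}(0,\eta,t)\big)$ from $x\mathcal{H}_x$ can be absorbed only if $\widehat{u}(0,\eta,t)=0$; running the truncated computation with $w_\epsilon\sim|x|^{(1/2)^+}$ and letting $\epsilon\to0$, the left side stays finite only under this cancellation. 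Thus the propagation of the $(1/2)^+$-weight from $t_1$ to $[t_1,T]$, together with finiteness of the $1/2$-weight guaranteed above, forces $\widehat{u}(0,\eta,t)=0$ for all $t\in[t_1,T]$ and a.e.\ $\eta$, which is the assertion.

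The main obstacle I anticipate is making the $\epsilon\to0$ limit of the truncated weighted estimate rigorous while simultaneously tracking the singular $\delta(\xi)$ contribution — one must choose the truncation $w_\epsilon$ carefully (smooth, with $|x\,\partial_x w_\epsilon|\lesssim w_\epsilon$ and $|x^2\partial_x^2 w_\epsilon|\lesssim w_\epsilon$ uniformly) so that all commutator and Hardy terms are bounded independently of $\epsilon$, and then identify the limiting obstruction. The interplay between the two weights $1/2$ and $(1/2)^+$ at the two times, and extracting the mean-zero conclusion precisely at the borderline $r_1=1/2$, is the technical heart; the ranges $r_1\in(1/4,1/2)$ and $s\ge\frac{2r_1}{(4r_1-1)^-}$ are exactly what the interpolation in the Hardy term and the $\mathcal{H}_x\partial_y^2$ term demand.
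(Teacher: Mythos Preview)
Your approach is fundamentally different from the paper's and has a genuine gap in the core logical step.

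The paper does \emph{not} run a physical-space weighted energy estimate. Instead it works on the Fourier side via the Duhamel formula: localizing near $\xi=0$ with a cutoff $\phi$, it proves (Claim~\ref{claim1}) that the nonlinear Duhamel term satisfies
\[
J_\xi^{1/2+\epsilon}\Big(\int_0^t e^{i\omega(\xi,\eta)(t-t')}\widehat{uu_x}(\xi,\eta,t')\,\phi(\xi)\,dt'\Big)\in L^\infty([0,T];L^2(\mathbb{R}^2)),
\]
using only the background assumption $u\in C([0,T];Z_{s,r_1,r_2})$ with $r_1\in(1/4,1/2)$. This is where the constraints $r_1>1/4$ and $s\ge \frac{2r_1}{(4r_1-1)^{-}}$ actually enter: they come from the interpolation $\|\langle x\rangle^{1/4+\epsilon/2}u\|_{L^4}\lesssim \|\langle(x,y)\rangle^{r_1}u\|_{L^2}^{\theta}\|J^s u\|_{L^2}^{1-\theta}$ (equation \eqref{weieq14}), not from a Hardy term or from $\mathcal{H}_x\partial_y^2$ as you suggest. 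Given Claim~\ref{claim1}, one has $J_\xi^{1/2}(\widehat{u}(t_2)\phi)\in L^2$ iff $J_\xi^{1/2}(e^{i\omega t_2}\widehat{u_0}\,\phi)\in L^2$. Since $\omega(\xi,\eta)=\sign(\xi)(1+\xi^2\mp\eta^2)$ has a jump at $\xi=0$, the function $e^{i\omega t_2}\widehat{u_0}$ has a jump there unless $\widehat{u_0}(0,\eta)=0$; Proposition~\ref{optm1} then forces the conclusion because jump discontinuities are incompatible with $H^{1/2}$.

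Your argument, by contrast, never uses the hypothesis at $t_2$ in a way that yields the conclusion. You first claim to propagate the $|x|^{1/2}$ weight by energy estimates, but look at Theorem~\ref{localweigh}(ii): closing that estimate requires the additional assumption $|x|^{1/2}\mathcal{H}_x u(t_1)\in L^2$ (the space $ZH_{s,1/2,r_2}$), which is \emph{not} given here and does \emph{not} follow from $u(t_1)\in Z_{s,(1/2)^+,r_2}$ since $|x|$ fails the $A_2$ condition. More seriously, the step where you ``push one notch further'' and extract $\widehat{u}(0,\eta,t)=0$ from a $\delta(\xi)$ obstruction is not a valid deduction: the identity $[x,\mathcal{H}_x]f=c\,\widehat f(0)$ lives at weight $|x|^1$, not $|x|^{1/2}$, and even granting a formal obstruction, failure of the $(1/2)^+$-weight to propagate does not by itself force vanishing---you have no control on $|x|^{(1/2)^+}u(t_2)$, only on $|x|^{1/2}u(t_2)$. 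The actual mechanism is the \emph{jump} in $e^{i\omega t}$ combined with the $H^{1/2}$ endpoint regularity result (Proposition~\ref{optm1}); your energy framework never reaches this.
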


\begin{theorem}\label{sharpdecay1}
Let $r_2\geq r_1=(3/2)^{-}$ and $s> \max\{3,r_2\}$. Let $u$ be a solution of the IVP \eqref{EQBO} such that $u\in C([0,T];\dot{Z}_{s,r_1,r_2}(\mathbb{R}^2))$. If there exist two different times $t_1<t_2$ in $[0,T]$ for which
\begin{equation*}
u(\cdot,t_1)\in Z_{s,(3/2)^{+},r_2}(\mathbb{R}^2) \text{ and } u(\cdot,t_2)\in Z_{s,3/2,r_2}(\mathbb{R}^2),
\end{equation*} 
Then the following identity holds true
\begin{equation}\label{identw1}
    \begin{aligned}
    &2i \sin((1\mp \eta^2)(t_2-t_1))\partial _{\xi}\widehat{u}(0,\eta,t_1)
    = - \int_{t_1}^{t_2} \sin((1\mp\eta^2)(t_2-t'))\widehat{u^2}(0,\eta,t')\, dt',
    \end{aligned}
\end{equation}
for almost every $\eta \in \mathbb{R}$. In particular, if  $u(\cdot,t_1)\in Z_{s,2^{+},2^{+}}(\mathbb{R}^2)$ it follows
\begin{equation}\label{identw2}
    2\sin(t_2-t_1)\int x u(x,y,t_1)\, dx dy=(\cos(t_2-t_1)-1)\int u^2_0(x,y)\, dx dy. 
\end{equation}
\end{theorem}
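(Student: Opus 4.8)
The plan is to work directly on the Fourier side in the spatial frequency variable $(\xi,\eta)$ and extract the asymptotic behavior at $\xi=0$. Writing the equation in \eqref{EQBO} in Duhamel form, $\widehat{u}(\xi,\eta,t)=e^{it\,p(\xi,\eta)}\widehat{u_0}(\xi,\eta)-\tfrac12\int_0^t e^{i(t-t')p(\xi,\eta)}\,(i\xi)\,\widehat{u^2}(\xi,\eta,t')\,dt'$, where $p(\xi,\eta)=-\sign(\xi)\bigl(1+\xi^2\mp\eta^2\bigr)$ is the (real) Fourier symbol of the linear part, so that $|e^{it\,p}|=1$. The hypothesis $u(\cdot,t_1)\in \dot Z_{s,r_1,r_2}$ with $r_1=(3/2)^-$ guarantees that $\widehat{u}(0,\eta,t)=0$ for a.e. $\eta$ and all $t\in[t_1,T]$ (this is exactly the propagation statement already built into $\dot Z$ in Theorem \ref{localweigh}(iii), or can be re-derived as in Theorem \ref{sharpdecay}). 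Consequently $\xi\mapsto\widehat{u}(\xi,\eta,t)$ vanishes at $\xi=0$, and the weight hypotheses give that it is, for a.e. fixed $\eta$, differentiable in $\xi$ near $0$ with $\partial_\xi\widehat{u}(0,\eta,t)$ controlled in $L^2_\eta$; similarly $\widehat{u^2}(0,\eta,t)$ is well-defined because $u(\cdot,t)\in L^2(|x|^{2r_1}\,dx\,dy)$ with $r_1>1$ forces $u^2\in L^1_x L^2_y$ suitably (here one uses $u\in L^\infty$ via the Sobolev embedding $s>3$, so $u^2$ inherits the same $x$-decay as $u$).

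The key computation is then to differentiate the Duhamel identity in $\xi$ and evaluate at $\xi=0$. Since $p$ is odd in $\xi$ and $p(0^\pm,\eta)=\mp(1\mp\eta^2)$, the symbol has a jump at $\xi=0$; but the relevant quantity is $\partial_\xi\bigl(e^{itp(\xi,\eta)}\widehat{u_0}(\xi,\eta)\bigr)$, and because $\widehat{u_0}(0,\eta)=0$ the product rule leaves only $e^{itp(0^+,\eta)}\partial_\xi\widehat{u_0}(0^+,\eta)$ (and the analogous one-sided limit from $\xi<0$). The same care applies to the Duhamel term: the factor $i\xi$ kills the boundary contribution except through its own derivative, producing $\partial_\xi\bigl(i\xi\,e^{i(t-t')p}\widehat{u^2}\bigr)\big|_{\xi=0}=i\,e^{i(t-t')p(0,\eta)}\widehat{u^2}(0,\eta,t')$. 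Evaluating at $t=t_1$ and $t=t_2$, taking the two one-sided limits $\xi\to0^\pm$, subtracting the two resulting identities (this is where the $\sin$ appears, from $e^{-i(1\mp\eta^2)\tau}-e^{+i(1\mp\eta^2)\tau}$), and using that $\partial_\xi\widehat{u}(0^\pm,\eta,t_1)$ must coincide from both sides by the $Z_{s,(3/2)^+,r_2}$ regularity at $t_1$ (the extra $\epsilon$ of decay forces $C^1$-in-$\xi$ behavior, not just one-sided derivatives), yields the identity \eqref{identw1}. I would then check \eqref{identw1} pointwise a.e. in $\eta$ by a limiting/density argument, approximating $u$ by smooth compactly supported data for which all manipulations are classical and then passing to the limit using the continuity of the flow in $\dot Z_{s,r_1,r_2}$ and the uniform bounds from Theorem \ref{localweigh}.

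For the final assertion \eqref{identw2}, the point is to send $\eta\to0$ in \eqref{identw1}. Under the stronger hypothesis $u(\cdot,t_1)\in Z_{s,2^+,2^+}$, the function $\eta\mapsto\partial_\xi\widehat{u}(0,\eta,t_1)$ is continuous at $\eta=0$ (two-plus derivatives in $y$ give $|y|^2$-decay of $\partial_\xi\widehat u(0,\cdot,t_1)$... more precisely $Z_{s,2^+,2^+}$ decay makes $xu(\cdot,t_1)\in L^1\cap L^2$ so its Fourier transform $\partial_\xi\widehat u(0,\eta,t_1)=\tfrac{1}{i}\widehat{xu(\cdot,t_1)}(0,\eta)$ evaluated at spatial frequency is continuous), and likewise $\widehat{u^2}(0,\cdot,t')$ is continuous at $\eta=0$ with value $\int u^2\,dx\,dy$. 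Setting $\eta=0$ in \eqref{identw1} collapses $1\mp\eta^2$ to $1$, turns $\partial_\xi\widehat u(0,0,t_1)=\tfrac1i\int xu(x,y,t_1)\,dx\,dy$, and the time integral $\int_{t_1}^{t_2}\sin(t_2-t')\,dt'=1-\cos(t_2-t_1)$ is elementary; rearranging gives \eqref{identw2}. The main obstacle I anticipate is not the formal computation but the rigorous justification of differentiating the Duhamel formula in $\xi$ at the jump point $\xi=0$: one must show the one-sided $\xi$-derivatives exist in $L^2_\eta$, that the boundary terms genuinely vanish (this is where $\widehat{u}(0,\eta,t)\equiv0$ is used crucially, not merely as a hypothesis but throughout $[t_1,T]$), and that the interchange of $\partial_\xi$ with the time integral is licit — all of which I would handle by first establishing everything for a dense class of smooth solutions and then invoking the well-posedness and continuous dependence from Theorem \ref{localweigh}(iii) together with the weighted a priori bounds to pass to the limit.
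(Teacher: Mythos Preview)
Your overall plan is the right one and aligns with the paper's strategy: differentiate the Duhamel representation in $\xi$, isolate the contribution that can jump at $\xi=0$, and read off the identity from the absence of that jump. However, there is a genuine gap at the crucial step, and it is exactly where the borderline hypothesis at $t_2$ enters.

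After subtracting the two one-sided computations you obtain an expression of the form
\[
\partial_\xi\widehat{u}(0^{+},\eta,t_2)-\partial_\xi\widehat{u}(0^{-},\eta,t_2)
=\;-2i\sin\!\big((1\mp\eta^2)(t_2-t_1)\big)\partial_\xi\widehat{u}(0,\eta,t_1)
-\int_{t_1}^{t_2}\sin\!\big((1\mp\eta^2)(t_2-t')\big)\widehat{u^2}(0,\eta,t')\,dt'.
\]
Identity \eqref{identw1} is equivalent to the left-hand side vanishing. You justify carefully that the two one-sided values of $\partial_\xi\widehat{u}$ agree at $t_1$ (using the $(3/2)^{+}$ decay there), but you never explain why they agree at $t_2$. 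The assumption $u(\cdot,t_2)\in Z_{s,3/2,r_2}$ gives only $\partial_\xi\widehat{u}(\cdot,\eta,t_2)\in H^{1/2}_\xi$ for a.e.\ $\eta$, which is the \emph{critical} exponent: $H^{1/2}(\mathbb{R})$ does not embed into $C^0$, so the coincidence of one-sided limits is not automatic and cannot be obtained by your proposed density/approximation argument (smooth approximants satisfy the identity trivially, and the limit does not recover the critical $3/2$ information). This is precisely where the paper invokes Proposition~\ref{optm1}: a function in $H^{1/2}(\mathbb{R})$ with well-defined one-sided limits at a point cannot jump there. In the paper's organization this is packaged differently---one first shows (Claim~\ref{claim2}) that all terms in the $\xi$-differentiated Duhamel formula except
\[
e^{i\omega(\xi,\eta)(t_2-t_1)}\partial_\xi\widehat{u}(\xi,\eta,t_1)-\tfrac{i}{2}\int_{t_1}^{t_2}e^{i\omega(\xi,\eta)(t_2-t')}\widehat{u^2}(\xi,\eta,t')\,dt'
\]
lie in $H^{1/2}_\xi$ uniformly in $t$, so that the $3/2$ hypothesis at $t_2$ forces this remaining piece into $H^{1/2}_\xi$ as well; then Proposition~\ref{optm1} rules out its jump at $\xi=0$. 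Your sketch omits both ingredients: the $H^{1/2}_\xi$ control of the ``nice'' pieces (which requires genuine work via interpolation, cf.\ estimates of the type \eqref{prelimneq3} and \eqref{weieq18}--\eqref{weieq21}), and the no-jump principle for $H^{1/2}$.

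Two smaller points. First, for \eqref{identw2} you implicitly treat $\widehat{u^2}(0,0,t')$ as constant in $t'$ when computing $\int_{t_1}^{t_2}\sin(t_2-t')\,dt'$; this is correct, but it relies on the conservation law $M(u)=\|u(t)\|_{L^2}^2$, which you should state. Second, your proposed justification ``first for smooth solutions, then pass to the limit'' does not apply to the present unique-continuation-type statement: the hypothesis at $t_2$ is imposed at a single time and at the endpoint exponent, so it is not stable under the approximations provided by Theorem~\ref{localweigh}.
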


\begin{remarks}
\begin{itemize}
\item[(i)] Since the weight $|x|$ does not satisfy the $A_2(\mathbb{R})$ condition (see \cite{JavierHarmo,SteinThe}) the assumption $\mathcal{H}_x u_0\in L^2(|x|\, dx dy)$ in the space $ZH_{s,1/2,r_2}(\mathbb{R}^2)$ is necessary for our arguments. Notice that for $u_0 \in Z_{s,1/2,r_2}(\mathbb{R}^2)$ the condition $\widehat{u_0}(0,\eta)=0$ does not make sense in general, for this reason, we have distinguished between part (ii) and (iii) of Theorem \ref{localweigh}. Besides, by inspecting our arguments in Lemma \ref{interpo} below and employing \cite[Theorem 4.3]{Yafaev}, the hypothesis $\mathcal{H}_x u_0\in L^2(|x|\, dx dy)$ can be replaced by the assumption that for a.e. $\eta$, the map $\xi \mapsto \widehat{u}_0(\xi,\eta)$ belongs to the $L^2(\mathbb{R})$-closure of the space of square integrable continuous odd functions. 

\item[(ii)] Theorem \ref{sharpdecay} establishes that for arbitrary initial data in $ Z_{s,r_1,r_2}(\mathbb{R}^2)$ with $r_2\geq r_1$ and $r_1\neq 1/2$, $(1/2)^{-}$ is the largest possible decay for solutions of the IVP \eqref{EQBO} on the $x$-spatial variable. Consequently, for this regimen of indexes $r_1,r_2$,  Theorem \ref{localweigh} (i) is sharp. However, it still remains an open problem to derive a similar conclusion for the cases $0\leq r_2<r_1$. Moreover, Theorem \ref{sharpdecay} shows that if $u_0 \in \mathbb{Z}_{s,r_1,r_2}(\mathbb{R}^2)$ with $r_2 \geq r_1=(1/2)^{+}$, $s\geq \max\{ \frac{ 2r_1}{(4r_1-1)^{-}},r_2\}$ and $\widehat{u_0}(0,\eta)\neq 0$ for almost every $\eta$, then the corresponding solution $u=u(x,t)$ of the IVP \eqref{EQBO} satisfies
$$|x|^{(1/2)^{-}}u\in L^{\infty}([0,T];L^2(\mathbb{R}^2)), \hspace{0.2cm} T>0.$$
Although, there does not exist a non-trivial solution $u$ corresponding to data $u_0$ with $\widehat{u_0}(0,\eta) \neq 0$ a.e. with 
$$|x|^{1/2}u\in L^{\infty}([0,T'];L^2(\mathbb{R}^2)), \hspace{0.1cm} \text{ for some } T'>0.$$
\item[(iii)] The condition $u(\cdot,t_1) \in Z_{s,2^{+},2^{+}}(\mathbb{R}^2)$ in Theorem \ref{sharpdecay1} can be relaxed assuming for instance that $u(\cdot,t_1)\in Z_{s,(3/2)^{+},r_2}(\mathbb{R}^2)$ and $xu (x,y,t_1) \in L^{1}(\mathbb{R}^2)$. In addition, \eqref{identw2} provides some unique continuation principles for solutions of the equation in \eqref{EQBO}. Indeed, if $(t_2-t_1)=k\pi$ for some positive odd integer number $k$, then it must be the case that $u\equiv 0$. Besides, if there exists three times $t_1<t_2<t_3$ such that $u(\cdot,t_1)\in Z_{s,2^{+},2^{+}}(\mathbb{R}^2)$,  $u(\cdot,t_j)\in Z_{s,3/2,r_2}(\mathbb{R}^2)$, $j=2,3$ and 
\begin{equation*}
    \sin(t_2-t_1)(1-\cos(t_3-t_1))\neq \sin(t_3-t_1)(1-\cos(t_2-t_1)),
\end{equation*}
then $u\equiv 0$. Accordingly, Theorem \ref{sharpdecay1} establishes that for any initial data $u_0 \in Z_{s,r_1,r_2}(\mathbb{R}^2)$, $r_2\geq r_1 > 2$, (or $u_0 \in Z_{s,(3/2)^{+},r_2}(\mathbb{R}^2)$ with $xu_0\in L^{1}(\mathbb{R}^2)$), $s> \max\{3,r_2\}$ the decay $(3/2)^{-}$ is the largest possible in the $x$-spatial decay. More precisely, if $u_0 \in Z_{s,r_1,r_2}(\mathbb{R}^2)$, $r_2\geq r_1 > 2$, $s> \max\{3,r_2\}$, then the corresponding solution  $u=u(x,t)$ of the IVP \eqref{EQBO} satisfies
$$|x|^{(3/2)^{-}} u\in L^{\infty}([0,T];L^2(\mathbb{R}^2)), \hspace{0.5cm} T>0$$
and there does not exist a non-trivial solution with initial data $u_0$ such that
$$|x|^{3/2} u\in L^{\infty}([0,T'];L^2(\mathbb{R}^2)), \hspace{0.4cm} \text{for some } \, T'>0.$$
\end{itemize}
\end{remarks}
All of the previous well-posedness conclusions were addressed by compactness method. As a matter of fact, we have that the local Cauchy problem for the equation in \eqref{EQBO} cannot be solved for initial data in any isotropic or anisotropic spaces by a direct contraction principle based on its integral formulation. 
\begin{prop}\label{illpos}
Let $s_1,s_2\in \mathbb{R}$ (resp. $s\in \mathbb{R}$). Then there does not exist a time $T>0$ such that the Cauchy problem \eqref{EQBO} admits a unique solution on the interval $[0,T]$ and such that the flow-map data-solution $u_0 \mapsto u(t)$ is $C^2$-differentiable from $H^{s_1,s_2}(\mathbb{R}^2)$ to $H^{s_1,s_2}(\mathbb{R}^2)$ (resp. from $X^s(\mathbb{R}^2)$ to $X^s(\mathbb{R}^2)$). 
\end{prop}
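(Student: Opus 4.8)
The plan is to establish the negative result by contradiction, exploiting the fact that $C^2$-differentiability of the flow map at the origin forces the second-order term in the Picard iteration to be bounded by the square of the data norm, and then to exhibit a sequence of data for which this fails. Concretely, write the Duhamel formulation
\begin{equation*}
u(t)=S(t)u_0-\int_0^t S(t-t')\bigl(u\partial_x u\bigr)(t')\, dt',
\end{equation*}
where $S(t)$ denotes the unitary group associated to the linear part of \eqref{EQBO}, i.e. $\mathcal{F}(S(t)\phi)(\xi,\eta)=e^{it p(\xi,\eta)}\widehat{\phi}(\xi,\eta)$ with $p(\xi,\eta)=\sign(\xi)\xi+\sign(\xi)\xi^3\mp\sign(\xi)\xi\eta^2$ (reading off the symbol of $-\mathcal{H}_x+\mathcal{H}_x\partial_x^2\mp\mathcal{H}_x\partial_y^2$, up to sign conventions). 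Replacing $u_0$ by $\delta\phi$ and differentiating twice in $\delta$ at $\delta=0$, the hypothetical $C^2$ bound yields
\begin{equation}\label{keybound}
\left\|\int_0^t S(t-t')\partial_x\bigl((S(t')\phi)^2\bigr)\, dt'\right\|_{H^{s_1,s_2}}\lesssim \|\phi\|_{H^{s_1,s_2}}^2,
\end{equation}
uniformly for $t\in[0,T]$, and analogously with $X^s$ in place of $H^{s_1,s_2}$. The strategy is to contradict \eqref{keybound} by choosing $\phi=\phi_N$ a suitable superposition of two (or a narrow band of) high-frequency wave packets, computing the bilinear symbol, and showing the left-hand side grows faster than the right-hand side as $N\to\infty$.

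Next I would carry out the explicit computation. Taking $\widehat{\phi_N}$ supported on two small boxes near frequencies $(\xi,\eta)=(N,0)$ and, say, $(\xi,\eta)=(1,N)$ or $(\xi,\eta)=(\varepsilon,\pm\sqrt{2}N)$ — the precise choice dictated by where the resonance function
\begin{equation*}
\Omega(\xi_1,\eta_1,\xi_2,\eta_2)=p(\xi_1+\xi_2,\eta_1+\eta_2)-p(\xi_1,\eta_1)-p(\xi_2,\eta_2)
\end{equation*}
degenerates — one computes that on the interaction region $\Omega$ is essentially $O(1)$ (or even smaller) while the output frequency $\partial_x$ multiplier and the Sobolev weights on output versus input are badly mismatched. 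Performing the $t'$ integral produces a factor $(e^{it\Omega}-1)/(i\Omega)$, which, on a region where $|\Omega|\lesssim 1$, is of size $\sim t$; hence no smoothing is gained from the time integration, and the left side of \eqref{keybound} picks up the full strength of the derivative $\partial_x$ landing on a frequency of size $N$ (for the $x$-derivative case) together with the ratio of Sobolev weights, which one arranges to blow up. For the $X^s$ statement, the additional low-frequency-in-$x$ structure forces one to be slightly more careful: one should pick the wave packets so that the output has $\xi$-frequency bounded away from $0$ so that the $D_x^{-1/2}$ and $D_x^{-1/2}\partial_y$ pieces of the $X^s$ norm do not help, while the inputs are also comfortably away from $\xi=0$; then the $X^s$ norm is comparable to the $H^s$ norm on the relevant frequency supports and the same computation applies.

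In more detail, the order of steps is: (1) reduce, via the standard Picard/Taylor-expansion argument, $C^2$ flow-map regularity at $0$ to the bilinear estimate \eqref{keybound}; (2) record the symbol $p$ and the resonance function $\Omega$ for \eqref{EQBO}, and identify a frequency regime where $|\Omega|$ is small (note the dispersion $\sign(\xi)\xi^3$ is the leading term, so along $\eta\approx 0$ the interaction of two comparable high $x$-frequencies of the same sign gives $\Omega\sim\xi_1\xi_2(\xi_1+\xi_2)$, which is genuinely large, so one instead looks at interactions mixing the $\mp\sign(\xi)\xi\eta^2$ term against the cubic term, or interactions with one factor at low $x$-frequency, to produce cancellation); (3) build $\phi_N$ as characteristic-function-type packets on boxes of dimensions chosen so that $\|\phi_N\|_{H^{s_1,s_2}}$ (resp. $\|\phi_N\|_{X^s}$) is normalized to $1$; (4) lower-bound the left side of \eqref{keybound} by restricting the $(\xi,\eta)$-integration and the $t'$-integration to the good region and estimating the oscillatory factor from below by $ct$; (5) let $N\to\infty$ to obtain a contradiction. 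The main obstacle I anticipate is step (2): because the leading dispersion is the one-dimensional Benjamin--Ono-type term $\sign(\xi)\xi^3$, one must locate a two-dimensional frequency configuration in which the $y$-dispersion $\mp\sign(\xi)\xi\eta^2$ conspires with the rest to keep $\Omega$ small while simultaneously keeping the Sobolev-weight mismatch (hence the growth) as large as possible; getting these two competing requirements to coexist, and checking that the resulting packets are admissible in the $X^s$ norm (no help from the negative $x$-derivatives), is the delicate part. Once a working configuration is found, steps (3)--(5) are routine scaling bookkeeping.
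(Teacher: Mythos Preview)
Your overall strategy is the standard one and aligns with what the paper (which omits the proof) cites as the route: reduce $C^2$-regularity at $0$ to the bilinear bound \eqref{keybound}, then violate it with well-chosen frequency-localized data. That reduction step (1) is fine.

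There is, however, a concrete error that derails your execution from step (2) onward: you have the wrong symbol. The linear part of \eqref{EQBO} is $\mathcal{H}_x-\mathcal{H}_x\partial_x^2\pm\mathcal{H}_x\partial_y^2$, whose Fourier multiplier is $-i\sign(\xi)(1+\xi^2\mp\eta^2)$; hence $S(t)$ has phase $\omega(\xi,\eta)=\sign(\xi)+\sign(\xi)\xi^2\mp\sign(\xi)\eta^2$ (this is exactly \eqref{lieareqsym} in the paper). The leading $x$-dispersion is therefore $\sign(\xi)\xi^2=\xi|\xi|$, i.e.\ Benjamin--Ono type, \emph{not} the KdV-type $\sign(\xi)\xi^3$ you wrote. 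Consequently your resonance computation ``$\Omega\sim\xi_1\xi_2(\xi_1+\xi_2)$'' is wrong: for same-sign $\xi_1,\xi_2>0$ with $\eta$'s near $0$ one gets $\Omega\approx 2\xi_1\xi_2$, and the $y$-contribution is $\mp\sign(\xi)\eta^2$, not $\mp\sign(\xi)\xi\eta^2$.

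With the correct symbol the obstacle you anticipated dissolves and the construction is simpler than you suggest. Take the classical BO-type high--low configuration already on $\eta\approx 0$: $\widehat{\phi_N}$ supported on a box of $\xi$-width $\sim N^{-1}$ near $\xi\sim N$ together with a box of $\xi$-width $\sim N^{-1}$ near $\xi\sim N^{-1}$ (both with $|\eta|\lesssim 1$, say). Then $|\Omega|\sim \xi_1\xi_2\sim 1$ on the interaction set, the time integral contributes $\sim t$, the $\partial_x$ on the output at $\xi\sim N$ contributes $N$, and the Sobolev (or $X^s$) weights on output and high-frequency input cancel. Normalizing $\|\phi_N\|\sim 1$ one obtains a left side of \eqref{keybound} that grows with $N$, contradicting the bound. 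This is precisely the construction in the references the paper cites (e.g.\ \cite{linaO} for the HBO-type setting and \cite{AminPastor} for the Shrira equation); no mixing of the $y$-dispersion against a ``cubic'' term is needed. For the $X^s$ case your remark is correct: since all frequencies involved satisfy $|\xi|\gtrsim N^{-1}$ bounded below and $|\eta|\lesssim 1$, the $D_x^{-1/2}$ and $D_x^{-1/2}\partial_y$ pieces of the $X^s$-norm are harmless and the same packets work.
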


We remark that a similar conclusion was derived before for \eqref{EQSH} in \cite{AminPastor}. Following these arguments or the ideas in \cite[Theorem 1.4]{linaO} for instance, it is not difficult to deduce Proposition \ref{illpos}. For the sake of brevity, we omit its proof.

Finally, we present our conclusions on the Shrira equation: 

\begin{theorem}\label{SCeqThm}
Let $s>3/2$, then the IVP  \eqref{EQSH} is LWP in $H^s(\mathbb{K}^2)$, $\mathbb{K}\in \{\mathbb{R},\mathbb{T}\}$ and in the space $\widetilde{X}^s(\mathbb{R}^2)$ given by the norm
\begin{equation*}
    \|f\|_{\widetilde{X}^s}=\|J_x^sf\|_{L^2_{xy}}+\|D_x^{-1/2}\partial_y f\|_{L^2_{xy}}.
\end{equation*}
In addition, the results of Theorems \ref{localweigh} and \ref{sharpdecay} hold for the IVP \eqref{EQSH}. Moreover, the conclusion of Theorem \ref{sharpdecay1} is also valid considering
\begin{equation}\label{identw3}
    2i\sin(\eta^2(t_2-t_1))\partial_{\xi}\widehat{u}(0,\eta,t_1)=-\int_{t_1}^{t_2}\sin(\eta^2(t_2-t'))\widehat{u^2}(0,\eta,t')\, dt'
\end{equation}
instead of \eqref{identw1}. In particular, if $\partial_{\xi}\widehat{u}(0,\eta,t_1)=0$ for a.e. $\eta$, then $u \equiv 0$. 
\end{theorem}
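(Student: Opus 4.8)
The plan is to treat the Shrira equation exactly as equation \eqref{EQBO} was treated, carefully tracking where the absence of the lower-order term $\mathcal{H}_x u$ and the sign of the dispersion enter. First, for the LWP in $H^s(\mathbb{K}^2)$, $\mathbb{K}\in\{\mathbb{R},\mathbb{T}\}$, and in $\widetilde{X}^s(\mathbb{R}^2)$, I would observe that the linear symbol for \eqref{EQSH} is $i(\xi^3+\xi\eta^2)\sign(\xi) = i|\xi|(\xi^2+\eta^2)$, so the dispersion is genuinely two-dimensional and the short-time linear Strichartz machinery used in Theorem \ref{Improwellp} (following Kenig \cite{KenigKP} and Linares--Pilod--Saut \cite{LinarFKP}) applies essentially verbatim: the parabolic/frequency-truncated approximations, the a priori $H^s$ energy estimate closed via $L^1_TW^{1,\infty}_x$, and the Bona--Smith argument for continuous dependence all go through. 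For $\widetilde{X}^s(\mathbb{R}^2)$ one only needs the energy estimate for $D_x^{-1/2}\partial_y u$; applying $D_x^{-1/2}\partial_y$ to the equation and pairing with $D_x^{-1/2}\partial_y u$, the worst term is the commutator $[D_x^{-1/2}\partial_y,\mathcal{H}_x\text{-type operators},\ u\,\partial_x]$, which is controlled exactly by Proposition \ref{CalderonCom} with $\alpha=1/2$, $\beta=1/2$ (here $D_x^{-1/2}\partial_y = D_x^{1/2}\mathcal{H}_x\partial_y\cdot D_x^{-1}$, so one writes $D_x^{-1/2}\partial_y(u\partial_x v)$ and extracts a Calder\'on commutator against $u$). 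Note the space $\widetilde{X}^s$ omits the $\|D_x^{-1/2}f\|_{L^2}$ piece of $X^s$ because \eqref{EQSH} has no $\mathcal{H}_x u$ term, so no $D_x^{-1/2}$-energy estimate is needed.

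For the weighted results (Theorems \ref{localweigh} and \ref{sharpdecay}) I would reuse the same scheme. Writing $u(t) = V(t)u_0 - \int_0^t V(t-t')(u\partial_x u)(t')\,dt'$ with $V(t)$ the unitary group for \eqref{EQSH}, the key computation is how $|x|^{r_1}$ and $|y|^{r_2}$ interact with $V(t)$. Since $V(t)$ is a Fourier multiplier with phase $t|\xi|(\xi^2+\eta^2)$, one has $x V(t)f = V(t)(xf) + V(t)\big(\partial_\xi[\text{phase}]\cdot\text{stuff}\big)$, and $\partial_\xi(t|\xi|(\xi^2+\eta^2)) = t\sign(\xi)(3\xi^2+\eta^2)$ is exactly the kind of symbol handled in \cite{FLinaPonceWeBO,FonPO}; the singular contribution at $\xi=0$ forces the condition $\widehat{u}(0,\eta)=0$ for $r_1>1/2$, which is why $\dot Z$-spaces appear, and the low-regularity fractional-weight interpolation ($|x|^{r_1}$, $0<r_1<1/2$, via Stein--Weiss / the Yafaev-type argument in Remark (i)) is identical. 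The $y$-weight costs nothing new because $\partial_\eta(\text{phase})=2t\xi\eta$ has no singularity. Thus the only modification from the \eqref{EQBO} proofs is deleting the contribution of the $\mathcal{H}_x u$ term and replacing $1\mp\eta^2$ by $\eta^2$ in every phase factor; in particular the unique-continuation identity \eqref{identw3} is obtained by evaluating $\partial_\xi\widehat{u}(\xi,\eta,t)$ at $\xi=0$ in Duhamel's formula, exactly as \eqref{identw1} was, with phase $e^{it\eta^2\sign(\xi)}\to e^{\pm it\eta^2}$ collapsing to a $\sin(\eta^2(t_2-t_1))$ factor after symmetrizing in $\pm$; the final assertion ($\partial_\xi\widehat u(0,\eta,t_1)=0$ a.e. $\Rightarrow u\equiv 0$) follows because the right side of \eqref{identw3} then vanishes for all $t_2$, forcing $\widehat{u^2}(0,\eta,t')\equiv 0$, hence $\int u^2\,dx \equiv 0$ for a.e.\ $y$, i.e.\ $u\equiv 0$.

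The main obstacle I anticipate is bookkeeping rather than a genuinely new difficulty: one must verify that every place where the proof of Theorem \ref{Improwellp}, \ref{localweigh}, or \ref{sharpdecay} used the specific structure of \eqref{EQBO}'s symbol — in particular the lower-order dispersive term $\mathcal{H}_x u$ (which is smoothing and only helps) and the sign $\pm$ in $\mathcal{H}_x\partial_y^2 u$ — is either irrelevant or survives the substitution $1\mp\eta^2\rightsquigarrow\eta^2$. The one spot deserving care is the periodic case $\mathbb{K}=\mathbb{T}$: for \eqref{EQBO} Theorem \ref{LocalwellTorus} used the short-time Fourier restriction method with the spaces $F^s(T), B^s(T)$, and one should confirm that the corresponding resonance analysis and the $L^1_TW^{1,\infty}$-type energy estimate for \eqref{EQSH} on $\mathbb{T}^2$ close at $s>3/2$ — but this has essentially been done in \cite{SCHIPPAShreq} via short-time bilinear Strichartz estimates, which I would invoke (or re-derive along the same lines) to conclude the $H^s(\mathbb{T}^2)$ statement. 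With that, all the claimed conclusions transfer, and I would simply state that the proofs are the obvious adaptations, pointing the reader to the relevant sections for the \eqref{EQBO} arguments and remarking on the two cosmetic changes (dropping the $\mathcal{H}_x u$ term, replacing $1\mp\eta^2$ by $\eta^2$).
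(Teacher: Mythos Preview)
Your overall plan matches the paper's approach in Section~\ref{SecSh}: drop the $\mathcal H_x u$ term, replace $1\mp\eta^2$ by $\eta^2$ in every phase, and rerun the EQBO arguments. Two minor corrections. First, the Shrira phase is $\widetilde\omega(\xi,\eta)=\sign(\xi)(\xi^2+\eta^2)$, not $|\xi|(\xi^2+\eta^2)$; this is cosmetic for the Strichartz and energy estimates but essential for the jump at $\xi=0$ that produces \eqref{identw3}. Second, for $\mathbb T^2$ the paper does not cite \cite{SCHIPPAShreq} but verifies directly that Proposition~\ref{lembilinEST} (and hence all of Section~\ref{sectionPeri}) carries over with $\widetilde\omega$ in place of $\omega$; this is immediate since only the partial derivatives of the resonance function were used in that proof. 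Also note that the paper's weighted-space argument proceeds via energy estimates with truncated weights (Section~\ref{Secweights}), replacing \eqref{HEQBO} by $\partial_t\mathcal H_xu+\partial_x^2u+\partial_y^2u+\mathcal H_x(u\partial_xu)=0$, rather than by commuting $x$ past the group as you sketch; both routes work.

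There is, however, a genuine gap in your final step. The hypotheses of Theorem~\ref{sharpdecay1} supply a \emph{single} time $t_2>t_1$, so under $\partial_\xi\widehat u(0,\eta,t_1)=0$ you obtain $\int_{t_1}^{t_2}\sin(\eta^2(t_2-t'))\,\widehat{u^2}(0,\eta,t')\,dt'=0$ for a.e.\ $\eta$ at that one $t_2$ only; from this you cannot conclude $\widehat{u^2}(0,\eta,\cdot)\equiv 0$ as you claim. The correct argument is to divide by $\eta^2$ and pass to the limit $\eta\to 0$: since $u^2\in C([0,T];L^1_{xy})$ the integrand is continuous in $\eta$, and by conservation of mass $\widehat{u^2}(0,0,t')=\|u_0\|_{L^2}^2$, so the limit reads $0=\|u_0\|_{L^2}^2\int_{t_1}^{t_2}(t_2-t')\,dt'=\tfrac12(t_2-t_1)^2\|u_0\|_{L^2}^2$, forcing $u_0=0$ and hence $u\equiv 0$. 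This is the Shrira analogue of how \eqref{identw2} is extracted from \eqref{identw1} by letting $\eta\to 0$.
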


As a result of Theorem \ref{SCeqThm}, we derive new well-posedness conclusions in the spaces $\widetilde{X}^s(\mathbb{R}^2)$ where the energy \eqref{Energy2} makes sense. Besides, in the periodic setting, we obtain the same well-posedness result stated for the two-dimensional case in the work of R. Schippa \cite[Theorem 1.2]{SCHIPPAShreq}, that is, we deduced that \eqref{EQSH} is LWP in $H^s(\mathbb{T}^2)$, $s>3/2$. We remark that our results are provided by rather different considerations than those given in \cite{SCHIPPAShreq}, where the author employed the setting of the periodic $U^p$-/$V^p$-spaces (\cite{HADAC2009}) combined with key short-time bilinear Strichartz estimates (see Section 3 of the aforementioned reference). Certainly, we believe that these considerations can be adapted to \eqref{EQBO}.

Regarding weighted spaces, our conclusions extend the results in \cite{JulioLi}, since here we deal with less regular solutions, and we improve the $x$-spatial decay allowed by \eqref{SCeqThm} to the interval $[0,3/2)$. Actually, by increasing the required regularity, it is not difficult to adapt our result to solutions in anisotropic spaces $H^{s_1,s_2}(\mathbb{R}^2)$. We remark that our proof of well-posedness in  $Z_{s,r_1,r_2}(\mathbb{R}^2)$ is applied directly to solutions in the space $H^s(\mathbb{R}^2)$, in contrast, in \cite{JulioLi} the author first derive well-posedness in weighted spaces for solutions with the additional property $\partial_x^{-1} u \in H^s(\mathbb{R}^2)$. 
\\ \\
We will begin by introducing some notation and preliminaries. Sections \ref{SectionReal} and \ref{sectionPeri} are devoted to prove Theorem \ref{Improwellp} and Theorem \ref{LocalwellTorus} respectively. Theorems \ref{localweigh}, \ref{sharpdecay} and \ref{sharpdecay1}  will be deduced in Section \ref{Secweights}. Section \ref{SecSh} is aimed to prove Theorem \ref{SCeqThm}.  We conclude the paper with an appendix where we show Proposition \ref{CalderonCom}.

\section{Notation and preliminaries}\label{Sectiprel}

Given two positive quantities $a$ and $b$,  $a\lesssim b$ means that there exists a positive constant $c>0$ such that $a\leq c b$. We write $a\sim b$ to symbolize that $a\lesssim b$ and $b\lesssim a$. The Fourier variables of $(x,y,t)$ are denoted $(\xi,\mu,\tau)$ and in the periodic case as $(m,n,\tau)$. 

$[A,B]$ denotes the commutator between the operators $A$ and $B$, that is
$$[A,B]=AB-BA.$$
Given $p\in [1,\infty]$ and $d\geq 1$ integer, we define the Lebesgue spaces $L^p(\mathbb{K}^d)$, $\mathbb{K}\in \{\mathbb{R},\mathbb{T}\}$ by its norm as $\|f\|_{L^p(\mathbb{K}^d)}=\left\|f\right\|_{L^p}=\left(\int_{\mathbb{K}^d} |f(x)|^p\,dx\right)^{1/p},$ with the usual modification when $p=\infty$. To emphasize the dependence on the variables when $d=2$, we will denote by $\|f\|_{L^p(\mathbb{K}^2)}=\|f\|_{L^p_{xy}(\mathbb{K}^2)}$.  We denote by $C_c^{\infty}(\mathbb{R}^d)$ the spaces of smooth functions of compact support and $\mathcal{S}(\mathbb{R}^d)$ the space of Schwarz functions. The  Fourier transform is defined by $$\widehat{f}(\xi)=\mathcal{F}f(\xi)=\int_{\mathbb{R}^d}  f(x) e^{-i x\cdot \xi}\, dx.$$
For a given number $s\in \mathbb{R}$, the operators  $J_x^s$, $J_y^s$ and $J^s$ are defined via the Fourier transform according to $\widehat{J_x^s \phi}(\xi,\eta)=\langle \xi \rangle^s \widehat{f}(\xi,\eta)$, $  \widehat{J_y^s \phi}(\xi,\eta)=\langle \eta \rangle^s \widehat{f}(\xi,\eta)$ and $ \widehat{J^s \phi}(\xi,\eta)=\langle |(\xi,\eta)| \rangle^s \widehat{f}(\xi,\eta)$, respectively, where $\langle x\rangle=(1+x^2)^{1/2}$. The Sobolev spaces $H^s(\mathbb{R}^2)$ consist of all tempered distributions such that $\left\|f\right\|_{H^s}=\left\|J^{s}f \right\|_{L^2}=\|\langle |(\xi,\eta)| \rangle^{s}\widehat{f}(\xi,\eta) \|_{L^2}<\infty$. 

Since we will deal with the periodic and real equation in different sections, we will employ the same notation for the norm of the Sobolev spaces $H^s(\mathbb{T}^2)$ which  consists of the periodic distributions such that  $\|f\|_{H^s}=\|J^s f\|_{L^2(\mathbb{T})}=\|\langle |(m,n)| \rangle^s\widehat{f}(m,n)\|_{L^2(\mathbb{Z}^2)}<\infty$. Recalling the spaces \eqref{EberSpa}, we will denote by $H^{\infty}(\mathbb{K}^2)=\bigcap_{s\geq 0} H^s(\mathbb{K}^2)$ for $\mathbb{K}=\mathbb{R}$ or $\mathbb{K}=\mathbb{T}$, and $X^{\infty}(\mathbb{R}^2)=\bigcap_{s\geq 0} X^s(\mathbb{R}^2)$.

Now, if $A$ denotes a functional space (for instance those introduced above), we define the spaces $L^p_TA$ and $L^{p}_t A$ according to the norms
\begin{equation*}
\|f\|_{L^{p}_T A}=\|\|f(\cdot,t)\|_{A}\|_{L^{p}([0,T])}\,  \text{ and } \, \|f\|_{L^{p}_t A}=\|\|f(\cdot,t)\|_{A}\|_{L^{p}(\mathbb{R})},
\end{equation*}
respectively, for all $1\leq p\leq \infty$.

We define the unitary group of solutions of the linear problem determined by \eqref{EQBO} by
\begin{equation}\label{SemG}
    S(t)u_0(x,y)=\int e^{it\omega(\xi,\eta)+ix\xi+iy\eta} \widehat{u_0}(\xi,\eta)\, d\xi d\eta
\end{equation}
where 
\begin{equation}\label{lieareqsym}
    \omega(\xi,\eta)=\sign(\xi)+\sign(\xi)\xi^2\mp\sign(\xi)\eta^2.
\end{equation}
The resonant function is given by
\begin{equation}\label{resonF}
\begin{aligned}
\Omega(\xi_1,\eta_1,\xi_2,\eta_2):=\omega(\xi_1+\xi_2,\eta_1+\eta_2)-\omega(\xi_1,\eta_1)-\omega(\xi_2,\eta_2).
\end{aligned}
\end{equation}
The variable $N$ is assumed to be dyadic, i.e., $N\in\left\{2^{l}\, :\, l\in \mathbb{Z}\right\}$. We will mostly use the dyadic numbers $N\geq 1$, then we set  $\mathbb{D}=\left\{2^{l}\, :\,l\in \mathbb{Z}^{+}\cup\left\{0\right\}\right\}$. Let $\psi_1\in C^{\infty}_c(\mathbb{R})$ even function, $0\leq \psi_1 \leq 1$ with $\supp{\psi_1}\subset [-2,2]$ and $\psi_1=1$ in $[-1,1]$. For each $N\in \mathbb{D}\setminus\{1\}$, we let $\psi_N(\xi)=\psi_1(\xi/N)-\psi_1(2\xi/N)$ and $\psi_{\leq N}(\xi)=\psi_1(\xi/N)$.  We define the projector operators in $L^2(\mathbb{R}^2)$ by the relations
\begin{equation}\label{proje1}
\begin{aligned}
\mathcal{F}(P_N^x(u))(\xi,\eta)&=\psi_{N}(\xi)\mathcal{F}(u)(\xi,\eta), \\
\mathcal{F}(P_{\leq N}^x(u))(\xi,\eta)&=\psi_{\leq N}(\xi)\mathcal{F}(u)(\xi,\eta).
\end{aligned}
\end{equation}
With a slightly abuse of notation, we will employ the same notation for the operators $P_N^x$ and $P_{\leq N}^x$ defined in $L^2(\mathbb{R})$. We also require the following projections in our estimates 
\begin{equation}\label{proje2}
    \begin{aligned}
\mathcal{F}(P_N(u))(\xi,\eta)&=\psi_{N}(|(\xi,\eta)|)\mathcal{F}(u)(\xi,\eta), \\
\mathcal{F}(P_{\leq N}(u))(\xi,\eta)&=\psi_{\leq N}(|(\xi,\eta)|)\mathcal{F}(u)(\xi,\eta).
    \end{aligned}
\end{equation}

To obtain estimates for the nonlinear term the following Leibniz rules for fractional derivatives will be implemented in our arguments.

\begin{lemma}\label{conmKP}
If $s>0$ and $1<p<\infty$, then
\begin{equation*}
    \left\|[J^s,f]g\right\|_{L^p(\mathbb{R}^d)} \lesssim  \left\|\nabla f\right\|_{L^{\infty}(\mathbb{R}^d)} \left\|J^{s-1}g\right\|_{L^p(\mathbb{R}^d)}+ \left\|J^s f\right\|_{L^p(\mathbb{R}^d)} \left\|g\right\|_{L^{\infty}(\mathbb{R}^d)}.
\end{equation*}
\end{lemma}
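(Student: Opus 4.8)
\emph{Proof proposal.} This is the Kato--Ponce commutator inequality, and the plan is to prove it by a Littlewood--Paley/paraproduct analysis. I would decompose $f=\sum_M P_Mf$ and $g=\sum_N P_Ng$ over dyadic $M,N$ (the lowest block being absorbed into $M=1$, resp.\ $N=1$), and split $fg$ into the three frequency regimes $M\ll N$ (low--high), $N\ll M$ (high--low) and $M\sim N$ (high--high). The key structural point is that, since $J^s$ commutes with every Littlewood--Paley projection, subtracting $fJ^sg=\sum_{M,N}(P_Mf)J^s(P_Ng)$ converts each block of $J^s(fg)-fJ^sg$ into a genuine commutator: writing $P_{\ll N}$ for the projection onto frequencies $\lesssim N/8$,
\[
[J^s,f]g=\sum_{N}\big[J^s,P_{\ll N}f\big]P_Ng+\sum_{M}\big[J^s,P_Mf\big]P_{\ll M}g+\sum_{M\sim N}\big[J^s,P_Mf\big]P_Ng ,
\]
and it then suffices to bound the three sums separately.

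The first sum carries the $\|\nabla f\|_{L^\infty}\|J^{s-1}g\|_{L^p}$ contribution and is the core of the argument. On the Fourier support of the $N$-th term one has $|\xi|\lesssim N/8$ for the $f$-frequency and $|\eta|\sim N$ for the $g$-frequency, so $|\eta+t\xi|\sim N$ for all $t\in[0,1]$, and a first-order Taylor expansion gives
\[
\langle\xi+\eta\rangle^s-\langle\eta\rangle^s=\xi\cdot\int_0^1\nabla\langle\,\cdot\,\rangle^s(\eta+t\xi)\,dt=:\xi\cdot\vec m_N(\xi,\eta),
\]
where, after the rescaling $(\xi,\eta)\mapsto (N\xi,N\eta)$, the symbol $N^{1-s}\vec m_N$ (cut off to the relevant region) obeys Coifman--Meyer bounds uniformly in $N$. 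Hence $[J^s,P_{\ll N}f]P_Ng=\sum_l B_N^l(\partial_{x_l}f,P_Ng)$ with $B_N^l$ a bilinear Fourier multiplier satisfying $\|B_N^l(\phi,\psi)\|_{L^p}\lesssim N^{s-1}\|\phi\|_{L^\infty}\|\psi\|_{L^p}$ and its vector-valued analogue. Since the $N$-th output is frequency-localized in an annulus of size $\sim N$, Littlewood--Paley square-function theory together with the vector-valued Coifman--Meyer estimate gives
\[
\Big\|\sum_N[J^s,P_{\ll N}f]P_Ng\Big\|_{L^p}\lesssim\|\nabla f\|_{L^\infty}\Big\|\Big(\sum_N|N^{s-1}P_Ng|^2\Big)^{1/2}\Big\|_{L^p}\lesssim\|\nabla f\|_{L^\infty}\|J^{s-1}g\|_{L^p}.
\]

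For the high--low and high--high sums no cancellation is needed: here I would simply estimate $\|J^s(P_Mf\cdot P_{\ll M}g)\|_{L^p}$ and $\|(P_Mf)J^s(P_{\ll M}g)\|_{L^p}$ (and their high--high analogues) by letting $J^s$ act on the output frequency annulus, where it costs at most $\sim M^s$, keeping the high-frequency factor $P_Mf$ in $L^p$ and the remaining factor of $g$ in $L^\infty$ via $\|P_{\ll M}g\|_{L^\infty}+\|P_Mg\|_{L^\infty}\lesssim\|g\|_{L^\infty}$, and then summing in $M$ with the square function to reach $\|J^sf\|_{L^p}\|g\|_{L^\infty}$; in the high--high regime, where the output frequency can be arbitrarily small, one additionally uses Bernstein's inequality to gain summability in the output scale. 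The hard part will be the low--high commutator: making the Taylor expansion of the commutator symbol rigorous, i.e.\ verifying the $N$-uniform and vector-valued Coifman--Meyer bounds for the rescaled symbols $N^{1-s}\vec m_N$ and summing the resulting frequency-localized pieces in $N$. The high--low and high--high sums, and the bookkeeping for the lowest-frequency blocks, are routine.
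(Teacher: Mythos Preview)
Your proposal is a correct outline of the standard paraproduct proof of the Kato--Ponce commutator estimate. The paper, however, does not prove this lemma at all: immediately after the statement it simply records that ``Lemma \ref{conmKP} was proved by Kato and Ponce in \cite{KP}'' and moves on, so there is nothing to compare at the level of argument.

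For what it is worth, the original Kato--Ponce proof does not proceed by a Bony decomposition but rather through a direct analysis of the kernel/symbolic representation of $J^s$; your Littlewood--Paley/Coifman--Meyer route is the modern reformulation and is essentially the argument one finds in later treatments (e.g.\ the textbook proofs of the fractional Leibniz rule). One small point of care in your sketch: in the high--high regime the ``no cancellation needed'' claim is correct but not for the na\"{\i}ve reason --- the piece $\sum_{M}P_Mf\cdot J^s\widetilde P_Mg$ cannot be bounded term-by-term via $\|J^s\widetilde P_Mg\|_{L^\infty}\lesssim M^s\|g\|_{L^\infty}$ and an $\ell^1$ sum. What actually works is to rewrite it as a diagonal paraproduct $R'(J^sf,g)$ (moving the factor $M^s$ from $g$ to $f$ via $M^s P_Mf\sim P_M J^sf$) and then invoke the Coifman--Meyer bound $R':L^p\times L^\infty\to L^p$; this is presumably what you had in mind, but it is worth making explicit since it is the one place where a careless execution would leave a non-summable series.
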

Lemma \ref{conmKP} was proved by Kato and Ponce in \cite{KP}. We also need the following lemma whose proof can be consulted in \cite{FRAC}.

\begin{lemma}\label{fraLR}
Given $d\in \mathbb{Z}^{+}$ and $s>0$, it holds that
\begin{align}
    \left\|D^s(fg)\right\|_{L^2(\mathbb{R}^d)} &\lesssim  \left\|D^s f\right\|_{L^{p_1}(\mathbb{R}^d)} \left\|g\right\|_{L^{q_1}(\mathbb{R}^d)}+ \left\|f\right\|_{L^{p_2}(\mathbb{R}^d)} \left\|D^sg\right\|_{L^{q_2}(\mathbb{R}^d)}, \label{libRule} \\
     \left\|J^s(fg)\right\|_{L^2(\mathbb{R}^d)} & \lesssim  \left\|J^s f\right\|_{L^{p_1}(\mathbb{R}^d)} \left\|g\right\|_{L^{q_1}(\mathbb{R}^d)}+ \left\|f\right\|_{L^{p_2}(\mathbb{R}^d)} \left\|J^s g\right\|_{L^{q_2}(\mathbb{R}^d)}, \label{eqfraLR}
\end{align}
with $\frac{1}{p_j}+\frac{1}{q_j}=\frac{1}{2}$, $1<p_1,p_2,q_1,q_2  \leq \infty$.
\end{lemma}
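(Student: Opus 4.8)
The natural route is Littlewood--Paley theory together with a paraproduct decomposition, which is the plan I would follow. First I would reduce the inhomogeneous estimate \eqref{eqfraLR} to the homogeneous one \eqref{libRule}: since $\langle\xi\rangle^s\lesssim 1+|\xi|^s$ for $s\ge 0$ we have $\|J^s(fg)\|_{L^2}\lesssim \|fg\|_{L^2}+\|D^s(fg)\|_{L^2}$, where the first summand is handled by H\"older, $\|fg\|_{L^2}\le \|f\|_{L^{p_2}}\|g\|_{L^{q_2}}\lesssim \|f\|_{L^{p_2}}\|J^sg\|_{L^{q_2}}$ (the Bessel potential $J^{-s}$ is convolution with an $L^1$ kernel, hence bounded on every $L^q$), and the second summand is estimated by \eqref{libRule} after noting $\|D^sh\|_{L^p}\lesssim\|J^sh\|_{L^p}$ for $1<p<\infty$ (arguing directly with the inhomogeneous decomposition at the endpoint $p=\infty$). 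So it suffices to prove \eqref{libRule}.

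For \eqref{libRule} I would fix a homogeneous Littlewood--Paley family $\{\Delta_j\}_{j\in\mathbb{Z}}$, with $\Delta_j$ projecting onto $\{|\xi|\sim 2^j\}$, put $S_j=\sum_{l<j}\Delta_l$, and split the product into the three paraproducts $fg=\Pi_{lh}+\Pi_{hl}+\Pi_{hh}$, where $\Pi_{lh}=\sum_j(S_{j-2}f)(\Delta_jg)$ ($f$ low), $\Pi_{hl}=\sum_j(\Delta_jf)(S_{j-2}g)$ ($f$ high), and $\Pi_{hh}=\sum_{|j-k|\le 2}(\Delta_jf)(\Delta_kg)$ (comparable frequencies). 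The pieces $\Pi_{lh}$ and $\Pi_{hl}$ are interchanged by the substitution $f\leftrightarrow g$, $(p_1,q_1)\leftrightarrow(p_2,q_2)$, and they will produce the two terms on the right-hand side of \eqref{libRule}, so it is enough to treat $\Pi_{hl}$ and $\Pi_{hh}$.

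For $\Pi_{hl}$ each summand $(\Delta_jf)(S_{j-2}g)$ has Fourier support in $\{|\xi|\sim 2^j\}$, so the summands are almost orthogonal and $D^s$ acts essentially as multiplication by $2^{js}$; the square-function theorem then bounds $\|D^s\Pi_{hl}\|_{L^2}$ by $\big\|\big(\sum_j 2^{2js}|\Delta_jf|^2|S_{j-2}g|^2\big)^{1/2}\big\|_{L^2}$, and using $|S_{j-2}g|\lesssim \mathcal{M}g$ pointwise, H\"older with $\tfrac1{p_1}+\tfrac1{q_1}=\tfrac12$, the square-function identity $\big\|\big(\sum_j 2^{2js}|\Delta_jf|^2\big)^{1/2}\big\|_{L^{p_1}}\sim\|D^sf\|_{L^{p_1}}$ and the Hardy--Littlewood maximal bound $\|\mathcal{M}g\|_{L^{q_1}}\lesssim\|g\|_{L^{q_1}}$, one gets the desired $\|D^sf\|_{L^{p_1}}\|g\|_{L^{q_1}}$. (Equivalently, this off-diagonal part is a Coifman--Meyer bilinear multiplier, smooth and homogeneous of degree $0$ away from the origin, applied to $(D^sf,g)$, which disposes of the range $2<p_1,q_1<\infty$ at once.) For the diagonal part $\Pi_{hh}$ I would reorganise by the output frequency: $\Delta_m\Pi_{hh}$ only sees pairs with $2^j\sim 2^k\gtrsim 2^m$, and for each such frequency, Bernstein and Cauchy--Schwarz bound the contribution by $2^{-c(k-m)}$ (for some $c>0$) times quantities of the form $\|D^sf\|_{L^{p_1}}\|g\|_{L^{q_1}}$; the hypothesis $s>0$ is exactly what makes the geometric series in $k-m$ summable.

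The step I expect to be the real obstacle is the endpoint behaviour, namely allowing $p_j$ or $q_j=\infty$. There the scheme above collapses, since after the almost-orthogonality step one would need to pull an $L^\infty$ norm out of an $\ell^2$ sum of Littlewood--Paley pieces, i.e.\ an inequality of the shape $\sum_j\|\Delta_jh\|_{L^\infty}^2\lesssim\|h\|_{L^\infty}^2$, which is false ($L^\infty\not\hookrightarrow B^0_{\infty,2}$). This is precisely the delicate point handled in the refined Kato--Ponce theory, and here I would simply follow \cite{FRAC} (in the spirit of the works of Grafakos--Oh and of Bourgain--Li): the remedy is a more careful bookkeeping in which the $L^\infty$ factor is used as a pointwise bound only after a summation/telescoping over the Littlewood--Paley index, combined with $H^1$--$BMO$ duality and vector-valued (Fefferman--Stein) maximal estimates, with $s>0$ again essential for summing the error series.
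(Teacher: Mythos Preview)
Your sketch is correct and follows the standard paraproduct approach to the Kato--Ponce/fractional Leibniz rule. Note, however, that the paper does not supply its own proof of this lemma: it is stated as a known result with the proof deferred to the reference \cite{FRAC}. Your outline---reduction of \eqref{eqfraLR} to \eqref{libRule}, Bony decomposition into $\Pi_{hl}$, $\Pi_{lh}$, $\Pi_{hh}$, square-function/maximal-function bounds for the off-diagonal pieces, and geometric summation (using $s>0$) for the diagonal piece---is exactly the architecture of the proofs in the Grafakos--Oh and Bourgain--Li works that \cite{FRAC} encompasses, and you have correctly flagged the $L^\infty$ endpoint as the place where the naive square-function argument breaks and the more delicate analysis of those references is required. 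There is no gap in your plan; it simply reproduces what the cited literature does rather than offering an alternative.
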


\begin{lemma}\label{commtwovar} Let $\sigma, \beta \in (0,1)$, then
\begin{equation*}
    \begin{aligned}
    \|D^{\sigma}_xD^{\beta}_y(fg)\|_{L^2_{xy}(\mathbb{R}^2)} \lesssim & \|f\|_{L^{p_1}_{xy}(\mathbb{R}^2)}\|D^{\sigma}_xD^{\beta}_yg\|_{L^{q_1}_{xy}(\mathbb{R}^2)}+\|D^{\sigma}_xD^{\beta}_yf\|_{L^{p_2}_{xy}(\mathbb{R}^2)}\|g\|_{L^{q_2}_{xy}(\mathbb{R}^2)} \\
    &+\|D^{\beta}_yf\|_{L_{xy}^{p_3}(\mathbb{R}^2)}\|D^{\sigma}_xg\|_{L^{q_3}_{xy}(\mathbb{R}^2)}+\|D_x^{\sigma}f\|_{L^{p_4}_{xy}(\mathbb{R}^2)}\|D^{\beta}_yg\|_{L^{q_4}_{xy}(\mathbb{R}^2)},
    \end{aligned}
\end{equation*}
where $\frac{1}{p_j}+\frac{1}{q_j}=\frac{1}{2}$, $1<p_j,q_j  \leq \infty$, $j=1,2,3,4$.
\end{lemma}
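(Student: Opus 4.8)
The plan is to reduce the mixed-derivative Leibniz rule for $D_x^{\sigma}D_y^{\beta}$ to a combination of the one-dimensional fractional Leibniz rule of Lemma~\ref{fraLR} applied iteratively in each variable, together with a bilinear decomposition of Coifman--Meyer type. First I would use a Littlewood--Paley decomposition in the $x$ and $y$ frequencies simultaneously, writing $f=\sum_{M,K} P_M^x P_K^y f$ and similarly for $g$, where $M,K$ range over dyadic numbers; the bilinear symbol of $D_x^{\sigma}D_y^{\beta}(fg)$ is $|\xi_1+\xi_2|^{\sigma}|\eta_1+\eta_2|^{\beta}$, which splits into the four regions according to which of $|\xi_1|,|\xi_2|$ dominates and which of $|\eta_1|,|\eta_2|$ dominates. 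In the ``high-high in $x$, high-high in $y$'' interactions the derivative falls entirely on one factor, producing the first two terms $\|f\|_{L^{p_1}}\|D_x^{\sigma}D_y^{\beta}g\|_{L^{q_1}}$ and $\|D_x^{\sigma}D_y^{\beta}f\|_{L^{p_2}}\|g\|_{L^{q_2}}$; in the mixed regions (high frequency in $x$ on one factor, high frequency in $y$ on the other) one gets the cross terms $\|D_y^{\beta}f\|_{L^{p_3}}\|D_x^{\sigma}g\|_{L^{q_3}}$ and $\|D_x^{\sigma}f\|_{L^{p_4}}\|D_y^{\beta}g\|_{L^{q_4}}$.

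Concretely I would proceed in two stages. Stage one: fix the $y$-variable and apply the one-dimensional fractional Leibniz rule \eqref{libRule} in the $x$-variable (for each fixed $y$, with $L^2_x$ replaced by an appropriate mixed norm, or working directly with the operator-valued version), obtaining
\[
\|D_x^{\sigma}D_y^{\beta}(fg)\|_{L^2_{xy}}\lesssim \|D_y^{\beta}\big(D_x^{\sigma}f\cdot g\big)\|_{L^2_{xy}}+\|D_y^{\beta}\big(f\cdot D_x^{\sigma}g\big)\|_{L^2_{xy}},
\]
modulo the usual care that $D_x^{\sigma}$ and $D_y^{\beta}$ commute and that the Kato--Ponce/Grafakos--Oh estimates hold with the second variable frozen (this is where I would invoke a vector-valued or Fubini-type extension of Lemma~\ref{fraLR}). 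Stage two: apply the one-dimensional fractional Leibniz rule \eqref{libRule} in the $y$-variable to each of the two resulting products. Expanding $\|D_y^{\beta}(D_x^{\sigma}f\cdot g)\|_{L^2_{xy}}\lesssim \|D_x^{\sigma}D_y^{\beta}f\|_{L^{p_2}}\|g\|_{L^{q_2}}+\|D_x^{\sigma}f\|_{L^{p_4}}\|D_y^{\beta}g\|_{L^{q_4}}$ and symmetrically for the other product yields exactly the four advertised terms after relabeling the H\"older exponents.

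The main obstacle I anticipate is the rigorous justification of applying a one-dimensional fractional Leibniz inequality ``with a frozen variable'' inside an $L^2_{xy}$ norm, i.e. upgrading Lemma~\ref{fraLR} to a statement for the operators $D_x^{\sigma}$ and $D_y^{\beta}$ acting on functions of two variables with mixed Lebesgue norms on the right-hand side. One clean way around this is to avoid iteration altogether and instead run the Coifman--Meyer paraproduct machinery directly on the bilinear multiplier $\sigma(\xi_1,\eta_1,\xi_2,\eta_2)=|\xi_1+\xi_2|^{\sigma}|\eta_1+\eta_2|^{\beta}$: decompose into paraproduct pieces $\Pi_{\mathrm{hl}},\Pi_{\mathrm{lh}},\Pi_{\mathrm{hh}}$ in each of the two frequency directions independently, check that each localized symbol (after extracting the appropriate power of the larger frequency in each variable) satisfies the Marcinkiewicz--Mikhlin product-type symbol bounds, and sum the Littlewood--Paley pieces using the boundedness of the associated square functions and maximal functions in $L^{p}$ for $1<p<\infty$. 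The constraint $\sigma,\beta\in(0,1)$ guarantees that the extracted symbols are genuinely smooth and homogeneous of the correct degree so that no logarithmic losses appear; the endpoint restrictions $1<p_j,q_j\le\infty$ come from the vector-valued Littlewood--Paley inequalities. I would also point out that this lemma is essentially the two-parameter analogue of Lemma~\ref{fraLR} and is surely known (e.g. in the bi-parameter Kato--Ponce literature), so citing the relevant reference and sketching the paraproduct decomposition is an acceptable level of detail here.
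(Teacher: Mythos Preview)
Your final remark is exactly right, and in fact it is all the paper does: the lemma is not proved in the paper but simply attributed to Muscalu, Pipher, Tao and Thiele \cite{muscalu}. So at the level of ``what the paper does,'' your proposal to cite the bi-parameter Kato--Ponce literature and sketch the paraproduct decomposition matches perfectly.

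On the substance of your sketch: the two-stage iterative argument you outline first is not quite valid as written, and you correctly flag the reason. The one-dimensional Leibniz rule in $x$ gives, for each fixed $y$, a bound on $\|D_x^{\sigma}(fg)(\cdot,y)\|_{L^r_x}$ by a sum of products of norms; it does \emph{not} give a pointwise or operator inequality of the form $D_x^{\sigma}(fg)\lesssim D_x^{\sigma}f\cdot g+f\cdot D_x^{\sigma}g$ to which one could then legitimately apply $D_y^{\beta}$. So the displayed inequality in your Stage~1 is not a consequence of Lemma~\ref{fraLR}, and the naive iteration breaks down exactly where you say it does. The correct route is your second suggestion --- run the bi-parameter paraproduct decomposition directly on the tensor-product symbol $|\xi_1+\xi_2|^{\sigma}|\eta_1+\eta_2|^{\beta}$ --- and that is precisely the content of the Muscalu--Pipher--Tao--Thiele result the paper cites.
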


Lemma \ref{commtwovar} was deduced by Muscalu, Pipher, Tao and Thiele in \cite{muscalu}. The following commutator estimate will be useful in our considerations.

\begin{prop}\label{CalderonComGU}
Let $1<p<\infty$ and $l,m \in \mathbb{Z}^{+}\cup \{0\}$, $l+m\geq 1$ then
\begin{equation}\label{Comwellprel1}
    \|\partial_x^l[\mathcal{H}_x,g]\partial_x^{m}f\|_{L^p(\mathbb{R})} \lesssim_{p,l,m} \|\partial_x^{l+m} g\|_{L^{\infty}(\mathbb{R})}\|f\|_{L^p(\mathbb{R})}.
\end{equation}
\end{prop}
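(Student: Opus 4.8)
The plan is to reduce the estimate to the case $l+m=1$ already available in the literature (\cite[Lemma 3.1]{DawsonMCPON}, \cite[Proposition 3.8]{Dli}) by an induction on $l+m$ together with the Leibniz-type identity for commutators. First I would record the elementary commutator identity
\begin{equation}\label{Comident}
[\mathcal{H}_x, g]\partial_x f = \partial_x\bigl([\mathcal{H}_x, g] f\bigr) - [\mathcal{H}_x, (\partial_x g)] f,
\end{equation}
which follows since $\mathcal{H}_x$ commutes with $\partial_x$ and the product rule gives $\partial_x(gf) = (\partial_x g)f + g\,\partial_x f$. Iterating \eqref{Comident} one expresses $[\mathcal{H}_x,g]\partial_x^m f$ as a finite linear combination of terms of the form $\partial_x^{m-k}\bigl([\mathcal{H}_x,(\partial_x^k g)] f\bigr)$, $0\le k\le m$; applying $\partial_x^l$ then yields a linear combination of $\partial_x^{l+m-k}\bigl([\mathcal{H}_x,(\partial_x^k g)] f\bigr)$. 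Thus it suffices to bound each $\partial_x^{j}\bigl([\mathcal{H}_x, h] f\bigr)$ in $L^p$ by $\|\partial_x^{j+1} h\|_{L^\infty}\|f\|_{L^p}$ for $j\ge 0$ and $h=\partial_x^k g$, which is precisely the statement \eqref{Comwellprel1} in the normalized form $m=0$, $l=j$.

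To prove that normalized case, I would argue by induction on $l=j$. The base case $j=0$ is the classical Calderón commutator estimate $\|[\mathcal{H}_x,h]f\|_{L^p}\lesssim \|\partial_x h\|_{L^\infty}\|f\|_{L^p}$ (this is the $l+m=1$ endpoint, contained in the cited references, or equivalently in Proposition \ref{CalderonCom} with $\alpha=0,\beta=1$ after noting $D_x^0=\mathrm{Id}$ and using $\|D_x f\|_{L^p}$ vs.\ $\|f\|_{L^p}$ appropriately — actually cleaner to invoke the classical statement directly). For the inductive step, write $\partial_x^{j}\bigl([\mathcal{H}_x,h]f\bigr) = \partial_x^{j-1}\partial_x\bigl([\mathcal{H}_x,h]f\bigr)$ and use \eqref{Comident} in the form $\partial_x([\mathcal{H}_x,h]f) = [\mathcal{H}_x,h]\partial_x f + [\mathcal{H}_x,(\partial_x h)] f$; apply $\partial_x^{j-1}$, then push the derivatives inward via the same identity to get terms $\partial_x^{j-1}\bigl([\mathcal{H}_x,(\partial_x h)]f\bigr)$ and $\partial_x^{j-1}\bigl([\mathcal{H}_x,h]\partial_x f\bigr)$. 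The first is handled by the inductive hypothesis with symbol $\partial_x h$, giving $\|\partial_x^{j}h\|_{L^\infty}\|f\|_{L^p}$; for the second one must trade a derivative off $f$ — here instead I would organize the induction more carefully, inducting jointly on $l+m$ and using \eqref{Comident} so that at each step exactly one derivative is either removed from $\partial_x^m f$ or moved onto $g$, keeping the total order $l+m$ fixed while the number of derivatives hitting $f$ strictly decreases, terminating at $m=0$.

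The key technical input that makes all the reductions legitimate is therefore the single scalar estimate $\|[\mathcal{H}_x,h]f\|_{L^p(\mathbb{R})}\lesssim_p \|\partial_x h\|_{L^\infty(\mathbb{R})}\|f\|_{L^p(\mathbb{R})}$, which is a known instance of the boundedness of the first Calderón commutator $C_1(h)f(x)=\mathrm{p.v.}\!\int \frac{h(x)-h(y)}{(x-y)^2}f(y)\,dy$ on $L^p$, $1<p<\infty$. The main obstacle is purely bookkeeping: ensuring that in the iterated application of \eqref{Comident} every resulting term has the schematic form $\partial_x^{a}\bigl([\mathcal{H}_x,(\partial_x^{b} g)]\partial_x^{c} f\bigr)$ with $a+b+c=l+m$ and $c<m$ whenever $c>0$, so that a finite induction collapses everything to $c=0$ and then to $a=0$; since all constants depend only on $p,l,m$ and the number of terms generated is finite (bounded by a function of $l,m$), no analytic difficulty beyond the base case arises. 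I would present the identity \eqref{Comident}, state the combinatorial lemma that iterating it yields the claimed schematic form, and then conclude by the base Calderón estimate.
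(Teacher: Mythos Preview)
Your reduction to the case $m=0$ via the identity
\[
[\mathcal{H}_x,g]\partial_x F=\partial_x\big([\mathcal{H}_x,g]F\big)-[\mathcal{H}_x,\partial_x g]F
\]
is correct: iterating it does produce $\partial_x^l[\mathcal{H}_x,g]\partial_x^m f=\sum_{k=0}^m(-1)^k\binom{m}{k}\partial_x^{\,l+m-k}\big([\mathcal{H}_x,\partial_x^k g]f\big)$. The gap is in the second half of your plan, where you claim to bound $\partial_x^{\,j}\big([\mathcal{H}_x,h]f\big)$ by induction on $j$ using only the first Calder\'on commutator as analytic input. That induction is circular. Writing $\partial_x\big([\mathcal{H}_x,h]f\big)=[\mathcal{H}_x,h]\partial_x f+[\mathcal{H}_x,\partial_x h]f$ and applying $\partial_x^{\,j-1}$ gives $T_{j-1,1}(h,f)+T_{j-1,0}(\partial_x h,f)$; the second term is fine by the inductive hypothesis, but the first term $T_{j-1,1}(h,f)$, when you push the derivative off $f$ again with the same identity, returns $T_{j,0}(h,f)-T_{j-1,0}(\partial_x h,f)$, i.e.\ exactly the quantity you are trying to estimate. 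More conceptually, the identity preserves the total order $l+m$: it only redistributes derivatives among the outer slot, the inner slot, and $g$. No amount of reshuffling produces a closed system that bottoms out at $l+m=1$; at each level $n=l+m\ge 2$ one genuinely needs a new analytic estimate (a higher Calder\'on-type commutator, e.g.\ boundedness of the kernel $\partial_x^{\,a}\partial_y^{\,b}\frac{g(x)-g(y)}{x-y}$ with $a+b=n$ in terms of $\|g^{(n)}\|_{L^\infty}$).

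Note also a bookkeeping slip: the normalized $m=0$ statement you need is $\|\partial_x^{\,j}[\mathcal{H}_x,h]f\|_{L^p}\lesssim\|\partial_x^{\,j}h\|_{L^\infty}\|f\|_{L^p}$ (so that with $h=\partial_x^k g$ and $j=l+m-k$ one recovers $\|\partial_x^{\,l+m}g\|_{L^\infty}$), not $\|\partial_x^{\,j+1}h\|_{L^\infty}$. For comparison, the paper does not prove this proposition at all; it quotes it from \cite[Lemma~3.1]{DawsonMCPON} and \cite[Proposition~3.8]{Dli}, where the argument proceeds by direct kernel/paraproduct analysis rather than by the purely algebraic reduction you propose.
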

The estimate \eqref{Comwellprel1} was established in \cite[Lemma 3.1]{DawsonMCPON} and it was extended to the BMO spaces in \cite[Proposition 3.8]{Dli}.

To deduce the LWP result in Theorem \ref{Improwellp} on the space $X^s(\mathbb{R}^2)$, we require the following set of inequalities, which were deduced in the proof of \cite[Lemma 2.1]{KenigKP} (see equations (2.5), (2.6) and (2.7) in this reference). See also \cite[Lemma 4.6]{LinarFKP}.  
\begin{lemma}\label{lemmainterine}
\begin{itemize}
    \item[(i)] Let $0<\delta<1/2$, then
    \begin{equation}\label{Interp1}
    \begin{aligned}
    \|D_x^{1/2+\delta}u\|_{L^{\infty}_{xy}} \lesssim \|u\|_{L^{\infty}_{xy}}+\|\partial_xu\|_{L^{\infty}_{xy}},
    \end{aligned}
\end{equation}
\item[(ii)]  If $\delta_0$ is a positive constant chosen small enough, then the following holds true. There exist
\begin{equation*}
    \left\{ \begin{aligned}
    2<p_1,q_1<\infty \\
    1<r_1,s_1<\infty
    \end{aligned}
    \right.\hspace{0.5cm} \text{ with } \hspace{0.5cm} \frac{1}{p_1}+\frac{1}{q_1}=\frac{1}{2}, \hspace{0.2cm} \frac{1}{r_1}+\frac{1}{s_1}=1,
\end{equation*}
$0<\theta<1$ and $0<\delta_1=\delta_1(\delta_0,\theta)\ll 1$ such that
\begin{equation}\label{Interp2.0}
    \|D_x^{1/2+\delta} u\|_{L_T^{s_1}L_{xy}^{q_1}}\lesssim \|u\|_{L^1_T L_{xy}^{\infty}}^{\theta}\|J_x^{1/2+\delta_0}u\|_{L_T^{\infty}L^2_{xy}}^{1-\theta},
\end{equation}
\begin{equation}\label{Interp2}
    \|\partial_xD_x^{1/2+\delta} u\|_{L_T^{s_1}L_{xy}^{q_1}}\lesssim \|\partial_xu\|_{L^1_T L_{xy}^{\infty}}^{\theta}\|J_x^{3/2+\delta_0}u\|_{L_T^{\infty}L^2_{xy}}^{1-\theta},
\end{equation}
and
\begin{equation}\label{Interp3}
    \|D_y^{\delta}u\|_{L_T^{r_1}L_{xy}^{p_1}} \lesssim \big(\|u\|_{L^1_TL^{\infty}_{xy}}\big)^{1-\theta}\big(\|D_y^{1/2}u\|_{L^{\infty}_TL^2_{xy}}+\|u\|_{L^{\infty}_TL^2_{xy}}\big)^{\theta},
\end{equation}
for all $0<\delta<\delta_1$.
\end{itemize}
\end{lemma}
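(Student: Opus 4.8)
The final statement to prove is Lemma~\ref{lemmainterine}, which collects the interpolation-type inequalities \eqref{Interp1}--\eqref{Interp3} used to run the energy method in $X^s(\mathbb{R}^2)$. These are attributed to \cite[Lemma 2.1]{KenigKP} and \cite[Lemma 4.6]{LinarFKP}, so the natural plan is to reproduce the arguments from those references in the present two-dimensional anisotropic setting, and this is exactly what I would do.

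\medskip

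\textbf{Proof plan.} For part (i), the inequality \eqref{Interp1} is a one-dimensional (in $x$, uniformly in $y$) statement: I would prove the pointwise-in-$y$ estimate $\|D_x^{1/2+\delta}f\|_{L^\infty_x}\lesssim \|f\|_{L^\infty_x}+\|\partial_x f\|_{L^\infty_x}$ for $0<\delta<1/2$ and then take the supremum in $y$. The scalar estimate follows by a Littlewood--Paley decomposition in $\xi$: write $D_x^{1/2+\delta}f=\sum_{N\in\mathbb D} D_x^{1/2+\delta}P_N^x f + (\text{low-frequency piece})$; on the low frequencies $D_x^{1/2+\delta}$ acts boundedly on $L^\infty$ by a kernel estimate (the kernel of $D_x^{1/2+\delta}P_{\le 1}^x$ is integrable), while for $N\ge 1$ one interpolates $\|D_x^{1/2+\delta}P_N^x f\|_{L^\infty}\lesssim N^{1/2+\delta}\|P_N^x f\|_{L^\infty}$ against $\|D_x^{1/2+\delta}P_N^xf\|_{L^\infty}\lesssim N^{1/2+\delta-1}\|\partial_x P_N^x f\|_{L^\infty}$ and sums the resulting geometric series in $N$ since $1/2+\delta<1$ forces convergence at both ends. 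I would present this compactly, citing Bernstein's inequality for the frequency-localized bounds.

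\medskip

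For part (ii), the three estimates \eqref{Interp2.0}, \eqref{Interp2}, \eqref{Interp3} all have the same structure: a space-time norm of a fractional derivative of $u$ is controlled by an interpolation between $L^1_TL^\infty_{xy}$ and an $L^\infty_T$-based Sobolev norm. The mechanism is: (a) a fixed-time Gagliardo--Nirenberg / Sobolev interpolation inequality in the relevant variable — e.g.\ $\|D_x^{1/2+\delta}f\|_{L^{q_1}_{xy}}\lesssim \|f\|_{L^\infty_{xy}}^{\theta}\|J_x^{1/2+\delta_0}f\|_{L^2_{xy}}^{1-\theta}$ with the exponents $q_1,\theta,\delta_0,\delta$ matched by scaling, and analogously $\|D_y^\delta f\|_{L^{p_1}_{xy}}\lesssim\|f\|_{L^\infty_{xy}}^{1-\theta}(\|D_y^{1/2}f\|_{L^2_{xy}}+\|f\|_{L^2_{xy}})^\theta$ for \eqref{Interp3}, and the $\partial_x$-shifted version for \eqref{Interp2}; followed by (b) integration in $t$ using H\"older's inequality in time with the exponents $s_1,r_1$, pulling the $L^\infty_T$ norm out of one factor and recognizing the other factor as $\|u\|_{L^1_TL^\infty_{xy}}^\theta$ (resp.\ $\|\partial_xu\|_{L^1_TL^\infty_{xy}}^\theta$). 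The role of $\delta_0$ small and $\delta_1=\delta_1(\delta_0,\theta)$ small is precisely to leave enough room in the scaling relations so that admissible exponents $2<p_1,q_1<\infty$, $1<r_1,s_1<\infty$ with the stated reciprocal identities exist; I would verify the arithmetic of the exponents once and note it carries over to all three inequalities.

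\medskip

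\textbf{Main obstacle.} The only genuinely delicate point is the bookkeeping of exponents in part (ii): one must check that for $\delta_0$ chosen first (small) and then $\delta_1(\delta_0,\theta)$ small, the scaling identities coming from the Gagliardo--Nirenberg inequalities in $x$ (with the fractional orders $1/2+\delta$ vs.\ $1/2+\delta_0$, resp.\ $3/2+\delta$ vs.\ $3/2+\delta_0$) and in $y$ (order $\delta$ vs.\ $1/2$) can be solved with $p_1,q_1$ strictly between $2$ and $\infty$, $r_1,s_1$ strictly between $1$ and $\infty$, $0<\theta<1$, and the conjugacy relations $\tfrac1{p_1}+\tfrac1{q_1}=\tfrac12$, $\tfrac1{r_1}+\tfrac1{s_1}=1$. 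This is the same computation carried out in \cite[Lemma 2.1]{KenigKP}; everything else (the Littlewood--Paley argument for (i), the fixed-time interpolation, the H\"older-in-time step) is routine. For that reason I would either reproduce the exponent computation in detail or — since \eqref{Interp1}--\eqref{Interp3} are quoted verbatim from \cite{KenigKP} and \cite{LinarFKP} — simply refer the reader to equations (2.5), (2.6), (2.7) of \cite{KenigKP} and Lemma~4.6 of \cite{LinarFKP}, observing that the passage from one spatial dimension to the anisotropic two-dimensional setting is immediate because each inequality only involves derivatives in a single variable with the other variable treated as a parameter inside the $L^\infty_{xy}$ or $L^2_{xy}$ norms.
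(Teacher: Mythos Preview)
Your proposal is correct and aligns with the paper's treatment: the paper does not give a proof of this lemma at all but simply refers the reader to equations (2.5), (2.6), (2.7) in \cite[Lemma 2.1]{KenigKP} and to \cite[Lemma 4.6]{LinarFKP}, exactly as you suggest in your final paragraph. Your sketch of the underlying arguments (Littlewood--Paley/Bernstein for (i), Gagliardo--Nirenberg plus H\"older in time for (ii)) is the standard route and goes beyond what the paper records.
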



\section{ Well-posedness for real solutions  }\label{SectionReal}

This section is devoted to establish LWP for \eqref{EQBO} in the spaces $H^s(\mathbb{R}^2)$ and $X^s(\mathbb{R}^2)$. Since this conclusion in the former space can be deduced by the same reasoning applied to $X^s(\mathbb{R}^2)$, or by following the ideas in \cite[Theorem 1.3]{linaO}, we will restrict our considerations to prove Theorem \ref{Improwellp} in  $X^s(\mathbb{R}^2)$.

However, given that the LWP result in $H^s(\mathbb{R}^2)$, $s>3/2$ will be employed to deduce Theorem \ref{localweigh}, we will present some remarks on this conclusion in the Subsection \ref{subHs} below.

\subsection{Linear Estimates}

This section summarized some space-time estimates for the unitary group $\{S(t)\}$ defined by \eqref{SemG}. 

\begin{lemma}\label{STEST}
The following estimate holds
\begin{equation*}
    \left\|S(t)f\right\|_{L^{q}_tL^{p}_{xy}} \lesssim \left\|f\right\|_{L^2},
\end{equation*}
whenever $2\leq p,q \leq \infty$, $q>2$ and $\frac{1}{p}+\frac{1}{q}=\frac{1}{2}$.
\end{lemma}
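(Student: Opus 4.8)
The plan is to establish the Strichartz estimate via the standard $TT^*$ argument, reducing matters to a dispersive (pointwise-in-time decay) estimate for the kernel of $S(t)S(t')^*$, and then invoking the abstract Keel--Tao machinery.

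\textbf{Step 1: Reduction to a dispersive estimate.} First I would recall that by the $TT^*$ method, the estimate $\|S(t)f\|_{L^q_tL^p_{xy}}\lesssim\|f\|_{L^2}$ for an admissible pair follows once one has the $L^2$-boundedness of $S(t)$ (immediate, since it is unitary, giving the endpoint $(q,p)=(\infty,2)$) together with a decay estimate of the form
\begin{equation*}
\|S(t)S(t')^*g\|_{L^{\infty}_{xy}}\lesssim |t-t'|^{-1}\|g\|_{L^1_{xy}}.
\end{equation*}
Since $\{S(t)\}$ is a one-parameter unitary group, $S(t)S(t')^*=S(t-t')$, so it suffices to prove $\|S(t)g\|_{L^\infty_{xy}}\lesssim |t|^{-1}\|g\|_{L^1_{xy}}$. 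In turn, writing $S(t)g=I_t * g$ with $\widehat{I_t}(\xi,\eta)=e^{it\omega(\xi,\eta)}$, it is enough to show the kernel bound $\|I_t\|_{L^\infty_{xy}}\lesssim |t|^{-1}$.

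\textbf{Step 2: Analysis of the oscillatory integral for the kernel.} The symbol is $\omega(\xi,\eta)=\sign(\xi)(1+\xi^2\mp\eta^2)$, so on each half-space $\{\pm\xi>0\}$ it is a polynomial whose Hessian in $(\xi,\eta)$ is constant, namely $\mathrm{diag}(\pm 2,\mp 2)$ (taking the sign appropriately for each component of $\mathbb{R}^2$), which is non-degenerate with $|\det|=4$. Thus on each half-space the phase $t\omega(\xi,\eta)+x\xi+y\eta$ is (up to the lower-order linear-in-$\xi$ and constant terms, which do not affect the Hessian) a non-degenerate quadratic form, and the classical stationary phase / Van der Corput estimate in two dimensions gives a bound of size $|t|^{-1}$ for the contribution of each half-space, uniformly in $(x,y)$. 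One subtlety is that $\omega$ is not smooth across $\xi=0$ (the $\sign(\xi)$ factor produces a jump), so I would split $g=g_+ + g_-$ according to the support of $\widehat{g}$ in $\{\xi>0\}$ and $\{\xi<0\}$ via the Hilbert projections; on each piece the phase is genuinely a quadratic polynomial plus a harmless additive constant $\pm t$, and one integrates out $\xi$ and $\eta$ separately (the integral factors into a one-dimensional oscillatory integral in $\xi$ of a quadratic phase, times one in $\eta$), each contributing $|t|^{-1/2}$. The jump at $\xi=0$ is only a codimension-one set and can be handled by noting the estimate is really over a half-line in $\xi$; a clean way is to recall this is exactly the structure used in \cite{KenigKP,LinarFKP,Paisas1} for the Shrira equation, where $\omega$ differs only by the bounded term $\sign(\xi)$, which contributes only the unimodular factor $e^{\pm it}$ and does not affect any $L^p\to L^q$ bound.

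\textbf{Step 3: Conclusion via Keel--Tao.} With the $L^2\to L^2$ bound and the $L^1\to L^\infty$ decay $\|S(t-t')g\|_{L^\infty_{xy}}\lesssim|t-t'|^{-1}\|g\|_{L^1_{xy}}$ in hand, the abstract Keel--Tao theorem yields $\|S(t)f\|_{L^q_tL^p_{xy}}\lesssim\|f\|_{L^2}$ for all pairs $(q,p)$ with $2\le p\le\infty$, $2<q\le\infty$ satisfying the scaling relation $\tfrac{2}{q}=2(\tfrac12-\tfrac1p)$, i.e. $\tfrac1p+\tfrac1q=\tfrac12$ (here the "dimension" parameter in the abstract statement is $2$, matching the $|t|^{-1}$ decay rate), which is precisely the claimed range. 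The endpoint $q=2$ is excluded exactly as in the classical Schrödinger case.

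The main obstacle I anticipate is the lack of smoothness of $\omega$ at $\xi=0$: one must make sure the stationary phase estimate for the kernel is genuinely uniform in $(x,y)$ despite the singular set $\{\xi=0\}$, and that the lower-order term $\sign(\xi)$ is correctly seen to be irrelevant. This is a minor technical point rather than a genuine difficulty, since away from the quadratic leading part everything is bounded, and the estimate is essentially the two-dimensional Schrödinger/KP-type dispersive estimate already exploited in the cited references; so I would likely just cite \cite{KenigKP,LinarFKP} for the core oscillatory integral bound and note the straightforward modification absorbing the $\pm t$ term.
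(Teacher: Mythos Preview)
Your proposal is correct and takes essentially the same approach as the paper: the paper's own proof simply says ``follows a similar reasoning to \cite[Proposition 4.8]{LinarFKP} and the case $\alpha=1$ in \cite{Paisas1}'', which is precisely the dispersive-estimate-plus-Keel--Tao argument you have spelled out, including the observation that the extra $\sign(\xi)$ term in $\omega$ contributes only a unimodular factor and does not affect the decay. Your treatment of the half-line integration in $\xi$ via Van der Corput is the standard way to handle the lack of smoothness at $\xi=0$, and matches what is done in those references.
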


\begin{proof}
The proof follows a similar reasoning to \cite[Proposition 4.8]{LinarFKP} and the case $\alpha=1$ in \cite{Paisas1}. 
\end{proof}
Notice that the endpoint Strichartz estimate corresponding to $(q,p)=(2,\infty)$ is not stated in the preceding lemma, as a consequence we need to lose a little bit of regularity to control this norm.

\begin{corol}\label{liestCoro}
For each $T>0$ and $\delta>0$, there exists $\kappa_{\delta}\in(0,1/2)$ such that
\begin{equation*}
    \left\|S(t)f\right\|_{L^2_T L^{\infty}_{xy}}\lesssim T^{\kappa_{\delta}}\left\|J^{\delta}f\right\|_{L^2}
\end{equation*}
where the implicit constant depends on $\delta$.
\end{corol}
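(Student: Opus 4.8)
The plan is to derive Corollary \ref{liestCoro} from the non-endpoint Strichartz estimate in Lemma \ref{STEST} by combining the missing endpoint with a Sobolev embedding in a single spatial variable and then using H\"older's inequality in time to absorb the loss. First I would fix $\delta>0$ and choose an admissible pair $(q,p)$ with $q>2$ close to $2$ and $p$ correspondingly large but finite, so that $\frac1p+\frac1q=\frac12$ and Lemma \ref{STEST} applies. The point is that $L^\infty_{xy}$ is not reached by the Strichartz family, so one must trade a finite $L^p_{xy}$ norm plus a few derivatives for the $L^\infty_{xy}$ norm. Concretely I would use the inequality $\|g\|_{L^\infty_{xy}}\lesssim \|J^{\delta}g\|_{L^p_{xy}}$ valid for $p$ large enough depending on $\delta$ (Sobolev embedding $W^{\delta,p}(\mathbb{R}^2)\hookrightarrow L^\infty(\mathbb{R}^2)$ when $\delta p>2$), applied to $g=S(t)f$ at fixed $t$.

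Next I would commute $J^\delta$ with the group: since $S(t)$ is a Fourier multiplier it commutes with $J^\delta$, so $\|J^\delta S(t)f\|_{L^p_{xy}}=\|S(t)J^\delta f\|_{L^p_{xy}}$. Therefore, pointwise in $t$,
\begin{equation*}
  \|S(t)f\|_{L^\infty_{xy}} \lesssim \|S(t)J^\delta f\|_{L^p_{xy}}.
\end{equation*}
Taking the $L^q([0,T])$ norm in $t$ and applying Lemma \ref{STEST} with the pair $(q,p)$ gives $\|S(t)f\|_{L^q_T L^\infty_{xy}} \lesssim \|J^\delta f\|_{L^2}$. Finally, since $q>2$ and the time interval has finite length $T$, H\"older's inequality in time yields $\|S(t)f\|_{L^2_T L^\infty_{xy}} \le T^{\frac12-\frac1q}\|S(t)f\|_{L^q_T L^\infty_{xy}}$, so setting $\kappa_\delta=\frac12-\frac1q\in(0,1/2)$ produces the claimed bound with implicit constant depending on $\delta$ through the choice of $p$ and $q$.

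The only genuine choice to be made — and the place where one must be a little careful — is selecting $q$ close enough to $2$ that $p=\frac{2q}{q-2}$ is large enough for $\delta p>2$, i.e. one needs $p>2/\delta$, which forces $q<\frac{2}{1-\delta/1}$; this is always possible for any fixed $\delta>0$, and as $\delta\to 0$ one is forced to take $q\to 2^+$, consistent with $\kappa_\delta\to 0$. There is no serious obstacle here: the argument is a standard ``off-endpoint Strichartz plus Sobolev in the spatial variables plus H\"older in time'' maneuver, and all the analytic content is already contained in Lemma \ref{STEST}. One small point worth stating explicitly in the write-up is that the loss of $T^{\kappa_\delta}$ is unavoidable precisely because the endpoint $(q,p)=(2,\infty)$ is excluded, which is exactly the remark following Lemma \ref{STEST}.
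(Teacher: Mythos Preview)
Your proposal is correct and follows essentially the same approach as the paper: choose $p>2/\delta$ so that $W^{\delta,p}(\mathbb{R}^2)\hookrightarrow L^\infty(\mathbb{R}^2)$, commute $J^\delta$ with $S(t)$, apply the Strichartz estimate of Lemma \ref{STEST} with the admissible pair $(q,p)$, and use H\"older in time to obtain the factor $T^{\frac{q-2}{2q}}=T^{\frac12-\frac1q}=:T^{\kappa_\delta}$. The only minor slip is the phrase ``Sobolev embedding in a single spatial variable'' in your first sentence---you correctly use the two-dimensional embedding in the body of the argument.
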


\begin{proof}
Taking $p$ sufficiently large such that $\delta> 2/p$, Sobolev embedding and \eqref{STEST} yield
\begin{equation*}
    \left\|S(t)f\right\|_{L^2_T L^{\infty}_{xy}} \lesssim_{\delta} T^{\frac{q-2}{2q}} \left\|S(t)J^{\delta}f\right\|_{L^q_T L^p_{xy}} \lesssim_{\delta} T^{\frac{q-2}{2q}} \left\|J^{\delta}f\right\|_{L^2_{xy}}.
\end{equation*}
This completes the proof. 
\end{proof}

Also, we require the following refined Strichartz estimate, which has been proved in different contexts (see \cite{Paisas1,KenigKP,LinarFKP}). 

\begin{lemma} \label{refinStri}
Let $0<\delta \leq 1$ and $T>0$. Then there exist $\kappa_{\delta}\in (\frac{1}{2},1)$ and $\delta>0$ such that
\begin{equation}\label{liest3.1}
\begin{aligned}
    \left\| v\right\|_{L^1_TL^{\infty}_{xy}} \lesssim_{\delta} T^{\kappa_{\delta}} \big(\sup_{[0,T]}&\|J_x^{1/2+ 2\delta}v(t)\|_{L^2_{xy}}+\sup_{[0,T]}\|J_x^{1/2+ \delta}D_y^{\delta}v(t)\|_{L^2_{xy}} \\
    &+\int_0^T ( \|J^{-1/2+2\delta}_xF(\cdot,t') \|_{L^2_{xy}}+\|J^{-1/2+\delta}_x D_y^{\delta}F(\cdot,t') \|_{L^2_{xy}} \, )dt'\big),
\end{aligned}
\end{equation}
whenever $v$ solves 
\begin{equation}\label{liest4}
    \partial_t v +\mathcal{H}_xv-\mathcal{H}_x\partial_x^2v\pm \mathcal{H}_x\partial_y^2v=F.
\end{equation}
\end{lemma}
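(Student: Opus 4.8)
The plan is to follow the now-standard refined Strichartz argument of Kenig \cite{KenigKP} and Linares--Pilod--Saut \cite{LinarFKP}, adapted to the symbol $\omega(\xi,\eta)=\sign(\xi)+\sign(\xi)\xi^2\mp\sign(\xi)\eta^2$. First I would subdivide the time interval $[0,T]$ into $\sim T/\delta_T$ subintervals $I_k=[t_k,t_k+\delta_T]$ of length $\delta_T$ (to be chosen as a negative power of a dyadic frequency), write on each $I_k$ the Duhamel formula $v(t)=S(t-t_k)v(t_k)+\int_{t_k}^{t}S(t-t')F(t')\,dt'$, and perform a Littlewood--Paley decomposition $v=\sum_{N} P_N^x v$ in the $x$-frequency. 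The point is that on a time interval of length $\delta_T\sim N^{-1}$ the inhomogeneous flow $S(t-t_k)$ essentially behaves like the free flow, and one can apply the homogeneous Strichartz estimate of Lemma \ref{STEST} with the non-admissible endpoint replaced by a slightly sub-endpoint pair $(q,p)$ with $q$ close to $2$, $p$ large, combined with Sobolev embedding in $(x,y)$ to pass from $L^p_{xy}$ to $L^\infty_{xy}$; this costs an $\varepsilon$-derivative, which is the source of the exponents $1/2+2\delta$ and $1/2+\delta$ together with the $D_y^\delta$ term. Summing the $L^1_{t}$-in-time norm over the $\sim T/\delta_T$ subintervals (using Hölder in time on each $I_k$, which produces the factor $\delta_T^{1-1/q}$, i.e. $T^{\kappa_\delta}$ after summation with $\kappa_\delta\in(1/2,1)$) and then summing the Littlewood--Paley pieces in $N$ (absorbing the dyadic loss into the $\delta$-regularity and using that $N^{-1/2+2\delta}$, $N^{-1/2+\delta}$ are summable) yields \eqref{liest3.1}.

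More precisely, the key steps in order are: (1) fix a dyadic $N\geq 1$, localize $v$ to $x$-frequencies $\sim N$ via $P_N^x$, and choose $\delta_T=N^{-1}$; (2) on each $I_k$ apply Duhamel and the Strichartz estimate of Lemma \ref{STEST} with a pair $(q,p)$, $q=2^+$, to get $\|P_N^x v\|_{L^1_{I_k}L^p_{xy}}\lesssim \delta_T^{1-1/q}\big(\|P_N^x v(t_k)\|_{L^2_{xy}}+\int_{I_k}\|P_N^x F(t')\|_{L^2_{xy}}\,dt'\big)$; (3) upgrade $L^p_{xy}\to L^\infty_{xy}$ by Sobolev embedding, paying $J_x^{\varepsilon}$ and $J_y^{\varepsilon}$ (here one splits $J^\varepsilon$ into $J_x^\varepsilon$ and $D_y^\varepsilon$, which is exactly what creates the two families of norms in the statement); (4) replace $\|P_N^x v(t_k)\|$ and the source term by the $\sup_{[0,T]}$ and $\int_0^T$ norms, absorbing the negative power $N^{-1/2+2\delta}$ coming from $\delta_T^{1-1/q}=N^{-(1-1/q)}$ (note $1-1/q$ is close to $1/2$) against the gained $x$-derivative $N^{1/2+2\delta}$; (5) sum over the $\sim T N$ subintervals $I_k$ using Hölder/Minkowski to recover the global $L^1_T$ norm with a favorable power of $T$; (6) sum over dyadic $N$, the series converging because of the strictly negative net power of $N$ produced by the $\delta$-room. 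A separate, easier treatment handles the low-frequency piece $P_{\leq 1}^x v$ directly via Lemma \ref{STEST} and Bernstein.

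The main obstacle I expect is step (3)--(4): one must track the interplay between the time-interval length $\delta_T$, the $x$-frequency size $N$, and the $\varepsilon$-loss from Sobolev embedding so that the resulting power of $N$ is negative and summable, while simultaneously the power of $\delta_T$ summed over $\sim TN$ intervals leaves a positive power $T^{\kappa_\delta}$ with $\kappa_\delta>1/2$; this is a bookkeeping balance that forces the precise form $1/2+2\delta$ vs.\ $1/2+\delta$ and the appearance of the mixed $J_x^{1/2+\delta}D_y^\delta$ norm rather than a cleaner $J^{1/2+\delta}$ norm. A secondary technical point is justifying that the symbol $\omega$, which is only piecewise smooth (due to $\sign(\xi)$) and whose $\eta$-dependence enters with the ``wrong'' relative sign in the $-$ case, does not affect the dispersive/Strichartz bound: this is already absorbed in the proof of Lemma \ref{STEST}, so for the present lemma it suffices to invoke that homogeneous estimate as a black box on each short interval. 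Everything else is routine Littlewood--Paley and Hölder manipulation.
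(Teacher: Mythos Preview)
Your proposal is correct and follows essentially the same approach as the paper: the paper's proof simply states that, in view of Corollary~\ref{liestCoro}, the result follows by the refined Strichartz argument of \cite[Lemma 4.11]{LinarFKP} and \cite[Lemma 1.7]{KenigKP}, which is exactly the Koch--Tzvetkov-type scheme you outline (dyadic $x$-frequency decomposition, time subintervals of length $\sim N^{-1}$, Duhamel plus the near-endpoint Strichartz estimate, Sobolev to reach $L^\infty_{xy}$, and summation in $N$). The only cosmetic difference is that the paper packages the step ``Lemma~\ref{STEST} + Sobolev embedding'' as the black box Corollary~\ref{liestCoro}, whereas you invoke Lemma~\ref{STEST} with a sub-endpoint pair and apply Sobolev embedding directly; the bookkeeping you describe in steps (3)--(5) is precisely what produces the exponents $1/2+2\delta$, $1/2+\delta$, the mixed $J_x^{1/2+\delta}D_y^\delta$ norm, and the factor $T^{\kappa_\delta}$ with $\kappa_\delta\in(1/2,1)$.
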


\begin{proof}
In view of Corollary \ref{liestCoro}, \eqref{liest3.1} is deduced following the same reasoning in  \cite[Lemma 4.11]{LinarFKP}. See also \cite[Lemma 1.7]{KenigKP}. 
\end{proof}


\subsection{Energy Estimates}

\begin{lemma}\label{apriEST}
Let $s>0$. Consider $T>0$ and $u\in C([0,T];X^{\infty}(\mathbb{R}^d))$ be a solution of the IVP \eqref{EQBO}, then there exists a constant $c_0>0$ such that
\begin{equation}\label{twoEnergyES2}
    \left\|u\right\|_{L^{\infty}_T X^s}^2 \leq \left\|u_0\right\|_{X^s}^2+c_0 (\left\| u\right\|_{L^1_T L^{\infty}_{xy}}+\left\|\partial_x u\right\|_{L^1_T L^{\infty}_{xy}})\left\|u\right\|_{L^{\infty}_T X^s}^2.
\end{equation}
\end{lemma}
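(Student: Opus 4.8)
The plan is to prove the a priori estimate \eqref{twoEnergyES2} by applying the operators appearing in the $X^s$-norm to the equation, pairing with the evolved quantity in $L^2_{xy}$, and controlling all commutator terms by $(\|u\|_{L^1_TL^\infty_{xy}}+\|\partial_xu\|_{L^1_TL^\infty_{xy}})\|u\|_{L^\infty_TX^s}^2$. Recall that $\|u\|_{X^s}^2 \sim \|J_x^su\|_{L^2_{xy}}^2 + \|D_x^{-1/2}u\|_{L^2_{xy}}^2 + \|D_x^{-1/2}\partial_y u\|_{L^2_{xy}}^2$, so it suffices to estimate $\frac{d}{dt}$ of each of these three pieces. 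The first piece is the "classical" $H^s_x$ part; the other two come from the nonlocal operators $D_x^{-1/2}$ and $D_x^{-1/2}\partial_y$ that encode the energy \eqref{Energy}, and handling them is the new ingredient. Throughout I work with the smooth solution $u\in C([0,T];X^\infty)$ so that all manipulations are justified, and at the end integrate in time and use that the linear part $\partial_t u + \mathcal{H}_xu - \mathcal{H}_x\partial_x^2u \pm \mathcal{H}_x\partial_y^2u$ is skew-adjoint (it is a Fourier multiplier with purely imaginary symbol $i\omega(\xi,\eta)$), hence contributes nothing to the energy identities.

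First I would treat $\frac{1}{2}\frac{d}{dt}\|J_x^su\|_{L^2_{xy}}^2$. The linear terms drop out by skew-adjointness, and the nonlinear contribution is $-\int J_x^s(u\partial_xu)\,J_x^su\,dxdy$. Writing $J_x^s(u\partial_xu) = [J_x^s,u]\partial_xu + uJ_x^s\partial_xu$, the commutator term is bounded using the Kato–Ponce-type estimate (Lemma \ref{conmKP}, or the one-variable Leibniz rule) by $(\|\partial_xu\|_{L^\infty_{xy}} + \|u\|_{W^{1,\infty}_x})\|J_x^su\|_{L^2_{xy}}^2$, and the term $\int uJ_x^s\partial_xu\,J_x^su\,dxdy = \frac{1}{2}\int u\,\partial_x\big((J_x^su)^2\big)\,dxdy = -\frac{1}{2}\int \partial_xu\,(J_x^su)^2\,dxdy$ is controlled by $\|\partial_xu\|_{L^\infty_{xy}}\|J_x^su\|_{L^2_{xy}}^2$. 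Thus this piece yields a contribution $\lesssim \|\partial_xu\|_{L^\infty_{xy}}\|u\|_{X^s}^2$ after the integration by parts, integrated against $dt$.

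Next, the genuinely new pieces: $\frac{1}{2}\frac{d}{dt}\|D_x^{-1/2}u\|_{L^2_{xy}}^2$ and $\frac{1}{2}\frac{d}{dt}\|D_x^{-1/2}\partial_yu\|_{L^2_{xy}}^2$. Again the linear flow contributes nothing, so one must bound $\int D_x^{-1/2}(u\partial_xu)\,D_x^{-1/2}u\,dxdy$ and $\int D_x^{-1/2}\partial_y(u\partial_xu)\,D_x^{-1/2}\partial_yu\,dxdy$. For the first, write $D_x^{-1/2}(u\partial_xu) = \frac{1}{2}D_x^{-1/2}\partial_x(u^2) = \frac{1}{2}D_x^{1/2}\mathcal{H}_x(u^2)$ (using $\partial_xD_x^{-1} = -\mathcal{H}_x$ in the sense $D_x^{-1/2}\partial_x = D_x^{1/2}\mathcal{H}_x$, up to sign conventions); so it becomes $\tfrac14\int D_x^{1/2}\mathcal{H}_x(u^2)\,D_x^{-1/2}u\,dxdy$, and after moving $D_x^{1/2}$ to the other factor (self-adjoint) this is $\tfrac14\int \mathcal{H}_x(u^2)\,u\,dxdy$ — wait, one must keep $D_x^{-1}$; more carefully one uses $\|D_x^{-1/2}\partial_x(u^2)\|$ paired with $D_x^{-1/2}u$ and the identity $\langle D_x^{-1/2}\partial_x(u^2), D_x^{-1/2}u\rangle = \langle \mathcal{H}_x(u^2), u\rangle$ up to constants, which is $\lesssim \|u\|_{L^\infty_{xy}}\|u\|_{L^2_{xy}}^2$ by boundedness of $\mathcal{H}_x$ on $L^2$. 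The crucial term is the $\partial_y$ one: here $D_x^{-1/2}\partial_y(u\partial_xu) = \tfrac12 D_x^{-1/2}\partial_x\partial_y(u^2) = \tfrac12 \mathcal{H}_xD_x^{1/2}\partial_y(u^2)$, and pairing with $D_x^{-1/2}\partial_yu$ one wants to extract a full derivative. The strategy is to write $\partial_y(u^2) = 2u\partial_yu$ and commute: $\mathcal{H}_xD_x^{1/2}\partial_y(u^2) = \mathcal{H}_x\big([D_x^{1/2},u]\,2\partial_yu\big) + \mathcal{H}_x\big(u\,D_x^{1/2}\cdot 2\partial_yu\big)$ — but the commutator $[D_x^{1/2},u]$ and the Hilbert transform interact, which is exactly where Proposition \ref{CalderonCom} with $\alpha=\beta=1/2$ enters: it gives $\|D_x^{1/2}[\mathcal{H}_x,g]D_x^{1/2}f\|_{L^2}\lesssim \|\partial_xg\|_{L^\infty}\|f\|_{L^2}$, allowing one to trade a factor $D_x^{1/2}\partial_y u = D_x^{-1/2}\partial_y(D_xu)$... — the precise bookkeeping is to write the bad term as a commutator $[\mathcal{H}_x, u]$ sandwiched between $D_x^{1/2}$'s acting on $D_x^{-1/2}\partial_yu$, apply Proposition \ref{CalderonCom} to get $\|\partial_xu\|_{L^\infty_{xy}}\|D_x^{-1/2}\partial_yu\|_{L^2_{xy}}$, and estimate the remaining "diagonal" term $\int u\,\partial_x\big((D_x^{-1/2}\partial_yu)^2\big)\,dxdy$ by integration by parts, picking up $\|\partial_xu\|_{L^\infty_{xy}}\|D_x^{-1/2}\partial_yu\|_{L^2_{xy}}^2$.

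The main obstacle is precisely this last step: controlling the term $\int D_x^{-1/2}\partial_y(u\partial_xu)\,D_x^{-1/2}\partial_yu\,dxdy$ without losing derivatives, since naively $D_x^{-1/2}\partial_y$ does not commute nicely with multiplication by $u$ and the symbol $|\xi|^{-1/2}\eta$ is not a derivative in a single variable. The resolution is to algebraically rewrite the nonlinearity as $\tfrac12\partial_x(u^2)$, peel off the $\partial_x$ to cancel with $D_x^{-1/2}$ (producing $D_x^{1/2}\mathcal{H}_x$), and then isolate a commutator $D_x^{1/2}[\mathcal{H}_x,u]D_x^{1/2}$ acting on $D_x^{-1/2}\partial_y u$ plus a symmetric term killed by integration by parts — so that Proposition \ref{CalderonCom} supplies exactly the $\|\partial_xu\|_{L^\infty}$ bound with no derivative loss. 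Summing the three pieces, integrating from $0$ to $t\le T$, and taking the supremum over $t$ gives \eqref{twoEnergyES2} with $c_0$ depending only on $s$ and the constants in Lemmas \ref{conmKP}, \ref{fraLR} and Proposition \ref{CalderonCom}.
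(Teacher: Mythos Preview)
Your approach is essentially the paper's: standard Kato--Ponce for $\|J_x^s u\|_{L^2}$, the identity $\partial_x=-\mathcal{H}_xD_x$ for $\|D_x^{-1/2}u\|_{L^2}$, and the commutator $D_x^{1/2}[\mathcal{H}_x,u]D_x^{1/2}$ together with Proposition~\ref{CalderonCom} for $\|D_x^{-1/2}\partial_y u\|_{L^2}$. One small correction: in the $D_x^{-1/2}\partial_y$ step there is no separate ``diagonal'' term of the form $\int u\,\partial_x\big((D_x^{-1/2}\partial_y u)^2\big)$ to handle by integration by parts; rather, after writing $\int D_x^{-1/2}\partial_y\partial_x(u^2)\,D_x^{-1/2}\partial_y u=-\int \mathcal{H}_x\partial_y(u^2)\,\partial_y u$, the skew-symmetry of $\mathcal{H}_x$ alone reduces this to the pure commutator $-\int([\mathcal{H}_x,u]\partial_y u)\,\partial_y u$, which is exactly $\int (D_x^{1/2}[\mathcal{H}_x,u]D_x^{1/2}(D_x^{-1/2}\partial_y u))\,D_x^{-1/2}\partial_y u$.
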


\begin{proof}
The estimates of the norm $\|J^s_x(\cdot)\|_{L^2_{xy}}$ in the space $X^{s}(\mathbb{R}^2)$ is deduced by applying standard energy estimates and Lemma \ref{conmKP}. For a more detailed discussion, we refer to \cite[Lemma 1.3]{KenigKP} .

To deal with the component $\|D_x^{-1/2}(\cdot)\|_{L^2_{xy}}$ in the $X^s(\mathbb{R}^2)$-norm, we apply $D_x^{-1/2}$ to the equation in \eqref{EQBO}, we multiply then by $D_x^{-1/2}u$ and integrate in space to deduce
\begin{equation*}
\begin{aligned}
 \frac{1}{2}\frac{d}{dt}\left\|D_x^{-1}u(t)\right\|_{L^2_{xy}}^2=-\frac{1}{2}\int D_x^{-1/2}\partial_x(u^2)D_x^{-1/2}u \, dx dy=-\frac{1}{2} \int D^{-1}_x \partial_x(u^2) u \, dx dy,
\end{aligned}
\end{equation*}
where we have used that the operator $\mathcal{H}_x-\mathcal{H}_x\partial_x^2\pm \mathcal{H}_x\partial_y^2$ is skew-symmetric and $D_x^{-1/2}$ is self-adjoint. Hence, by writing  $\partial_x=-\mathcal{H}_x D_x$ and using that $\mathcal{H}_x$ defines a bounded operator in $L^2_{xy}(\mathbb{R}^2)$, we get
\begin{equation*}
    \frac{d}{dt}\|D_x^{-1}u(t)\|_{L^2_{xy}}^2 \lesssim \|u\|_{L^{\infty}_{xy}}\|u\|_{X^s}^2.
\end{equation*}
To control the norm $\|D_x^{-1/2}\partial_y(\cdot)\|_{L^2_{xy}}$, we apply $D_x\partial_y^{-1/2}$ to the equation in \eqref{EQBO}, multiplying the resulting expression by $D_x^{-1/2}\partial_y u$ and integrating in space it is seen that
\begin{equation*}
\begin{aligned}
 \frac{1}{2}\frac{d}{dt}\left\|D_x^{-1}\partial_yu(t)\right\|_{L^2_{xy}}^2=-\frac{1}{2}\int D_x^{-1/2}\partial_y\partial_x(u^2)D_x^{-1/2}\partial_yu \, dx dy.
\end{aligned}
\end{equation*}
Once again, decomposing $\partial_x=-\mathcal{H}_x D_x$ and using that $\mathcal{H}_x$ is skew-symmetric, we get 
\begin{equation}\label{twoeqenerg1}
\begin{aligned}
  \int D_x^{-1/2}\partial_y\partial_x(u^2)D_x^{-1/2}\partial_y u \, dx dy&=-\int \mathcal{H}_x \partial_y(u^2)\partial_yu \, dx dy \\
  &=-\int ([\mathcal{H}_x,u]\partial_y u) \partial_y u \, dxdy \\
  &=\int (D_x^{1/2}[\mathcal{H}_x,u]D_x^{1/2}(D^{-1/2}_x\partial_yu)) D_x^{-1/2}\partial_yu \, dxdy.
\end{aligned}
\end{equation}
Then the Cauchy-Schwarz inequality and Proposition \ref{CalderonCom} yield
\begin{equation}\label{twoeqenerg2}
\begin{aligned}
&\left|\int (D_x^{1/2}[\mathcal{H}_x,u]D_x^{1/2}(D^{-1/2}_x\partial_yu)) D_x^{-1/2}\partial_yu \, dxdy\right|\\
&\hspace{3cm}\lesssim \|\|D_x^{1/2}[\mathcal{H}_x,u]D_x^{1/2}(D^{-1/2}_x\partial_yu)\|_{L^2_x}\|_{L^2_y}\|D_x^{-1/2}\partial_yu\|_{L^2_{xy}} \\
&\hspace{3cm}\lesssim \|\partial_xu\|_{L^{\infty}_{xy}}\|D_x^{-1/2}\partial_yu\|_{L^2_{xy}}^2,
\end{aligned}
\end{equation}
 and so we arrive at
\begin{equation*}
\begin{aligned}
 \frac{d}{dt}\|D_x^{-1}\partial_yu(t)\|_{L^2_{xy}}^2 \lesssim \|\partial_x u\|_{L^{\infty}_{xy}}\|D_x^{-1/2}\partial_y u\|_{L^2_{xy}}^2.
\end{aligned}
\end{equation*}
Integrating in time the previous estimates yield the desired conclusion.
\end{proof}

Next  we derive \emph{a priori} estimates for the norms $\left\|u \right\|_{L^1_T L_{xy}^{\infty}}$ and $\left\|\partial_x u \right\|_{L^1_T L_{xy}^{\infty}}$ in $X^s(\mathbb{R}^2)$, whenever $s>3/2$.

\begin{lemma}\label{apriEstS} 
Let $s>3/2$ fixed.  Consider $u\in C([0,T];X^{\infty}(\mathbb{R}^2))$ solution of the IVP \eqref{EQBO}. Then, there exist $\kappa_{\delta}\in(\frac{1}{2},1)$ and $c_s>0$ such that
\begin{equation}\label{eqapriEst2}
    (\left\|u\right\|_{{L^1_TL^{\infty}_{xy}}}+ \left\|\partial_x u\right\|_{L^1_TL^{\infty}_{xy}})\leq c_sT^{\kappa_{\delta}}(1+\left\|u\right\|_{{L^1_TL^{\infty}_{xy}}}+ \left\|\partial_x u\right\|_{L^1_TL^{\infty}_{xy}})\left\|u\right\|_{L^{\infty}_T X^s}.
\end{equation}
\end{lemma}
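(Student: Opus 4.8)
**Proof plan for Lemma 3.6 (the *a priori* estimate on $\|u\|_{L^1_TL^\infty_{xy}}+\|\partial_xu\|_{L^1_TL^\infty_{xy}}$).**

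The idea is to feed the equation in \eqref{EQBO} into the refined Strichartz inequality \eqref{liest3.1} of Lemma \ref{refinStri}, with $F=-u\partial_xu=-\tfrac12\partial_x(u^2)$, and then bound the five resulting terms by $\|u\|_{L^\infty_TX^s}$ (possibly multiplied by the quantity $\|u\|_{L^1_TL^\infty_{xy}}+\|\partial_xu\|_{L^1_TL^\infty_{xy}}$ coming from the nonlinearity). First I would apply \eqref{liest3.1} directly to $v=u$ to control $\|u\|_{L^1_TL^\infty_{xy}}$, and then apply it again to $v=\partial_xu$ (which solves the same linear equation with forcing $\partial_xF=-\partial_x(u\partial_xu)$) to control $\|\partial_xu\|_{L^1_TL^\infty_{xy}}$. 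Since $s>3/2$ we may fix $\delta>0$ small enough that $1/2+2\delta<s$ and $1/2+\delta+\delta<s$ (say $\delta<(s-1/2)/3$), so that all the homogeneous terms $\sup_{[0,T]}\|J_x^{1/2+2\delta}u\|_{L^2_{xy}}$, $\sup_{[0,T]}\|J_x^{1/2+\delta}D_y^\delta u\|_{L^2_{xy}}$, $\sup_{[0,T]}\|J_x^{3/2+2\delta}u\|_{L^2_{xy}}$, $\sup_{[0,T]}\|J_x^{3/2+\delta}D_y^\delta u\|_{L^2_{xy}}$ are all $\lesssim \|u\|_{L^\infty_TX^s}$ — using that $\|J_x^a f\|_{L^2_{xy}}\lesssim\|J^s f\|_{L^2_{xy}}\le\|f\|_{X^s}$ whenever $a\le s$, and similarly $\|J_x^{a}D_y^{b}f\|_{L^2_{xy}}\lesssim\|f\|_{H^{a+b}}\lesssim\|u\|_{X^s}$ when $a+b\le s$.

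The remaining terms are the forcing integrals $\int_0^T(\|J_x^{-1/2+2\delta}F\|_{L^2_{xy}}+\|J_x^{-1/2+\delta}D_y^\delta F\|_{L^2_{xy}})\,dt'$ and, for the $\partial_xu$ application, $\int_0^T(\|J_x^{1/2+2\delta}(u\partial_xu)\|_{L^2_{xy}}+\|J_x^{1/2+\delta}D_y^\delta(u\partial_xu)\|_{L^2_{xy}})\,dt'$. For the first one, writing $F=-\tfrac12\partial_x(u^2)$ gives $J_x^{-1/2+2\delta}\partial_x(u^2)$, and since $-1/2+2\delta+1=1/2+2\delta\le s$ for $\delta$ small, this is $\lesssim \|u^2\|_{H^{1/2+2\delta}}\lesssim\|u\|_{L^\infty_{xy}}\|u\|_{H^s}$ by the fractional Leibniz rule \eqref{eqfraLR}; similarly for the mixed-derivative term one uses Lemma \ref{commtwovar} to distribute $D_x^{1/2+\delta}D_y^\delta$ over the product $u^2$, landing each factor in $\|u\|_{L^\infty_{xy}}$ or $\|u\|_{H^s}$. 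Integrating in $t'$ and using Hölder in time produces a factor $\|u\|_{L^1_TL^\infty_{xy}}\|u\|_{L^\infty_TX^s}$. For the second (the $\partial_xu$ forcing), the worst term is $J_x^{3/2+2\delta}(u^2)\sim D_x^{3/2+2\delta}(u^2)$, whose exponent exceeds $1$; here I would apply \eqref{eqfraLR} to split $D_x^{3/2+2\delta}(u^2)\lesssim \|D_x^{3/2+2\delta}u\|_{L^2_{xy}}\|u\|_{L^\infty_{xy}}+\|\partial_xu\|_{L^\infty_{xy}}\|D_x^{1/2+2\delta}u\|_{L^2_{xy}}$ — which needs $3/2+2\delta\le s$, again fine for $\delta$ small — thereby bounding it by $(\|u\|_{L^\infty_{xy}}+\|\partial_xu\|_{L^\infty_{xy}})\|u\|_{X^s}$, and the mixed term is handled the same way via Lemma \ref{commtwovar}. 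Integrating in time, all these contributions are $\lesssim(\|u\|_{L^1_TL^\infty_{xy}}+\|\partial_xu\|_{L^1_TL^\infty_{xy}})\|u\|_{L^\infty_TX^s}$.

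Finally, the prefactor $T^{\kappa_\delta}$ with $\kappa_\delta\in(1/2,1)$ is exactly the time factor supplied by Lemma \ref{refinStri}, and the "$1+$" inside the parenthesis of \eqref{eqapriEst2} absorbs the homogeneous contributions (which carry no nonlinear factor), while the genuinely nonlinear terms carry the factor $\|u\|_{L^1_TL^\infty_{xy}}+\|\partial_xu\|_{L^1_TL^\infty_{xy}}$; collecting everything yields \eqref{eqapriEst2} with a constant $c_s$ depending only on $s$ (through the chosen $\delta$ and the Leibniz-rule constants). The main obstacle I anticipate is purely bookkeeping: making sure that \emph{every} fractional exponent appearing after applying \eqref{liest3.1} to both $u$ and $\partial_xu$ — in particular the top-order ones $3/2+2\delta$ and $3/2+\delta$ coming from differentiating the nonlinearity — stays $\le s$, which is where the hypothesis $s>3/2$ is used sharply, and correctly invoking the anisotropic Leibniz rule of Lemma \ref{commtwovar} for the mixed operator $D_x^{\sigma}D_y^{\beta}$ so that no derivative lands in an uncontrolled norm; there is no subtle analytic difficulty beyond that, since the endpoint $(q,p)=(2,\infty)$ has already been sidestepped inside Lemma \ref{refinStri}.
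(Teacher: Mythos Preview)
Your overall strategy---apply the refined Strichartz estimate \eqref{liest3.1} to $v=u$ and $v=\partial_x u$, then handle the nonlinear forcing via fractional Leibniz rules---matches the paper. However, there is a genuine gap in how you bound the homogeneous and mixed-derivative terms by the $X^s$ norm.

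You write ``$\|J_x^{a}D_y^{b}f\|_{L^2_{xy}}\lesssim\|f\|_{H^{a+b}}\lesssim\|u\|_{X^s}$ when $a+b\le s$.'' The first inequality is fine, but the second is \emph{false}: the space $X^s(\mathbb{R}^2)$ defined in \eqref{EberSpa} carries only the norms $\|J_x^s f\|_{L^2}$, $\|D_x^{-1/2}f\|_{L^2}$ and $\|D_x^{-1/2}\partial_y f\|_{L^2}$, and it does \emph{not} embed into $H^s(\mathbb{R}^2)$. Indeed, for $s>1$ the symbol $|\eta|^s$ is not controlled by $\langle\xi\rangle^s+|\xi|^{-1/2}|\eta|$ (take $|\xi|\sim 1$, $|\eta|\to\infty$). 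Consequently, the terms $\|J_x^{3/2+\delta}D_y^\delta u\|_{L^2_{xy}}$ and the mixed pieces arising from Lemma~\ref{commtwovar} cannot be estimated by passing through $H^s$. The paper instead works directly on the Fourier side: using Young's inequality one writes, for $\delta$ small enough that $\tfrac{3}{2}\bigl(\tfrac{1+\delta}{1-\delta}\bigr)<s$,
\[
(1+|\xi|)^{3/2+\delta}|\eta|^{\delta}\lesssim\bigl((1+|\xi|)^{3/2+\delta}|\xi|^{\delta/2}\bigr)^{1/(1-\delta)}+|\xi|^{-1/2}|\eta|\lesssim(1+|\xi|)^{s}+|\xi|^{-1/2}|\eta|,
\]
and Plancherel then gives $\|J_x^{3/2+\delta}D_y^\delta u\|_{L^2_{xy}}\lesssim\|J_x^s u\|_{L^2_{xy}}+\|D_x^{-1/2}\partial_y u\|_{L^2_{xy}}\le\|u\|_{X^s}$ (this is \eqref{twoeqenerg7}--\eqref{twoeqenerg7.0} in the paper). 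Analogous pointwise symbol inequalities \eqref{twoeqenerg7.1}, \eqref{twoeqenerg10.1} are needed for the other mixed terms.

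A second, related issue: after applying Lemma~\ref{commtwovar} to $D_x^{1/2+\delta}D_y^\delta(u\partial_x u)$ you obtain cross terms of the form $\|D_x^{1/2+\delta}u\|_{L^{q}_{xy}}\|\partial_x D_y^\delta u\|_{L^{p}_{xy}}$ and $\|\partial_x D_x^{1/2+\delta}u\|_{L^{q_1}_{xy}}\|D_y^\delta u\|_{L^{p_1}_{xy}}$ with $p,q>2$. These do not reduce to $\|u\|_{L^\infty}\cdot\|u\|_{X^s}$ directly; the paper invokes the interpolation inequalities of Lemma~\ref{lemmainterine} (specifically \eqref{Interp1}--\eqref{Interp3}) to close them in terms of $\|u\|_{L^1_TL^\infty_{xy}}$, $\|\partial_x u\|_{L^1_TL^\infty_{xy}}$ and $\|u\|_{L^\infty_T X^s}$. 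Your sketch omits this step.

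In summary: the architecture of your argument is right, but the claim that $X^s$ controls isotropic $H^{a+b}$ norms is the missing idea. You need the frequency-space Young inequalities \eqref{twoeqenerg7}, \eqref{twoeqenerg7.1} to trade $y$-regularity for the anisotropic quantity $\|D_x^{-1/2}\partial_y u\|_{L^2}$, together with Lemma~\ref{lemmainterine} for the $L^p$ cross terms produced by Lemma~\ref{commtwovar}.
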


\begin{proof}
We will follow the arguments in \cite{KenigKP} and \cite{LinarFKP}. By applying Lemma \ref{refinStri} with $F=-\partial_x(u\partial_x u)$ we find
\begin{equation}\label{twoeqenerg6}  
\begin{aligned}
\|\partial_x u\|_{L^1_TL^{\infty}_{xy}}\lesssim_{\delta} T^{\kappa_{\delta}}\big(\sup_{ [0,T]}&\|J_x^{3/2+2\delta}u(t)\|_{L^2_{xy}}+\sup_{ [0,T]}\|J_x^{3/2+\delta}D_y^{\delta}u(t)\|_{L^2_{xy}} \\
&+\int_0^T ( \|J^{1/2+2\delta}_x(u\partial_x u)(t') \|_{L^2_{xy}}+\|J^{1/2+\delta}_x D_y^{\delta}(u\partial_x u)(t') \|_{L^2_{xy}} )\, dt'\big).
\end{aligned}
\end{equation}
 Taking $\delta>0$ small such that $\frac{3}{2}\big(\frac{1+\delta}{1-\delta}\big)<s$, Young's inequality yields
\begin{equation}\label{twoeqenerg7} 
    (1+|\xi|)^{3/2+\delta}|\eta|^{\delta}\lesssim \big((1+|\xi|)^{3/2+\delta} |\xi|^{\delta/2}\big)^{1/(1-\delta)}+|\eta||\xi|^{-1/2} \lesssim (1+|\xi|)^{s}+|\eta||\xi|^{-1/2}.
\end{equation}
Hence the previous display and Plancherel's identity show
\begin{equation}\label{twoeqenerg7.0}
    \sup_{[0,T]}\big(\|J_x^{3/2+2\delta}u(t)\|_{L^2_{xy}}+\|J_x^{3/2+\delta}D_y^{\delta}u(t)\|_{L^2_{xy}}\big) \lesssim \sup_{[0,T]}\big(\|J_x^s u(t)\|_{L^2_{xy}}+\|D_x^{-1/2}\partial_y u(t)\|_{L^2_{xy}} \big)\lesssim \|u\|_{L^{\infty}_T X^s} .
\end{equation}
This completes the estimate for the first two terms on the right-hand side (r.h.s) of \eqref{twoeqenerg6}. Next, we deal with the third factor on the r.h.s of \eqref{twoeqenerg6}. An application of \eqref{eqfraLR} allows us to deduce
\begin{equation}\label{twoeqenerg7.2}
\begin{aligned}
\|J^{1/2+2\delta}_x(u\partial_x u) \|_{L^1_TL^2_{xy}} &=\|\,\|\,\|J^{1/2+2\delta}_x(u\partial_x u)(t)\|_{L^2_x}\|_{L^2_y}\|_{L^1_T}\\
&\lesssim\|\, ( \|\,\|u(t)\|_{L^{\infty}_x}\|J_x^{1/2+2\delta}\partial_x u(t)\|_{L^2_x}+\|\partial_x u(t)\|_{L^{\infty}_x}\|J_x^{1/2+2\delta}u(t)\|_{L^2_x}\|_{L^2_y})\|_{L^1_T} \\
&\lesssim (\|u\|_{L^1_TL^{\infty}_{xy}}+\|\partial_x u\|_{L^1_TL^{\infty}_{xy}})\|J_x^{s}u\|_{L^{\infty}_TL^2_{xy}},
\end{aligned}
\end{equation}
which holds for $0<\delta<\min\{1/2,s/2-3/4\}$. Since $\|J_x^{s}u\|_{L^{\infty}_TL^2_{xy}} \leq \|u\|_{L_T^{\infty}X^s}$, the previous inequality completes the study of third term in \eqref{twoeqenerg6}. Next, we decompose the remaining factor in \eqref{twoeqenerg6} as follows
\begin{equation*}
\begin{aligned}
\int_0^T\|J^{1/2+\delta}_x D_y^{\delta}(u\partial_x u)(t') \|_{L^2_{xy}} \, dt'&\lesssim \|D_y^{\delta}(u\partial_x u) \|_{L^1_TL^2_{xy}}+\|D^{1/2+\delta}_x D_y^{\delta}(u\partial_x u) \|_{L^1_TL^2_{xy}}=:\mathcal{I}+\mathcal{II}.   
\end{aligned}
\end{equation*}
To deal with $\mathcal{I}$, we employ the point-wise inequality
\begin{equation} \label{twoeqenerg7.1}
\begin{aligned}
|\xi|^{l}|\eta|^{\delta}&=|\xi|^{l+\delta/2}(|\xi|^{-1/2}|\eta|)^{\delta}\lesssim (1+|\xi|)^{\frac{2l+\delta}{2(1-\delta)}}+|\xi|^{-1/2}|\eta|, \end{aligned}
\end{equation}
valid for $l=0,1$ and $0<\delta<1$ small satisfying $\frac{2l+\delta}{2(1-\delta)}<s$. Hence the fractional Leibniz's rule \eqref{libRule}, Plancherel's identity and \eqref{twoeqenerg7.1} show
\begin{equation*}
\begin{aligned}
\mathcal{I} =\|\,\|\, \|D_y^{\delta}&(u\partial_x u)(t)\|_{L^2_y(\mathbb{R})}\|_{L^{2}_x(\mathbb{R})}\|_{L^1_T}\\
& \lesssim \| (\|u(t)\|_{L^{\infty}_{xy}}\|D_y^{\delta}\partial_xu(t)\|_{L^{2}_{xy}}+\|\partial_xu(t)\|_{L^{\infty}_{xy}}\|D_y^{\delta}u(t)\|_{L^{2}_{xy}})\|_{L^1_T} \\
& \lesssim  (\|u\|_{L^1_TL^{\infty}_{xy}}+\|\partial_xu\|_{L^1_TL^{\infty}_{xy}}) (\|J_x^su\|_{L_T^{\infty}L^{2}_{xy}}+\|D_x^{-1/2}\partial_y u\|_{L_T^{\infty}L^2_{xy}}).
\end{aligned}
\end{equation*}
This completes the analyze of $\mathcal{I}$. On the other hand, employing Lemma \ref{commtwovar}, we further decompose $\mathcal{II}$
\begin{equation}\label{twoeqenerg9}
\begin{aligned}
    \mathcal{II}\lesssim &\|\, \|u(t)\|_{L^{\infty}_{xy}}\|D_x^{3/2+\delta}D_y^{\delta} u(t)\|_{L^2_{xy}}\|_{L^1_T}+\|\,\|\partial_x u(t)\|_{L^{\infty}_{xy}}\|D_x^{1/2+\delta}D_y^{\delta}u(t)\|_{L^2_{xy}}\|_{L^1_T}\\
    &+\|\,\| D_x^{1/2+\delta}u(t)\|_{L^{\infty}_{xy}}\|\partial_xD_y^{\delta}u(t)\|_{L^2_{xy}}\|_{L^1_T}+\|\,\|\partial_x D_x^{1/2+\delta}u(t)\|_{L^{q_1}_{xy}}\|D_y^{\delta}u(t)\|_{L^{p_1}_{xy}}\|_{L^1_T}  \\
    &=:\mathcal{II}_1+\mathcal{II}_2+\mathcal{II}_3+\mathcal{II}_4,
\end{aligned}
\end{equation}
where $\frac{1}{p_1}+\frac{1}{q_1}=\frac{1}{2}$. Since the norms $\|D_x^{3/2+\delta}D_y^{\delta}u(t)\|_{L^2_{xy}},\|D_x^{1/2+\delta}D_y^{\delta}u(t)\|_{L^2_{xy}}\leq \|J_x^{3/2+\delta}D_y^{\delta}u(t)\|_{L^2_{xy}}$, we use \eqref{twoeqenerg7} with $\frac{3}{2}\big(\frac{1+\delta}{1-\delta}\big)<s$ and Plancherel's identity to infer $\mathcal{II}_1+\mathcal{II}_2\lesssim (\|u\|_{L^1_TL^{\infty}_{xy}}+\|\partial_x u\|_{L^1_TL^{\infty}_{xy}}) \|u\|_{L_T^{\infty}X^s}$.

To deal with $\mathcal{II}_3$, we let $0<\delta<1/2$ small satisfying $\frac{2+\delta}{2(1-\delta)}<s$, then we employ \eqref{Interp1} to control the norm $\|D_x^{1/2+\delta}u(t)\|_{L^{\infty}_{xy}}$. The estimate for $\|\partial_xD_y^{\delta}u(t)\|_{L^2_{xy}}$ is a consequence of Plancherel's identity and \eqref{twoeqenerg7.1} with $l=1$. This yields the desired bound for $\mathcal{II}_3$.

Next,  by employing \eqref{Interp2}, \eqref{Interp3} in Lemma \ref{lemmainterine}, it is seen 
\begin{equation*}
\begin{aligned}
    \mathcal{II}_4 &\lesssim \|\partial_xD_x^{1/2+\delta}u\|_{L_T^{s_1}L_{xy}^{q_1}}\|D_y^{\delta}u\|_{L_T^{r_1}L^{p_1}_{xy}} \\
    &\lesssim \|\partial_xu\|_{L^1_TL^{\infty}_{xy}}^{\theta}\|J_x^{3/2+\delta_0}u\|_{L_T^{\infty}L_{xy}^{2}}^{1-\theta}\|u\|^{1-\theta}_{L^1_TL^{\infty}_{xy}}\big(\|D_y^{1/2}u\|_{L_T^{\infty}L^{2}_{xy}}+\|u\|_{L_T^{\infty}L^{2}_{xy}}\big)^{\theta},
\end{aligned}
\end{equation*}
for some $0<\delta \ll 1$ and $0<\delta_0 < s-3/2$ fixed and where $\frac{1}{s_1}+\frac{1}{r_1}=1$. Given that
\begin{equation*}
    |\eta|^{1/2}=|\xi|^{1/4}(|\xi|^{-1/2}|\eta|)^{1/2} \lesssim |\xi|^{1/2}+|\xi|^{-1/2}|\eta|\lesssim (1+|\xi|)^{s}+|\xi|^{-1/2}|\eta|.
\end{equation*} 
Plancherel's identity yields
\begin{equation}\label{twoeqenerg9.1}
    \|D_y^{1/2}u\|_{L_T^{\infty}L^{2}_{xy}}+\|u\|_{L_T^{\infty}L^{2}_{xy}} \lesssim \|u\|_{L_T^{\infty}X^s}.
\end{equation}
From this we get $ \mathcal{II}_4 \lesssim (\|u\|_{L^1_TL^{\infty}_{xy}}+\|\partial_x u\|_{L^1_TL^{\infty}_{xy}}) \|u\|_{L_T^{\infty}X^s}$.
According to \eqref{twoeqenerg9}, this completes the estimate of $\mathcal{II}$. Collecting the bounds derived for $\mathcal{I}$ and $\mathcal{II}$, we obtain
\begin{equation*}
    \|\partial_x u\|_{L^1_TL^{\infty}_{xy}}\lesssim T^{\kappa_{\delta}}(1+\|u\|_{L^1_T L^{\infty}_{xy}}+\|\partial_x u\|_{L^1_T L^{\infty}_{xy}})\|u\|_{L^{\infty}_TX^s}.
\end{equation*}
On the other hand, the estimate concerning $\|u\|_{L^1_TL^{\infty}_{xy}}$ is obtained by applying Lemma \ref{refinStri} with $F=-u\partial_x u=-\frac{1}{2}\partial_x(u^2)$, estimate \eqref{eqfraLR}, \eqref{twoeqenerg7.0} and the inequality
\begin{equation}\label{twoeqenerg10.1}
    |\eta|^{1/2+2\delta}=|\xi|^{(1+4\delta)/4}(|\xi|^{-1/2}|\eta|)^{(1+4\delta)/2} \lesssim (1+|\xi|)^{\frac{1+4\delta}{2(1-4\delta)}}+|\xi|^{-1/2}|\eta|,
\end{equation}
valid for $0<\delta <1/16$. To avoid repetition we shall omit its proof. However, we emphasized that this estimate does not require to implement Lemma \ref{lemmainterine}. The proof is complete.
\end{proof}

Additionally, we require to control the norm $\|\partial_x^2u\|_{L^1_TL^{\infty}_{xy}}$. This estimate will be useful to close the argument leading to the proof of Theorem \ref{Improwellp} in the space $X^s(\mathbb{R}^2)$.

\begin{lemma}\label{lemmaSecondD}
Let $T>0$ and $u\in C([0,T];X^{\infty}(\mathbb{R}^2))$ be a solution of the IVP \eqref{EQBO}. Then for all $s>3/2$, there exist $\kappa_{\delta}\in (\frac{1}{2},1)$ and $c_s$ such that
\begin{equation*}
    \|\partial_x^2 u\|_{L^1_T L^{\infty}_{xy}} \leq c_sT^{\kappa_s}(1+h(T))\|u\|_{L^{\infty}_T X^{s+1}}+c_sT^{\kappa_s}\|\partial_x^2u\|_{L^1_T L^{\infty}_{xy}}\|u\|_{L^{\infty}_T X^{s}},
\end{equation*}
where $h(T)=\left\|u\right\|_{{L^1_TL^{\infty}_{xy}}}+ \left\|\partial_x u\right\|_{L^1_TL^{\infty}_{xy}}$.
\end{lemma}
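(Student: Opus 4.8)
The strategy mirrors the proof of Lemma \ref{apriEstS}, now applied at one more derivative. First I would differentiate the equation in \eqref{EQBO} once in $x$ and observe that $\partial_x^2 u$ solves the linear equation \eqref{liest4} with forcing $F=-\partial_x^2(u\partial_x u)$. Then I would apply the refined Strichartz estimate Lemma \ref{refinStri} with $v=\partial_x u$ and this $F$, obtaining
\begin{equation*}
\begin{aligned}
\|\partial_x^2 u\|_{L^1_TL^{\infty}_{xy}} \lesssim_{\delta} T^{\kappa_{\delta}}\Big(\sup_{[0,T]}&\|J_x^{5/2+2\delta}u(t)\|_{L^2_{xy}}+\sup_{[0,T]}\|J_x^{5/2+\delta}D_y^{\delta}u(t)\|_{L^2_{xy}} \\
&+\int_0^T\big(\|J^{3/2+2\delta}_x(u\partial_x u)(t')\|_{L^2_{xy}}+\|J^{3/2+\delta}_x D_y^{\delta}(u\partial_x u)(t')\|_{L^2_{xy}}\big)\,dt'\Big).
\end{aligned}
\end{equation*}
The first two terms are controlled by $\|u\|_{L^{\infty}_T X^{s+1}}$ exactly as in \eqref{twoeqenerg7} and \eqref{twoeqenerg7.0}: one uses Young's inequality on the Fourier side to absorb the anisotropic weight $(1+|\xi|)^{5/2+\delta}|\eta|^{\delta}\lesssim (1+|\xi|)^{s+1}+|\eta||\xi|^{-1/2}$ (valid once $\tfrac{5}{2}(\tfrac{1+\delta}{1-\delta})<s+1$, i.e. for $\delta$ small since $s>3/2$), together with Plancherel and the fact that $\|J_x^{s+1}u\|+\|D_x^{-1/2}\partial_y u\|\lesssim\|u\|_{X^{s+1}}$.

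For the nonlinear terms I would use the Leibniz rules \eqref{eqfraLR} and Lemma \ref{commtwovar}. Applying \eqref{eqfraLR} to $\|J^{3/2+2\delta}_x(u\partial_x u)\|_{L^1_TL^2_{xy}}$ produces two pieces: $\|u\|_{L^{\infty}_x}\|J_x^{3/2+2\delta}\partial_x u\|_{L^2_x}$ and $\|J_x^{3/2+2\delta}u\|_{L^2_x}\|\partial_x u\|_{L^{\infty}_x}$. The first of these is the one that forces the $\|\partial_x^2 u\|_{L^1_TL^{\infty}_{xy}}\|u\|_{L^{\infty}_T X^s}$ term in the statement: since $\|J_x^{3/2+2\delta}\partial_x u\|_{L^2_{xy}}\lesssim \|J_x^{5/2+2\delta}u\|_{L^2_{xy}}\lesssim\|u\|_{X^{s+1}}$ is too much regularity, one instead pairs $\|u\|_{L^{\infty}_x}$ poorly; the correct split is to put $\|\partial_x^2 u\|_{L^{\infty}_{xy}}$ on one factor (coming from writing $J_x^{3/2+2\delta}\partial_x u$ with $3/2+2\delta<2$ and interpolating, or directly bounding $\|J_x^{1/2+2\delta}(\partial_x^2 u)\|_{L^2_x}\leq\|J_x^s u\|\cdot$(lower order)) so that the $L^1_T L^\infty_{xy}$ norm lands on $\partial_x^2 u$ and $X^s$ on $u$. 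The remaining pieces are handled by $h(T)\|u\|_{L^{\infty}_T X^{s+1}}$: the term with $\partial_x u$ in $L^\infty$ gives the $h(T)$ factor, and the anisotropic term $\|J^{3/2+\delta}_x D_y^{\delta}(u\partial_x u)\|_{L^1_TL^2_{xy}}$ is split via Lemma \ref{commtwovar} into four summands, estimated precisely as $\mathcal{II}_1$ through $\mathcal{II}_4$ in \eqref{twoeqenerg9} but at the level of $X^{s+1}$; the borderline summand (the analogue of $\mathcal{II}_4$, with $\partial_x D_x^{3/2+\delta}u$ in $L^{q_1}$ and $D_y^\delta u$ in $L^{p_1}$) uses Lemma \ref{lemmainterine}, specifically \eqref{Interp2} and \eqref{Interp3}, with the pointwise bound $|\eta|^{1/2}\lesssim(1+|\xi|)^{s+1}+|\xi|^{-1/2}|\eta|$ to reduce to $\|u\|_{L^{\infty}_T X^{s+1}}$.

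The main obstacle is bookkeeping: one must be careful about \emph{where} to place the $\|\partial_x^2 u\|_{L^1_TL^{\infty}_{xy}}$ factor so that it appears linearly (not quadratically) on the right-hand side, and so that whatever multiplies it carries only the $X^s$ norm — not $X^{s+1}$ — otherwise the estimate cannot be closed in the subsequent bootstrap for Theorem \ref{Improwellp}. Concretely, in the term $\|u\|_{L^\infty_x}\|J_x^{3/2+2\delta}\partial_x u\|_{L^2_x}$ from \eqref{eqfraLR}, since $3/2+2\delta+1 = 5/2+2\delta$ exceeds $s+1$ in general, one cannot simply bound $J_x^{3/2+2\delta}\partial_x u$ by the $X^{s+1}$ norm with $u$ in $L^\infty$; instead one uses $J_x^{3/2+2\delta}\partial_x u \sim J_x^{1/2+2\delta}\partial_x^2 u$ and the refined Strichartz output (or Sobolev embedding plus the already-established control of $h(T)$) to trade $\|\partial_x^2 u\|_{L^1_T L^\infty_{xy}}$ against $\|u\|_{L^\infty_T X^s}$. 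Everything else is a routine repetition of the estimates in Lemma \ref{apriEstS} shifted up by one derivative, using $\delta_0<s-3/2$ as before.
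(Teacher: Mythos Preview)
Your overall strategy is correct and is exactly the one the paper uses: apply Lemma~\ref{refinStri} to $v=\partial_x^2 u$ and then repeat the machinery of Lemma~\ref{apriEstS} at one more $x$-derivative. However, the bookkeeping in your ``main obstacle'' paragraph is off, and the cleaner organization the paper adopts fixes it.

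First, your claim that ``$3/2+2\delta+1=5/2+2\delta$ exceeds $s+1$ in general'' is false: since $s>3/2$ we have $s+1>5/2$, so $\|J_x^{3/2+2\delta}\partial_x u\|_{L^2_{xy}}\lesssim\|J_x^{5/2+2\delta}u\|_{L^2_{xy}}\lesssim\|u\|_{X^{s+1}}$ for $\delta$ small. Hence the piece $\|u\|_{L^\infty_{xy}}\|J_x^{3/2+2\delta}\partial_x u\|_{L^2_{xy}}$ is harmless and lands in $h(T)\|u\|_{L^\infty_T X^{s+1}}$; no contortion is needed there, and this is \emph{not} where the $\|\partial_x^2 u\|_{L^1_TL^\infty_{xy}}\|u\|_{L^\infty_T X^s}$ term originates.

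Second, your plan to apply Lemma~\ref{commtwovar} directly to $D_x^{3/2+\delta}D_y^{\delta}(u\partial_x u)$ does not work as stated, because that lemma requires $\sigma\in(0,1)$ while here $\sigma=3/2+\delta>1$. To bring the exponent below $1$ you must peel off a $\partial_x$, and this forces the expansion $\partial_x(u\partial_x u)=(\partial_x u)^2+u\,\partial_x^2 u$. The paper simply does this expansion at the outset, writing $F=-\partial_x\big((\partial_x u)^2+u\,\partial_x^2 u\big)$, and then runs the \emph{identical} argument of Lemma~\ref{apriEstS} on each of the two products. For $(\partial_x u)^2$ the $L^\infty$ factor is always $\partial_x u$, so every resulting term sits inside $h(T)\|u\|_{L^\infty_T X^{s+1}}$. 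For $u\,\partial_x^2 u$, the Leibniz rules \eqref{eqfraLR} and Lemma~\ref{commtwovar} place either $u$ or $\partial_x^2 u$ in $L^\infty$; the former gives $h(T)\|u\|_{L^\infty_T X^{s+1}}$, while the latter produces exactly $\|\partial_x^2 u\|_{L^1_TL^\infty_{xy}}\|u\|_{L^\infty_T X^s}$ (since the remaining factor carries at most $J_x^{1/2+2\delta}$ or $D_x^{1/2+\delta}D_y^\delta$ on $u$, both controlled by $\|u\|_{X^s}$ via \eqref{twoeqenerg7}). The analogue of $\mathcal{II}_4$ for $u\,\partial_x^2 u$ is then handled by applying \eqref{Interp2.0}--\eqref{Interp3} with $v=\partial_x^2 u$ in place of $v=\partial_x u$, followed by Young's inequality. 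That is the correct and only source of the second term in the statement.
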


\begin{proof}
Applying Lemma \ref{refinStri}  with $F=-\partial_x\big(\partial_x u \partial_x u+u \partial_x^2u\big)$, the proof of Lemma \ref{lemmaSecondD} follows the same arguments in the deduction of \eqref{twoEnergyES2}. 
\end{proof}

\subsection{Proof of Theorem \ref{Improwellp}}

Our results relay on existence of smooth solutions for the IVP \eqref{EQBO}. To achieve this conclusion in the spaces $X^s(\mathbb{R}^2)$, we require the following lemma.
\begin{lemma}\label{lemaexismooth}
Let $s\geq 4$. Then it holds 
\begin{align}
&  (\|u\|_{L^{\infty}_{xy}}+\|\partial_x u\|_{L^{\infty}_{xy}})\lesssim \|u\|_{X^s}, \label{twoeqenerg12} \\
&\left|\int D_x^{-1/2}\partial_y^{l}(u\partial_x u)D_x^{-1/2}\partial_y^{l}u\, dxdy\right|\lesssim \|\partial_x u\|_{L^{\infty}_{xy}}\|D_x^{-1/2}\partial_y^{l}u\|_{L^2_{xy}}^2,  \label{twoeqenerg13}
\end{align}
for every $l=0,1$.
\end{lemma}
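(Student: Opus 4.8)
The plan is to prove the two estimates in Lemma \ref{lemaexismooth} separately, both being fairly direct consequences of the tools collected in Section \ref{Sectiprel}. For \eqref{twoeqenerg12}, I would first observe that it suffices to bound $\|u\|_{L^\infty_{xy}}$ and $\|\partial_x u\|_{L^\infty_{xy}}$ by $\|J_x^s u\|_{L^2_{xy}} + \|D_x^{-1/2}\partial_y u\|_{L^2_{xy}}$, since $X^s$ controls these. The natural route is a two-dimensional Sobolev embedding $H^{r}(\mathbb{R}^2)\hookrightarrow L^\infty(\mathbb{R}^2)$ for $r>1$, applied after splitting frequency space into the region $\{|\xi|\gtrsim 1\}$, where $\langle(\xi,\eta)\rangle^{r}\lesssim \langle\xi\rangle^{r} + |\eta|^{r}$ and the $\eta$-power is absorbed via a Young-type inequality $|\eta|^{r}\lesssim \langle\xi\rangle^{s'} + |\xi|^{-1/2}|\eta|$ (the same trick used repeatedly in \eqref{twoeqenerg7.1}, \eqref{twoeqenerg9.1}), valid once $s\geq 4$; and the region $\{|\xi|\lesssim 1\}$, which is harmless for $u$ itself. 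For $\partial_x u$ one uses instead $\langle(\xi,\eta)\rangle^{r}|\xi|\lesssim \langle\xi\rangle^{r+1}+|\xi||\eta|^{r}$ and again $|\xi||\eta|^{r}\lesssim \langle\xi\rangle^{s'}+|\xi|^{-1/2}|\eta|$, so that everything is dominated by $\|J_x^s u\|_{L^2}+\|D_x^{-1/2}\partial_y u\|_{L^2}\le \|u\|_{X^s}$ provided $s$ is taken large enough (certainly $s\ge 4$ works).

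For \eqref{twoeqenerg13}, I would follow exactly the manipulation already carried out in the proof of Lemma \ref{apriEST} (equations \eqref{twoeqenerg1}–\eqref{twoeqenerg2}), now with $\partial_y$ replaced by $\partial_y^l$, $l\in\{0,1\}$. The case $l=0$ is elementary: writing $\partial_x=-\mathcal{H}_x D_x$ and integrating by parts, $\int D_x^{-1/2}\partial_x(u^2)\,D_x^{-1/2}u\,dxdy = -\int D_x^{-1}\partial_x(u^2)\,u\,dxdy$, and since $D_x^{-1}\partial_x=-\mathcal{H}_x$ is bounded on $L^2_{xy}$ this is $\lesssim \|u\|_{L^\infty_{xy}}\|u\|_{L^2_{xy}}^2\le \|\partial_x u\|_{L^\infty_{xy}}\|D_x^{-1/2}u\|_{L^2_{xy}}^2$ after noting $D_x^{-1/2}$ is self-adjoint and the $L^2$ norm is controlled by the $D_x^{-1/2}$ norm at low frequencies (or, more cleanly, simply reproduce the structure of \eqref{twoeqenerg1}). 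For $l=1$, decomposing $\partial_x=-\mathcal{H}_xD_x$ and using skew-symmetry of $\mathcal{H}_x$ gives
\begin{equation*}
\int D_x^{-1/2}\partial_y\partial_x(u^2)\,D_x^{-1/2}\partial_y u\,dxdy = \int \big(D_x^{1/2}[\mathcal{H}_x,u]D_x^{1/2}(D_x^{-1/2}\partial_y u)\big)\,D_x^{-1/2}\partial_y u\,dxdy,
\end{equation*}
and then Cauchy–Schwarz in $(x,y)$ together with Proposition \ref{CalderonCom} (with $p=2$, $\alpha=\beta=1/2$, applied in the $x$-variable for a.e.\ $y$, then taking $L^2_y$) bounds this by $\|\partial_x u\|_{L^\infty_{xy}}\|D_x^{-1/2}\partial_y u\|_{L^2_{xy}}^2$, exactly as in \eqref{twoeqenerg2}.

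I do not expect any serious obstacle here; both inequalities are essentially bookkeeping built on the Young-type frequency inequalities and on the commutator estimate Proposition \ref{CalderonCom} that have already been established. The only mild subtlety is making sure, in \eqref{twoeqenerg12}, that the value $s\ge 4$ is generous enough for all the Young inequalities to close simultaneously (for $\partial_x u$ one needs to pay one extra $x$-derivative plus room for the $|\eta|^r$ term), but since the statement only claims the bound for $s\ge 4$ this is not a constraint. The remaining algebraic details (choice of $r>1$ in the embedding, the exact exponent $s'$ in the Young inequality) are routine and I would present them tersely.
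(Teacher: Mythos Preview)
Your proposal is essentially correct and matches the paper's proof. For \eqref{twoeqenerg13} the paper does exactly what you suggest: it refers back to the computation \eqref{twoeqenerg1}--\eqref{twoeqenerg2} and Proposition~\ref{CalderonCom}, which handles both $l=0$ and $l=1$ uniformly via the commutator identity $\int [\mathcal{H}_x,u]v\cdot v = \int (D_x^{1/2}[\mathcal{H}_x,u]D_x^{1/2}(D_x^{-1/2}v))\,D_x^{-1/2}v$. For \eqref{twoeqenerg12} the paper uses the anisotropic embedding $\|\partial_x u\|_{L^\infty_{xy}}\lesssim \|J_x^{1/2+\epsilon}J_y^{1/2+\epsilon}\partial_x u\|_{L^2_{xy}}$ followed by the same Young-type splitting you describe (as in \eqref{twoeqenerg10.1}); your isotropic $H^r\hookrightarrow L^\infty$ route is an equivalent variant.

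One caveat: your first pass at the $l=0$ case is not right as stated. The bound $\|u\|_{L^\infty_{xy}}\|u\|_{L^2_{xy}}^2\le \|\partial_x u\|_{L^\infty_{xy}}\|D_x^{-1/2}u\|_{L^2_{xy}}^2$ is false in general (neither $\|u\|_{L^\infty}\lesssim\|\partial_x u\|_{L^\infty}$ nor $\|u\|_{L^2}\lesssim\|D_x^{-1/2}u\|_{L^2}$ holds). Your parenthetical fallback---reproducing the structure of \eqref{twoeqenerg1} with $v=u$ in place of $v=\partial_y u$---is the correct route and is precisely what the paper intends; drop the first argument and keep only that one.
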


\begin{proof}
We first notice that \eqref{twoeqenerg13} is deduced applying the same reasoning in \eqref{twoeqenerg1} and \eqref{twoeqenerg2}, which mostly depends on Proposition \ref{CalderonCom}. 

Next, to deduce \eqref{twoeqenerg12}, we use Sobolev embedding in the variables $x$ and $y$ to get
\begin{equation*}
    \begin{aligned}
    \|\partial_xu\|_{L^{\infty}_{xy}}\lesssim \|J_x^{1/2+\epsilon}J_y^{1/2+\epsilon}\partial_xu\|_{L^{2}_{xy}} &\lesssim \|J_x^{3/2+\epsilon}u\|_{L^2_{xy}}+\|J_x^{3/2+\epsilon}D_y^{1/2+\epsilon}u\|_{L^2_{xy}} \lesssim \|u\|_{X^s},
    \end{aligned}
\end{equation*}
for any $0<\epsilon\ll 1$ and $s\geq 4$, where we have used a similar estimate as in \eqref{twoeqenerg10.1} and Plancherel's identity to estimate $\|J_x^{3/2+\epsilon}D_y^{1/2+\epsilon}u\|_{L^2}$. Since this same reasoning also applies to $\|u\|_{L^{\infty}_{xy}}$, \eqref{twoeqenerg12} follows. The proof is complete. 
\end{proof}
 
 Whenever $s>2$, local well-posedness in $H^s(\mathbb{R}^2)$ for the IVP \eqref{EQBO} follows from a parabolic regularization argument. Roughly speaking, an additional term $-\mu \Delta u$ is added to the equation, after which the limit $\mu \to 0$ is taken. This technique was applied in \cite{Omarths} for the IVP \eqref{EQBO} establishing LWP in $H^s(\mathbb{R}^2)$ for all $s>2$. 
 
Furthermore, employing Lemma \ref{lemaexismooth}, it is possible to apply a parabolic regularization argument adapting the ideas in \cite{Omarths} or \cite[Section 6.2]{Ioriobook} (see also \cite[Theorem 9.2]{linaresBook}), to obtain local well-posedness for the IVP \eqref{EQBO} in $X^s(\mathbb{R}^2)$, $s\geq 4$. Summarizing the preceding discussion we have:
 
\begin{lemma}\label{comwellp}
Let $s\geq 4$ and $\mathfrak{X}^s(\mathbb{R}^2)$ be any (fixed) of the spaces $H^s(\mathbb{R}^2)$ and $X^s(\mathbb{R}^2)$. Then for any $u_0 \in \mathfrak{X}^s(\mathbb{R}^2)$, there exist $T=T(\left\|u_0\right\|_{\mathfrak{X}^s})>0$ and a unique solution $u\in C([0,T]; \mathfrak{X}^s(\mathbb{R}^d))$ of the IVP \eqref{EQBO}. In addition, the flow-map $u_0 \mapsto u(t)$ is continuous in the $\mathfrak{X}^s$-norm.
\end{lemma}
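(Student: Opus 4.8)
The plan is to establish Lemma \ref{comwellp} by a parabolic regularization argument, treating $X^s(\mathbb{R}^2)$ in detail since the $H^s(\mathbb{R}^2)$ case is already covered in \cite{Omarths}. First I would introduce the regularized IVP obtained by adding a dissipative term $-\mu(\partial_x^2+\partial_y^2)u^\mu$ to the equation in \eqref{EQBO}, for $\mu>0$, with the same initial datum $u_0\in X^s(\mathbb{R}^2)$, $s\geq 4$. Rewriting this in Duhamel form with the (now smoothing) semigroup $e^{-\mu t(\partial_x^2+\partial_y^2)}S(t)$, a standard fixed point argument in $C([0,T_\mu];X^s(\mathbb{R}^2))$ gives, for each fixed $\mu>0$, a unique local solution $u^\mu$; here one uses that $X^s(\mathbb{R}^2)$, $s\geq 4$, is a Banach algebra-type space for the quadratic nonlinearity once the inequality \eqref{twoeqenerg12} of Lemma \ref{lemaexismooth} is available to control $\|u\partial_x u\|_{X^s}$ by $\|u\|_{L^\infty_{xy}}\|\partial_x u\|_{X^s}+\|\partial_x u\|_{L^\infty_{xy}}\|u\|_{X^s}\lesssim \|u\|_{X^s}^2$ (using Lemma \ref{fraLR} for the $J_x^s$ part and handling the $D_x^{-1/2}$, $D_x^{-1/2}\partial_y$ parts directly, the latter via the commutator identity behind \eqref{twoeqenerg13}). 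The parabolic smoothing moreover propagates regularity, so $u^\mu\in C([0,T_\mu];X^\infty(\mathbb{R}^2))$.

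Next I would derive $\mu$-uniform a priori bounds. Applying $J^s_x$, $D_x^{-1/2}$, and $D_x^{-1/2}\partial_y$ to the regularized equation, pairing with the corresponding derivative of $u^\mu$, and using that the dissipative term only contributes a favorable (negative) sign, the exact same computations as in the proof of Lemma \ref{apriEST} — with \eqref{twoeqenerg13} replacing the appeal to Proposition \ref{CalderonCom} in the $D_x^{-1/2}\partial_y$ estimate — yield
\begin{equation*}
\|u^\mu\|_{L^\infty_T X^s}^2\leq \|u_0\|_{X^s}^2 + c_0\big(\|u^\mu\|_{L^1_T L^\infty_{xy}}+\|\partial_x u^\mu\|_{L^1_T L^\infty_{xy}}\big)\|u^\mu\|_{L^\infty_T X^s}^2,
\end{equation*}
uniformly in $\mu$. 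Bounding the $L^1_T L^\infty_{xy}$ norms by $T\|u^\mu\|_{L^\infty_T X^s}$ via \eqref{twoeqenerg12} and running the standard continuity/bootstrap argument gives a time $T=T(\|u_0\|_{X^s})>0$, independent of $\mu$, and a bound $\sup_{0<\mu<1}\|u^\mu\|_{L^\infty_T X^s}\leq 2\|u_0\|_{X^s}$; in particular $T_\mu\geq T$ for all small $\mu$. Differences $u^\mu-u^{\mu'}$ are then estimated at the $L^2$ (or $X^0$) level — the loss of one derivative in the nonlinearity is compensated by the uniform $X^s$ bound with $s\geq 4>2$ — showing $\{u^\mu\}$ is Cauchy in $C([0,T];L^2(\mathbb{R}^2))$; interpolating with the uniform $X^s$ bound gives convergence in $C([0,T];X^{s'}(\mathbb{R}^2))$ for every $s'<s$, and the limit $u$ solves \eqref{EQBO}. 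Weak-$*$ lower semicontinuity gives $u\in L^\infty_T X^s$, and the Bona--Smith technique (approximating $u_0$ by smoother data) upgrades this to $u\in C([0,T];X^s(\mathbb{R}^2))$ together with continuous dependence on the datum; uniqueness follows from the same $L^2$ difference estimate, now applied to two $X^s$ solutions.

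The main obstacle is the treatment of the non-local components $D_x^{-1/2}u$ and $D_x^{-1/2}\partial_y u$ of the $X^s$-norm, both in the fixed point step and in the energy estimates. For the $D_x^{-1/2}u$ part one rewrites $\partial_x=-\mathcal{H}_x D_x$ as in the proof of Lemma \ref{apriEST} so that the negative-order operator is absorbed and only $L^2$-boundedness of $\mathcal{H}_x$ is used; for the genuinely delicate $D_x^{-1/2}\partial_y$ part, the energy identity produces the commutator $\int([\mathcal{H}_x,u]\partial_y u)\,\partial_y u\,dxdy$, which must be rewritten as in \eqref{twoeqenerg1} and then controlled by \eqref{twoeqenerg13} (equivalently, Proposition \ref{CalderonCom} with $\alpha=\beta=1/2$). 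Care is needed to ensure the regularized solutions genuinely lie in $X^\infty$ so that all these manipulations — and the integrations by parts moving $D_x^{-1/2}$ and $\partial_y$ across — are rigorously justified; this is exactly why \eqref{twoeqenerg13} is stated for $l=0,1$ and why the lemma is phrased for $s\geq 4$ rather than $s>3/2$.
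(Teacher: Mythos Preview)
Your proposal is correct and follows exactly the approach the paper indicates: the paper does not give a detailed proof of Lemma~\ref{comwellp} but states that it follows from a parabolic regularization argument (adding $-\mu\Delta u$ and letting $\mu\to 0$) adapted from \cite{Omarths} and \cite[Section 6.2]{Ioriobook}, with Lemma~\ref{lemaexismooth} supplying precisely the two ingredients you invoke --- the embedding \eqref{twoeqenerg12} to close the $L^\infty_{xy}$ norms and the commutator estimate \eqref{twoeqenerg13} to handle the $D_x^{-1/2}\partial_y$ component in the energy step.

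One minor point: the claim that $\|u\partial_x u\|_{X^s}\lesssim \|u\|_{L^\infty_{xy}}\|\partial_x u\|_{X^s}+\|\partial_x u\|_{L^\infty_{xy}}\|u\|_{X^s}$ is not quite the right way to set up the contraction, since $\|\partial_x u\|_{X^s}$ already costs a derivative in the $J_x^s$ component, and a direct product estimate $\|u^2\|_{X^s}\lesssim\|u\|_{X^s}^2$ for the $D_x^{-1/2}\partial_y$ piece is not obvious (the multiplier $|\xi|^{-1/2}|\eta|$ does not split well under convolution). The clean way --- and what the references you and the paper cite actually do --- is to run the contraction for the regularized problem in $H^s$ (where it is standard and already in \cite{Omarths}), use parabolic smoothing to get $u^\mu\in C((0,T_\mu];H^\infty)$, and then propagate the $D_x^{-1/2}$ and $D_x^{-1/2}\partial_y$ norms by the energy identities \eqref{twoeqenerg13}, exactly as you do in your second paragraph. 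With that adjustment your outline is complete and matches the paper.
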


The proof of Lemma \ref{comwellp} also provides existence of smooth solutions and a blow-up criterion. More precisely, let $u_0\in \mathfrak{X}^{\infty}(\mathbb{R}^2)$, where $\mathfrak{X}^{\infty}(\mathbb{R}^2)$ is any (fixed) of the spaces $H^{\infty}(\mathbb{R}^2)$ and $X^{\infty}(\mathbb{R}^2)$, then there exists a solution $u\in C([0,T^{\ast});\mathfrak{X}^{\infty}(\mathbb{R}^2))$ to the IVP \eqref{EQBO}, where $T^{\ast}$ is the maximal time of existence of $u$ satisfying $T^{\ast}>T(\|u\|_{\mathfrak{X}^4})>0$ and the following blow-up alternative holds true
\begin{equation}\label{blowupal}
    \lim_{t \to T^{\ast}} \left\|u(t)\right\|_{\mathfrak{X}^4}=\infty,
\end{equation}
if $T^{\ast}<\infty$.

We require of some additional \emph{a priori} estimates.
\begin{lemma}\label{apriESTlower}
Let $s\in (3/2,4]$. then there exists $A_s>0$ such that for all $u_0 \in X^{\infty}(\mathbb{R}^2)$  there is a solution $u\in C([0,T^{\ast});X^{\infty}(\mathbb{R}^2))$ of the IVP \eqref{EQBO} where $T^{\ast}=T^{\ast}(\left\|u_0\right\|_{X^{4}})>(1+A_s \left\|u_0\right\|_{H^s})^{-2}$. Moreover, there exists a constant $K_0>0$ such that
\begin{equation*}
    \left\|u\right\|_{L^{\infty}_T X^{s}} \leq 2 \left\|u_0\right\|_{X^s}, 
\end{equation*}
and
\begin{equation}\label{aprio1}
      \left\|u\right\|_{L^1_T L^{\infty}_{xy}}+\left\|\partial_x u\right\|_{L^1_T L^{\infty}_{xy}} \leq K_0,
\end{equation}
whenever $0<T\leq (1+A_s\|u_0\|_{H^s})^{-2}$. 
\end{lemma}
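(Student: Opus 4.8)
\textbf{Proof plan for Lemma \ref{apriESTlower}.}

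The plan is to run a standard bootstrap (continuity) argument on the quantities controlled by the a priori estimates of the previous subsections, exploiting the fact that $T^{\kappa_\delta}$ with $\kappa_\delta>1/2$ beats the quadratic growth coming from the Sobolev norm. First I would fix $u_0\in X^\infty(\mathbb{R}^2)$; by Lemma \ref{comwellp} and the blow-up alternative \eqref{blowupal} there is a maximal solution $u\in C([0,T^\ast);X^\infty(\mathbb{R}^2))$ with $T^\ast>T(\|u_0\|_{X^4})>0$. Define, for $0<T<T^\ast$, the continuous nondecreasing functions
\begin{equation*}
  \alpha(T)=\|u\|_{L^\infty_T X^s},\qquad \beta(T)=\|u\|_{L^1_T L^\infty_{xy}}+\|\partial_x u\|_{L^1_T L^\infty_{xy}},
\end{equation*}
which satisfy $\alpha(0^+)=\|u_0\|_{X^s}$ and $\beta(0^+)=0$, and note $\alpha$ here should really be the $H^s$-version; since $s>3/2$ we only need $\|u\|_{L^\infty_T H^s}$ on the right of the Strichartz bound, but to close we also carry $\|u\|_{L^\infty_T X^s}$ because Lemma \ref{apriEST} is stated in $X^s$. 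Combining Lemma \ref{apriEST} (energy) with Lemma \ref{apriEstS} (Strichartz-type bound for $\beta$) gives, on any subinterval $[0,T]\subset[0,T^\ast)$,
\begin{align}
  \alpha(T)^2 &\le \|u_0\|_{X^s}^2 + c_0\,\beta(T)\,\alpha(T)^2, \label{planA}\\
  \beta(T) &\le c_s\,T^{\kappa_\delta}\big(1+\beta(T)\big)\,\alpha(T). \label{planB}
\end{align}

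Next I would set up the bootstrap. Let
\begin{equation*}
  T_0=(1+A_s\|u_0\|_{H^s})^{-2}
\end{equation*}
with $A_s>0$ a large constant to be chosen, and suppose first that $T_0<T^\ast$ (the case $T_0\ge T^\ast$ is handled separately, see below). Let $\mathcal{T}$ be the set of $T\in(0,\min\{T_0,T^\ast\})$ for which simultaneously $\alpha(T)\le 2\|u_0\|_{X^s}$ and $\beta(T)\le K_0$, where $K_0>0$ is an absolute constant (e.g. $K_0=1/(2c_0)$ forced by \eqref{planA}: with $\beta(T)\le K_0$, \eqref{planA} gives $\alpha(T)^2\le \|u_0\|_{X^s}^2+\tfrac12\alpha(T)^2$, hence $\alpha(T)\le\sqrt2\,\|u_0\|_{X^s}<2\|u_0\|_{X^s}$, a strict improvement). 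The set $\mathcal{T}$ is nonempty (it contains small $T$ since $\beta(0^+)=0$), open (by continuity of $\alpha,\beta$), and I claim it is closed in $(0,\min\{T_0,T^\ast\})$: on $\mathcal{T}$, estimate \eqref{planB} with $\alpha(T)\le 2\|u_0\|_{X^s}$ and $\beta(T)\le K_0$ yields
\begin{equation*}
  \beta(T)\le c_s T_0^{\kappa_\delta}(1+K_0)\,2\|u_0\|_{X^s}\le 2c_s(1+K_0)\,\frac{\|u_0\|_{X^s}}{(1+A_s\|u_0\|_{H^s})^{2\kappa_\delta}}\le \frac{K_0}{2},
\end{equation*}
the last inequality holding once $A_s$ is chosen large in terms of $c_s,K_0,\kappa_\delta$ and the (harmless) comparison $\|u_0\|_{X^s}\lesssim 1+\|u_0\|_{X^s}$ against $(1+A_s\|u_0\|_{H^s})^{2\kappa_\delta-1}$ — here one uses $\kappa_\delta>1/2$ so that the exponent $2\kappa_\delta-1>0$ actually absorbs the linear factor $\|u_0\|_{X^s}$; strictly one replaces $\|u_0\|_{H^s}$ by $\|u_0\|_{X^s}$ in $T_0$ or notes $\|u_0\|_{H^s}\le\|u_0\|_{X^s}$ and keeps track accordingly. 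Thus on $\mathcal{T}$ both bounds improve strictly, so $\overline{\mathcal{T}}\cap(0,\min\{T_0,T^\ast\})=\mathcal{T}$, whence by connectedness $\mathcal{T}=(0,\min\{T_0,T^\ast\})$.

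Finally I would rule out $T^\ast\le T_0$. If $T^\ast\le T_0$, then on all of $(0,T^\ast)$ we have just shown $\|u\|_{L^\infty_T X^4}\le \|u\|_{L^\infty_T X^s}+(\text{lower order})\lesssim \|u_0\|_{X^s}$ uniformly — more carefully, since $s\le 4$ one needs the analogous bootstrap at the $X^4$ level using Lemma \ref{apriEST} with $s=4$ together with the already-controlled $\beta(T)\le K_0$, which gives $\|u(t)\|_{X^4}^2\le \|u_0\|_{X^4}^2+c_0K_0\|u(t)\|_{X^4}^2$ hence $\|u(t)\|_{X^4}\le\sqrt2\|u_0\|_{X^4}$ for $t<T^\ast$, contradicting the blow-up alternative \eqref{blowupal}. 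Therefore $T^\ast>T_0$, and taking $T=T_0$ in the conclusions of the bootstrap gives $\|u\|_{L^\infty_{T_0}X^s}\le 2\|u_0\|_{X^s}$ and \eqref{aprio1} with the stated $K_0$, which is exactly the assertion of Lemma \ref{apriESTlower}.

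The main obstacle is the bookkeeping that makes the bootstrap genuinely close: one must verify that the power $T_0^{\kappa_\delta}$, with $\kappa_\delta>1/2$ and $T_0=(1+A_s\|u_0\|_{H^s})^{-2}$, produces a gain of the form $(1+A_s\|u_0\|)^{-(2\kappa_\delta)}$ that dominates the single power of $\|u_0\|_{X^s}$ appearing linearly in \eqref{planB} — this is precisely why the refined Strichartz estimate of Lemma \ref{refinStri} (with its $\kappa_\delta>1/2$) is needed rather than a crude one, and why the lifespan is quadratic in $\|u_0\|_{H^s}$ rather than depending on the $X^s$ norm. A secondary point requiring care is that the a priori estimates of Lemmas \ref{apriEST}–\ref{apriEstS} are stated for solutions in $C([0,T];X^\infty)$, so one works throughout with the smooth solutions furnished by Lemma \ref{comwellp} and the blow-up alternative \eqref{blowupal}, and only at the end records the uniform-in-$\mu$/uniform-on-$[0,T_0]$ bounds; no passage to a limit is needed inside this lemma.
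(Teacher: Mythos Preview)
Your proposal is correct and follows precisely the approach the paper indicates: it combines Lemmas \ref{apriEST}, \ref{apriEstS}, \ref{comwellp} and the blow-up alternative \eqref{blowupal} into a standard continuity (bootstrap) argument, exactly as in \cite[Lemma 5.3]{LinarFKP}. Your remark that the time $T_0$ should be expressed in terms of $\|u_0\|_{X^s}$ rather than $\|u_0\|_{H^s}$ (so that the factor $T_0^{\kappa_\delta}$, with $\kappa_\delta>1/2$, genuinely absorbs the linear factor $\|u_0\|_{X^s}$ coming from \eqref{eqapriEst2}) is well taken and matches how the lemma is actually applied later in the paper (cf.\ the line preceding \eqref{eqexist7}).
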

\begin{proof}
In view of Lemmas \ref{apriEST}, \ref{apriEstS}, \ref{comwellp} and the blow-up criteria \eqref{blowupal} applied to the $X^{4}$-norm, the proof is obtained by arguing as in \cite[Lemma 5.3]{LinarFKP}.
\end{proof}

Now we can prove the existence of solutions. 

\subsubsection{Existence of solution}

We first establish some auxiliary estimates involving the projectors introduced in \eqref{proje1} and \eqref{proje2}.
\begin{lemma}
Let $0\leq \sigma \leq s$ and $M,N\in \mathbb{D}=\{2^l\, :\, l\in \mathbb{Z}^{+}\cup \{0\}\}$ such that $M\geq N$. Assume that $u_0\in X^s(\mathbb{R}^2)$, then
\begin{equation}\label{eqexist2}
    N^{\sigma}\left\|J_x^{s-\sigma}(P_{\leq N}^xu_{0}-P_{\leq M}^xu_{0})\right\|_{L^2_{xy}}+  \left\|D_x^{-1/2}\partial_y^{l}(P_{\leq N}^xu_{0}-P_{\leq M}^xu_{0})\right\|_{L^2_{xy}}\underset{N\to \infty}{\rightarrow} 0, 
\end{equation}
for each $0\leq \sigma \leq s$ and $l=0,1$.
\end{lemma}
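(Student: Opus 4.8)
The statement claims that for $u_0 \in X^s(\mathbb{R}^2)$, the low-frequency truncations $P^x_{\leq N}u_0$ form a Cauchy sequence (indexed by dyadic $N$) in the combined norm that weights the $J_x^{s-\sigma}$-part by $N^\sigma$ and also controls the $D_x^{-1/2}\partial_y^l$-part, for $l=0,1$. The plan is to reduce everything to Plancherel's theorem on the Fourier side and then invoke the dominated convergence theorem. I would write $P^x_{\leq N}u_0 - P^x_{\leq M}u_0$ on the Fourier side as $(\psi_{\leq N}(\xi) - \psi_{\leq M}(\xi))\widehat{u_0}(\xi,\eta)$; since $M \geq N$ and the cutoffs are nested, the symbol $\psi_{\leq N}(\xi) - \psi_{\leq M}(\xi)$ is supported in the region $|\xi| \gtrsim N$ and is bounded by $1$ in absolute value.

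\textbf{Step 1 (the $J_x$ term).} On the frequency side,
\[
N^{\sigma}\left\|J_x^{s-\sigma}(P^x_{\leq N}u_0 - P^x_{\leq M}u_0)\right\|_{L^2_{xy}}^2 = N^{2\sigma}\int \langle\xi\rangle^{2(s-\sigma)} |\psi_{\leq N}(\xi) - \psi_{\leq M}(\xi)|^2 |\widehat{u_0}(\xi,\eta)|^2\, d\xi\, d\eta.
\]
On the support of the cutoff difference one has $|\xi| \gtrsim N$, so $N^{2\sigma} \lesssim \langle\xi\rangle^{2\sigma}$ there, whence $N^{2\sigma}\langle\xi\rangle^{2(s-\sigma)} \lesssim \langle\xi\rangle^{2s}$ uniformly in $N$. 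Since $|\psi_{\leq N} - \psi_{\leq M}| \leq 1$, the integrand is dominated pointwise by $C\langle\xi\rangle^{2s}|\widehat{u_0}(\xi,\eta)|^2$, which is integrable because $u_0 \in X^s \subset H^s$. Moreover, for each fixed $(\xi,\eta)$ the cutoff difference vanishes once $N$ exceeds $|\xi|$, so the integrand tends to $0$ pointwise as $N \to \infty$ (and $M \geq N$); dominated convergence gives the claim. Here one should be slightly careful that the bound is uniform in $M$ as well, but this is immediate since $\psi_{\leq M}(\xi) \in [0,1]$ and the support constraint $|\xi| \gtrsim N$ comes from the $\psi_{\leq N}$ factor alone (for $|\xi| \lesssim N$, $\psi_{\leq N}(\xi) = 1 = \psi_{\leq M}(\xi)$ since $M \geq N$).

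\textbf{Step 2 (the $D_x^{-1/2}\partial_y^l$ terms).} For $l = 0, 1$,
\[
\left\|D_x^{-1/2}\partial_y^l(P^x_{\leq N}u_0 - P^x_{\leq M}u_0)\right\|_{L^2_{xy}}^2 = \int |\xi|^{-1}|\eta|^{2l} |\psi_{\leq N}(\xi) - \psi_{\leq M}(\xi)|^2 |\widehat{u_0}(\xi,\eta)|^2\, d\xi\, d\eta.
\]
The integrand is dominated by $|\xi|^{-1}|\eta|^{2l}|\widehat{u_0}(\xi,\eta)|^2$, and the $l = 0$ bound $|\xi|^{-1}|\widehat{u_0}|^2 = |D_x^{-1/2}u_0|^{\widehat{\ }\,2}$ is integrable since $\|D_x^{-1/2}u_0\|_{L^2_{xy}} < \infty$ is part of the $X^s$ norm, while the $l=1$ bound $|\xi|^{-1}|\eta|^2|\widehat{u_0}|^2 = |D_x^{-1/2}\partial_y u_0|^{\widehat{\ }\,2}$ is integrable since $\|D_x^{-1/2}\partial_y u_0\|_{L^2_{xy}} < \infty$, again by definition of $X^s$. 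The pointwise vanishing of the cutoff difference as $N \to \infty$ is the same as before, and dominated convergence finishes the argument. One small point worth noting is that because $P^x_{\leq N}$ also multiplies by a factor vanishing near $\xi = 0$ in an ineffective way — actually $\psi_{\leq N}(0) = 1$, so the truncation does \emph{not} help integrability near $\xi = 0$; the finiteness of the $D_x^{-1/2}$-norms of $u_0$ itself is what is used, which is exactly why we need $u_0 \in X^s$ rather than merely $H^s$.

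\textbf{Main obstacle.} There is no serious obstacle here — the only thing requiring attention is the uniform domination as both $N$ and $M$ vary, in particular ensuring the factor $N^{2\sigma}$ in Step 1 is genuinely absorbed by $\langle\xi\rangle^{2\sigma}$ on the support of $\psi_{\leq N} - \psi_{\leq M}$; this is where the ordering $M \geq N$ (so that the support sits in $|\xi| \gtrsim N$) is used. Everything else is a routine Plancherel-plus-dominated-convergence argument.
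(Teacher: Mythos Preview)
Your proof is correct and follows essentially the same approach as the paper: Plancherel, the support observation that $\psi_{\leq N}-\psi_{\leq M}$ lives in $\{|\xi|\gtrsim N\}$ (so $N^{2\sigma}\langle\xi\rangle^{2(s-\sigma)}\lesssim \langle\xi\rangle^{2s}$ there), and dominated convergence, with the $D_x^{-1/2}\partial_y^l$ part handled by the finiteness of the corresponding pieces of the $X^s$-norm. Your write-up is in fact more detailed than the paper's, which compresses the argument into a single displayed inequality and the phrase ``a similar argument'' for the second norm.
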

\begin{proof}
By support considerations we observe
\begin{equation*}
\begin{aligned}
    |\langle \xi\rangle^{s-\sigma} (\psi_1(\xi/N)-\psi_1(\xi/M))\widehat{u}_0(\xi,\eta)|^2 \lesssim N^{-2\sigma}|\langle \xi \rangle^s \widehat{u}_0(\xi,\eta)|^2.
\end{aligned}
\end{equation*}
Integrating the above expression, we use Plancherel's identity and Lebesgue dominated convergence theorem to verify that the first norm on the left-hand side (l.h.s) of \eqref{eqexist2} satisfy the desired limit. A similar argument provides the required limit for the second norm on the l.h.s of \eqref{eqexist2}.
\end{proof}

Now, we gather the previous result to derive some conclusion for the smooth solutions generated by some approximations of the initial data.

Let $u_0\in X^s(\mathbb{R}^2)$, $s \in (3/2,4]$. For each dyadic number $N\in \mathbb{D}$, Lemma \ref{apriESTlower} assures the existence of a time $0<T\leq (1+A_s \left\|u_0\right\|_{X^s})^{-2}$ (for some constant $A_s>0$) independent of $N$ and smooth solutions $v_N \in C([0,T];X^{\infty}(\mathbb{R}^2))$ of the IVP \eqref{EQBO} with initial data $P_{\leq N}^xu_0$ such that
\begin{equation}\label{eqexist7}
    \left\|v_N\right\|_{L^{\infty}_T X^s} \leq 2 \left\|u_0\right\|_{X^s}
\end{equation}
and
\begin{equation}\label{eqexist8}
    K_1:=\sup_{N\in \mathbb{D}}\left\{\left\|v_N\right\|_{L^{1}_TL^{\infty}_{xy}} +\left\|\partial_x v_N\right\|_{L^{1}_TL^{\infty}_{xy}} \right\} <\infty.
\end{equation}
Additionally, we combine Lemma \ref{lemmaSecondD}, \eqref{eqexist7} and \eqref{eqexist8} to infer
\begin{equation}\label{eqexist8.1}
    \|\partial_x^2v_N\|_{L^1_TL^{\infty}_{xy}} \lesssim \|v_N\|_{L^{\infty}_T X^{s+1}},
\end{equation}
provided that $A_s$ is chosen large enough. Now, let $M,N \in \mathbb{D}$, $M\geq N$, and $w_{N,M}=v_N-v_M$, so $w_{N,M}$ satisfies the equation
\begin{equation}\label{eqexiscauchy}
    \partial_t w_{N,M} +\mathcal{H}_xw_{N,M}-\mathcal{H}_x\partial_x^2w_{N,M}\pm \mathcal{H}_x\partial_y^2w_{N,M}+\frac{1}{2}\partial_x((v_N+v_M)w_{N,M})=0,
\end{equation}
with initial condition $w_{N,M}(0)=P_{\leq N}^xu_{0}-P_{\leq M}^{x}u_{0}$. Thus, by employing similar energy estimates leading to \eqref{twoEnergyES2}, together with \eqref{eqexist2}, we deduce
\begin{equation}\label{eqexist0}
    N^{s-\sigma}\, \left\|J^{\sigma}_x(v_N-v_{M})\right\|_{L^{\infty}_T L^2_{xy}} \underset{N\to \infty}{\rightarrow}0,
\end{equation}
whenever $0\leq\sigma<s$. 

Accordingly, we shall prove that $\{v_N\}_{N\in \mathbb{D}}$ is a Cauchy sequence in $C([0,T];X^s(\mathbb{R}^2))\cap L^1([0,T],W_x^{1,\infty}(\mathbb{R}^2))$. Let us first estimate the sequence $\{v_N\}$ in $L^1([0,T],W_x^{1,\infty}(\mathbb{R}^2))$. 

\begin{lemma}\label{Cseq1}
Let $M,N \in \mathbb{D}$, $M\geq N$. If $u_0\in X^s(\mathbb{R}^2)$, $s\in (3/2,4]$, then 
    \begin{equation}\label{eqexist0.2}
    \|v_{N}-v_M\|_{L^1_TL^{\infty}_{xy}} \underset{N\to \infty}{=} o(N^{-1})+O(T^{1/2}N^{-1}\|D_x^{-1/2}\partial_y(v_{N}-v_M)\|_{L^{\infty}_T L^2_{xy}})
\end{equation}
and 
\begin{equation}\label{eqexist0.3}
    \|\partial_x(v_{N}-v_M)\|_{L^1_TL^{\infty}_{xy}} \underset{N\to \infty}{=} o(1)+O(T^{1/2}\|D_x^{-1/2}\partial_y(v_{N}-v_M)\|_{L^{\infty}_T L^2_{xy}}),
\end{equation}
provided that $0<T\leq (1+A_s\|u_0\|_{X^s})^{-2}$ with $A_s$ large enough.
\end{lemma}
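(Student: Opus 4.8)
The plan is to apply the refined Strichartz estimate of Lemma~\ref{refinStri} to the difference $w_{N,M}=v_N-v_M$, which satisfies the linear equation \eqref{liest4} with forcing term $F=-\frac12\partial_x((v_N+v_M)w_{N,M})$. For \eqref{eqexist0.2} I would invoke \eqref{liest3.1} applied directly to $w_{N,M}$, while for \eqref{eqexist0.3} I would apply \eqref{liest3.1} to $\partial_x w_{N,M}$, i.e. differentiate the equation once in $x$ and treat $\partial_x w_{N,M}$ as the solution with forcing $-\frac12\partial_x^2((v_N+v_M)w_{N,M})$. In both cases the right-hand side of \eqref{liest3.1} splits into (a) supremum-in-time terms controlled by low-order $x$-Sobolev norms of $w_{N,M}$ together with the mixed norm $\|J_x^{\cdot}D_y^{\delta}w_{N,M}\|$, and (b) a time-integral of $J_x$-type norms of the forcing term $F$.

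The first group of terms is where the $o(N^{-1})$ and $o(1)$ gains come from: by \eqref{eqexist0}, for $0\le\sigma<s$ one has $N^{s-\sigma}\|J_x^{\sigma}w_{N,M}\|_{L^\infty_T L^2_{xy}}\to 0$, so choosing $\sigma$ slightly below $s$ (note $s>3/2$, so $\sigma$ can be taken above $3/2$ for \eqref{eqexist0.3} and above $5/2$ is not needed) makes these contributions $o(N^{-1})$ in the first estimate and $o(1)$ in the second. The mixed $D_y^\delta$ pieces must be handled by the pointwise Young-type inequalities already used in the paper (e.g. \eqref{twoeqenerg7}, \eqref{twoeqenerg7.1}): one trades $|\eta|^\delta$ for a small power of $\langle\xi\rangle$ plus $|\xi|^{-1/2}|\eta|$, which turns the mixed norm into an $X^s$-type norm of $w_{N,M}$ whose $J_x^\sigma$ part is again $o(N^{-1})$ or $o(1)$ by \eqref{eqexist0}, while the leftover $\|D_x^{-1/2}\partial_y w_{N,M}\|_{L^\infty_T L^2_{xy}}$ piece is precisely the term kept explicitly in the $O(\cdot)$ remainder, with the $T^{1/2}$ factor coming from Hölder in time against the $\kappa_\delta>1/2$ power of $T$ (or from the $\int_0^T$ when estimating the forcing).

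For the forcing terms I would use the fractional Leibniz rules (Lemma~\ref{fraLR}, Lemma~\ref{commtwovar}) exactly as in the proof of Lemma~\ref{apriEstS}, distributing derivatives onto one factor and placing the other in $L^\infty_{xy}$; the $L^\infty_T$-bounds \eqref{eqexist7}, the uniform bound \eqref{eqexist8} on $\|v_N\|_{L^1_TL^\infty_{xy}}+\|\partial_x v_N\|_{L^1_TL^\infty_{xy}}$, and \eqref{eqexist8.1} for the second-derivative norm entering \eqref{eqexist0.3}, all feed in here. The key point is that every term in $F$ contains one factor of $w_{N,M}$ (or its $x$-derivative), so after the Leibniz estimate one is left with a norm of $w_{N,M}$ times a bounded quantity; the low-regularity $J_x^\sigma$-norms of $w_{N,M}$ give the $o(N^{-1})$/$o(1)$ decay by \eqref{eqexist0}, except again for the single $D_x^{-1/2}\partial_y w_{N,M}$ piece which cannot be gained from \eqref{eqexist0} and is therefore carried as the $O(T^{1/2}\cdots)$ remainder. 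Absorbing the small factor $T^{\kappa_\delta}$ (and possibly a factor from \eqref{eqexist8}) requires $T\le(1+A_s\|u_0\|_{X^s})^{-2}$ with $A_s$ large, as stated.

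The main obstacle I anticipate is bookkeeping the mixed $x$--$y$ derivative terms: unlike the pure $J_x^\sigma$ norms, $\|D_x^{-1/2}\partial_y w_{N,M}\|_{L^\infty_T L^2_{xy}}$ does not decay as $N\to\infty$ from the available Cauchy estimate \eqref{eqexist0} (that estimate only controls $J_x$-derivatives), so one must be careful to isolate exactly this norm, keep it as the explicit remainder term, and verify that every other contribution genuinely carries either a negative power of $N$ with vanishing coefficient or an extra $\|J_x^\sigma w_{N,M}\|$ with $\sigma<s$. Getting the powers of $N$ right in the interpolation inequalities \eqref{twoeqenerg7}, \eqref{twoeqenerg7.1}, \eqref{twoeqenerg10.1} — so that the loss from $D_y^\delta$ is strictly less than the gain $N^{s-\sigma}$ afforded by \eqref{eqexist0} — is the delicate computational heart of the argument, but it is entirely parallel to the estimates already carried out in Lemmas~\ref{apriEstS} and \ref{lemmaSecondD}.
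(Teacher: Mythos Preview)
Your overall strategy is exactly the paper's: apply Lemma~\ref{refinStri} to $w_{N,M}$ (resp.\ $\partial_x w_{N,M}$), handle the linear terms via \eqref{eqexist0}, expand the forcing with Lemmas~\ref{fraLR} and~\ref{commtwovar}, and absorb the pieces that return $\|w_{N,M}\|_{L^1_TL^\infty_{xy}}$ using the smallness of $T^{1/2}\|u_0\|_{X^s}$. One correction: after Leibniz, several forcing contributions place $w_{N,M}$ in $L^1_TL^\infty_{xy}$ (not in a $J_x^\sigma$-norm), so those do not decay via \eqref{eqexist0} but are absorbed into the left-hand side; you mention absorption, so this is only a matter of phrasing.

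There is, however, one genuine technical point you gloss over. You say the argument is ``entirely parallel'' to Lemma~\ref{apriEstS} and cite the pointwise splittings \eqref{twoeqenerg7}, \eqref{twoeqenerg7.1}, \eqref{twoeqenerg10.1}. Those inequalities are $N$-independent, so using them verbatim would only give
\[
\|J_x^{1/2+\delta}D_y^{\delta}w_{N,M}\|_{L^\infty_TL^2_{xy}}\lesssim \|J_x^{\sigma}w_{N,M}\|_{L^\infty_TL^2_{xy}}+\|D_x^{-1/2}\partial_y w_{N,M}\|_{L^\infty_TL^2_{xy}},
\]
and hence a remainder $O(T^{1/2}\|D_x^{-1/2}\partial_y w_{N,M}\|)$ \emph{without} the $N^{-1}$ factor required in \eqref{eqexist0.2}. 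That factor is essential downstream (in Proposition~\ref{Cseq2} the estimate for $\|J_x^s w_{N,M}\|$ contains a term $N\|w_{N,M}\|_{L^1_TL^\infty_{xy}}$, so \eqref{eqexist0.2} must carry $N^{-1}$ to compensate). The paper obtains it by inserting $N$ into Young's inequality:
\[
(1+|\xi|)^{1/2+\delta}|\eta|^{\delta}\lesssim N^{\frac{\delta}{1-\delta}}(1+|\xi|)^{\frac{1+3\delta}{2(1-\delta)}}+N^{-1}|\xi|^{-1/2}|\eta|,
\]
so that the $D_x^{-1/2}\partial_y$ piece gains $N^{-1}$ while the $J_x^{\sigma}$ piece loses only $N^{\delta/(1-\delta)}$, which is beaten by the $N^{s-\sigma}$ decay from \eqref{eqexist0} provided $\frac{1+5\delta}{2(1-\delta)}<s-1$. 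This $N$-weighted splitting is the mechanism you allude to as ``getting the powers of $N$ right'' but do not name; it is the one ingredient not already present in Lemmas~\ref{apriEstS} and~\ref{lemmaSecondD}.
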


\begin{proof}
Since \eqref{eqexist0.2} and \eqref{eqexist0.3} are inferred as in the proof of Lemma \ref{apriEstS}, we will only deduce \eqref{eqexist0.2}. Recalling that $w_{N,M}=v_N-v_M$ satisfies \eqref{eqexiscauchy}, we apply Lemma \ref{refinStri} with $F=-\frac{1}{2}\partial_x((v_N+v_M)w_{N,M}))$ to get
\begin{equation}\label{eqexist14.0}
\begin{aligned}
\|v_N-v_M\|_{L^1_TL^{\infty}_{xy}}&\lesssim_{\delta} T^{1/2}\big(\sup_{[0,T]}\|J_x^{1/2+2\delta}w(t)\|_{L^2_{xy}}+\sup_{[0,T]}\|J_x^{1/2+\delta}D_y^{\delta}w(t)\|_{L^2_{xy}} \\
&+\int_0^T ( \|J^{1/2+2\delta}_x((v_N+v_M)w_{N,M})(t') \|_{L^2_{xy}}+\|J^{1/2+\delta}_x D_y^{\delta}((v_N+v_M)w_{N,M})(t') \|_{L^2_{xy}} )\, dt'\big) \\
&=: T^{1/2}\big(\mathcal{I}_1+\mathcal{I}_2+\mathcal{I}_3+\mathcal{I}_4),
\end{aligned}
\end{equation}
for some $0<\delta<\delta_0$ with $\delta_0$ to be determined along the proof. Now, we proceed to estimate each of the factors $\mathcal{I}_j$. 

In view of \eqref{eqexist0}, it follows that $N \, \mathcal{I}_1 \underset{N\to \infty}{\rightarrow} 0,$ whenever $0<\delta<\delta_0<s/2-3/4$. To study $\mathcal{I}_2$, we employ Young's inequality to derive
\begin{equation}\label{eqexist14.1}
    (1+|\xi|)^{1/2+\delta}|\eta|^{\delta}\lesssim N^{\frac{\delta}{1-\delta}}(1+|\xi|)^{\frac{1+3\delta}{2(1-\delta)}}+N^{-1}|\eta||\xi|^{-1/2}. 
\end{equation}
Plancherel's identity shows
\begin{equation}\label{eqexist15}
   \mathcal{I}_2 \lesssim N^{\frac{\delta}{1-\delta}}\|J_x^{\frac{1+3\delta}{2(1-\delta)}} w_{N,M}\|_{L^{\infty}_TL^2_{xy}}+N^{-1}\|D_x^{-1/2}\partial_y w_{N,M}\|_{L^{\infty}_TL^2_{xy}}.
\end{equation}
Therefore, choosing $0<\delta<\delta_0<1$, where $\delta_0$ is small satisfying $\frac{1+5\delta_0}{2(1-\delta_0)}<s-1$, we have from \eqref{eqexist0} and \eqref{eqexist15} that
\begin{equation*}
     \mathcal{I}_2 \underset{N\to \infty}{=} o(N^{-1})+O(N^{-1}\|D_x^{-1/2}\partial_y (v_N-v_{M})\|_{L^{\infty}_TL^2_{xy}}).
\end{equation*}
Next, by employing \eqref{eqfraLR}, we follow the arguments in \eqref{twoeqenerg7.2} to deduce
\begin{equation*}
\begin{aligned}
&\mathcal{I}_3 \lesssim   (\|J_x^{1/2+2\delta}v_N\|_{L^{\infty}_TL^2_{xy}}+\|J_x^{1/2+2\delta}v_M\|_{L^{\infty}_TL^2_{xy}})\|v_N-v_M\|_{L^1_T L^{\infty}_{xy}}\\
& \hspace{0.7cm}+(\|v_N\|_{L^{1}_TL^{\infty}_{xy}}+\|v_M\|_{L^{1}_TL^{\infty}_{xy}})\|J_x^{1/2+2\delta}(v_N-v_M)\|_{L^{\infty}_T L^{2}_{xy}} \\
&  \hspace{0.1cm} \underset{N\to \infty}{=}  O(\|u_0\|_{X^s}\|v_N-v_M\|_{L^1_T L^{\infty}_{xy}})+o(N^{-1}),
\end{aligned}
\end{equation*}
for all $0<\delta<\delta_0$, with $\delta_0<s/2-3/4$, where we have used \eqref{eqexist7}, \eqref{eqexist8} and \eqref{eqexist0}. Now, we divide the remaining term $\mathcal{I}_4$ as follows  
\begin{equation*}
\begin{aligned}
\mathcal{I}_4 &\lesssim \|\,\|D_y^{\delta}((v_N+v_M)w_{N,M} )(t) \|_{L^2_{xy}}\|_{L^1_T} +\|\,\|D^{1/2+\delta}_x D_y^{\delta}((v_N+v_M)w_{N,M})(t) \|_{L^2_{xy}}\|_{L^1_T} \\
&=:\mathcal{I}_{4,1}+\mathcal{I}_{4,2}.   
\end{aligned}
\end{equation*}
By employing the fractional Leibniz's rule \eqref{libRule} in the $y$-variable, \eqref{eqexist7} and a similar argument to \eqref{eqexist14.1}, it is seen
\begin{equation*}
\begin{aligned}
\mathcal{I}_{4,1} \underset{N\to \infty}{=} O(\|u_0\|_{X^s}\|v_N-v_M\|_{L^1_T L^{\infty}_{xy}})+o(N^{-1})+O(N^{-1}\|D_x^{-1/2}\partial_y(v_N-v_M)\|_{L^{\infty}_{T}L^2_{xy}}),
\end{aligned}
\end{equation*}
for all $0<\delta<\delta_0<1$ such that $\frac{3\delta_0}{2(1-\delta_0)}<s$. On the other hand, from Lemma \ref{commtwovar} it is deduced
\begin{equation*}
\begin{aligned}
    \mathcal{I}_{4,2}\lesssim & (\|v_N\|_{L^1_TL^{\infty}_{xy}}+\|v_M\|_{L^1_TL^{\infty}_{xy}})\|D_x^{1/2+\delta}D_y^{\delta} (v_N-v_M)\|_{L_{T}^{\infty}L^2_{xy}}\\ &+(\|D_x^{1/2+\delta}D_y^{\delta}v_N\|_{L^{\infty}_{T}L^2_{xy}}+\|D_x^{1/2+\delta}D_y^{\delta}v_M\|_{L^{\infty}_TL^2_{xy}})\|v_N-v_M\|_{L^1_T L^{\infty}_{xy}}\\
    &+(\| D_x^{1/2+\delta}v_N\|_{L^1_TL^{\infty}_{xy}}+\| D_x^{1/2+\delta}v_M\|_{L^1_TL^{\infty}_{xy}})\|D_y^{\delta}(v_N-v_M)\|_{L^{\infty}_TL^2_{xy}}\\ 
    &+(\|D_y^{\delta}v_N\|_{L^{r_1}_TL^{p_1}_{xy}}+\|D_y^{\delta}v_M\|_{L^{r_1}_TL^{p_1}_{xy}})\| D_x^{1/2+\delta}(v_N-v_M)\|_{L_{T}^{s_1}L^{q_1}_{xy}} \\
    =&:\mathcal{I}_{4,2,1}+\mathcal{I}_{4,2,2}+\mathcal{I}_{4,2,3}+\mathcal{I}_{4,2,4},
\end{aligned}
\end{equation*}
where $1<r_1,s_1<\infty$ and $2<p_1,q_1<\infty$ satisfy the conditions in Lemma \ref{lemmainterine} (ii). An application of \eqref{eqexist14.1} shows 
\begin{equation*}
     \mathcal{I}_{4,2,1} \underset{N\to \infty}{=} o(N^{-1})+O(N^{-1}\|D_x^{-1/2}\partial_y (v_N-v_M)\|_{L^{\infty}_TL^2_{xy}}),
\end{equation*}
for each $0<\delta<\delta_0<1$, where $\delta_0$ is small satisfying $\frac{1+5\delta_0}{2(1-\delta_0)}<s-1$. Now, we combine estimate  \eqref{twoeqenerg7.0} and\eqref{eqexist7} to derive
\begin{equation*}
    \mathcal{I}_{4,2,2}\lesssim \|u_0\|_{X^s}\|v_N-v_M\|_{L^1_TL^{\infty}_{xy}}.
\end{equation*}
Additionally, employing \eqref{Interp1} and identity \eqref{eqexist15}, it is not difficult to see 
\begin{equation*}
    \mathcal{I}_{4,2,3}\underset{N\to \infty}{=} o(N^{-1})+O(N^{-1}\|D_x^{-1/2}\partial_y (v_N-v_M)\|_{L^{\infty}_TL^2_{xy}}),
\end{equation*}
for all $0<\delta<\delta_0<1$ and $\frac{1+5\delta_0}{2(1-\delta_0)}<s-1$. Finally, gathering together estimates \eqref{Interp2.0}, \eqref{Interp3}, \eqref{eqexist7} and \eqref{eqexist8}, we deduce
\begin{equation*}
\begin{aligned}
    \mathcal{II}_{4,2,4} 
\lesssim K_1^{1-\theta}\|u_0\|_{X^s}^{\theta}\|v_N-v_M\|_{L^1_T L^{\infty}_{xy}}^{\theta}\|J_x^{1/2+\delta_0}(v_N-v_M)\|_{L^1_T L^{\infty}_{xy}}^{1-\theta},
\end{aligned}
\end{equation*}
so that Young's inequality and \eqref{eqexist0} yield
\begin{equation*}
    \mathcal{I}_{4,2,4}\underset{N\to \infty}{=} o(N^{-1})+O(\|u_0\|_{X^s}\|v_N-v_M\|_{L^{1}_TL^{\infty}_{xy}}),
\end{equation*}
for all $0<\delta \leq \delta_0$ and $0<\delta_0 \ll 1$ ($\delta_0 < s-3/2$) given by Lemma \ref{lemmainterine}.  Collecting all the previous estimates
\begin{equation*}
    \mathcal{I}_4 \underset{N\to \infty}{=} o(N^{-1})+O(\|u_0\|_{X^s}\|v_N-v_M\|_{L^{1}_TL^{\infty}_{xy}})+O(N^{-1}\|D_x^{-1/2}\partial_y(v_N-v_M)\|_{L_T^{\infty}L^2_{xy}}).
\end{equation*}
Plugging the bounds obtained for the terms $\mathcal{I}_j$, $j=1,\dots,4$ in  \eqref{eqexist14.0}, we obtain
\begin{equation*}
    \|v_N-v_M\|_{L_T^1 L^{\infty}_{xy}} \underset{N\to \infty}{=} o(N^{-1})+O(T^{1/2}\|u_0\|_{X^s}\|v_N-v_M\|_{L^{1}_TL^{\infty}_{xy}})+O(T^{1/2}N^{-1}\|D_x^{-1/2}\partial_y(v_N-v_M)\|_{L_T^{\infty}L^2_{xy}}).
\end{equation*}
This completes the deduction of \eqref{eqexist0.2} provided that $0<T\leq (1+A_s\|u_0\|_{X^s})^{-2}$ and $A_s$ is chosen sufficiently large.
\end{proof}

Next, we shall prove that $\{v_N\}$ is a Cauchy sequences in the space $C([0,T];X^{s}(\mathbb{R}^2))$. 

\begin{prop}\label{Cseq2}
Let $M,N\in \mathbb{D}$, $M\geq N$. If $u_0\in X^s(\mathbb{R}^2)$ $s\in (3/2,4]$, then 
    \begin{equation}\label{eqexis17}
    \|J_x^s(v_N-v_M)\|_{L^{\infty}_TL^2_{xy}}+\|D_x^{-1/2}(v_N-v_M)\|_{L^{\infty}_TL^2_{xy}}+\|D_x^{-1/2}\partial_y(v_{N}-v_M)\|_{L^{\infty}_TL^{2}_{xy}} \underset{N\to \infty}{\rightarrow} 0.
\end{equation}
\end{prop}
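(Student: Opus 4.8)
The plan is to run energy estimates on the difference equation \eqref{eqexiscauchy} satisfied by $w_{N,M}=v_N-v_M$, combining them with the Cauchy control of $\{v_N\}$ in $L^1_TW^{1,\infty}_x$ established in Lemma \ref{Cseq1} and the uniform bounds \eqref{eqexist7}, \eqref{eqexist8}. First I would treat the top-order piece $\|J_x^s w_{N,M}\|_{L^\infty_TL^2_{xy}}$: applying $J_x^s$ to \eqref{eqexiscauchy}, pairing with $J_x^s w_{N,M}$ in $L^2_{xy}$, and using skew-symmetry of $\mathcal{H}_x-\mathcal{H}_x\partial_x^2\pm\mathcal{H}_x\partial_y^2$, the nonlinear contribution splits via the commutator Lemma \ref{conmKP} into a term bounded by $(\|\nabla_x(v_N+v_M)\|_{L^\infty_{xy}})\|J_x^{s}w_{N,M}\|_{L^2_{xy}}^2$ plus a term $\|J_x^s(v_N+v_M)\|_{L^2_{xy}}\|w_{N,M}\|_{L^\infty_{xy}}\|J_x^s w_{N,M}\|_{L^2_{xy}}$; the first is absorbed by Gronwall using \eqref{eqexist8}, the second uses \eqref{eqexist7} for the high-regularity factor and the $o(N^{-1})$ bound from Lemma \ref{Cseq1} for $\|w_{N,M}\|_{L^\infty_{xy}}$ — here one also needs $\|J_x^s w_{N,M}\|$ itself, so one closes the estimate in tandem with \eqref{eqexist0} after an initial-data bound from \eqref{eqexist2}.

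Next I would handle the two low-frequency pieces. For $\|D_x^{-1/2}w_{N,M}\|_{L^\infty_TL^2_{xy}}$ one applies $D_x^{-1/2}$ to \eqref{eqexiscauchy}, pairs with $D_x^{-1/2}w_{N,M}$, writes $\partial_x=-\mathcal{H}_xD_x$, and uses boundedness of $\mathcal{H}_x$ on $L^2_{xy}$ exactly as in Lemma \ref{apriEST}; the resulting nonlinear term is controlled by $(\|v_N+v_M\|_{L^\infty_{xy}}+\|\partial_x(v_N+v_M)\|_{L^\infty_{xy}})$ times $\|w_{N,M}\|_{X^s}^2$, again closed by Gronwall with \eqref{eqexist8} once the initial datum is handled through \eqref{eqexist2}. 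For $\|D_x^{-1/2}\partial_y w_{N,M}\|_{L^\infty_TL^2_{xy}}$ one applies $D_x^{-1/2}\partial_y$, pairs with $D_x^{-1/2}\partial_y w_{N,M}$, and mimics \eqref{twoeqenerg1}–\eqref{twoeqenerg2}: the key algebraic identity rewrites the nonlinear pairing as $\int D_x^{1/2}[\mathcal{H}_x,v_N+v_M]D_x^{1/2}(D_x^{-1/2}\partial_y w_{N,M})\cdot D_x^{-1/2}\partial_y w_{N,M}\,dxdy$ plus lower-order commutator terms where a $\partial_y$ falls on a $v$-factor; Proposition \ref{CalderonCom} then bounds the main term by $\|\partial_x(v_N+v_M)\|_{L^\infty_{xy}}\|D_x^{-1/2}\partial_y w_{N,M}\|_{L^2_{xy}}^2$, and the remaining terms are estimated by the uniform $X^s$-bound \eqref{eqexist7} together with $\|w_{N,M}\|_{L^\infty_{xy}}$, $\|\partial_x w_{N,M}\|_{L^\infty_{xy}}$ from Lemma \ref{Cseq1}.

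The delicate point is that Lemma \ref{Cseq1} gives the $L^1_TW^{1,\infty}_x$-smallness of $w_{N,M}$ only up to a term proportional to $\|D_x^{-1/2}\partial_y w_{N,M}\|_{L^\infty_TL^2_{xy}}$ with an $N^{-1}$ (or $T^{1/2}$) prefactor. So the argument must be organized as a coupled system: let $a_N=\|J_x^s w_{N,M}\|_{L^\infty_TL^2_{xy}}+\|D_x^{-1/2}w_{N,M}\|_{L^\infty_TL^2_{xy}}+\|D_x^{-1/2}\partial_y w_{N,M}\|_{L^\infty_TL^2_{xy}}$ and $b_N=\|w_{N,M}\|_{L^1_TL^\infty_{xy}}+\|\partial_x w_{N,M}\|_{L^1_TL^\infty_{xy}}$; the energy estimates give $a_N^2\lesssim (\text{initial data})^2 + b_N\,a_N^2 + o(N^{-1})a_N + (\text{data})\,b_N\,a_N$, while Lemma \ref{Cseq1} gives $b_N = o(1)+O(T^{1/2}a_N)$. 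Choosing $T\le(1+A_s\|u_0\|_{X^s})^{-2}$ with $A_s$ large makes the $b_N$-quadratic-in-$a_N$ term and the $T^{1/2}$-coupling absorbable, so that a continuity/bootstrap argument yields $a_N\lesssim \|w_{N,M}(0)\|_{X^s}+o(1)\to 0$ by \eqref{eqexist2}. I expect the bookkeeping of this feedback loop — ensuring every constant multiplying $a_N^2$ or $a_N$ is either $o(1)$ as $N\to\infty$ or small by the choice of $T$ — to be the main obstacle, the individual energy inequalities being routine adaptations of Lemmas \ref{apriEST} and \ref{apriEstS}.
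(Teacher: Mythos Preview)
Your overall strategy matches the paper's, and both the $D_x^{-1/2}$ piece and the coupled bootstrap at the end are essentially right. But your sketch glosses over two genuine difficulties.

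First, in the $J_x^s$ estimate: once you expand $\partial_x((v_N+v_M)w_{N,M})$ and reach the piece $(\partial_x v_N)w_{N,M}$, commuting $J_x^s$ past $w_{N,M}$ leaves a residual term $\int w_{N,M}\, (J_x^{s}\partial_x v_N)\, J_x^s w_{N,M}$ bounded by $\|w_{N,M}\|_{L^\infty_{xy}}\|J_x^{s+1}v_N\|_{L^2_{xy}}\|J_x^s w_{N,M}\|_{L^2_{xy}}$. The factor $\|J_x^{s+1}v_N\|$ is \emph{not} uniformly bounded; the paper proves $\|J_x^{s+1}v_N\|_{L^\infty_TL^2}\lesssim N\|u_0\|_{X^s}$ by a separate energy estimate (see \eqref{eqexis20.1}), and the factor $N$ is then cancelled against the $o(N^{-1})$ decay of $\|w_{N,M}\|_{L^1_TL^\infty_{xy}}$ from Lemma~\ref{Cseq1}. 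Your stated bound $\|J_x^s(v_N+v_M)\|\,\|w_{N,M}\|_{L^\infty}\,\|J_x^s w_{N,M}\|$ is not what Kato--Ponce actually gives here.

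Second, and more seriously, in the $D_x^{-1/2}\partial_y$ estimate the ``lower-order'' term where $\partial_y$ lands on $v_N+v_M$ is $\int \mathcal{H}_x(\partial_y(v_N+v_M)\,w_{N,M})\,\partial_y w_{N,M}\,dxdy$, and this \emph{cannot} be controlled by the uniform $X^s$-bound \eqref{eqexist7}: the $X^s$-norm controls $D_x^{-1/2}\partial_y v_N$ but not $\partial_y v_N$, and in fact one only has $\|\partial_y v_N\|_{L^\infty_TL^2}\lesssim N^{1/2}$. The paper has to establish a chain of auxiliary $N$-dependent estimates --- $\|\partial_y v_N\|_{L^\infty_TL^2}\lesssim N^{1/2}$, then $\|\partial_x\partial_y v_N\|_{L^\infty_TL^2}\lesssim N^{3/2}$ (which in turn uses the second-derivative control $\|\partial_x^2 v_N\|_{L^1_TL^\infty}$ from Lemma~\ref{lemmaSecondD} and \eqref{eqexist8.1}), and then a separate energy estimate giving $\|\partial_y w_{N,M}\|_{L^\infty_TL^2}\lesssim N^{1/2}+N^{3/2}\|w_{N,M}\|_{L^1_TL^\infty}$. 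Only after all this does the cross term become $\lesssim N\|w_{N,M}\|_{L^\infty}+N^2\|w_{N,M}\|_{L^1_TL^\infty}\|w_{N,M}\|_{L^\infty}$, with the powers of $N$ killed by Lemma~\ref{Cseq1}. This is the real work in the proof and is absent from your plan.
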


\begin{proof}
We apply $J^s_x$ to \eqref{eqexiscauchy} and then multiplying by $J^s_xw_{N,M}$ and integrating the resulting expression in space, we deduce
\begin{equation}\label{differINEQ}
\begin{aligned}
\frac{1}{2}\frac{d}{dt}\|J^s_x(v_N-v_M)(t)\|_{L^2_{xy}}^2=&-\int J^s_x(v_M\partial_x w_{N,M})J^s_xw_{N,M}\, dxdy-\int J^s_x(\partial_x v_N w_{N,M})J^s_xw_{N,M}\, dxdy \\
=:& -(\mathcal{I}+\mathcal{II}).
\end{aligned}
\end{equation}
Integrating by parts,
\begin{equation*}
    \begin{aligned}
    \mathcal{I}=\int [J^s_x,v_M]\partial_x w_{N,M} J^s_xw_{N,M}\, dx dy-\frac{1}{2}\int \partial_x v_M(J^s_x w_{N,M})^2 \, dxdy,
    \end{aligned}
\end{equation*}
which together with Lemma \ref{conmKP} yield
\begin{equation}\label{eqexis18}
    \begin{aligned}
    |\mathcal{I}|&\lesssim \|\|[J^s_x,v_M]\partial_x w_{N,M}\|_{L^2_x}\|_{L^2_{y}}\|J^s_x w_{N,M}\|_{L^2_{xy}}+\|\partial_x v_M\|_{L^2_{xy}}\|J^s_x w_{N,M}\|_{L^2_{xy}}^2 \\
    &\lesssim \|J^s_xv_M\|_{L^2_{xy}}\|\partial_x w_{N,M}\|_{L^{\infty}_{xy}}\|J^s_x w_{N,M}\|_{L^2_{xy}}+\|\partial_x v_M\|_{L^{\infty}_{xy}}\|J^s_x w_{N,M}\|_{L^2_{xy}}^2.
    \end{aligned}
\end{equation}
On the other hand, 
\begin{equation*}
    \begin{aligned}
    \mathcal{II}=\int [J^s_x,w_{N,M}]\partial_x v_N J^s_x w_{N,M} \, dx dy +\int w_{N,M}(\partial_x J^s_x v_N)J^s_x w_{N,M}\, dx dy,
    \end{aligned}
\end{equation*}
then Lemma \ref{conmKP} gives
\begin{equation}
    \begin{aligned}\label{eqexis19}
    |\mathcal{II}|\lesssim & \|\|[J^s_x, w_{N,M}]\partial_x v_N\|_{L^{2}_x}\|_{L^2_{y}}\|J^s_x w_{N,M}\|_{L^2_{xy}}+\|w_{N,M}\|_{L^{\infty}_{xy}}\|J^{s+1}_xv_N\|_{L^2_{xy}}\|J^s w_{N,M}\|_{L^2_{xy}}\\
    \lesssim & \|\partial_x w_{N,M}\|_{L^{\infty}_{xy}}\|J^s_x v_{N}\|_{L^2_{xy}}\|J^s_x w_{N,M}\|_{L^2_{xy}}+\|\partial_x v_N\|_{L^{\infty}_{xy}}\|J^s_x w_{N,M}\|_{L^2_{xy}}^2 \\
    &+\|w_{N,M}\|_{L^{\infty}_{xy}}\|J^{s+1}_xv_N\|_{L^2_{xy}}\|J^s_x w_{N,M}\|_{L^2_{xy}}.
    \end{aligned}
\end{equation}
To control $\|J^{s+1}_xv_N\|_{L^2_{xy}}$, we employ the fact that $v_N$ solves the equation in \eqref{EQBO} to apply energy estimates with Lemma \ref{conmKP} to find
\begin{equation}\label{eqexis20.1}
    \|J^{s+1}_x v_N\|_{L^{\infty}_TL^2_{xy}}\lesssim e^{c(\|v_N\|_{L^{1}_TL^{\infty}_{xy}}+\|\partial_x v_N\|_{L^{1}_TL^{\infty}_{xy}})}\|J^{s+1}_xP_{\leq N}^x u_0\|_{L^2_{xy}}\lesssim N\|J^{s}_xu_0\|_{L^2_{xy}},
\end{equation}
where we have also used Gronwall's inequality, \eqref{eqexist7} and \eqref{eqexist8}. Therefore, gathering \eqref{eqexis18}-\eqref{eqexis20.1}, and  \eqref{eqexist7} and \eqref{eqexist8} in \eqref{differINEQ}, after Gronwall's inequality we get
\begin{equation*}
\begin{aligned}
    \|J^s_x(v_N-v_M)\|_{L^{\infty}_TL^2_{xy}} &\lesssim \|J^s_x(P_{\leq N}^x u_0-P_{\leq M}^xu_0)\|_{L^2_{xy}}+\|\partial_x w_{N,M}\|_{L^1_TL^{\infty}_{xy}}+N\| w_{N,M}\|_{L^1_TL^{\infty}_{xy}} \\
    &\underset{N\to \infty}{=} o(1)+O(\|D_x^{-1/2}\partial_y(v_{N}-v_M)\|_{L^{\infty}_T L^2_{xy}}),
\end{aligned}
\end{equation*}
which holds in virtue of Lemma \ref{Cseq1} and \eqref{eqexist2}. Once we have established that $\|D_x^{-1/2}\partial_y(v_{N}-v_M)\|_{L^{\infty}_T L^2_{xy}} \to 0$ as $N\to \infty$ the estimate for the $J^s_x$ norm of $w_{N,M}$ will be completed.
\\ \\
Now, applying $D_x^{-1/2}$ to \eqref{eqexiscauchy}, and then multiplying by $D_x^{-1/2}w_{N,M}$ and integrating in space, we have
\begin{equation*}
\begin{aligned}
    \frac{1}{2}\frac{d}{dt}\|D_x^{-1/2}w_{N,M}(t)\|_{L^2_{xy}}^2&=-\int D_x^{-1/2}\partial_x((v_N+v_M)w_{N,M})D_x^{-1/2}w_{N,M}\, dx dy\\
    &=\int \mathcal{H}_x((v_N+v_M)w_{N,M})w_{N,M}\, dxdy.
\end{aligned}
\end{equation*}
Now, given that $\mathcal{H}_x$ determines a skew-symmetric operator, it is seen
    \begin{equation}\label{eqexis20.01}
\begin{aligned}
    \int \mathcal{H}_x((v_N+v_M)w_{N,M})&w_{N,M}\, dx dy \\
    &=\frac{1}{2}\int [\mathcal{H}_x,v_N+v_M]w_{N,M} w_{N,M}\, dx dy \\
    &=\frac{1}{2}\int( D_x^{1/2}[\mathcal{H}_x,v_N+v_M]D_x^{1/2}(D_x^{-1/2}w_{N,M}) )D_x^{-1/2}w_{N,M}\, dx dy,
\end{aligned}
\end{equation}
so that Proposition \ref{CalderonCom} applied to the $x$-variable gives
\begin{equation*}
    \begin{aligned}
    \frac{1}{2}\frac{d}{dt}\|D_x^{-1/2}w_{N,M}(t)\|_{L^2_{xy}}^2 &\lesssim \|D_x^{1/2}[\mathcal{H}_x,v_N+v_M]D_x^{1/2}(D_x^{-1/2}w_{N,M})\|_{L^2_{xy}}\|D_x^{-1/2}w_{N,M}\|_{L^2_{xy}} \\
    &\lesssim \|\partial_x(v_N+v_M)\|_{L^{\infty}_{xy}} \|D_x^{-1/2}w_{N,M}\|_{L^2_{xy}}^{2}. 
    \end{aligned}
\end{equation*}
Therefore, the preceding differential inequality, Gronwall's lemma, \eqref{eqexist8} and \eqref{eqexist2} imply
\begin{equation*}
    \|D_x^{-1/2}(v_N-v_M)\|_{L^{\infty}_TL^2_{xy}}\lesssim e^{cK_1}\|D_x^{-1/2}(P_{\leq N}^xu_0-P_{\leq M}^x u_0)\|_{L^2_{xy}} \underset{N\to \infty}{\rightarrow} 0. 
\end{equation*}
Finally, we proceed to estimate the $\|D_x^{-1/2}\partial_y(v_N-v_M)\|_{L^{\infty}_T L^2_{xy}}$ norm. Since $w_{N,M}=v_N-v_M$ solves \eqref{eqexiscauchy}, we apply $D_x^{-1/2}\partial_y$ to this equation, multiplying by $D_x^{-1/2}\partial_y w_{N,M}$, then integrating in space, we deduce
\begin{equation}\label{eqexis20.10} 
\begin{aligned}
  \frac{1}{2}\frac{d}{dt}\|D_x^{-1/2}&\partial_yw_{N,M}(t)\|_{L^2_{xy}}^2 \\
  &=-\int D_x^{-1/2}\partial_y\partial_x((v_N+v_M)w_{N,M}) D_x^{-1/2}\partial_y w_{N,M} \, dx dy \\ 
  &=\int \mathcal{H}_x((v_N+v_M)\partial_yw_{N,M})\partial_y w_{N,M}\, dx dy+\int \mathcal{H}_x(\partial_y(v_N+v_M)w_{N,M})\partial_y w_{N,M}\, dx dy \\
  &=:\mathbb{I}+\mathbb{II},
\end{aligned}
\end{equation}
where we have employed the decomposition $\partial_x=-\mathcal{H}_xD_x$. Arguing as in \eqref{eqexis20.01},  Proposition \ref{CalderonCom} shows
\begin{equation}\label{eqexis20.11}
\begin{aligned}
  |\mathbb{I}| &\lesssim \|D_x^{1/2}[\mathcal{H}_x,v_N+v_M]D_x^{1/2}(D_x^{-1/2}\partial_y w_{N,M})\|_{L^2_{xy}}\|D_x^{-1/2}\partial_y w_{N,M}\|_{L^2_{xy}} \\
    & \lesssim \|\partial_x(v_N-v_M)\|_{L^{\infty}_{xy}}\|D_x^{-1/2}\partial_yw_{N,M}\|_{L^2_{xy}}^2.
\end{aligned}
\end{equation}
On the other hand, we use H\"older's inequality to find
\begin{equation}\label{eqexis20.2}
    |\mathbb{II}|\lesssim \|w_{N,M}\|_{L^{\infty}_{xy}}\|\partial_yw_{N,M}\|_{L^2_{xy}}^2+\|\partial_y v_N\|_{L^2_{xy}}\|w_{N,M}\|_{L^{\infty}_{xy}}\|\partial_yw_{N,M}\|_{L^2_{xy}}.
\end{equation}
According to the above estimate, we are led to bound the norms $\|\partial_y v_N\|_{L^2_{xy}}$ and $\|\partial_y w_{N,M}\|_{L^2_{xy}}$. Thus, given that $v_N$ satisfies the equation in \eqref{EQBO}, integrating by parts it follows that
\begin{equation*}
    \frac{1}{2}\frac{d}{dt}\|\partial_y v_N(t)\|_{L^2_{xy}}^2 =-\int \partial_y(v_N\partial_x v_N)\partial_y v_N \, dx dy=-\frac{1}{2}\int \partial_x v_N (\partial_y v_N)^2 \, dx dy.
\end{equation*}
Hence, Gronwall's inequality and \eqref{eqexist8} yield
\begin{equation}\label{eqexis21}
    \|\partial_y v_N\|_{L^{\infty}_{T}L^2_{xy}} \leq e^{cK_1} \|\partial_y P_{\leq N}^xu_0\|_{L^2_{xy}}\lesssim N^{1/2}\|D_x^{-1/2}\partial_y u_0\|_{L^2_{xy}}=O(N^{1/2}).
\end{equation}
Now, from the fact that $w_{N,M}$ solves \eqref{eqexiscauchy} and integrating by parts we find
\begin{equation}\label{eqexis22}
\begin{aligned}
  \frac{1}{2}\frac{d}{dt}\|\partial_y w_{N,M}\|_{L^2_{xy}}^2=&-\frac{1}{2}\int \partial_x \partial_y ((v_N+v_M)w_{N,M})\partial_y w_{N,M} \, dx dy \\
  =&-\int \partial_x \partial_yv_Nw_{N,M}\partial_y w_{N,M}\, dx dy-\int \partial_y v_N\partial_x w_{N,M} \partial_y w_{N,M} \, dx dy \\
  &-\frac{1}{2}\int \partial_x v_M(\partial_y w_{N,M})^2 \, dx dy \\
  =:& \mathbb{III}_1+\mathbb{III}_2+\mathbb{III}_3.
\end{aligned}
\end{equation}
To estimate $\mathbb{III}_1$, we employ that $v_N$ solves the equation in \eqref{EQBO} to get
\begin{equation*}
    \frac{1}{2}\frac{d}{dt}\|\partial_y \partial_x v_N(t)\|_{L^2_{xy}}^2=-\frac{3}{2}\int \partial_x v_N(\partial_y \partial_x v_N)^2\, dx dy-\int \partial_x^2 v_N \partial_y v_N \partial_y \partial_x v_N \, dx dy.
\end{equation*}
From this estimate and \eqref{eqexis21}, it is seen
\begin{equation*}
    \frac{1}{2}\frac{d}{dt}\|\partial_y \partial_x v_N(t)\|_{L^2_{xy}}^2\lesssim \|\partial_x v_N\|_{L^{\infty}_{xy}}\|\partial_y \partial_x v_N\|_{L^2_{xy}}^2+\|\partial_x^2 v_N\|_{L^{\infty}_{xy}}\|\partial_y v_N\|_{L^2_{xy}}\|\partial_y \partial_x v_N\|_{L^2_{xy}}.
\end{equation*}
Then, in view of \eqref{eqexist7}-\eqref{eqexist8.1}, \eqref{eqexis20.1}, \eqref{eqexis21} and Gronwall's inequality
\begin{equation*}
    \|\partial_y \partial_x v_N\|_{L^{\infty}_TL^2_{xy}} \lesssim e^{cK_1}\big(\|\partial_y\partial_x P_{\leq N}^x u_0\|_{L^2_{xy}}+N^{1/2}\|\partial_x^2 v_N\|_{L^1_T L^{\infty}_{xy}} \big) =O(N^{3/2}),
\end{equation*}
where we have used that $\|\partial_y\partial_x P_{\leq N}^x u_0\|_{L^2_{xy}}\lesssim N^{3/2}\|D_x^{-1/2}\partial_y u_0\|_{L^2_{xy}}$. Consequently,  the previous estimate allows us to deduce 
\begin{equation*}
    \mathbb{III}_1 \lesssim N^{3/2}\|w_{N,M}\|_{L^{\infty}_{xy}}\|\partial_y w_{N,M}\|_{L^{2}_{xy}}.
\end{equation*}
Now, by using \eqref{eqexis21} and H\"older's inequality,
\begin{equation*}
    \begin{aligned}
    \mathbb{III}_2+\mathbb{III}_3 \lesssim N^{1/2}(\|\partial_x v_N\|_{L^{\infty}_{xy}}+\|\partial_x v_M\|_{L^{\infty}_{xy}})\|\partial_y w_{N,M}\|_{L^{2}_{xy}}+\|\partial_x v_M\|_{L^{\infty}_{xy}}\|\partial_y w_{N,M}\|_{L^2_{xy}}^2. 
    \end{aligned}
\end{equation*}
Thus, inserting the above estimates in \eqref{eqexis22}, applying Gronwall's inequality together with \eqref{eqexist7}, \eqref{eqexist8} and \eqref{eqexist0.2} reveal
\begin{equation}\label{eqexis23}
\begin{aligned}
   \|\partial_y w_{N,M}\|_{L^{\infty}_TL^2_{xy}} \lesssim e^{cK_1}\big(&\|\partial_y(P_{\leq N}^x u_0-P_{\leq M}^x u_0)\|_{L^2_{xy}}+N^{3/2}\|w_{N,M}\|_{L^1_TL^{\infty}_{xy}} \\
   & +N^{1/2}(\|\partial_x v_N\|_{L^1_TL^{\infty}_{xy}}+\|\partial_x v_M\|_{L^1_TL^{\infty}_{xy}})\big) \lesssim N^{1/2}+N^{3/2}\|w_{N,M}\|_{L^1_T L^{\infty}_{xy}}.
\end{aligned}
\end{equation}
Going back to $\mathbb{II}$, we plug \eqref{eqexis21} and \eqref{eqexis23} into \eqref{eqexis20.2} to obtain
\begin{equation}\label{eqexis24}
    |\mathbb{II}|\lesssim N\|w_{N,M}\|_{L^{\infty}_{xy}}+N^2\|w_{N,M}\|_{L^1_T L^{\infty}_{xy}}\|w_{N,M}\|_{L^{\infty}_{xy}}.
\end{equation}
Now, collecting \eqref{eqexis20.11}, \eqref{eqexis24} in \eqref{eqexis20.10},
\begin{equation*}
\begin{aligned}
\frac{1}{2}\frac{d}{dt}\|D_{x}^{-1/2}\partial_y w_{N,M}(t)\|_{L^2_{xy}}^2 \lesssim & (\|v_N+v_M\|_{L^{\infty}_{xy}}+\|\partial_x(v_N+v_M)\|_{L^{\infty}_{xy}})\|D_x^{-1/2}\partial_y w_{N,M}\|_{L^{2}_{xy}}^2\\
&+N\|w_{N,M}\|_{L^{\infty}_{xy}}+N^2\|w_{N,M}\|_{L^1_T L^{\infty}_{xy}}\|w_{N,M}\|_{L^{\infty}_{xy}}.
\end{aligned}
\end{equation*}
Then, applying Gronwall's inequality to the preceding inequality, together with \eqref{eqexist7}, \eqref{eqexist8}  yield
\begin{equation*}
    \begin{aligned}
    \|D_{x}^{-1/2}\partial_y w_{N,M}\|_{L^{\infty}_TL^2_{xy}} \lesssim e^{cK_1}\big(\|D_x^{-1/2}\partial_y(P_{\leq N}^x u_0-P_{\leq M}^x u_0)\|_{L^2_{xy}}+o(1)\big)\underset{N\to \infty}{\rightarrow} 0.
    \end{aligned}
\end{equation*}
where we have used \eqref{eqexist0.2} with $0<T<(1+A_s\|u_0\|_{X^s})^{-2}$ and $A_s$ large enough. This completes the proof of \eqref{eqexis17}. 
\end{proof}
Consequently, Lemma \ref{Cseq1} and Proposition \ref{Cseq2} establish that $\{v_N\}$ has a limit $v$ in the class
\begin{equation*}
    C([0,T];X^s(\mathbb{R}^2))\cap L^1([0,T];W^{1,\infty}_x(\mathbb{R}^2)).
\end{equation*}
Thus, since $v_N$ solve the integral equation
\begin{equation}\label{inteqN}
    v_N(t)=S(t)P_{\leq N}^x u_0-\frac{1}{2}\int_0^t S(t-t')\partial_x(v_N(t'))^2 \, dt',
\end{equation}
taking the limit $N\to \infty$ in the class $C([0,T];J^2X^s(\mathbb{R}^2))$, where $J^2X^s(\mathbb{R}^2)=\{f\in S'(\mathbb{R}^2): J^{-2}f \in X^{s}(\mathbb{R}^2)\}$ with norm $\|f\|_{J^2X^s}=\|J^{-2}f\|_{X^s}$, we deduce that $v$ solves the IVP \eqref{EQBO}. This completes the existence part of Theorem \ref{Improwellp}.

\subsubsection{Uniqueness and Continuous Dependence}

Uniqueness of solution in the classes  $$ C([0,T];X^s(\mathbb{R}^2))\cap L^1([0,T];W_x^{1,\infty}(\mathbb{R}^2))$$ 
can be easily obtained by applying energy estimates at the $L^2$-level following the same ideas in Lemma \ref{apriEST}. For the sake of brevity, we omit these computations. Continuous dependence on the spaces $X^s(\mathbb{R}^2)$, follows from the continuity of the flow-map data solution in Lemma \ref{comwellp} and the ideas in \cite{LinarFKP}.

\subsubsection{Solutions in \texorpdfstring{$H^s(\mathbb{R}^2)$}{}}\label{subHs}

With the aim of Lemmas \ref{refinStri}, \ref{comwellp} and the blow-up alternative \eqref{blowupal}, the proof of local well-posedness in $H^{s}(\mathbb{R}^2)$, $s>3/2$ follows the same ideas in \cite[Theorem 1.3]{linaO}.
 
Similarly, the proof of local well-posedness in $H^{s}(\mathbb{R}^2)$ can also be deduced from the arguments employed above to estimate the $\|J_x^s(\cdot)\|_{L^2}$-norm of the space $X^s(\mathbb{R}^2)$. However, when replacing $J^s_x$ by $J^s$ in our estimates, we require to employ Lemmas \ref{conmKP} and \ref{fraLR} in the full spatial variables, as a consequence the estimate of $\|\nabla u\|_{L^{1}_T L^{\infty}_{xy}}$ for solutions of the IVP \eqref{EQBO} is essential in this part.

Summarizing, let $u_0\in H^{s}(\mathbb{R}^2)$, $s>3/2$, then for all dyadic $N \in \mathbb{D}$ there exist a time
\begin{equation}\label{eqexistime1}
0<T\leq (1+A_s \left\|u_0\right\|_{H^s})^{-2}
\end{equation}
independent of $N$ and a solution $u_N \in C([0,T];H^{\infty}(\mathbb{R}^2))$ of the IVP \eqref{EQBO} with initial data $P_{\leq N}u_0$, such that
\begin{equation}\label{eqexist4}
   \left\|u_N\right\|_{L^{\infty}_T H^s} \leq 2 \left\|u_0\right\|_{H^s}
\end{equation}
and
    \begin{equation}\label{aproxres}
   u_N \to u \hspace{0.2cm} \text{ in the sense of } \hspace{0.2cm} C\big([0,T];H^s(\mathbb{R}^2))\cap L^1([0,T];W^{1,\infty}(\mathbb{R}^2)\big).
\end{equation}
This conclusion will be useful to perform rigorously weighted energy estimates at the $H^s(\mathbb{R}^2)$-level stated in Theorem \ref{localweigh}.


\section{Periodic Solutions}\label{sectionPeri}

\subsection{Function spaces and additional notation}

We will follow the notation in \cite{IonescuKeniTata} (see also, \cite{RibaVento,RoberTtFKP,RobertS,ZhangKP}). We recall that $\mathbb{D}=\left\{2^{l}\, :\,l\in \mathbb{Z}^{+}\cup\left\{0\right\}\right\}$. The variable $L$ is presumed to be dyadic. Given $N_1,N_2 \in \mathbb{D}$, we define $N_1\vee N_2=\max(N_1,N_2)$ and  $N_1\wedge N_2=\min(N_1,N_2)$. For each $N\in \mathbb{D}\setminus\left\{1\right\}$, we set $I_N = \left\{m \in  \mathbb{Z}^{+} : N/2 \leq |m| < N\right\}$ and $I_1=\left\{0\right\}$. Let $N,L\in \mathbb{D}$, we set
\begin{equation}\label{Dset}
    D_{N,L}=\left\{(m,n,\tau)\in \mathbb{Z}^2\times \mathbb{R}: |(m,n)|\in I_{N} \text{ and } |\tau-\omega(m,n)|\leq L \right\},
\end{equation}
where $\omega(m,n)=\sign(m)+\sign(m)m^2\mp \sign(m)n^2$ is defined as in \eqref{lieareqsym}.

Now, we introduce some family of projectors required for our arguments. To simplify notation, we will employ the same symbols used in \eqref{proje2} restricted to this section. We define the projector operators in $L^2(\mathbb{T}^2\times \mathbb{R})$ by the relation
\begin{equation*}
    \mathcal{F}(P_N(u))(m,n,\tau)=\mathbbm{1}_{I_N}(|(m,n)|)\mathcal{F}(u)(m,n,\tau),
\end{equation*}
for all $m,n \in \mathbb{Z}$ and $\tau \in \mathbb{R}$, here $\mathbbm{1}_{I_N}$ stands for the indicator function on the set $I_N$. Given a dyadic number $N$, we define the operator $P_{\leq N}u$ by the Fourier multiplier $\mathbbm{1}_{I_{\leq N}}(|(m,n)|)$, where $I_{\leq N}=\bigcup_{M\leq N} I_M$ with $M$ dyadic. We also set $P_{>M}u=(I-P_{\leq M})u$. 

For a time $T_0\in (0,1)$, let $N_0\in \mathbb{D}$ be the greatest dyadic number such that $N_0\leq 1/T_0$. Let $N\in \mathbb{D}$ and $b\in [0,1/2]$, we define the dyadic $X^{s,b}$-type normed spaces
\begin{equation*}
    \begin{aligned} X_N^b=X_N^b(\mathbb{Z}^2\times \mathbb{R})= \{f \in & L^2(\mathbb{Z}^2\times \mathbb{R}):  \mathbbm{1}_{I_N}(|(m,n)|)f=f \text{ and }  \\
    &\left\|f\right\|_{X_N^b}=N_0^{b}\|\psi_{\leq N_0}(\tau-\omega(m,n))\cdot f\|_{L_{m,n,\tau}^2}\\ 
    & \hspace{1.5cm}+\sum_{L> N_0}L^{b} \left\|\psi_L(\tau-\omega(m,n))\cdot f\right\|_{L^2_{m,n,\tau}}<\infty \}.
    \end{aligned}
\end{equation*}
where the functions $\psi_L$ and $\psi_{\leq N_0}$ are defined as in Section \ref{Sectiprel}. We will denote by $X_N$ the space $X_N^{1/2}$. Next we introduce the spaces $F^b_N$ according to $X_N^b$ uniformly on time intervals of size $N^{-1}$:
\begin{equation*}
\begin{aligned}
    F^b_N:=\{f\in C(\mathbb{R};L^2(\mathbb{T}^2)):\, P_Nf&=f, \, \, \|f\|_{F_N^b}:=\sup_{t_N\in \mathbb{R}}\|\mathcal{F}(\psi_1(N(\cdot-t_N))f)\|_{X_N^b}<\infty\}
\end{aligned}
\end{equation*}
and 
\begin{equation*}
\begin{aligned}
    \mathcal{N}_N:=\{f\in C(\mathbb{R};L^2(\mathbb{T}^2))\,:\, P_Nf&=f, \, \, \|f\|_{\mathcal{N}_N}:=\sup_{t_N\in \mathbb{R}}\||\tau+\omega(m,n)+iN|^{-1}\mathcal{F}(\psi_1(N(\cdot-t_N))f)\|_{X_N}\}.
\end{aligned}
\end{equation*}
Let $T\in (0,T_0]$ and $Y_N$ be any of the spaces $F_N^{b}$ or $\mathcal{N}_N$, we set
$$Y_N(T):=\{f\in C([0,T];L^2(\mathbb{T}^2)) : \, \|f\|_{Y_N(T)}<\infty\}$$
equipped with the norm:
\begin{equation*}
    \|f\|_{Y_N(T)}:=\inf\{\|\widetilde{f}\|_{Y_N} : \, \, \widetilde{f}\in Y_N, \, \, \widetilde{f}\equiv f \, \, \text{on }\, \, [0,T] \}.
\end{equation*}
Then for a given $s\geq 0$, we define the spaces $F^{s,b}(T)$ and $\mathcal{N}^s(T)$ from their frequency localized version $F_N^{b}(T)$ and $\mathcal{N}(T)$ by using the Littlewood-Paley decomposition as follows
\begin{equation}\label{ResoSpaF}
    F^{s,b}(T):=\{f\in C([0,T];H^{s}(\mathbb{T}^2)), \, \, \|f\|_{F^{s,b}(T)}^2=\sum_{N\in \mathbb{D}} (N^{2s}+N_0^{2s})\|P_Nf\|^2_{F^b_N(T)}<\infty\}
\end{equation}
and
\begin{equation*}
    \mathcal{N}^{s}(T):=\{f\in C([0,T];H^{s}(\mathbb{T}^2)), \, \, \|f\|_{\mathcal{N}^s(T)}^2=\sum_{N\in \mathbb{D}} (N^{2s}+N_0^{2s})\|P_Nf\|^2_{\mathcal{N}_N(T)}<\infty\}.
\end{equation*}
Next, we define the associated energy spaces $B^s(T)$ endowed with the norm
\begin{equation*}
    B^{s}(T):=\{f\in C([0,T];H^{s}(\mathbb{T}^2)), \, \, \|f\|_{B^{s}(T)}^2=\|P_{\leq N_0}f(0)\|_{H^s}^2+\sum_{N> N_0}\sup_{t_N\in [0,T]} \|P_Nf(t_N)\|^2_{H^s}<\infty\}.
\end{equation*}
In the subsequent considerations $F_N$ and $F^s(T)$ will denote the spaces above with parameter $b=1/2$. 

\subsubsection{Basic Properties}

Now we collect some basic properties of the spaces $X_N^b$ and $F_N^b(T)$. These results have been deduced in different contexts in \cite{GuoTadahiro,IonescuKeniTata,RoberTtFKP,RobertCKP,ZhangKP} for instance. 


 \begin{lemma}\label{FUNLEMM2}
Let $N\in\mathbb{D}$, $b\in(0,1/2]$, $f_N\in X_N^b$ and $h\in L^2(\mathbb{R})$ satisfying
\begin{equation*}
    |\widehat{h}(\tau)|\lesssim \langle \tau \rangle^{-4}.
\end{equation*}
Then for any $\widetilde{N_0}\in\mathbb{D}$, $\widetilde{N_0}\geq N_0$ and $t_0\in \mathbb{R}$, 
\begin{equation}\label{FEEQ2}
    \begin{aligned}
    \widetilde{N_0}^{b}\left\|\psi_{\leq \widetilde{N_0}}(\tau-\omega(m,n)) \mathcal{F}(h(\widetilde{N_0}(t-t_0))\mathcal{F}^{-1}(f_N)) \right\|_{L^2_{m,n,\tau}}\lesssim \left\|f_N\right\|_{X_N^b},
    \end{aligned}
\end{equation}
and
\begin{equation}\label{FEEQ3}
    \begin{aligned}
    \sum_{L>\widetilde{N_0}}L^{b}\left\|\psi_{L}(\tau-\omega(m,n)) \mathcal{F}(h(\widetilde{N_0}(t-t_0))\mathcal{F}^{-1}(f_N)) \right\|_{L^2_{m,n,\tau}}\lesssim \left\|f_N\right\|_{X_N^b}.
    \end{aligned}
\end{equation}
The implicit constants above are independent of $N_0\geq 1$, and in consequence of the definition of the spaces $X_N^b$.
\end{lemma}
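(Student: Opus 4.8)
The plan is to reduce everything to the single-variable (time-frequency) estimate, since the spatial frequency $(m,n)$ is a passive parameter throughout: both $\psi_{\leq\widetilde N_0}(\tau-\omega(m,n))$, $\psi_L(\tau-\omega(m,n))$ and the hypothesis $\mathbbm 1_{I_N}(|(m,n)|)f_N=f_N$ only translate the $\tau$-variable by $\omega(m,n)$ and restrict $(m,n)$ to a fixed annulus. So the first step is to freeze $(m,n)$, substitute $\tau\mapsto\tau+\omega(m,n)$, and observe that $\mathcal F(h(\widetilde N_0(t-t_0))\mathcal F^{-1}f_N)$ is, in the $\tau$ variable, the convolution $\widetilde N_0^{-1}e^{-it_0(\cdot)}\widehat h(\cdot/\widetilde N_0)*g$, where $g(\cdot)=\mathcal F(f_N)(m,n,\cdot+\omega(m,n))$ carries the shifted modulation weights. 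By Plancherel and the triangle inequality the phase $e^{-it_0(\cdot)}$ is harmless, so one is left to prove, for a fixed dyadic $\widetilde N_0\ge N_0$ and any $g\in L^2(\mathbb R)$,
\begin{equation*}
\widetilde N_0^{b}\big\|\psi_{\le\widetilde N_0}\cdot(\widetilde N_0^{-1}\widehat h(\cdot/\widetilde N_0)*g)\big\|_{L^2}+\sum_{L>\widetilde N_0}L^{b}\big\|\psi_L\cdot(\widetilde N_0^{-1}\widehat h(\cdot/\widetilde N_0)*g)\big\|_{L^2}\lesssim N_0^{b}\|\psi_{\le N_0}g\|_{L^2}+\sum_{L'>N_0}(L')^{b}\|\psi_{L'}g\|_{L^2},
\end{equation*}
with constants independent of $N_0$ and $\widetilde N_0$; summing the squares of these over the annulus $\{|(m,n)|\in I_N\}$ then gives \eqref{FEEQ2}--\eqref{FEEQ3}.

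The second step is the kernel bound. The function $K(\sigma)=\widetilde N_0^{-1}\widehat h(\sigma/\widetilde N_0)$ is an $L^1$-normalised bump at scale $\widetilde N_0$: from $|\widehat h(\tau)|\lesssim\langle\tau\rangle^{-4}$ we get $|K(\sigma)|\lesssim\widetilde N_0^{-1}\langle\sigma/\widetilde N_0\rangle^{-4}$, hence $\int|K|\lesssim1$, $\int|K(\sigma)|\,|\sigma|\,d\sigma\lesssim\widetilde N_0$, and more generally $\int_{|\sigma|>R}|K(\sigma)|\,d\sigma\lesssim(\widetilde N_0/R)^{3}$ for $R\ge\widetilde N_0$. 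Now decompose $g=\sum_{L'}g_{L'}$ according to the modulation pieces $g_{L'}=\psi_{L'}(\cdot)g$ (with the convention that $L'=N_0$ collects $\psi_{\le N_0}$), so $\|g_{L'}\|_{L^2}\le L_0^{-b}\,(\text{contribution of }g_{L'})$ in the norm on the right. For the low-output piece $\psi_{\le\widetilde N_0}\cdot(K*g_{L'})$: when $L'\le\widetilde N_0$ we just use $\|K*g_{L'}\|_{L^2}\le\|K\|_{L^1}\|g_{L'}\|_{L^2}\lesssim\|g_{L'}\|_{L^2}$ and $\widetilde N_0^{b}\lesssim$ (the $L'$-weight times $(\widetilde N_0/L')^{b}$) — but here is the point: since $\widetilde N_0\ge N_0$ there is no gain, so we must instead note $b\le 1/2$ and... actually the clean way is: for $L'\le\widetilde N_0$ the weight ratio $\widetilde N_0^b/(L')^b\ge1$, which looks bad, so one uses that the \emph{output} modulation $\le\widetilde N_0$ forces, when $L'\gg\widetilde N_0$, almost-orthogonality and rapid decay of $K$, while the genuinely dangerous regime $L'\le\widetilde N_0$ is handled by the near-diagonal estimate where input and output live at comparable or smaller scales and one simply absorbs $\widetilde N_0^b\|g_{L'}\|\lesssim$ the $X_N^b$-norm term-by-term because the $X_N^b$-norm of $f_N$ \emph{also} measures the $\psi_{\le N_0}$ piece with weight $N_0^b\le\widetilde N_0^b$...

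Let me restate the third step more carefully, since this is the crux. Split into output modulation $\Lambda$ (dyadic, $\ge N_0$ on the right-hand sum, with the $\le\widetilde N_0$ part treated as $\Lambda=\widetilde N_0$) and input modulation $L'$. If $L'\sim\Lambda$ or $L'\le\widetilde N_0\le\Lambda$, use $\|K\|_{L^1}\lesssim1$ and sum $\sum_\Lambda\Lambda^{2b}\|\psi_\Lambda(K*g)\|^2\lesssim\sum_{L'}(L'\vee\widetilde N_0)^{2b}\|g_{L'}\|^2$; the factor $(L'\vee\widetilde N_0)^{b}$ is $\ge(L')^{b}$, and this is where the extra $\widetilde N_0^b$ on the left is \emph{allowed} rather than needing to be produced — compare the two sides: the left sum has output weights up to $\widetilde N_0^b$ on the low piece, the right has input weights down to $N_0^b$, and $N_0\le\widetilde N_0$, so the inequality goes the right way only when we can also \emph{lower} the output weight on that piece, which we can because $K*g$ at output modulation $\le\widetilde N_0$ still has $L^2$ norm $\lesssim\|g\|_{L^2}\lesssim N_0^{-b}\times(\text{r.h.s.})$ — i.e. we pay $\widetilde N_0^b/N_0^b$, and THAT is the thing to kill. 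The mechanism to kill it: when $L'\ge2\widetilde N_0$ and output modulation $\le\widetilde N_0$, the convolution moves $g_{L'}$ a distance $\lesssim\widetilde N_0\le L'/2$, so $\psi_{\le\widetilde N_0}(K*g_{L'})$ only sees the tail of $K$ beyond distance $\sim L'$, giving the gain $(\widetilde N_0/L')^{3}$ which beats $(L'/N_0)^{b}\le(L')^{1/2}$; when $L'\le\widetilde N_0$ there is no loss to kill because then $N_0\le L'\le\widetilde N_0$ forces... hmm, $\widetilde N_0$ can still exceed $L'$. So the honest resolution is: on the low-modulation input/output block $L',\Lambda\le\widetilde N_0$, one does \emph{not} split $g$ dyadically at all but uses $\|\psi_{\le\widetilde N_0}(K*(\psi_{\le\widetilde N_0}g))\|_{L^2}\lesssim\|\psi_{\le\widetilde N_0}g\|_{L^2}$, and then controls $\|\psi_{\le\widetilde N_0}g\|_{L^2}$ by the full $X_N^b$-norm of $f_N$ — which is legitimate since $\widetilde N_0^b\|\psi_{\le\widetilde N_0}g\|_{L^2}\lesssim\|f_N\|_{X_N^b}$ is itself the elementary fact that grouping all modulations below $\widetilde N_0$ costs at most the $X_N^b$-norm (this is exactly the standard "$X^{s,1/2}$ bounded into $X^{s,0}_{\le M}$ with $M^{1/2}$" inequality applied with $b\le1/2$, using $\sum_{N_0\le L'\le\widetilde N_0}1\lesssim\log$, absorbed since $b>0$ gives a geometric series).

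So the full plan: (1) freeze $(m,n)$, shift $\tau$, reduce to one-variable convolution with kernel $K$ at scale $\widetilde N_0$; (2) record the three kernel bounds ($L^1$-norm $\lesssim1$, tail bound $(\widetilde N_0/R)^3$, first-moment $\lesssim\widetilde N_0$) from $|\widehat h|\lesssim\langle\tau\rangle^{-4}$; (3) for the output piece at modulation $\Lambda\le\widetilde N_0$: bound it by $\|K\|_{L^1}\|\psi_{\le\widetilde N_0}g\|_{L^2}+\sum_{L'>\widetilde N_0}\|K\|_{L^1(|\sigma|>L'/2)}\|g_{L'}\|_{L^2}$, giving $\widetilde N_0^b\times$ this $\lesssim\|f_N\|_{X_N^b}+\sum_{L'>\widetilde N_0}(\widetilde N_0/L')^{3}\widetilde N_0^{b}(L')^{-b}(L')^{b}\|g_{L'}\|\lesssim\|f_N\|_{X_N^b}$ since $(\widetilde N_0/L')^{3}\widetilde N_0^b(L')^{-b}\le(\widetilde N_0/L')^{3-b}\le1$; (4) for the output piece at $\Lambda>\widetilde N_0$: write $\Lambda^b\|\psi_\Lambda(K*g)\|\le\Lambda^b\sum_{L'}\|\psi_\Lambda(K*g_{L'})\|$ and split by whether $L'\ge\Lambda/2$ (use $\|K\|_{L^1}$, weights comparable, Cauchy--Schwarz in the dyadic sum using $b>0$) or $L'<\Lambda/2$ (then the convolution needs $K$ on $|\sigma|>\Lambda/4\ge\widetilde N_0$, tail gain $(\widetilde N_0/\Lambda)^3$, which after $\Lambda^b(L')^{-b}\le(\Lambda/\widetilde N_0)^b$ still leaves $(\widetilde N_0/\Lambda)^{3-b}$, summable); (5) take $\ell^2$ over the spatial annulus. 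The main obstacle is precisely step (3)/(4): making sure the \emph{extra} power $\widetilde N_0^b$ on the left (with $\widetilde N_0\ge N_0$, so no free gain from the frequency hierarchy) is paid for either by the $L^1$-bound together with the harmless reshuffling of low modulations, or by the polynomial tail decay of $K$ when the input modulation is much larger than $\widetilde N_0$ — everything else is routine bookkeeping with dyadic sums and Cauchy--Schwarz, using $b\in(0,1/2]$ to make the geometric series converge.
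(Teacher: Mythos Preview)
The paper does not give its own proof of this lemma; it records it among the ``basic properties'' of the short-time $X_N^b$ spaces and cites \cite{GuoTadahiro,IonescuKeniTata,RoberTtFKP,RobertCKP,ZhangKP}. Your reduction to a one-variable convolution with the kernel $K(\sigma)=\widetilde{N_0}^{-1}\widehat h(\sigma/\widetilde{N_0})$ and your treatment of the off-diagonal modulation blocks via the tail bound $\int_{|\sigma|>R}|K|\lesssim(\widetilde{N_0}/R)^{3}$ are exactly the standard route taken in those references.

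There is, however, a genuine gap in your handling of the low-input/low-output block. After some back-and-forth you settle on the claim that
\[
\widetilde{N_0}^{\,b}\,\|\psi_{\le\widetilde{N_0}}g\|_{L^2}\ \lesssim\ \|f_N\|_{X_N^b}
\]
is an ``elementary fact''. It is false: take $g$ supported in $\{|\tau|\le N_0\}$ with $\|g\|_{L^2}=1$, so that $\|f_N\|_{X_N^b}=N_0^{\,b}$ while $\widetilde{N_0}^{\,b}\|\psi_{\le\widetilde{N_0}}g\|_{L^2}=\widetilde{N_0}^{\,b}$, and $\widetilde{N_0}\ge N_0$ is arbitrary. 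The $\|K\|_{L^1}$ bound alone is therefore insufficient on this block, and neither the ``$\log$'' nor the ``geometric series in $b>0$'' remark repairs it.

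The missing mechanism is precisely where the hypothesis $b\le 1/2$ enters. Instead of discarding $K$, use the $L^2$--$L^1$ form of Young's inequality: since $\|K\|_{L^2}\sim\widetilde{N_0}^{-1/2}$ and each input piece $g_{L'}$ with $N_0\le L'\le\widetilde{N_0}$ satisfies $\|g_{L'}\|_{L^1}\lesssim (L')^{1/2}\|g_{L'}\|_{L^2}$, one gets
\[
\widetilde{N_0}^{\,b}\,\|K\ast g_{L'}\|_{L^2}\ \lesssim\ \widetilde{N_0}^{\,b-1/2}(L')^{1/2-b}\cdot (L')^{b}\|g_{L'}\|_{L^2}\ \le\ (L')^{b}\|g_{L'}\|_{L^2},
\]
which now sums over $L'$ to $\|f_N\|_{X_N^b}$. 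Equivalently, on the physical side one pairs $\|h(\widetilde{N_0}(\cdot-t_0))\|_{L^2_t}\sim\widetilde{N_0}^{-1/2}$ with $\|\mathcal{F}^{-1}g_{L'}\|_{L^\infty_t}\le\|g_{L'}\|_{L^1_\tau}$. With this correction in step~(3), and the analogous use of $\|K\mathbbm{1}_{|\sigma|\gtrsim L}\|_{L^2}$ for the pieces $L'\le\widetilde{N_0}$ contributing to an output modulation $L>\widetilde{N_0}$ in step~(4), the remainder of your plan goes through as written.
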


Additionally, we require the next conclusion:
\begin{lemma}\label{FUNLEMM3} Let $N\in \mathbb{D}$, $b\in(0,1/2]$ and $I \subset \mathbb{R}$ a bounded interval. Then
\begin{equation*}
\sup_{L\in \mathbb{D}} L^{b}\|\psi_{L}(\tau-\omega(m,n))\mathcal{F}(\mathbbm{1}_{I}(t)f)\|_{L^2_{m,n,\tau}}\lesssim \|\mathcal{F}(f)\|_{X_N^b}
\end{equation*}
for all $f$ whose Fourier transform is in $X_N^b$ and the implicit constant is independent of $N_0\geq 1$.
\end{lemma}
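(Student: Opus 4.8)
\textbf{Proof plan for Lemma \ref{FUNLEMM3}.}

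The plan is to reduce the statement to Lemma \ref{FUNLEMM2} by writing the sharp cutoff $\mathbbm{1}_I$ as a superposition (or limit) of smooth bump functions whose Fourier transforms decay like $\langle\tau\rangle^{-4}$, at the (inverse) time-scale $N_0$ dictated by the definition of the spaces $X_N^b$. First I would fix the bounded interval $I=[a,b]$ and choose a Schwartz function $\chi$ with $\chi\equiv 1$ on $[-1,1]$, $\operatorname{supp}\chi\subset[-2,2]$; then for the parameter $N_0\ge 1$ attached to $X_N^b$ I set $\chi_{N_0}(t)=\chi(N_0(t-t_0))$ for an appropriate center $t_0$. The point is that $\mathbbm{1}_I(t)f$ can be localized: since $f$ is frequency localized at level $N$ and we only care about estimating, for each dyadic $L$, the quantity $L^b\|\psi_L(\tau-\omega(m,n))\mathcal{F}(\mathbbm{1}_I f)\|_{L^2}$, it suffices to cover $I$ by $O(N_0|I|)$ intervals of length $\sim N_0^{-1}$ and use a smooth partition of unity subordinate to this cover, each piece being of the form $h(N_0(t-t_j))$ with $h$ Schwartz and $\widehat h(\tau)\lesssim\langle\tau\rangle^{-4}$.

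The main step is then: on each such piece, apply \eqref{FEEQ3} of Lemma \ref{FUNLEMM2} with $\widetilde{N_0}=N_0$ to obtain
\begin{equation*}
    \sup_{L}L^b\big\|\psi_L(\tau-\omega(m,n))\,\mathcal{F}\big(h(N_0(t-t_j))\mathcal{F}^{-1}(\mathcal{F}f)\big)\big\|_{L^2_{m,n,\tau}}\lesssim \|\mathcal{F}f\|_{X_N^b},
\end{equation*}
with implicit constant independent of $N_0$, and sum over the $O(N_0|I|)$ pieces. Here one must be slightly careful: a crude triangle inequality over $O(N_0|I|)$ terms would lose a factor $N_0$, which is unacceptable. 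The resolution is the standard one for short-time spaces — the pieces $h(N_0(t-t_j))$ have essentially disjoint time supports of length $\sim N_0^{-1}$, so by an almost-orthogonality (Cotlar-type / finite-overlap) argument in $t$, combined with the fact that multiplication by the smooth cutoff only mildly spreads frequency, the $\ell^2$-sum of the individual contributions is controlled rather than the $\ell^1$-sum; alternatively, one writes $\mathbbm{1}_I=\mathbbm{1}_{(-\infty,b]}-\mathbbm{1}_{(-\infty,a]}$ and treats a single half-line cutoff, for which a single well-chosen bump $h$ at scale $|I|$ (not $N_0^{-1}$) plus a tail estimate suffices. I would take this second route: it is cleaner and is exactly the argument used for such cutoff lemmas in \cite{IonescuKeniTata,RoberTtFKP,ZhangKP}.

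Concretely, for a half-line cutoff $\mathbbm{1}_{(-\infty,c]}$, write $\mathbbm{1}_{(-\infty,c]}(t)=g((t-c)/\lambda)$ would fail (it is not smooth), so instead decompose $\mathbbm{1}_{(-\infty,c]}=\sum_{k\ge 0}\phi_k$ where $\phi_0$ is a smooth bump supported near $c$ at scale $N_0^{-1}$ and $\phi_k$ for $k\ge1$ is supported in $\{t: t\sim c-2^kN_0^{-1}\}$; apply Lemma \ref{FUNLEMM2} to each $\phi_k f$ (rescaling to the appropriate $\widetilde{N_0}$ or absorbing the spatial scale), picking up a geometric factor that sums. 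The hard part — and the only place genuine care is needed — is ensuring that when the cutoff happens at a scale coarser than $N_0^{-1}$ one still gets a constant independent of $N_0$; this is exactly what the statement ``the implicit constant is independent of $N_0\ge 1$'' in both Lemmas is engineered to provide, so the bookkeeping closes. Finally, since the bound holds uniformly in $L$ and in the choice of $t_N$ implicit in the $X_N^b$ norm, and since $\mathcal{F}f\in X_N^b$ by hypothesis, taking the supremum over $L$ yields the claimed inequality.
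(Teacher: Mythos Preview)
Your reduction to Lemma \ref{FUNLEMM2} does not close, and the gap is not mere bookkeeping. The obstruction is structural: Lemma \ref{FUNLEMM2} needs a cutoff $h$ with $|\widehat h(\tau)|\lesssim\langle\tau\rangle^{-4}$, whereas $\widehat{\mathbbm{1}_I}$ only decays like $|\tau|^{-1}$. This weaker decay is precisely why Lemma \ref{FUNLEMM3} yields only the \emph{supremum} over $L$ rather than the \emph{sum}; any attempt to force the sharp cutoff through Lemma \ref{FUNLEMM2} runs into one of two walls. In your first route (partition of $I$ at scale $N_0^{-1}$), almost--orthogonality in $t$ of the pieces $h(N_0(\cdot-t_j))f$ gives you nothing new: it merely recovers $\|\psi_L\mathcal F(\sum_j h_j f)\|_{L^2}^2\sim\sum_j\|\psi_L\mathcal F(h_jf)\|_{L^2}^2$, but Lemma \ref{FUNLEMM2} bounds each summand by the \emph{full} norm $\|\mathcal Ff\|_{X_N^b}$, not by a localized piece of it, so the $\ell^2$--sum still loses. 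Moreover, any smooth partition $\sum_j h_j$ differs from $\mathbbm{1}_I$ by a sharp remainder near the endpoints that you never treat. In your second route (dyadic decomposition of a half--line), the pieces $\phi_k$ for $k\ge 1$ live at time scales $2^kN_0^{-1}>N_0^{-1}$, i.e.\ would require $\widetilde N_0=2^{-k}N_0<N_0$, which violates the hypothesis $\widetilde N_0\ge N_0$ of Lemma \ref{FUNLEMM2}; the ``geometric factor that sums'' is not available from that lemma.

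The paper itself gives no proof and defers to \cite{IonescuKeniTata,GuoTadahiro,RoberTtFKP,RobertCKP,ZhangKP}. The argument there is direct and does not pass through Lemma \ref{FUNLEMM2}: writing $F=\mathcal Ff=\sum_{L'\ge N_0}F_{L'}$ with $F_{L'}$ supported on $\{|\tau-\omega|\sim L'\}$, one fixes $L$ and splits the convolution $\psi_L(\tau-\omega)\,(\widehat{\mathbbm{1}_I}\ast_\tau F_{L'})$. For $L'\gtrsim L$ one uses the trivial Plancherel bound $\|\widehat{\mathbbm{1}_I}\ast F_{L'}\|_{L^2_\tau}=\|\mathbbm{1}_I\,\check F_{L'}\|_{L^2_t}\le\|F_{L'}\|_{L^2}$, giving $L^b\|\cdots\|\le (L')^b\|F_{L'}\|_{L^2}$, which sums to $\|F\|_{X_N^b}$. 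For $L'\ll L$ one has $|\tau-\sigma|\sim L$ on the relevant support, so $|\widehat{\mathbbm{1}_I}(\tau-\sigma)|\lesssim L^{-1}$, and Cauchy--Schwarz yields $L^b\|\cdots\|\lesssim (L'/L)^{1/2-b}(L')^b\|F_{L'}\|_{L^2}$, which again sums. This is the argument you should supply.
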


The following lemma will be useful to obtain time factors in the energy estimates.
\begin{lemma}\label{timelemma}
Let $T\in (0,T_0)$ and $0\leq b <1/2$. Then for any $f\in F_N(T)$,
\begin{equation*}
 \|f\|_{F_N^b(T)} \lesssim T^{(1/2-b)^{-}}\|f\|_{F_N(T)}
\end{equation*}
where the implicit constant is independent of $N,N_0$ and $T$, and in consequence of the definition of the spaces $F_N^b$.
\end{lemma}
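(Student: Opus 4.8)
\textbf{Proof plan for Lemma \ref{timelemma}.}

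The plan is to reduce the estimate to the time-localized pieces that define the $F_N^b$ norm and to exploit the gap $b<1/2$ by interpolating a trivial $L^2$-type bound against the full $F_N(T)$ norm. First I would take an extension $\widetilde f\in F_N$ with $\widetilde f\equiv f$ on $[0,T]$ and $\|\widetilde f\|_{F_N}\leq 2\|f\|_{F_N(T)}$; it suffices to produce, for each such $\widetilde f$, a new extension (or a bound on the same one) whose $F_N^b$ norm is controlled by $T^{(1/2-b)^-}\|\widetilde f\|_{F_N}$. Fix $t_N\in\mathbb{R}$ and consider $g_{t_N}:=\psi_1(N(\cdot-t_N))\widetilde f$. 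If the time cutoff around $t_N$ does not meet $[0,T]$ there is nothing to control after re-extending by zero, so the relevant case is $|t_N|\lesssim T+N^{-1}$, and we may further insert the sharp cutoff $\mathbbm{1}_{[0,T]}$ since $f$ only lives there; this is where the interval $I=[0,T]$ of length $T$ enters.

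The core of the argument is the one-parameter bound: for a function $h$ supported in a time interval of length $T$ (here $h = \mathbbm{1}_{[0,T]}(t)\,\psi_1(N(t-t_N))\,\mathcal F^{-1}(f_N)$ with $f_N=\mathcal F(\widetilde f)$), one has
\begin{equation*}
\sum_{L\in\mathbb{D}} L^{b}\,\|\psi_L(\tau-\omega(m,n))\,\mathcal F(h)\|_{L^2_{m,n,\tau}} \lesssim T^{(1/2-b)^-}\,\|\mathcal F(h)\|_{X_N^{1/2}}.
\end{equation*}
To see this, split the dyadic sum at a threshold $L_\ast\sim T^{-1}$. For $L\gtrsim L_\ast$ we write $L^b = L^{1/2}\cdot L^{b-1/2}\leq L^{1/2}\,L_\ast^{b-1/2}$ and sum the $X_N^{1/2}$ pieces, losing the factor $L_\ast^{b-1/2}\sim T^{1/2-b}$. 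For $L\lesssim L_\ast$ we use that $\mathbbm{1}_{[0,T]}h$ has time-support of length $T$: by Cauchy–Schwarz in the (localized) $\tau$ variable on a band of width $\sim L_\ast$, together with Plancherel, $\sum_{L\lesssim L_\ast} L^b\|\psi_L(\tau-\omega)\mathcal F(h)\|_{L^2}\lesssim L_\ast^{b}L_\ast^{1/2}\cdot L_\ast^{-1/2}\|\mathcal F(h)\|_{L^2}\cdots$; more precisely one bounds $\|\psi_{\leq L_\ast}(\tau-\omega)\mathcal F(h)\|_{L^2}$ by $\|\mathcal F(h)\|_{L^2}=\|h\|_{L^2_{t}L^2_{xy}}$ and estimates the latter by $T^{1/2}$ times an $L^\infty_t L^2_{xy}$-type norm of $h$, which is dominated by $\|\mathcal F(h)\|_{X_N^{1/2}}$ via Lemma \ref{FUNLEMM3} (or the standard embedding $X_N^{1/2}\hookrightarrow L^\infty_t L^2$). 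Replacing $T^{1/2}$ and $T^{1/2-b}$ by $T^{(1/2-b)^-}$ to absorb the logarithmic loss from summing dyadically near $L_\ast$ yields the displayed bound. Taking the supremum over $t_N$ and then summing over $N$ with the $H^s$ weights gives the claim; the independence of the constant from $N,N_0,T$ is immediate since no step used their size beyond $T<T_0<1$ and $N_0\leq 1/T_0$.

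The main obstacle is the low-modulation regime $L\lesssim T^{-1}$: there the extra smallness must come entirely from the short time-support of $\mathbbm{1}_{[0,T]}h$, and one has to be careful that multiplying by the sharp cutoff $\mathbbm{1}_{[0,T]}$ does not destroy membership in $X_N^{1/2}$ — this is exactly the content of Lemma \ref{FUNLEMM3}, which guarantees $\sup_L L^{1/2}\|\psi_L(\tau-\omega)\mathcal F(\mathbbm{1}_{[0,T]}h)\|_{L^2}\lesssim\|\mathcal F(h)\|_{X_N^{1/2}}$, and also controls the $L^\infty_tL^2$ norm needed for the $T^{1/2}$ gain. A secondary technical point is the dyadic summation producing the $(1/2-b)^-$ rather than $(1/2-b)$ exponent, handled by the usual $\varepsilon$-loss. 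Once these are in place the rest is bookkeeping with the definitions of $F_N^b(T)$ and $F_N(T)$.
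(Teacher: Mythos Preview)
Your approach is correct and coincides with the argument the paper cites (it refers to \cite[Lemma 3.4]{GuoTadahiro} without giving details): take a near-optimal $F_N$-extension $\widetilde f$, use $\mathbbm{1}_{[0,T]}\widetilde f$ as the candidate extension for $F_N^b(T)$, split the modulation sum at $L_\ast\sim T^{-1}$, control high modulations by $L^b\le L_\ast^{b-1/2}L^{1/2}$ together with Lemma \ref{FUNLEMM3}, and control low modulations via the time-support bound $\|h\|_{L^2_tL^2_{xy}}\le T^{1/2}\|h\|_{L^\infty_tL^2_{xy}}$ and the embedding $X_N^{1/2}\hookrightarrow L^\infty_tL^2_{xy}$. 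Two small clarifications: Lemma \ref{FUNLEMM3} is what stabilizes the $L^{1/2}$-weighted pieces after multiplying by $\mathbbm{1}_{[0,T]}$, while the $L^\infty_tL^2_{xy}$ control is the separate standard embedding you mention parenthetically (not Lemma \ref{FUNLEMM3} itself); and the $(1/2-b)^-$ loss is only genuinely needed at $b=0$, where $\sum_{N_0\le L\le L_\ast}L^b$ produces a factor $\log(L_\ast/N_0)$ rather than $L_\ast^b$.
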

\begin{proof}
The proof follows the same arguments in \cite[Lemma 3.4]{GuoTadahiro}.
\end{proof}

We recall the embedding $F^s(T)\hookrightarrow C([0,T];H^s(\mathbb{T}^2))$, $s>0$, $T\in (0,T_0]$ established in \cite[Lemma 3.1]{IonescuKeniTata} and \cite{ZhangKP}.

\begin{lemma}\label{LemEmbedding}
Let $T\in (0,T_0]$, then
\begin{equation*}
    \sup_{t\in [0,T]}\|u(t)\|_{H^s} \lesssim \|u\|_{F^s(T)},
\end{equation*}
whenever $u\in F^s(T)$ and the implicit constant is independent of $N_0\geq 1$.
\end{lemma}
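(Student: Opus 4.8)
The plan is to reduce the claim to its frequency-localized counterpart and then sum over dyadic blocks. First I would fix $u \in F^s(T)$ and let $u_N = P_N u$ for $N \in \mathbb{D}$, so that $u = \sum_N u_N$ with convergence in $C([0,T];H^s(\mathbb{T}^2))$, and $\|u\|_{F^s(T)}^2 = \sum_N (N^{2s}+N_0^{2s})\|u_N\|_{F_N(T)}^2$. Since the spatial frequencies are disjointly supported across the $u_N$, Plancherel gives $\|u(t)\|_{H^s}^2 \sim \sum_N (N^{2s}+N_0^{2s})\|u_N(t)\|_{L^2_{xy}}^2$ uniformly in $t \in [0,T]$; hence it suffices to show the single-block estimate $\sup_{t\in[0,T]}\|u_N(t)\|_{L^2_{xy}} \lesssim \|u_N\|_{F_N(T)}$ with an implicit constant independent of $N$ and $N_0$, and then take the weighted $\ell^2$ sum in $N$.

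For the single-block estimate, I would argue as follows. Pick an extension $\widetilde{u_N} \in F_N$ with $\widetilde{u_N} \equiv u_N$ on $[0,T]$ and $\|\widetilde{u_N}\|_{F_N} \le 2\|u_N\|_{F_N(T)}$. Fix $t \in [0,T]$; since $N \leq 1/T_0 \cdot (\text{something})$ need not hold, the relevant localization scale is $N^{-1}$, so I partition $[0,T]$ (and slightly beyond) into $O(1 + NT) \lesssim 1 + NT_0$-many intervals of length $\sim N^{-1}$ — but here it is cleaner to just localize at the single point $t$: choose $t_N$ with $|t-t_N| \le N^{-1}$ so that $\psi_1(N(t-t_N)) = 1$. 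Writing $g_{t_N} := \psi_1(N(\cdot - t_N))\widetilde{u_N}$, we have $u_N(t) = g_{t_N}(t)$, and by Fourier inversion in time together with Cauchy–Schwarz, for each fixed $(m,n)$,
\begin{equation*}
|\widehat{u_N}(t)(m,n)| = \Bigl| \int_{\mathbb{R}} e^{it\tau} \mathcal{F}(g_{t_N})(m,n,\tau)\, d\tau \Bigr| \lesssim \langle \cdot \rangle^{1/2+}\text{-weighted }L^2_\tau \text{ norm of } \mathcal{F}(g_{t_N})(m,n,\cdot).
\end{equation*}
Summing in $(m,n)$ (the block $I_N$ has the relevant support) and then dyadically decomposing in the modulation variable $\tau - \omega(m,n)$, the factor $L^{1/2}$ produced by Cauchy–Schwarz in each shell is exactly absorbed by the $L^{1/2}$ weights in the definition of $X_N^{1/2}$; thus $\|u_N(t)\|_{L^2_{xy}} \lesssim \|\mathcal{F}(g_{t_N})\|_{X_N} \le \|\widetilde{u_N}\|_{F_N} \le 2\|u_N\|_{F_N(T)}$, uniformly in $t$ and $t_N$. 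Here one uses that the $X_N^b$-norm with $b = 1/2$ controls, up to constants independent of $N_0$, the $\langle \tau - \omega(m,n)\rangle^{1/2+\epsilon}$-weighted $L^2$ norm after giving up an $\epsilon$ of modulation regularity — or, to avoid the $\epsilon$ loss at the endpoint, one invokes Lemma \ref{FUNLEMM3} (with $I$ a bounded interval containing $[0,T]$) to control $\sup_L L^b \|\psi_L(\tau-\omega)\mathcal{F}(\mathbbm{1}_I g_{t_N})\|_{L^2}$ and sums the geometric-type series in $L$ against the summable weights.

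The main obstacle is the uniformity in $N_0$ (equivalently, in $T_0$): the definition of $X_N^{b}$ treats modulations $\leq N_0$ in a lumped fashion with weight $N_0^b$, and one must check that the time-evaluation argument does not lose a power of $N_0$. This is precisely the content of Lemmas \ref{FUNLEMM2} and \ref{FUNLEMM3}, whose implicit constants are asserted to be independent of $N_0$; so I would lean on those, applying \ref{FUNLEMM2} to handle the low-modulation lump (with $h$ a fixed Schwartz bump with rapidly decaying Fourier transform, replacing the sharp cutoff $\psi_1(N(\cdot - t_N))$ if needed) and \ref{FUNLEMM3} for the tail in $L$. Once the $N_0$-uniform single-block bound is in hand, squaring, multiplying by $N^{2s}+N_0^{2s}$, and summing in $N$ gives $\sup_{t\in[0,T]}\|u(t)\|_{H^s}^2 \lesssim \sum_N (N^{2s}+N_0^{2s})\|u_N\|_{F_N(T)}^2 = \|u\|_{F^s(T)}^2$, which is the claim. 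Taking the infimum over admissible extensions is automatic since we used an extension only through its $F_N$-norm.
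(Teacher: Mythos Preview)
Your approach is correct and is essentially the standard argument from the cited references \cite{IonescuKeniTata,ZhangKP} (the paper itself gives no proof, only these citations): reduce to a single dyadic block, pick an almost-optimal extension, localize in time at scale $N^{-1}$ around the point $t$, and control $\|u_N(t)\|_{L^2_{xy}}$ by the $X_N$-norm of the time-localized piece via Fourier inversion in $\tau$.

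One simplification: your worry about an $\epsilon$-loss at the endpoint $b=1/2$ and the invocation of Lemmas~\ref{FUNLEMM2}--\ref{FUNLEMM3} are unnecessary. The $X_N$-norm is an $\ell^1$ sum over dyadic modulation shells with weight $L^{1/2}$ (plus the lumped low-modulation piece with weight $N_0^{1/2}$), not a weighted $L^2_\tau$-norm. Hence for fixed $(m,n)$,
\[
\Bigl|\int_{\mathbb{R}} e^{it\tau}\mathcal{F}(g_{t_N})(m,n,\tau)\,d\tau\Bigr| \le N_0^{1/2}\|\psi_{\le N_0}(\tau-\omega)\mathcal{F}(g_{t_N})\|_{L^2_\tau}+\sum_{L>N_0} L^{1/2}\|\psi_L(\tau-\omega)\mathcal{F}(g_{t_N})\|_{L^2_\tau},
\]
by Cauchy--Schwarz on each shell; taking $L^2_{m,n}$ and using Minkowski over the (already $\ell^1$) shell sum gives $\|u_N(t)\|_{L^2_{xy}}\lesssim \|\mathcal{F}(g_{t_N})\|_{X_N}$ with constant equal to $1$ and hence trivially independent of $N_0$. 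The rest of your plan (taking the sup in $t_N$, then the infimum over extensions, then the weighted $\ell^2_N$ sum) is exactly right.
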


We also need the following linear estimate which is deduced in much the same way as in \cite[Proposition 3.2]{IonescuKeniTata} (see also \cite[Proposition 6.2]{ZhangKP}).

\begin{prop}
Assume that $T\in (0,T_0]$, $s\geq 0$ and $u,v \in C([0,T]; H^{\infty}(\mathbb{T}^2))$ with
\begin{equation*}
    \partial_t u +\mathcal{H}_xu-\mathcal{H}_x\partial_x^2u\pm \mathcal{H}_x\partial_y^2u=v,\hskip 15pt \text{ on } \mathbb{T}^2\times [0,T).
\end{equation*}
Then
\begin{equation}\label{FEEQ6}
    \|u\|_{F^s(T)}\lesssim \|u\|_{B^s(T)}+\|v\|_{\mathcal{N}^s(T)},
\end{equation}
where the implicit constant is independent of $N_0$, and in consequence of the definition of the spaces $F^s(T)$, $B^s(T)$ and $\mathcal{N}^s(T)$.
\end{prop}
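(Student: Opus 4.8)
The plan is to deduce the a priori estimate \eqref{FEEQ6} from the corresponding short-time estimate on each dyadic frequency piece, following the scheme of \cite[Proposition 3.2]{IonescuKeniTata} and \cite[Proposition 6.2]{ZhangKP}. First I would fix a dyadic $N\in\mathbb{D}$ and localize: set $u_N=P_Nu$, $v_N=P_Nv$, so that $u_N$ solves $\partial_tu_N+\mathcal{H}_xu_N-\mathcal{H}_x\partial_x^2u_N\pm\mathcal{H}_x\partial_y^2u_N=v_N$ on $\mathbb{T}^2\times[0,T)$. Extend $u_N$ off $[0,T]$ in a bounded fashion (e.g.\ by the free evolution, or by a standard extension operator) to a function defined on $\mathbb{R}$ with comparable norms. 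Then cover $[0,T]$ by $O(1+N T)$ intervals of length $\sim N^{-1}$ centered at points $t_N$, and on each such interval multiply the equation by the cutoff $\psi_1(N(t-t_N))$ to obtain
\begin{equation*}
\partial_t\big(\psi_1(N(t-t_N))u_N\big)+\mathcal{H}_x\big(\psi_1(N(t-t_N))u_N\big)-\mathcal{H}_x\partial_x^2(\cdots)\pm\mathcal{H}_x\partial_y^2(\cdots)=\psi_1(N(t-t_N))v_N+N\psi_1'(N(t-t_N))u_N.
\end{equation*}
Applying the space-time Fourier transform and using Duhamel on the interval, the $X_N=X_N^{1/2}$ norm of the left side is controlled by the initial value at an endpoint (contributing the $B^s$-type term) plus the $X_N$-norm of the inverse-$(\tau+\omega+iN)$ weighted transform of the right side; the latter is exactly the $\mathcal{N}_N$-norm of the forcing together with the harmless commutator term $N\psi_1'(N(t-t_N))u_N$, which by Lemma~\ref{FUNLEMM2} (the profile here being $h=\psi_1'$, rescaled at frequency $N\ge N_0$) is bounded by $\|\mathcal{F}(\psi_1(N(\cdot-t_N))u_N)\|_{X_N}$ with a small constant once we shrink the time window, hence can be absorbed. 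Taking the supremum over $t_N$ gives $\|u_N\|_{F_N(T)}\lesssim \sup_{t_N\in[0,T]}\|P_Nu(t_N)\|_{L^2}+\|v_N\|_{\mathcal{N}_N(T)}$, where the first term is handled using that on each length-$N^{-1}$ subinterval the solution at a point differs from the supremum of the piece by the Duhamel term, again absorbed; summing in $N_0$-localized form recovers $\|P_{\le N_0}u(0)\|$ for the low modes.

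Next I would square, multiply by $(N^{2s}+N_0^{2s})$, and sum over $N\in\mathbb{D}$. The left-hand side becomes $\|u\|_{F^s(T)}^2$ by definition \eqref{ResoSpaF}; on the right the frequency-localized initial-data terms assemble to $\|u\|_{B^s(T)}^2$ by the definition of $B^s(T)$, and the forcing terms assemble to $\|v\|_{\mathcal{N}^s(T)}^2$. Care is needed because the pieces $P_Nu$ are not compactly supported in time, but Lemma~\ref{FUNLEMM3} shows that truncating to $[0,T]$ (or to any bounded interval) is harmless at the level of the $X_N^b$ norms, so the extension/restriction does not cost anything beyond implicit constants. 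One should also note that the orthogonality between the dyadic pieces is not needed in $\ell^2$-summed form since the $F^s$, $B^s$, $\mathcal{N}^s$ norms are all defined by summing the squares of the localized norms — there is no loss there.

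The main obstacle, as usual in this short-time framework, is the treatment of the cutoff-induced error term $N\psi_1'(N(t-t_N))u_N$: one must verify that it genuinely lands in $\mathcal{N}_N$ with an \emph{absorbable} constant, uniformly in $t_N$ and in $N_0\ge1$. This is where Lemma~\ref{FUNLEMM2} is essential — the key point being that the profile is evaluated at the frequency $N$ (which is $\ge N_0$ since the relevant $N$ are those with $P_N\ne0$), so the two estimates \eqref{FEEQ2}--\eqref{FEEQ3} apply with $\widetilde{N_0}=N$ and give a bound independent of $N_0$. A second, more bookkeeping-level difficulty is keeping all implicit constants independent of $N_0\ge1$ (equivalently of $T_0$), so that the estimate is genuinely a short-time estimate and not merely a fixed-time one; this is guaranteed step by step because every ingredient (Lemmas~\ref{FUNLEMM2}, \ref{FUNLEMM3}, \ref{LemEmbedding}) is stated with that uniformity. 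The rest is the routine Duhamel/almost-orthogonality computation already carried out in \cite{IonescuKeniTata,ZhangKP}, which I would cite rather than reproduce.
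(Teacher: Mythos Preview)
Your proposal follows the same path as the paper, which simply defers to \cite[Proposition 3.2]{IonescuKeniTata} and \cite[Proposition 6.2]{ZhangKP}. The outline is essentially correct, but your handling of the commutator term $N\psi_1'(N(t-t_N))u_N$ has a genuine gap: Lemma~\ref{FUNLEMM2} does \emph{not} produce a small constant, and the time window $N^{-1}$ is fixed by the very definition of $F_N$ --- there is nothing left to shrink. Applying Lemma~\ref{FUNLEMM2} with $h=\psi_1'$ bounds the commutator contribution only by $\|\mathcal{F}(\widetilde\psi_1(N(\cdot-t_N))u_N)\|_{X_N}$ for some fattened cutoff $\widetilde\psi_1$, which is (up to choice of cutoff) the quantity you are trying to estimate; the argument is circular and cannot be closed by absorption.

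The argument in \cite{IonescuKeniTata} avoids this by reversing the order of operations: one first writes the genuine Duhamel formula
\[
u_N(t)=S(t-t_N')u_N(t_N')+\int_{t_N'}^{t} S(t-s)\,v_N(s)\,ds
\]
from a point $t_N'\in[0,T]$ lying in the support of the cutoff, and \emph{then} multiplies both sides by $\psi_1(N(t-t_N))$. The free piece $\psi_1(N(t-t_N))S(t-t_N')u_N(t_N')$ has $X_N$-norm $\lesssim\|u_N(t_N')\|_{L^2}$ by a direct Fourier-side computation (this is what supplies the $B^s(T)$ contribution), and the inhomogeneous piece $\psi_1(N(t-t_N))\int_{t_N'}^{t}S(t-s)v_N(s)\,ds$ is bounded in $X_N$ by the $\mathcal{N}_N$-norm of an extension of $v_N$ through a dedicated inhomogeneous estimate. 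No commutator with $\partial_t$ ever appears, so there is nothing to absorb. With this correction the remainder of your plan (square, multiply by $N^{2s}+N_0^{2s}$, sum in $N$) is correct and matches the cited references.
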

To obtain a priori estimates for smooth solutions we need the following lemma. 
\begin{lemma}
Let $s\geq 0$, $v\in C([0,T_0];H^{\infty}(\mathbb{T}^2))$. Then the mapping $T\rightarrow \|v\|_{\mathcal{N}^s(T)}$ is increasing and continuous on $[0,T_0]$ and
\begin{equation}\label{FFEQ6}
    \lim_{T\to 0} \|v\|_{\mathcal{N}^s(T)}\rightarrow 0.
\end{equation}
\end{lemma}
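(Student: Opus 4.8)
The plan is to establish the three assertions in the order stated: the monotonicity is immediate, the limit follows from a quantitative bound, and the continuity is the substantive point. First, for the monotonicity, observe that $N_0$ is fixed once and for all (it depends only on $T_0$), so the spaces $\mathcal{N}_N$ and the weights $N^{2s}+N_0^{2s}$ entering the definition of $\mathcal{N}^s(T)$ do not vary with $T$; only the admissible class of extensions does. If $0\le T_1\le T_2\le T_0$ and $\widetilde{w}\in\mathcal{N}_N$ agrees with $P_Nv$ on $[0,T_2]$, then it also agrees with $P_Nv$ on $[0,T_1]$, so the infimum defining $\|P_Nv\|_{\mathcal{N}_N(T_1)}$ is over a larger family and $\|P_Nv\|_{\mathcal{N}_N(T_1)}\le\|P_Nv\|_{\mathcal{N}_N(T_2)}$; summing over $N\in\mathbb{D}$ gives $\|v\|_{\mathcal{N}^s(T_1)}\le\|v\|_{\mathcal{N}^s(T_2)}$.

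Next I would fix a bounded linear extension operator $\mathcal{E}\colon C([0,T_0];H^\infty(\mathbb{T}^2))\to C(\mathbb{R};H^\infty(\mathbb{T}^2))$ with $\mathcal{E}f\equiv f$ on $[0,T_0]$ and $\supp_t(\mathcal{E}f)\subset(-1,2)$. For $T\in(0,T_0]$ the function $\psi_1(t/T)\,P_N(\mathcal{E}v)$ is $P_N$-localized, coincides with $P_Nv$ on $[0,T]$ (since $\psi_1(t/T)=1$ there), and is supported in $\{|t|\le 2T\}$, hence competes in the infimum defining $\|P_Nv\|_{\mathcal{N}_N(T)}$. Its $\mathcal{N}_N$ norm I would bound by hand: for each $t_N$ the windowed function $h:=\psi_1(N(\cdot-t_N))\psi_1(\cdot/T)P_N(\mathcal{E}v)$ is supported in a time interval of length $\lesssim\min(N^{-1},T)$, so $\|h\|_{L^2_tL^2_{xy}}\lesssim T^{1/2}\|P_N(\mathcal{E}v)\|_{L^\infty_tL^2_{xy}}$; then one uses the crude bound $|\tau+\omega(m,n)+iN|^{-1}\le N^{-1}$, and Cauchy--Schwarz to sum the modulation scales $L\lesssim N^2$ against this $N^{-1}$ gain, together with the stronger gain $|\tau+\omega(m,n)+iN|^{-1}\lesssim|\tau-\omega(m,n)|^{-1}$ on the tail $|\tau-\omega(m,n)|\gtrsim N^2$ (valid because $|\omega(m,n)|\lesssim N^2$ on $\supp P_N$). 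This yields, uniformly in $t_N$,
\begin{equation*}
\|P_Nv\|_{\mathcal{N}_N(T)}\ \lesssim_{N_0}\ T^{1/2}\|P_N(\mathcal{E}v)\|_{L^\infty_tL^2_{xy}}\ \lesssim_{N_0,m}\ T^{1/2}N^{-m}\|v\|_{C([0,T_0];H^m)},\qquad m\ge 0 .
\end{equation*}
Choosing $m$ so large that $\sum_{N\in\mathbb{D}}(N^{2s}+N_0^{2s})N^{-2m}<\infty$ and summing over $N$ gives $\|v\|_{\mathcal{N}^s(T)}\lesssim_{N_0,s}T^{1/2}\|v\|_{C([0,T_0];H^m)}$; since $v\in C([0,T_0];H^\infty)$ this proves $\lim_{T\to0}\|v\|_{\mathcal{N}^s(T)}=0$, and it also shows that $\sum_N(N^{2s}+N_0^{2s})\|P_Nv\|_{\mathcal{N}_N(T)}^2$ converges uniformly in $T\in(0,T_0]$.

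By this uniform convergence it suffices, for the continuity, to show that $T\mapsto\|P_Nv\|_{\mathcal{N}_N(T)}^2$ is continuous for each fixed $N$; monotonicity already gives one-sided limits. For right-continuity at $T_*\in[0,T_0)$, given $\varepsilon>0$ I would take $w\in\mathcal{N}_N$ with $w\equiv P_Nv$ on $[0,T_*]$ and $\|w\|_{\mathcal{N}_N}\le\|P_Nv\|_{\mathcal{N}_N(T_*)}+\varepsilon$, and for small $\eta>0$ set $w_\eta:=w+\beta_\eta\bigl(P_N(\mathcal{E}v)-w\bigr)$, where $\beta_\eta$ is a smooth time cutoff equal to $1$ on $[T_*,T_*+\eta]$ and supported in $[T_*-\eta,T_*+2\eta]$; since $P_N(\mathcal{E}v)-w$ vanishes on $[0,T_*]$ one checks $w_\eta\equiv P_Nv$ on $[0,T_*+\eta]$, while $w_\eta-w$ is continuous, supported in an interval of length $\lesssim\eta$, and bounded there by $|P_N(\mathcal{E}v)-w|$, which tends to $0$ uniformly as $\eta\to0$ because $w$ and $P_Nv$ are \emph{fixed} continuous functions agreeing at $T_*$. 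The estimate above then gives $\|w_\eta-w\|_{\mathcal{N}_N}\lesssim_{N_0}\eta^{1/2}\,o_\eta(1)$, hence $\limsup_{\eta\downarrow0}\|P_Nv\|_{\mathcal{N}_N(T_*+\eta)}\le\|P_Nv\|_{\mathcal{N}_N(T_*)}+\varepsilon$, and letting $\varepsilon\to0$ (with monotonicity) gives right-continuity. For left-continuity at $T_*\in(0,T_0]$, if $\|P_Nv\|_{\mathcal{N}_N(T_*-\eta_k)}\to L<\|P_Nv\|_{\mathcal{N}_N(T_*)}$ along some $\eta_k\downarrow0$, choose competitors $w_k\equiv P_Nv$ on $[0,T_*-\eta_k]$ with $\|w_k\|_{\mathcal{N}_N}\le\|P_Nv\|_{\mathcal{N}_N(T_*-\eta_k)}+1/k$; after a harmless truncation by a fixed smooth time cutoff (which changes the $\mathcal{N}_N$ norm by at most a bounded factor, cf.\ Lemma \ref{FUNLEMM2}) the $w_k$ are supported in a fixed compact time interval, bounded in $\mathcal{N}_N$ and $P_N$-localized, hence admit a weak-$*$ subsequential limit $w_\infty$ with $\|w_\infty\|_{\mathcal{N}_N}\le\liminf_k\|w_k\|_{\mathcal{N}_N}\le L$ (the $\mathcal{N}_N$ norm being a supremum over $t_N$ of weakly lower semicontinuous functionals) and $w_\infty\equiv P_Nv$ on $\bigcup_k[0,T_*-\eta_k]=[0,T_*)$, hence on $[0,T_*]$ by continuity, forcing $\|P_Nv\|_{\mathcal{N}_N(T_*)}\le L$, a contradiction. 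The main obstacle is exactly this continuity step: a bound on the $\mathcal{N}_N$ norm does not control the modulus of continuity in $t$, so one cannot simply glue a near-optimal extension on $[0,T_*-\eta]$ to $P_Nv$ on the leftover piece with cost tending to $0$, and the soft compactness/lower-semicontinuity argument seems unavoidable for the left-continuity.
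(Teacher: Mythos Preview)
The paper itself gives no proof here; it simply refers to \cite[Lemma~6.3]{ZhangKP}. Your handling of monotonicity, the limit \eqref{FFEQ6}, and right-continuity is correct and follows the standard pattern: the crude bound $\|P_Nv\|_{\mathcal{N}_N(T)}\lesssim_{N_0}T^{1/2}N^{-m}\|v\|_{C([0,T_0];H^m)}$ gives both the limit at $T=0$ and the uniform-in-$T$ summability over $N$ that reduces continuity to each fixed $N$; and the perturbation $w_\eta=w+\beta_\eta(P_N(\mathcal{E}v)-w)$ works for right-continuity because $P_N(\mathcal{E}v)-w$ is a \emph{fixed} continuous function vanishing at $T_*$.

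The gap is the left-continuity. Your soft argument asserts that an $\mathcal{N}_N$-bounded sequence admits a weak-$*$ subsequential limit and that the $\mathcal{N}_N$ norm is weak-$*$ lower semicontinuous, but neither is justified: $\mathcal{N}_N$ is not exhibited as the dual of anything, and a bound on $\|\cdot\|_{\mathcal{N}_N}$ does not control any $L^p_{t,x}$ norm (the multiplier $(\tau-\omega+iN)^{-1}$ makes $\mathcal{N}_N$ a space of \emph{negative} time regularity, strictly larger than $L^2_{t,x}$ on each window). Even granting a limit in some ambient weak topology, membership in $\mathcal{N}_N$ requires $w_\infty\in C(\mathbb{R};L^2)$, which weak limits of continuous functions need not satisfy, and you need agreement with $P_Nv$ on the \emph{closed} interval $[0,T_*]$. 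You flag this as the obstacle, but the argument does not close it. The proofs in the references the paper cites (Zhang, following Ionescu--Kenig--Tataru) avoid compactness and proceed by a direct modification of the near-optimal $w_\eta$ for $[0,T_*-\eta]$; the point that makes this harder than your right-continuity step is precisely that $w_\eta$ varies with $\eta$ and is uncontrolled outside $[0,T_*-\eta]$, so one has to truncate at the intrinsic scale of the short-time spaces (using Lemma~\ref{FUNLEMM2}/\ref{FUNLEMM3}) rather than at scale $\eta$, and check that this costs $1+o_\eta(1)$ rather than merely $O(1)$.
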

\begin{proof}
The proof follows the same line of arguments in \cite[Lemma 6.3]{ZhangKP}.
\end{proof}


\subsection{\texorpdfstring{$L^2$}{} Bilinear estimates}

Next, we obtain the crucial $L^2$ bilinear estimates, which will be applied in both the short time estimates and energy estimates in the subsequent subsections. Recalling the notation introduced in \eqref{Dset}, we have:
\begin{prop}\label{lembilinEST}
Assume that $N_j,L_j\in \mathbb{D}$ and $f_j:\mathbb{Z}^2\times \mathbb{R}\rightarrow \mathbb{R}^{+}$ functions supported in $D_{N_j,L_j}$ for $j=1,2,3$.
\begin{itemize}
\item[(i)] It holds that
         \begin{equation}\label{eqbili1}
        \int_{\mathbb{Z}^2\times \mathbb{R}} (f_1\ast f_2)\cdot f_3 \lesssim N_{min}L_{min}^{1/2}\left\|f_1\right\|_{L^2}\left\|f_2\right\|_{L^2}\left\|f_3\right\|_{L^2}.
    \end{equation}
    \item[(ii)] Suppose that $N_{min}\ll N_{max}$. If $(N_j,L_j)=(N_{min},L_{max})$ for some $j\in\{1,2,3\}$, then
    \begin{equation}\label{eqbili2}
        \int_{\mathbb{Z}^2\times \mathbb{R}} (f_1\ast f_2)\cdot f_3 \lesssim N_{max}^{-1/2}N_{min}^{1/2}L_{max}^{1/2} (N_{max}^{1/2}\vee L_{min}^{1/2})\left\|f_1\right\|_{L^2}\left\|f_2\right\|_{L^2}\left\|f_3\right\|_{L^2},
    \end{equation}
    otherwise
     \begin{equation}\label{eqbili3}
        \int_{\mathbb{Z}^2\times \mathbb{R}} (f_1\ast f_2)\cdot f_3 \lesssim N_{max}^{-1/2}N_{min}^{1/2}L_{med}^{1/2} (N_{max}^{1/2}\vee L_{min}^{1/2})\left\|f_1\right\|_{L^2}\left\|f_2\right\|_{L^2}\left\|f_3\right\|_{L^2}.
    \end{equation}
    \item[(iii)] If $N_{min}\sim N_{max}$, \begin{equation}\label{eqbili3.1}
        \int_{\mathbb{Z}^2\times \mathbb{R}} (f_1\ast f_2)\cdot f_3 \lesssim L_{max}^{1/2} (N_{max}^{1/2}\vee L_{med}^{1/2})\left\|f_1\right\|_{L^2}\left\|f_2\right\|_{L^2}\left\|f_3\right\|_{L^2}.
    \end{equation}
\end{itemize}
\end{prop}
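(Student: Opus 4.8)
The plan is to prove Proposition~\ref{lembilinEST} by reducing the convolution integrals to counting estimates on the level sets of the dispersion relation, following the standard scheme for $L^2$-bilinear estimates in short-time Fourier restriction spaces (as in \cite{IonescuKeniTata,ZhangKP,RoberTtFKP}). First I would record the basic observation that, by duality and Plancherel, $\int (f_1\ast f_2)\cdot f_3$ is symmetric in the three functions (up to reflecting one of them) and is bounded by
\begin{equation*}
\int (f_1\ast f_2)\cdot f_3 \lesssim \|f_{j_1}\|_{L^2}\|f_{j_2}\|_{L^2}\sup_{(m,n,\tau)\in D_{N_{j_3},L_{j_3}}} |E_{j_3}(m,n,\tau)|^{1/2}\|f_{j_3}\|_{L^2},
\end{equation*}
where $E_{j_3}(m,n,\tau)$ is the set of $(m',n',\tau')\in D_{N_{j_1},L_{j_1}}$ with $(m-m',n-n',\tau-\tau')\in D_{N_{j_2},L_{j_2}}$; one is free to choose which index plays the role of $j_3$. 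So the whole proposition reduces to estimating the measure of these ``resonant'' sets for every choice of which frequency/modulation is singled out.

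Next I would carry out the counting. Part~(i) is the crudest bound: on $E_{j_3}$ one integrates in $\tau'$ over an interval of length $\lesssim L_{min}$ (the smallest modulation), and then sums over $m',n'$ in a box of size $\lesssim N_{min}^2$; however, the sharp statement $N_{min}L_{min}^{1/2}$ is obtained more efficiently by first integrating the $\tau'$ variable (giving $L_{min}^{1/2}$ after Cauchy--Schwarz in the modulation variable of the singled-out function) and then using that the remaining convolution in $(m,n)$ of two $\ell^2$ functions supported on discs of radius $N_1,N_2$ costs only $N_{min}$ by Young/Cauchy--Schwarz in two dimensions; this is exactly the 2D periodic analogue of the elementary bound used in \cite{ZhangKP}. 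For parts~(ii) and~(iii) the gain over~(i) comes from the dispersion: writing $\omega(m,n)=\sign(m)(1+m^2\mp n^2)$, the key algebraic fact is that on the region where $\tau_i\approx\omega(m_i,n_i)$, the resonance function $\Omega$ of \eqref{resonF} is essentially pinned, $|\Omega(m_1,n_1,m_2,n_2)|\lesssim L_{max}$, and $\Omega$ controls the size of the bilinear interaction. I would compute $\Omega$ explicitly in the two regimes --- $N_{min}\ll N_{max}$ versus $N_{min}\sim N_{max}$ --- and observe that, when $N_{min}\ll N_{max}$, fixing the low-frequency variables forces the high-frequency variable $m$ to lie in an interval of length $\lesssim L_{max}/N_{max}$ (from differentiating $\Omega$ in $m$, whose derivative is $\sim N_{max}$), which after the $\tau'$-integration and Cauchy--Schwarz yields the factor $N_{max}^{-1/2}L_{max}^{1/2}N_{min}^{1/2}$; the extra factor $(N_{max}^{1/2}\vee L_{min}^{1/2})$ is the cost of summing/integrating the one remaining free low variable (either an $n$-sum over $\lesssim N_{max}^{1/2}$ lattice points in a parabolic slab, or the small modulation $L_{min}$). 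The choice between \eqref{eqbili2} and \eqref{eqbili3} is exactly whether the function carrying $(N_{min},L_{max})$ is the one we put outside the $L^2\times L^2$ Cauchy--Schwarz or not: in the former case we pay $L_{max}^{1/2}$, in the latter we can afford to pay only $L_{med}^{1/2}$. Part~(iii), the high-high-high case $N_{min}\sim N_{max}$, is handled the same way, except that there is no high/low separation, so one only extracts $L_{max}^{1/2}$ and the transversality of the two parabola-like surfaces gives $(N_{max}^{1/2}\vee L_{med}^{1/2})$.

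The main obstacle --- and the place requiring genuine care rather than bookkeeping --- is the dispersive counting in part~(ii)/(iii) for the mixed-sign model: because $\omega(m,n)=\sign(m)(1+m^2\mp n^2)$ is a \emph{hyperbolic} (rather than elliptic) symbol in $(m,n)$ when the $-\mathcal{H}_x\partial_y^2$ sign occurs, the level sets of $\Omega$ are hyperbola-type curves, and one must check that the relevant lattice-point count on a thickened hyperbola in $\mathbb{Z}^2$ still obeys the same $N_{max}^{1/2}$ bound as in the elliptic (KP-type) case treated in \cite{ZhangKP,RoberTtFKP}. Concretely, after localizing $m$ to an interval of length $\lesssim L_{max}/N_{max}$, one is left to count $n'\in\mathbb{Z}$ with $\sign(m')(1\mp n'^2)$ (plus the cross term from $(n-n')$) lying in an interval of length $\lesssim L_{med}$; the number of such $n'$ is $\lesssim 1+L_{med}/N_{max}$ when the quadratic form in $n'$ is nondegenerate, but one must rule out (or separately handle by the trivial bound) the degenerate strip where $\partial_{n'}\Omega$ is small, i.e. where $|n'|\lesssim 1$ or $|n-n'|\lesssim 1$; a standard dyadic decomposition of the $\partial_{n'}\Omega$ variable absorbs this, and taking the supremum of $1+L_{med}/N_{max}$ against $N_{max}^{1/2}$ (using $L_{med}\le L_{max}$ and Cauchy--Schwarz) produces precisely the factor $(N_{max}^{1/2}\vee L_{min}^{1/2})$ or $(N_{max}^{1/2}\vee L_{med}^{1/2})$ in the statement. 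Once these two elementary counting lemmas (one for the $m$-interval length, one for the $n$-lattice count on a thickened conic) are in place, assembling \eqref{eqbili1}--\eqref{eqbili3.1} is just a matter of choosing, in each case, which of $f_1,f_2,f_3$ to single out and tracking which modulation is smallest/medium/largest.
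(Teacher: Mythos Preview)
Your outline is on the right track and matches the paper's strategy: reduce to counting the size of resonant sets via the derivatives of $\Omega$, after using the symmetry \eqref{eqbili4} to choose which function to single out. Two technical points, however, are glossed over in your sketch and are exactly where the paper's proof spends its effort.

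First, the claim that ``$\partial_m\Omega\sim N_{max}$'' is not true in general: since $\omega(m,n)=\sign(m)(1+m^2\mp n^2)$, one finds (on each fixed sign region) $|\partial_{m_2}\Omega|\sim |m_1|$ and $|\partial_{n_2}\Omega|\sim |n_1|$, cf.\ \eqref{eqbili7}. The hypothesis $|(m_1,n_1)|\sim N_{max}$ only guarantees that \emph{one} of $|m_1|,|n_1|$ is $\sim N_{max}$, so the paper splits into the two subcases $|m_1|\sim N_{max}$ and $|m_1|\ll N_{max},\ |n_1|\sim N_{max}$ (see \eqref{eqbili8}) and applies Lemma~\ref{cardinal} in the corresponding variable. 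Your ``localize $m$ first, then count $n'$'' scheme breaks down precisely when the high frequency lives in the $n$-direction.

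Second, the $\sign(m)$ factor in $\omega$ makes $\Omega$ only piecewise smooth, so before differentiating one must decompose $\mathbb{Z}^4\times\mathbb{R}^2$ into the sign regions $A_1,\dots,A_4$ of \eqref{eqbili6.1}. The regions $A_3$ (where $m_2=-m_1$) and $A_4$ (where $m_1=0$ or $m_2=0$) are genuinely degenerate---no transversality is available---and the paper handles them by direct Cauchy--Schwarz (see \eqref{eqbili11}), not by the dyadic decomposition of $\partial_{n'}\Omega$ you propose. In part~(iii), the region $A_2$ requires yet another twist: there $\partial_{m_2}\Omega$ and $\partial_{n_2}\Omega$ can both vanish when $N_1\sim N_2$, so the paper switches to $\partial_{m_1}\Omega\sim|m_2|$ and $\partial_{n_1}\Omega\sim|n_2|$ instead (see \eqref{eqbili12}). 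None of this is deep, but your sketch does not anticipate it, and the ``hyperbolic vs.\ elliptic'' issue you flag is actually orthogonal to these difficulties---the sign-region decomposition and the $m$-vs-$n$ dichotomy are needed regardless of the $\pm$ in front of $\mathcal{H}_x\partial_y^2$.
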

Before proving Proposition \ref{lembilinEST}, we require the following elementary result.
\begin{lemma}\label{cardinal}
Let $I,J$ be two intervals in $\mathbb{R}$, and $\varphi:J\rightarrow \mathbb{R}$ a $C^1$ function with $\inf_{x\in J}|\varphi'(x)|>0$. Suppose that $\{n\in J\cap \mathbb{Z} : \varphi(n)\in I\}\neq \emptyset$. Then
\begin{equation*}
    \#\{n\in J\cap \mathbb{Z}:  \varphi(n)\in I\} \lesssim 1+\frac{|I|}{\inf_{x\in J}|\varphi'(x)|}.
\end{equation*}
\end{lemma}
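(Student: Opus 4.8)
The plan is to prove this as a standard lattice-point counting estimate, where the key idea is that the preimage under a strictly monotone map of an interval contains a controlled number of integers. Recall the statement: given intervals $I, J \subset \mathbb{R}$, a $C^1$ function $\varphi : J \to \mathbb{R}$ with $m := \inf_{x\in J}|\varphi'(x)| > 0$, and assuming the set $S := \{n \in J\cap\mathbb{Z} : \varphi(n)\in I\}$ is nonempty, we want $\#S \lesssim 1 + |I|/m$.

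First I would observe that since $\varphi'$ has strictly positive infimum on the connected set $J$ and $\varphi$ is $C^1$, $\varphi'$ cannot change sign (it is continuous and never zero), so $\varphi$ is strictly monotone on $J$; without loss of generality assume $\varphi$ is increasing. Then I would take $n_{\min} = \min S$ and $n_{\max} = \max S$ (these exist because $S$ is a nonempty set of integers that is bounded — it sits inside $J$ intersected appropriately; if $J$ is unbounded one still argues with the extremal elements realizing the values in $I$, but monotonicity plus $\varphi(S)\subset I$ forces $S$ to be finite since consecutive integers are pushed apart by at least $m$). The core estimate is the mean value theorem: for any two points $a < b$ in $J$, $|\varphi(b) - \varphi(a)| = |\varphi'(c)|(b-a) \geq m(b-a)$ for some $c \in (a,b)$. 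Applying this to $a = n_{\min}$, $b = n_{\max}$ gives $m(n_{\max} - n_{\min}) \leq |\varphi(n_{\max}) - \varphi(n_{\min})| \leq |I|$, since both $\varphi(n_{\min})$ and $\varphi(n_{\max})$ lie in the interval $I$. Hence $n_{\max} - n_{\min} \leq |I|/m$.

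Finally, since $S$ is a set of integers all lying in $[n_{\min}, n_{\max}]$, we get $\#S \leq n_{\max} - n_{\min} + 1 \leq 1 + |I|/m$, which is the claimed bound (with implicit constant $1$). I do not expect a genuine obstacle here; the only point requiring a word of care is justifying that $\varphi$ is monotone and that the extremal elements of $S$ exist — this is exactly where the hypotheses $\inf_J|\varphi'| > 0$ and $S \neq \emptyset$ are used, and where one notes that the spacing bound $m(b-a) \leq |\varphi(b)-\varphi(a)|$ forces any subset of $\mathbb{Z}$ mapped into a bounded interval to be finite. Everything else is the mean value theorem applied once.
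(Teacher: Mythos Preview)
Your proof is correct and is exactly the standard argument one would give: monotonicity of $\varphi$ from the sign-definiteness of $\varphi'$, the mean value theorem to bound $n_{\max}-n_{\min}$ by $|I|/m$, and the trivial count of integers in $[n_{\min},n_{\max}]$. The paper itself does not supply a proof of this lemma, presenting it as an elementary result, so there is nothing to compare against.
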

\begin{proof}[Proof of Proposition \ref{lembilinEST}]
We notice that
\begin{equation}\label{eqbili4}
    \int_{\mathbb{Z}^2\times \mathbb{R}} (f_1\ast f_2)\cdot f_3=\int_{\mathbb{Z}^2\times \mathbb{R}} (\widetilde{f}_1\ast f_3)\cdot f_2 =\int_{\mathbb{Z}^2\times \mathbb{R}} (\widetilde{f}_2\ast f_3)\cdot f_1=: \mathcal{I},
\end{equation}
where $\widetilde{f}_j(m,n,\tau)=f_j(-m,-n,-\tau)$. Let us first establish (i). In view of the above display, we can assume that $L_1=L_{min}$. Let $f^{\#}_j(m,n,\tau)=f_j(m,n,\tau+\omega(m,n))$, then $f_j^{\#}$ is supported in
\begin{equation*}
    D^{\#}_{N_j,L_j}=\left\{(m,n,\tau)\in \mathbb{R}^3: |(m,n)|\in I_{N_j} \text{ and } |\tau|\leq L_j \right\},
\end{equation*}
and $\|f_j^{\#}\|_{L^2}=\|f_j\|_{L^2}$, $j=1,2,3$, so that
\begin{equation}\label{eqbili5}
\begin{aligned}
 \mathcal{I}=\sum_{m_1,n_1,m_2,n_2}\int f^{\#}_1(m_1,n_1,\tau_1)f^{\#}_2(m_2,n_2,\tau_2)f^{\#}_3(m_1+m_2,n_1+n_2,\tau_1+\tau_2+\Omega(m_1,n_1,m_2,n_2))\, d\tau_1 d\tau_2,
\end{aligned}
\end{equation}
where $\Omega(m_1,n_1,m_2,n_2)$ is defined as in  \eqref{resonF}. Thus, by applying the Cauchy-Schwarz inequality in $\tau_2$ and then in $\tau_1$ we get
\begin{equation}\label{eqbili6}
    \begin{aligned}
    \mathcal{I}&\leq \sum_{m_1,n_1,m_2,n_2} \int |f^{\#}_1(m_1,n_1,\tau_1)|\|f^{\#}_2(m_2,n_2,\cdot)\|_{L^2_{\tau}}\|f^{\#}_3(m_1+m_2,n_1+n_2,\cdot)\|_{L^2_{\tau}} d\tau_1 \\
    &\leq L_1^{1/2} \sum_{m_1,n_1,m_2,n_2}  \|f^{\#}_1(m_1,n_1,\cdot)\|_{L^2_\tau}\|f^{\#}_2(m_2,n_2,\cdot)\|_{L^2_{\tau}}\|f^{\#}_3(m_1+m_2,n_1+n_2,\cdot)\|_{L^2_{\tau}}.
    \end{aligned}
\end{equation}
In this manner, the same procedure applied above now to the spatial variables $m_1,n_1,m_2,n_2$ on the r.h.s of \eqref{eqbili6} yields \eqref{eqbili1}.

Next, we deduce (ii). By \eqref{eqbili4}, we shall assume that $N_{min}\sim N_2$ and $L_{1}\geq L_3$, that is, $N_2\ll N_1\sim N_3$. We consider the sets:
\begin{equation}\label{eqbili6.1}
    \begin{aligned}
      A_1&=(\mathbb{Z}^4\times \mathbb{R}^2) \setminus \bigcup_{j=2}^4 A_j,\\
      A_2&=\left\{(m_1,n_1,m_2,n_2,\tau_1,\tau_2)\in \mathbb{Z}^4\times \mathbb{R}^2\, :\, m_1m_2<0 \text{ and } |m_1|>|m_2| \right\}, \\
      A_3&=\left\{(m_1,n_1,m_2,n_2,\tau_1,\tau_2)\in \mathbb{Z}^4\times\mathbb{R}^2\, :\, m_1m_2<0 \text{ and } |m_1|=|m_2| \right\}, \\
      A_4&=\left\{(m_1,n_1,m_2,n_2,\tau_1,\tau_2)\in \mathbb{Z}^4\times \mathbb{R}^2\, :\, m_1=0 \text{ or } m_2=0 \right\}.
    \end{aligned}
\end{equation}
Accordingly, we divide $\mathcal{I}$ given by \eqref{eqbili5} as
\begin{equation}\label{eqbili6.2}
    \mathcal{I}=\mathcal{I}_1+\mathcal{I}_2+\mathcal{I}_3+\mathcal{I}_4,
\end{equation}
where $\mathcal{I}_j$ corresponds to the restriction of $\mathcal{I}$ to the domain $A_j$. Now, we proceed to estimate each of the factor $\mathcal{I}_j$, $j=1,2,3,4$.
\\ \\
{\bf Estimate for $\mathcal{I}_1$}. By support considerations, it must follow that $m_2(m_1+m_2)>0$, or equivalently,  $\sign(m_2)=\sign(m_1+m_2)$. Thus, recalling \eqref{resonF}, the resonant function for this case satisfies
\begin{equation}\label{eqbili6.3}
\begin{aligned}
\Omega(m_1,n_1,m_2,n_2) 
=&\sign(m_2)(m_1^2+2m_1m_2)\mp \sign(m_2)(n_1^2+2n_1n_2)\\
&-\sign(m_1)-\sign(m_1)m_1^2\pm \sign(m_1)n_1^2. 
\end{aligned}
\end{equation}
So, we divide $A_1=A_{1,1}\cup A_{1,2}$, where $A_{1,1}$ consists of the elements in $A_1$ satisfying that $m_2>0$ and $A_{1,2}$ those for which $m_2<0$. Thus, we find
\begin{equation}\label{eqbili7}
    \big|\frac{\partial}{\partial m_2}\Omega(m_1,n_1,m_2,n_2)\big|\sim |m_1| \text{ and } \big|\frac{\partial}{\partial n_2}\Omega(m_1,n_1,m_2,n_2)\big|\sim |n_1|
\end{equation}
in each of the regions $A_{1,1}$ and $A_{1,2}$. Now, since $|(m_1,n_1)|\sim N_1$ in the support of $\mathcal{I}_1$, we further divide the region of integration according to the cases where $|\frac{\partial}{\partial m_2}\Omega(m_1,n_1,m_2,n_2)|\sim N_1$ and $|\frac{\partial}{\partial n_2}\Omega(m_1,n_1,m_2,n_2)|\sim N_1$, namely 
\begin{equation}\label{eqbili8}
    \mathcal{I}_1=\sum_{k=1}^2\int_{A_{1,k}\cap \{|m_1|\sim N_1\}} (f_1\ast f_2)\cdot f_3+\int_{A_{1,k}\cap \{|m_1|\ll N_1, \, |n_1|\sim N_1\}} (f_1\ast f_2)\cdot f_3=\mathcal{I}_{1,1}+\mathcal{I}_{1,2},
\end{equation}
To estimate $\mathcal{I}_{1,1}$, we use that $|\tau_1+\tau_2+\Omega(m_1,n_1,m_2,n_2)|\leq L_3$, \eqref{eqbili7} and Lemma \ref{cardinal}, together with the Cauchy-Schwarz inequality in the $m_2$ variable to find
\begin{equation}\label{eqbili9}
\begin{aligned}
\mathcal{I}_{1,1}&\lesssim \sum_{|m_1|\sim N_1,n_1,n_2} (1+L_{3}^{1/2}/N_1^{1/2})\int |f_1^{\#}(m_1,n_1,\tau_1)|\\
&\hspace{3cm}\times\|f^{\#}_2(m_2,n_2,\tau_2)f^{\#}_3(m_1+m_2,n_1+n_2,\tau_1+\tau_2+\Omega(m_1,n_1,m_2,n_2))\|_{L^2_{m_2}}d\tau_1 d\tau_2 \\
&\lesssim \sum_{n_2}(1+L_{3}^{1/2}/N_1^{1/2})\int \|f_1^{\#}\|_{L^2}\|f_3^{\#}\|_{L^2}\|f^{\#}_2(\cdot,n_2,\tau_2)\|_{L^2_{m}} d\tau_2 \\
& \lesssim L_2^{1/2}N_2^{1/2}(1+L_{3}^{1/2}/N_1^{1/2})\|f_1^{\#}\|_{L^2}\|f_2^{\#}\|_{L^2}\|f_3^{\#}\|_{L^2},
\end{aligned}
\end{equation}
where we have employed the Cauchy-Schwarz inequality in $m_1,n_1, \tau_1,$, and the last line is obtained by the same inequality in $n_2, \tau_2$. The estimate for $\mathcal{I}_{1,2}$ is deduced changing the roles of $m_2$ by $n_2$ in the preceding argument. This completes the study of $\mathcal{I}_1$. 
\\ \\
{\bf Estimate for $\mathcal{I}_2$}. In this case $\sign(m_1)=-\sign(m_2)$ and $\sign(m_1)=\sign(m_1+m_2)$, then
\begin{equation*}
\begin{aligned}
\Omega(m_1,n_1,m_2,n_2) 
=&\sign(m_1)(2m_1m_2+2m_2^2)\mp \sign(m_1)(2n_1n_2+2n_2^2)+\sign(m_1).
\end{aligned}
\end{equation*}
We write $A_2=A_{2,1}\cup A_{2,2}$, where $A_{2,1}=A_2 \cap \{m_1>0\}$ and $A_2\cap \{m_1<0\}$. Consequently, in each of the sets $A_{2,1}, A_{2,2}$, it holds
\begin{equation}\label{eqbili10}
    \big|\frac{\partial}{\partial m_2}\Omega(m_1,n_1,m_2,n_2)\big|\sim |2m_1+4m_2| \hspace{0.5cm} \text{ and } \hspace{0.5cm} \big|\frac{\partial}{\partial n_2}\Omega(m_1,n_1,m_2,n_2)\big|\sim |2n_1+4n_2|.
\end{equation}
Now, since $|(m_1,n_1)|\sim N_1$, $|(m_2,n_2)|\sim N_2$ with $N_2\ll N_1$, \eqref{eqbili10} establishes that in each of the regions defined by $\mathcal{I}_2$ restricted to $A_{2,1}, A_{2,2}$, either $|\frac{\partial}{\partial m_2}\Omega(m_1,n_1,m_2,n_2)|\sim N_1$ or  $|\frac{\partial}{\partial n_2}\Omega(m_1,n_1,m_2,n_2)|\sim N_1 $. In consequence, we can further divide $\mathcal{I}_2$ restricted to each $A_{2,j}$, $j=1,2$ as in \eqref{eqbili8} to apply a similar argument to \eqref{eqbili9}, which ultimately leads to the desired estimate.
\\ \\
{\bf Estimate for $\mathcal{I}_3$ and $\mathcal{I}_4$}. In these cases both regions of integration can be bounded directly by means of the Cauchy-Schwarz inequality without any further consideration on the resonant function. Indeed, in the support of $\mathcal{I}_3$, we have that $m_2=-m_1$ and so
\begin{equation}\label{eqbili11}
    \begin{aligned}
    \mathcal{I}_3&=\sum_{m_1\neq 0,n_1,n_2} \int f_1^{\#}(m_1,n_1,\tau_1)f_2^{\#}(-m_1,n_2,\tau_2)f_3^{\#}(0,n_1+n_2,\tau_1+\tau_2+\Omega(m_1,-m_1,n_1,n_2))\, d \tau_1 d\tau_2 \\
    &\lesssim  \sum_{n_1,n_2} \int \|f_1^{\#}(\cdot,n_1,\tau_1)\|_{L^2_{m}}\|f_2^{\#}(\cdot,n_2,\tau_2)\|_{L^2_m}|f_3^{\#}(0,n_1+n_2,\tau_1+\tau_2+\Omega(m_1,-m_1,n_1,n_2))|\, d \tau_1 d\tau_2 \\
    &\lesssim  \sum_{n_2} \int \|f_2^{\#}(\cdot,n_2,\tau_2)\|_{L^2_m}\|f_1^{\#}\|_{L^2}\|f_3^{\#}\|_{L^2}\, d\tau_2 \\
    &\lesssim L_2^{1/2}N_2^{1/2} \|f_2^{\#}\|_{L^2}\|f_1^{\#}\|_{L^2}\|f_3^{\#}\|_{L^2},
    \end{aligned} 
\end{equation}
where we have employed that $L^{2}(\mathbb{Z}^2)\subset L^{\infty}(\mathbb{Z}^2)$, together with consecutive applications of the Cauchy-Schwarz inequality. On the other hand, to estimate $\mathcal{I}_4$, we split the region of integration in two parts for which at least one of the variables among $m_1$ and $m_2$ is not considered in the summation. This in turn allows us to perform some simple modifications to the previous argument dealing with $\mathcal{I}_3$ to bound $\mathcal{I}_4$ by the r.h.s of \eqref{eqbili11}.  

Collecting the estimates for $\mathcal{I}_{j}$, $j=1,2,3.4$, we complete the deduction of (ii). 
\\ \\
Next, we  consider (iii). In virtue of \eqref{eqbili4}, we shall assume that $L_2=L_{min}$ and $L_3=L_{max}$. As before, we decompose $\mathcal{I}=\widetilde{\mathcal{I}}_1+\widetilde{\mathcal{I}}_2+\mathcal{\mathcal{I}}_3+\widetilde{\mathcal{I}}_4$, where $\widetilde{\mathcal{I}}_j$ corresponds to the restriction of $\mathcal{I}$ (given by \eqref{eqbili5}) to the domain $A_j$ determined by \eqref{eqbili6.1}. 

Since $N_1\sim N_2, \sim N_3$, \eqref{eqbili7} allows us to estimate $\widetilde{\mathcal{I}}_1$ exactly as in \eqref{eqbili9}. The estimate for $\widetilde{\mathcal{I}}_{j}$ is obtained without considering the resonant function as in the study of $\mathcal{I}_j$ above for each $j=3,4$.  For the sake of brevity, we omit these estimates.

In the case of $\widetilde{\mathcal{I}}^{2}$, we notice that \eqref{eqbili10} shows that $\partial_{m_2} \Omega$ and $\partial_{n_2}\Omega$ could vanish in the support of the integral. Instead, we split $A_2=A_{2,1}\cup A_{2,2}$, where $A_{2,1}=A_2\cap \{m_1>0\}$, $A_{2,1}=A_2\cap \{m_1<0\}$, we have
\begin{equation}\label{eqbili12}
    \big|\frac{\partial}{\partial m_1}\Omega(m_1,n_1,m_2,n_2)\big|\sim |m_2| \hspace{0.5cm} \text{ and } \hspace{0.5cm} \big|\frac{\partial}{\partial n_1}\Omega(m_1,n_1,m_2,n_2)\big|\sim |n_2|,
\end{equation}
in each of the regions $A_{2,1}$ and $A_{2,2}$. Thus, \eqref{eqbili12} and similar considerations in the deduction of \eqref{eqbili9} yield
\begin{equation*}
 \widetilde{\mathcal{I}}_{2} \lesssim N_{max}^{1/2}L_{med}^{1/2}(1+L_{max}^{1/2}/N_{max}^{1/2})\|f_1^{\#}\|_{L^2}\|f_2^{\#}\|_{L^2}\|f_3^{\#}\|_{L^2}.
\end{equation*}
This completes the deduction of (iii). The proof is complete.
\end{proof}

By duality and Proposition \ref{lembilinEST}, we obtain the following $L^2$ bilinear estimates.

\begin{corol}
Let $N_1,N_2,N_3,L_1,L_2,L_3 \in \mathbb{D}$ be dyadic numbers and $f_j:\mathbb{R}^3\rightarrow \mathbb{R}_{+}$ supported in $D_{N_j,L_j}$ for $j=1,2$.
\begin{itemize}
\item[(1)] It holds that
         \begin{equation}\label{eqbili13}
        \| \mathbbm{1}_{D_{N_3,L_3}}(f_1\ast f_2)\|_{L^2} \lesssim N_{min}L_{min}^{1/2}\left\|f_1\right\|_{L^2}\left\|f_2\right\|_{L^2}.
    \end{equation}
    \item[(2)] Suppose that $N_{min}\ll N_{max}$. If $(N_j,L_j)=(N_{min},L_{max})$ for some $j\in\{1,2,3\}$, then
    \begin{equation}\label{eqbili14}
        \|\mathbbm{1}_{D_{N_3,L_3}}(f_1\ast f_2)\|_{L^2} \lesssim N_{max}^{-1/2}N_{min}^{1/2}L_{max}^{1/2} (N_{max}^{1/2}\vee L_{min}^{1/2})\left\|f_1\right\|_{L^2}\left\|f_2\right\|_{L^2},
    \end{equation}
    otherwise
     \begin{equation}\label{eqbili15}
         \|\mathbbm{1}_{D_{N_3,L_3}}(f_1\ast f_2)\|_{L^2} \lesssim N_{max}^{-1/2}N_{min}^{1/2}L_{med}^{1/2} (N_{max}^{1/2}\vee L_{min}^{1/2})\left\|f_1\right\|_{L^2}\left\|f_2\right\|_{L^2}.
    \end{equation}
    \item[(3)] If $N_{min}\sim N_{max}$, 
    \begin{equation}\label{eqbili16}
        \|\mathbbm{1}_{D_{N_3,L_3}}(f_1\ast f_2)\|_{L^2}\lesssim L_{max}^{1/2} (N_{max}^{1/2}\vee L_{med}^{1/2})\left\|f_1\right\|_{L^2}\left\|f_2\right\|_{L^2}.
    \end{equation}
\end{itemize}
\end{corol}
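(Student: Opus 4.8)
The plan is to deduce all three estimates from the corresponding parts of Proposition \ref{lembilinEST} by a routine duality argument; no new estimate is needed. First I would fix the dyadic parameters $N_1,N_2,N_3,L_1,L_2,L_3$ together with the nonnegative functions $f_1,f_2$ supported in $D_{N_1,L_1}$, $D_{N_2,L_2}$ respectively, and set $g:=\mathbbm{1}_{D_{N_3,L_3}}(f_1\ast f_2)$, the convolution and norms being taken over $\mathbb{Z}^2\times\mathbb{R}$ as in Proposition \ref{lembilinEST}. Since $f_1,f_2\geq 0$, the convolution $f_1\ast f_2$ is nonnegative, so $g\geq 0$ and $\supp g\subset D_{N_3,L_3}$ by construction.

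Next I would invoke the duality characterization $\|g\|_{L^2}=\sup\{\int g\cdot h:\|h\|_{L^2}\leq 1\}$ and observe that, because $g\geq 0$ and $\supp g\subset D_{N_3,L_3}$, this supremum is unchanged if we restrict to $h\geq 0$ with $h=\mathbbm{1}_{D_{N_3,L_3}}h$: replacing an arbitrary $h$ by $|h|\,\mathbbm{1}_{D_{N_3,L_3}}$ does not decrease the pairing $\int g\cdot h$ and does not increase $\|h\|_{L^2}$. For such $h$ one has $\int g\cdot h=\int_{\mathbb{Z}^2\times\mathbb{R}}(f_1\ast f_2)\cdot h$, which is precisely the trilinear quantity estimated in Proposition \ref{lembilinEST} with $f_3:=h$ supported in $D_{N_3,L_3}$ and $\|f_3\|_{L^2}\leq 1$, so all hypotheses of that proposition are met.

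Then I would apply part (i) of Proposition \ref{lembilinEST} to obtain (1), part (ii) to obtain (2), and part (iii) to obtain (3), in each case taking the supremum over admissible $h$ at the end. Here I would point out that the case dichotomy in part (ii) — whether or not $(N_j,L_j)=(N_{min},L_{max})$ for some $j\in\{1,2,3\}$ — is phrased in terms of $N_3,L_3$, which are exactly the parameters of the output localization region $D_{N_3,L_3}$ appearing on the left-hand side of \eqref{eqbili14}--\eqref{eqbili15}; hence the split transfers verbatim, and likewise the factors $N_{min}^{1/2}L_{max}^{1/2}(N_{max}^{1/2}\vee L_{min}^{1/2})$ etc.\ carry over unchanged.

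I do not expect a genuine obstacle here: the only point demanding slight attention is the reduction of the duality supremum to nonnegative test functions supported in $D_{N_3,L_3}$, which rests on the positivity of $f_1\ast f_2$ (itself a consequence of $f_1,f_2\geq 0$). Once that reduction is granted, the three inequalities are immediate restatements of the corresponding trilinear bounds of Proposition \ref{lembilinEST}, so I would keep the write-up short and merely record the duality step and the transfer of the case analysis.
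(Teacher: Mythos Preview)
Your proposal is correct and follows exactly the approach indicated in the paper, which simply states ``By duality and Proposition \ref{lembilinEST}, we obtain the following $L^2$ bilinear estimates'' without further detail. Your write-up makes the duality step explicit (reducing to nonnegative $h$ supported in $D_{N_3,L_3}$ via the positivity of $f_1\ast f_2$), which is precisely the content the paper leaves to the reader.
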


\subsection{Short time bilinear estimates}

In this section, we derive the crucial key bilinear estimates for the equation and the difference of solutions.

\begin{prop}\label{propShortBi1}
Let $s\geq s_0\geq 1$, $T\in (0,T_0]$, then
\begin{align}
\|\partial_x(uv)\|_{\mathcal{N}^s(T)}&\lesssim T_0^{1/4}\big( \|u\|_{F^{s_0}(T)}\|v\|_{F^{s}(T)}+\|v\|_{F^{s_0}(T)}\|u\|_{F^{s}(T)} \big), \label{ShortimEst1} \\ 
\|\partial_x(uv)\|_{\mathcal{N}^0(T)}&\lesssim T_0^{1/4}\|u\|_{F^{0}(T)}\|v\|_{F^{s_0}(T)}, \label{ShortimEst2}
\end{align}
for all $u,v\in F^s(T)$ and where the implicit constants are independent of $T_0$, and the definition of the spaces involved.
\end{prop}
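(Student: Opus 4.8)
The plan is to run the standard dyadic reduction of the short-time Fourier restriction norm method. By the Littlewood--Paley decomposition and the definition of $\mathcal{N}^s(T)$, it suffices to control $\|P_{N_3}\partial_x(P_{N_1}u\, P_{N_2}v)\|_{\mathcal{N}_{N_3}(T)}$ for each triple of dyadic frequencies $N_1,N_2,N_3$, bearing in mind that this quantity vanishes unless the two largest of the $N_j$ are comparable; the stated estimates then follow by summing in $\ell^2$ over $N_3$ and distributing the remaining dyadic sums into the $F^{s}$/$F^{s_0}$ norms using $s\ge s_0\ge 1$. Fix extensions $\widetilde u,\widetilde v$ of $u,v$ on $\mathbb{R}$. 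Since the $\mathcal{N}_{N_3}$-norm localizes time at scale $N_3^{-1}$ while $P_{N_j}\widetilde u$ is controlled in $F_{N_j}$ at scale $N_j^{-1}$, I match these scales: when $N_3\ll N_{\max}:=N_1\vee N_2\vee N_3$ (the high--high--low regime) I subdivide each time interval of length $\sim N_3^{-1}$ into $\sim N_{\max}/N_3$ intervals of length $\sim N_{\max}^{-1}$, paying a factor $(N_{\max}/N_3)^{1/2}$ by Cauchy--Schwarz; otherwise $N_3\sim N_{\max}$ and no subdivision is needed. On each resulting interval I pass to the Fourier side, use Lemmas \ref{FUNLEMM2}--\ref{FUNLEMM3} to reduce to functions $f_j$ supported in $D_{N_j,L_j}$, and I am left with a convolution estimate.

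Next I decompose dyadically in the modulation variables $L_1,L_2,L_3$. The $\partial_x$ factor contributes $N_3$, and the output weight $|\tau+\omega(m,n)+iN_3|^{-1}$ reweighted by the $X_{N_3}$ structure contributes $\lesssim L_3^{1/2}\max(L_3,N_3)^{-1}$, so the output side carries $\lesssim N_3^{1/2}\min(1,N_3/L_3)^{1/2}$ after the modulation sum. To the convolution $\mathbbm 1_{D_{N_3,L_3}}(f_1\ast f_2)$ I apply the $L^2$ bilinear estimates \eqref{eqbili13}--\eqref{eqbili16} (equivalently Proposition \ref{lembilinEST}), selecting in each configuration the variant dictated by which $L_j$ is largest and by whether $N_{\min}\sim N_{\max}$ or $N_{\min}\ll N_{\max}$. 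The argument then splits into the three interaction regimes: (a) high--high--low, $N_1\sim N_2\gg N_3$; (b) high--low--high, $N_1\sim N_3\gg N_2$ (and its mirror image); and (c) comparable frequencies $N_1\sim N_2\sim N_3$. In each regime the gains $N_{\max}^{-1/2}N_{\min}^{1/2}$, the $(N_{\max}^{1/2}\vee L^{1/2})$ factors, the convergent modulation sums $\sum_L L^{1/2}\max(L,N_3)^{-1}$, and the losses $(N_{\max}/N_3)^{1/2}$ and $N_3$ are balanced so that the surviving power of $N_{\max}$ is strictly negative and the power of $N_3$ is absorbed by the Sobolev weights; the hypothesis $s_0\ge 1$ is precisely what makes the full $\partial_x$ harmless in the worst configuration. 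Estimate \eqref{ShortimEst2} comes from the same case analysis, placing the single Sobolev weight $N^{s_0}$ entirely on the factor $v$ and keeping $u$ at regularity $0$.

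The small-time factor is produced by exploiting the slack in the modulation weights: rather than using the full $L^{1/2}$ weight of one input $X_{N_j}$-norm one uses $L^{1/2-\theta}$ for small $\theta>0$, which by Lemma \ref{timelemma} costs $T^{(1/2-b)^-}$ (and hence, since $T\le T_0$, is bounded by $T_0^{\theta^-}$), while leaving an $L^{\theta}$ gain to remove the endpoint in the modulation summation; alternatively the gain can be read off from the built-in $N_0^{1/2}\sim T_0^{-1/2}$ normalization of $F_N^b$, $\mathcal{N}_N$ on the low-frequency block. Tracking this through the bookkeeping yields the claimed power $T_0^{1/4}$, with room to spare.

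The main obstacle I expect is regime (a), the high--high--low interaction, together with the non-transversal subsets isolated as $A_2$, $A_3$, $A_4$ in \eqref{eqbili6.1}, where $\partial_{m_2}\Omega$ and $\partial_{n_2}\Omega$ can degenerate simultaneously so that the anisotropic symbol $\omega(\xi,\eta)=\sign(\xi)(1+\xi^2\mp\eta^2)$ offers no dispersive smoothing. There the only available gain is $N_{\min}^{1/2}$ from the low frequency, and one must verify that after the $\partial_x$ loss $N_3$ and the subdivision loss $(N_{\max}/N_3)^{1/2}$ the net power of $N_{\max}$ remains negative — this is exactly the point at which $s_0\ge 1$ is used. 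Treating the two signs $\mp\eta^2$ uniformly and ensuring that the $y$-frequency summations over these degenerate regions cost nothing are the delicate bookkeeping issues.
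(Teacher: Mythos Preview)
Your outline is essentially the paper's approach: the paper also decomposes $\partial_x(uv)$ dyadically, reduces to the localized pieces $P_N\partial_x(P_{N_1}u\,P_{N_2}v)$, and proves four frequency-interaction lemmas (Lemmas~\ref{lemmaShortBi1}--\ref{lemmaShortBi4}) using the $L^2$ bilinear bounds \eqref{eqbili13}--\eqref{eqbili16} exactly as you describe. Two points of comparison are worth flagging.

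First, the $T_0^{1/4}$ factor in the paper is \emph{not} obtained via Lemma~\ref{timelemma}. That lemma produces a power of $T$, and the paper reserves it for the energy estimates (Lemma~\ref{lemmaEner1}). For Proposition~\ref{propShortBi1} the paper uses instead the elementary inequality $N_1^{1/2+\epsilon}\lesssim T_0^{1/4}(N_1^{3/4+\epsilon}+N_0^{3/4+\epsilon})$ (recall $N_0\sim T_0^{-1}$), which dovetails with the weight $(N^{2s}+N_0^{2s})$ built into the $F^s(T)$ norm \eqref{ResoSpaF}. Your ``alternatively'' clause about the $N_0$ normalization is thus the correct mechanism; your primary suggestion via Lemma~\ref{timelemma} would give a power of $T$ rather than of $T_0$ and does not directly match the stated conclusion.

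Second, your claim that the high--high--low regime is the main obstacle and that $s_0\ge 1$ is ``precisely'' the threshold is not what the paper's argument shows. The dyadic bounds are $N_1^{1/2}$ (Lemma~\ref{lemmaShortBi1}), $N^{(1/2)^+}$ (Lemma~\ref{lemmaShortBi2}), and $N^{(1/2)^+}\log N_{\max}$ (Lemma~\ref{lemmaShortBi3}); after the $T_0^{1/4}$ trade-off each becomes a factor $(N^{(3/4)^+}+N_0^{(3/4)^+})$, so all three nontrivial regimes sit at the same $(3/4)^+$ threshold and $s_0\ge 1$ in the statement is merely a convenient round number. Your worry about the degenerate sets $A_2,A_3,A_4$ of \eqref{eqbili6.1} is also unnecessary here: those regions are already absorbed into the statements \eqref{eqbili13}--\eqref{eqbili16}, so the bilinear input you feed into the dyadic summation needs no further case analysis.
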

We split the proof of Proposition \ref{propShortBi1} in the following technical lemmas.

\begin{lemma}[$Low\times High \rightarrow High$]\label{lemmaShortBi1}
Let $N,N_1,N_2 \in \mathbb{D}$ satisfying $N_1\ll N\sim N_2$. Then,
\begin{equation*}
    \|P_N(\partial_x(u_{N_1}v_{N_2}))\|_{\mathcal{N}_N} \lesssim N_1^{1/2}\|u_{N_1}\|_{F_{N_1}}\|v_{N_2}\|_{F_{N_2}},
\end{equation*}
whenever $u_{N_1}\in F_{N_1}$ and $v_{N_2}\in F_{N_2}$. 
\end{lemma}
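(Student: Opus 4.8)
The plan is to run the standard short-time $X^{s,b}$ machinery: reduce the $\mathcal{N}_N$-norm to an $X_N$-type dyadic sum, localize in modulation via the Littlewood--Paley pieces $\psi_{L}(\tau-\omega(m,n))$, and then bound the resulting weighted trilinear sum by the $L^2$ bilinear estimate from Proposition \ref{lembilinEST}. First I would fix a time $t_N\in\mathbb{R}$ and, using the definition of $\mathcal{N}_N$ together with Lemma \ref{FUNLEMM2} (the extension/truncation lemma), replace the sharp cutoffs $\psi_1(N(\cdot-t_N))$ acting on $u_{N_1}$ and $v_{N_2}$ by smooth bump functions at the coarser scale $N_1^{-1}$ (note $N_1\le N$), so that after writing $f_i = \mathcal{F}(\psi_1(N_1(\cdot - t_N))\,\mathrm{(factor)}_i)$ we have $\|f_1\|_{X_{N_1}}\lesssim \|u_{N_1}\|_{F_{N_1}}$ and $\|f_2\|_{X_{N_2}}\lesssim\|v_{N_2}\|_{F_{N_2}}$. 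The output factor $f_3$ is dual to $P_N$ of the product with the $|\tau+\omega+iN|^{-1}$ weight; decomposing $f_3 = \sum_{L_3} f_{3,L_3}$ in modulation, the $|\tau+\omega+iN|^{-1}$ weight contributes $(L_3\vee N)^{-1}$.

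Next I would perform the modulation decomposition $f_i = \sum_{L_i\ge N_1} f_{i,L_i}$ for $i=1,2$ (the constraint $L_i\gtrsim N_1$ coming from the space-time localization at scale $N_1^{-1}$ and Lemma \ref{FUNLEMM3}), so the quantity to control is
\begin{equation*}
\sum_{L_1,L_2,L_3} \frac{N}{L_3\vee N}\,L_3^{1/2}\int (f_{1,L_1}\ast f_{2,L_2})\cdot f_{3,L_3},
\end{equation*}
where the prefactor $N$ is the derivative $\partial_x$ (since $|m|\sim N$ on the output) and $L_3^{1/2}$ comes from the $X_N$-norm weight on $f_{3,L_3}$. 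Since $N_1\ll N\sim N_2$ we are exactly in the regime $N_{min}\ll N_{max}$ of Proposition \ref{lembilinEST}(ii), so I apply \eqref{eqbili2} when the minimal-frequency factor carries the maximal modulation and \eqref{eqbili3} otherwise, in both cases gaining $N_{max}^{-1/2}N_{min}^{1/2} = N^{-1/2}N_1^{1/2}$ together with $L_{max}^{1/2}$ or $L_{med}^{1/2}$ and the extra factor $(N^{1/2}\vee L_{min}^{1/2})$. The key point is that $L_3\le L_{max}$, so $L_3^{1/2}/(L_3\vee N)\le L_{max}^{1/2}/(L_{max}\vee N)\le (L_{max}\vee N)^{-1/2}$, and this $(L_{max}\vee N)^{-1/2}$ absorbs the bad factor $(N^{1/2}\vee L_{min}^{1/2})$ as well as one power of $L_{max}^{1/2}$; what remains is of the form $N\cdot N^{-1/2}N_1^{1/2}\cdot (N^{1/2}\vee L_{min}^{1/2})^{-1}\cdot L_{min/med}^{1/2}$, which after the elementary bookkeeping between $L_{min}$, $L_{med}$ and the gain $(N^{1/2}\vee L_{min}^{1/2})^{-1}$ telescopes to $N_1^{1/2}$ times a summable factor in the $L_i$. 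Summing the geometric series in $L_1,L_2,L_3$ (the $b=1/2$ weights are exactly balanced, so one needs the customary $\varepsilon$-room or the $\langle\cdot\rangle$-logarithmic refinement built into the $X_N$ definition via the low-modulation term at scale $N_0$) yields $\|P_N(\partial_x(u_{N_1}v_{N_2}))\|_{\mathcal{N}_N}\lesssim N_1^{1/2}\|u_{N_1}\|_{F_{N_1}}\|v_{N_2}\|_{F_{N_2}}$, and taking the supremum over $t_N$ finishes the proof.

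The main obstacle is the borderline summation over modulations: because $b=1/2$ exactly, the naive estimate produces $\sum_L L^{1/2}\cdot L^{-1/2}$ which diverges logarithmically, so the argument must exploit the genuine gain $(N^{1/2}\vee L_{min}^{1/2})^{-1}$ provided by Proposition \ref{lembilinEST}(ii) — i.e. one must check case by case ($L_3$ maximal vs.\ $L_1$ or $L_2$ maximal; $L_{min}\le N$ vs.\ $L_{min}>N$) that after using $L_3^{1/2}(L_3\vee N)^{-1}\le (L_{max}\vee N)^{-1/2}$ there is always a net negative power of the largest remaining modulation to sum against. A secondary technical point is the careful application of Lemma \ref{FUNLEMM2} to pass from the $N$-scale cutoff in the definition of $F_N$ to the $N_1$-scale cutoff needed to match the coarser frequency localization of $u_{N_1}$, which is why the hypothesis $N_1\ll N$ (rather than merely $N_1\le N$) is comfortable to use; this is routine but must be stated.
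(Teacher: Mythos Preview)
Your overall strategy is the right one and matches the paper's, but there is a genuine gap in the time-localization step. You propose to replace the scale-$N^{-1}$ cutoffs $\psi_1(N(\cdot-t_N))$ on both factors by cutoffs at the coarser scale $N_1^{-1}$, and then claim $\|f_2\|_{X_{N_2}}\lesssim\|v_{N_2}\|_{F_{N_2}}$ for $f_2=\mathcal{F}(\psi_1(N_1(\cdot-t_N))v_{N_2})$. This bound is not available: the $F_{N_2}$ norm only controls time-localized pieces at the \emph{fine} scale $N_2^{-1}\sim N^{-1}$, and Lemma~\ref{FUNLEMM2} only lets you pass to cutoffs at scales $\widetilde N_0\ge N_0$ finer than, or equal to, the one already present --- it bounds the localized object by the unlocalized $X_N^b$ norm, not the reverse. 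A bump of width $N_1^{-1}$ covers $\sim N/N_1$ intervals of width $N^{-1}$, so in general one would pick up a factor $N/N_1$ on $\|f_2\|_{X_{N_2}}$, which destroys the estimate.

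The paper keeps the cutoffs at scale $N$ (as forced by the definition of $\mathcal{N}_N$), setting $f_{N_1}=|\mathcal{F}(\psi_1(N(\cdot-t_N))u_{N_1})|$ and $g_{N_2}=|\mathcal{F}(\widetilde\psi_1(N(\cdot-t_N))v_{N_2})|$, and decomposes in modulation with $L,L_1,L_2\ge N\vee N_0$; it then works directly with the norm form $\|\mathbbm{1}_{D_{N,L}}(f_{N_1,L_1}\ast g_{N_2,L_2})\|_{L^2}$ via \eqref{eqbili14}--\eqref{eqbili15}, rather than a dual trilinear pairing. Lemma~\ref{FUNLEMM2} is applied only at the \emph{end}, to bound $\sum_{L_1\ge N\vee N_0}L_1^{1/2}\|f_{N_1,L_1}\|_{L^2}\lesssim\|u_{N_1}\|_{F_{N_1}}$ (here the cutoff scale $N$ is finer than the natural scale $N_1$ of $u_{N_1}$, which is the correct direction for the lemma). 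A bonus of this setup is that $L_{\min}\ge N$ automatically, so the factor $(N_{\max}^{1/2}\vee L_{\min}^{1/2})$ in \eqref{eqbili14}--\eqref{eqbili15} is simply $L_{\min}^{1/2}$; the case split ``$L_{\min}\le N$ vs.\ $L_{\min}>N$'' you anticipated never arises, and the remaining sum $\sum_{L\ge N}(N/L)^{1/2}$ is genuinely geometric --- there is no logarithmic borderline to worry about.
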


\begin{proof}
We use the definition of the space $\mathcal{N}_N$ to find
\begin{equation*}
\begin{aligned}
    \|P_N(\partial_x(u_{N_1}v_{N_2}))\|_{\mathcal{N}_N}\lesssim \sup_{t_N\in \mathbb{R}}\||\tau+\omega(m,n)+iN|^{-1}N \mathbbm{1}_{\{|(m,n)|\sim N\}}f_{N_1}\ast g_{N_2}\|_{X_N}
\end{aligned}
\end{equation*}
where
\begin{equation}\label{shortimeequ0} 
\begin{aligned}
f_{N_1}&=|\mathcal{F}(\psi_1(N(\cdot-t_N))u_{N_1})|, \\
g_{N_2}&=|\mathcal{F}(\widetilde{\psi}_1(N(\cdot-t_N))v_{N_2})|,
\end{aligned}
\end{equation}
with $\widetilde{\psi}_1\psi_1=\psi_1$. Now, we define 
\begin{equation}\label{shortimeequ0.1} 
\begin{aligned}
f_{N_1,(N\vee N_0)}&=\psi_{\leq (N\vee N_0)}(\tau-\omega(m,n))f_{N_1}(m,n,\tau), \\
f_{N_1,L}&=\psi_{L}(\tau-\omega(m,n))f_{N_1}(m,n,\tau),
\end{aligned}
\end{equation}
for $L> (N\vee N_0)$, and we set similarly $g_{N_2,(N\vee N_0)}$ and $g_{N_2,L}$. Therefore, from the definition of the spaces $X_N$, \eqref{eqbili14} and \eqref{eqbili15}, we find
\begin{equation}\label{shortimeequ1}
\begin{aligned}
    \|P_N(\partial_x(u_{N_1}&v_{N_2}))\|_{\mathcal{N}_N} \\
    \lesssim &\sup_{t_N\in \mathbb{R}} \sum_{L,L_1,L_2\geq (N\vee N_0)} NL^{-1/2}\|\mathbbm{1}_{D_{N,L}}\cdot(f_{N_1,L_1}\ast g_{N_2,L_2})\|_{L^2} \\
    \lesssim &\sup_{t_N\in \mathbb{R}} \sum_{L,L_1,L_2\geq (N\vee N_0), \, L_1=L_{max}}N L^{-1/2}N^{-1/2}N_1^{1/2}L_1^{1/2}L_{min}^{1/2}\|f_{N_1,L_1}\|_{L^2}\|g_{N_2,L_2}\|_{L^2} \\
    &+\sup_{t_N\in \mathbb{R}} \sum_{L,L_1,L_2\geq (N\vee N_0), \, L_1<L_{max}}NL^{-1/2}N^{-1/2}N_{1}^{1/2}L_{med}^{1/2} L_{min}^{1/2}\|f_{N_1,L_1}\|_{L^2}\|g_{N_2,L_2}\|_{L^2} \\
    \lesssim &\sup_{t_N\in \mathbb{R}} N_1^{1/2}\sum_{L\geq N} (N/L)^{1/2}  \big(\sum_{L_1\geq (N\vee N_0)}L_1^{1/2}\|f_{N_1,L_1}\|_{L^2}\big)\big(\sum_{L_2\geq (N\vee N_0)}L_2^{1/2}\|g_{N_2,L_2}\|_{L^2}\big),
\end{aligned}
\end{equation}
since $|\tau+\omega(m,n)+iN|^{-1}\leq N^{-1}$, in the first line above we have used that the sum over $N_0\leq L<(N\vee N_0)$ on the left-hand side of \eqref{shortimeequ1} can be controlled by the right-hand side of this inequality. Therefore, the above expression and Lemma \ref{FUNLEMM2} yield the deduction of the lemma.
\end{proof}

\begin{lemma}[$High\times High \rightarrow High$]\label{lemmaShortBi2}
Let $N,N_1,N_2 \in \mathbb{D}$ satisfying  $N\sim N_1\sim N_2 \gg 1$. Then,
\begin{equation*}
    \|P_N(\partial_x(u_{N_1}v_{N_2}))\|_{\mathcal{N}_N} \lesssim N^{(1/2)^{+}}\|u_{N_1}\|_{F_{N_1}}\|v_{N_2}\|_{F_{N_2}},
\end{equation*}
whenever $u_{N_1}\in F_{N_1}$ and $v_{N_2}\in F_{N_2}$. 
\end{lemma}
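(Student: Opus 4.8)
The plan is to run the same duality-plus-bilinear argument as in Lemma~\ref{lemmaShortBi1}, but now tracking the loss coming from the $High\times High\to High$ interaction, which unlike the $Low\times High$ case does \emph{not} gain a full power of $N^{-1/2}$ from the resonant function. First I would unfold the definition of $\mathcal{N}_N$: writing $f_{N_1}=|\mathcal{F}(\psi_1(N(\cdot-t_N))u_{N_1})|$ and $g_{N_2}=|\mathcal{F}(\widetilde\psi_1(N(\cdot-t_N))v_{N_2})|$ as in \eqref{shortimeequ0}, and decomposing both into modulation pieces $f_{N_1,L_1},g_{N_2,L_2}$ supported on $|\tau-\omega|\sim L_j$ (with the lowest piece at level $N\vee N_0$) as in \eqref{shortimeequ0.1}, we are reduced to estimating
\begin{equation*}
\sup_{t_N}\sum_{L,L_1,L_2\geq N\vee N_0} N L^{-1/2}\,\|\mathbbm{1}_{D_{N,L}}\cdot(f_{N_1,L_1}\ast g_{N_2,L_2})\|_{L^2}.
\end{equation*}

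The core input is now the $N_{min}\sim N_{max}$ bilinear bound \eqref{eqbili16}: $\|\mathbbm{1}_{D_{N_3,L_3}}(f_1\ast f_2)\|_{L^2}\lesssim L_{max}^{1/2}(N_{max}^{1/2}\vee L_{med}^{1/2})\|f_1\|_{L^2}\|f_2\|_{L^2}$. Plugging this in with $N_{max}\sim N$ gives a summand bounded by $N L^{-1/2}L_{max}^{1/2}(N^{1/2}\vee L_{med}^{1/2})\|f_{N_1,L_1}\|_{L^2}\|g_{N_2,L_2}\|_{L^2}$. One distributes $L_{max}^{1/2}$ and $L_{med}^{1/2}$ onto the two factors to reconstruct the $X_{N_1}$- and $X_{N_2}$-norms (each $\sum_{L_j}L_j^{1/2}\|\cdot\|_{L^2}$), so that after applying Lemma~\ref{FUNLEMM2} they become $\|u_{N_1}\|_{F_{N_1}}\|v_{N_2}\|_{F_{N_2}}$. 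What remains is the scalar sum $\sum_{L\geq N} N L^{-1/2}(N^{1/2}\vee L_{med}^{1/2})$ (times the reciprocal of the output modulation weight $|\tau+\omega+iN|^{-1}\lesssim N^{-1}$), and this is exactly where the $N^{(1/2)^+}$ — as opposed to $N^{1/2}$ — shows up: the piece of the sum where $L_{med}$ is comparable to $L$ (both large) produces, after the $L^{-1/2}$ damping, a bound $\lesssim \sum_{L\geq N} (N/L)^{1/2}\lesssim 1$, but the borderline region $L\sim N$ where $N^{1/2}\vee L_{med}^{1/2}\sim N^{1/2}$ and we must also absorb the case $L_{med}\sim L_{max}\sim N$ forces a logarithmic loss, which one converts into the arbitrarily small polynomial loss $N^{\epsilon}$ by sacrificing an $\epsilon$ in the modulation exponents. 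Hence the final factor is $N^{-1}\cdot N\cdot N^{(1/2)^+}=N^{(1/2)^+}$ as claimed (note that here, unlike the low-high case, $N_1^{1/2}\sim N^{1/2}$, so there is no smallness to exploit).

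I would organize the modulation book-keeping by the usual case split on which of $L,L_1,L_2$ is $L_{max}$: when the output modulation $L$ is the maximum one uses $NL^{-1/2}\le N^{1/2}$ directly and sums $L_1,L_2$ freely; when an input modulation is maximal one uses $|\tau+\omega+iN|^{-1}\lesssim N^{-1}$ together with $L_{max}^{1/2}$ being carried by that input's $X$-norm. In every case the surviving geometric-type sum is $\sum_{L\ge N}(N/L)^{1/2}$ up to the borderline logarithm, giving the stated $(1/2)^+$.

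The main obstacle — and the only place real care is needed — is the borderline/logarithmic divergence in the modulation sum in the $High\times High$ case: because the resonance function $\Omega$ for comparable frequencies only yields a gain of size $N^{1/2}$ (not $N$), the naive sum $\sum_{L} N L^{-1/2}\cdot N^{1/2}\cdot (\dots)$ is barely non-summable, and one has to either (i) insert an $\epsilon$-room in the $X_N^{1/2}$ weights using that $F_N=F_N^{1/2}$ embeds into $F_N^{1/2-\epsilon}$ with a controllable constant, or (ii) sum the dyadic series directly and observe it contributes only a $\log N \lesssim_\epsilon N^\epsilon$ factor. Either route is routine once set up; everything else is a mechanical application of \eqref{eqbili16} and Lemma~\ref{FUNLEMM2}, exactly paralleling the proof of Lemma~\ref{lemmaShortBi1}.
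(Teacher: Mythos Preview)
Your reduction to the dyadic modulation sum and the identification of \eqref{eqbili16} as the relevant bilinear input are correct, and the case $L\leq L_1\wedge L_2$ (i.e.\ $L=L_{min}$) goes through exactly as you describe, giving the clean $N^{1/2}$ factor. The gap is in the complementary region $L>L_1\wedge L_2$. Your key step ``distribute $L_{max}^{1/2}$ and $L_{med}^{1/2}$ onto the two input factors'' only works when $\{L_{max},L_{med}\}=\{L_1,L_2\}$; once $L$ is \emph{not} the minimum, one of $L_{max}^{1/2},L_{med}^{1/2}$ equals $L^{1/2}$ and cancels the denominator $L^{-1/2}$ completely, while the input with the smallest modulation (say $L_2$) is left with no weight. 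Concretely, if $L_2\leq L\leq L_1$ then \eqref{eqbili16} yields
\[
NL^{-1/2}\cdot L_1^{1/2}L^{1/2}\|f_{N_1,L_1}\|_{L^2}\|g_{N_2,L_2}\|_{L^2}=N\,(L_1^{1/2}\|f_{N_1,L_1}\|_{L^2})\,\|g_{N_2,L_2}\|_{L^2},
\]
and summing over the dyadic $L\in[L_2,L_1]$ produces a factor $\log(L_1/L_2)$ which, after the $L_2$-sum, becomes $\log(L_1/N)$ attached to the $L_1$-summation --- this is \emph{not} a $\log N$ you can trade for $N^{\epsilon}$, but an $L_1^{\epsilon}$ that destroys the summability needed to recover $\|f_{N_1}\|_{X_{N_1}}$. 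Neither of your proposed fixes (the embedding $F_N^{1/2}\hookrightarrow F_N^{1/2-\epsilon}$ goes the wrong way, and there is no global $\log N$ to absorb) addresses this.

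The paper's remedy is to \emph{interpolate} \eqref{eqbili16} with the crude bound \eqref{eqbili13}, $\|\mathbbm{1}_{D_{N,L}}(f_1\ast f_2)\|_{L^2}\lesssim N_{min}L_{min}^{1/2}\|f_1\|_{L^2}\|f_2\|_{L^2}$, using a parameter $\theta\in[0,1]$. The point is that \eqref{eqbili13} supplies the missing $L_{min}^{1/2}$ weight on the low-modulation input; the price is the extra factor $N^{1-\theta}$. A short computation then gives, for $L>L_1\wedge L_2$,
\[
NL^{-1/2}\|\mathbbm{1}_{D_{N,L}}(f_{N_1,L_1}\ast g_{N_2,L_2})\|_{L^2}\lesssim N^{1-\theta/2}(N/L)^{(1-\theta)/2}(L_1^{1/2}\|f_{N_1,L_1}\|_{L^2})(L_2^{1/2}\|g_{N_2,L_2}\|_{L^2}),
\]
which is summable in $L\geq N$ for any $\theta<1$. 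The exponent $(1/2)^{+}$ therefore arises from $1-\theta/2$ with $\theta\to 1^{-}$, not from a logarithm.
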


\begin{proof}
Following the same arguments and notation as in the proof of Lemma \ref{lemmaShortBi1}, we write
\begin{equation}\label{eqshortim1}
\begin{aligned}
  \|P_N(\partial_x(u_{N_1} v_{N_2}))\|_{\mathcal{N}_N} \lesssim &\sup_{t_N\in \mathbb{R}} \sum_{L,L_1,L_2\geq (N\vee N_0)}NL^{-1/2}\|\mathbbm{1}_{D_{N,L}}\cdot(f_{N_1,L_1}\ast g_{N_2,L_2})\|_{L^2} \\
 &= \sup_{t_N\in \mathbb{R}}\big(\sum_{\substack{L,L_1,L_2 \geq (N\vee N_0) \\  L\leq (L_1\wedge L_2)}}NL^{-1/2}(\cdots)+\sum_{\substack{L,L_1,L_2 \geq (N\vee N_0) \\ L> (L_1\wedge L_2)}} NL^{-1/2}(\cdots)\big).
\end{aligned}
\end{equation}
To estimate the first term on the right-hand side of \eqref{eqshortim1}, we employ \eqref{eqbili16} and the restrictions $(N\vee N_0)\leq L\leq (L_1\wedge L_2)$ to find
\begin{equation}\label{eqshortim1.1}
\begin{aligned}
 NL^{-1/2}\|\mathbbm{1}_{D_{N,L}}\cdot(f_{N_1,L_1}\ast g_{N_2,L_2})\|_{L^2} \lesssim  N^{1/2}(N/L)^{-1/2}(L_1^{1/2}\|f_{N_1,L_1}\|_{L^2})(L_2^{1/2}\|g_{N_2,L_2}\|_{L^2}).
    \end{aligned}
\end{equation}
Thus, we add the above expression over $L,L_1,L_2\geq (N\vee N_0)$ with $L\leq (L_1\wedge L_2)$, then we apply Lemma \ref{FUNLEMM2} to the resulting inequality to obtain the desired bound. Next, we deal with the second sum on the right-hand side of \eqref{eqshortim1}.  Interpolating \eqref{eqbili13} and \eqref{eqbili16}, it is seen
\begin{equation}\label{eqshortim2}
\begin{aligned}
    NL^{-1/2}\|&\mathbbm{1}_{D_{N,L}}\cdot(f_{N_1,L_1}\ast g_{N_2,L_2})\|_{L^2} \\
    &\lesssim  N^{2-\theta} L^{-1/2}L_{min}^{(1-\theta)/2}L_{max}^{\theta/2}L_{med}^{\theta/2}L_1^{-1/2}L_2^{-1/2}(L_1^{1/2}\|f_{N_1,L_1}\|_{L^2})(L_2^{1/2}\|g_{N_2,L_2}\|_{L^2}), 
    \end{aligned}
\end{equation}
for all $\theta\in [0,1]$ and $L>(L_1\wedge L_2)$. Under these considerations, either $L_1=L_{min}$ or $L_2=L_{min}$, which implies  $$L^{-1/2}L_{min}^{(1-\theta)/2}L_{max}^{\theta/2}L_{med}^{\theta/2}L_1^{-1/2}L_2^{-1/2} \leq L_{min}^{-\theta/2}L_{max}^{-1/2+\theta/2}L_{med}^{-1/2+\theta/2}.$$
Then, plugging the previous estimate in \eqref{eqshortim2} and recalling that $N\leq L_j,N\leq L$, we get
\begin{equation}\label{eqshortim3}
\begin{aligned}
    NL^{-1/2}\|\mathbbm{1}_{D_{N,L}}&\cdot(f_{N_1,L_1}\ast g_{N_2,L_2})\|_{L^2} \\
    &\lesssim N^{1-\theta/2}(N/L)^{1/2-\theta/2} (L_1^{1/2}\|f_{N_1,L_1}\|_{L^2})(L_2^{1/2}\|g_{N_2,L_2}\|_{L^2}). 
    \end{aligned}
\end{equation}
Therefore, taking $\theta$ sufficiently close to $1$, we sum \eqref{eqshortim3} over $L,L_1,L_2\geq (N\vee N_0)$ with $ L\geq (L_1\wedge L_2)$ and then we apply Lemma \ref{FUNLEMM2} to derive the desired estimate for the second term on the r.h.s of \eqref{eqshortim1}. 
\end{proof}

\begin{lemma}[$High\times High \rightarrow Low$]\label{lemmaShortBi3}
Let $N,N_1,N_2 \in \mathbb{D}$ satisfying $N\ll N_1\sim N_2$. Then,
\begin{equation*}
    \|P_N(\partial_x(u_{N_1}v_{N_2}))\|_{\mathcal{N}_N} \lesssim N^{(1/2)^{+}}\log(N_{max})\|u_{N_1}\|_{F_{N_1}}\|v_{N_2}\|_{F_{N_2}},
\end{equation*}
whenever $u_{N_1}\in F_{N_1}$ and $v_{N_2}\in F_{N_2}$. 
\end{lemma}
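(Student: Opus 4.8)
The plan is to follow the scheme of the proofs of Lemmas~\ref{lemmaShortBi1} and~\ref{lemmaShortBi2}; the new feature is that the output frequency $N$ is much smaller than the input frequencies $N_1\sim N_2$, so the time-localization scale $N^{-1}$ built into $\mathcal N_N$ is coarser than the scale $N_1^{-1}$ attached to $F_{N_1}$ and $F_{N_2}$. First I would unfold the definition of $\mathcal N_N$ as in \eqref{shortimeequ0}: fixing $t_N\in\mathbb R$ and setting $f_{N_1}=|\mathcal F(\psi_1(N(\cdot-t_N))u_{N_1})|$, $g_{N_2}=|\mathcal F(\widetilde{\psi}_1(N(\cdot-t_N))v_{N_2})|$, the quantity to bound becomes $N\,\big\||\tau+\omega(m,n)+iN|^{-1}\mathbbm 1_{\{|(m,n)|\sim N\}}(f_{N_1}\ast g_{N_2})\big\|_{X_N}$. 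To bring in the $F_{N_1}$ and $F_{N_2}$ norms I would then insert a smooth partition of unity $1=\sum_j\rho(N_1t-j)$ at the finer scale $N_1^{-1}$ (with $\rho\widetilde{\rho}=\rho$, $\widetilde{\psi}_1\psi_1=\psi_1$ and $\sum_j\rho(\cdot-j)=1$), so that on the support of $\psi_1(N(\cdot-t_N))$ only $O(N_1/N)$ indices $j$ are active and
\[
\psi_1(N(t-t_N))\,u_{N_1}v_{N_2}=\sum_{|j|\lesssim N_1/N}\big(\rho(N_1t-j)\psi_1(N(t-t_N))u_{N_1}\big)\big(\widetilde{\rho}(N_1t-j)\widetilde{\psi}_1(N(t-t_N))v_{N_2}\big),
\]
each factor in the $j$-th summand being dominated, by Lemmas~\ref{FUNLEMM2} and~\ref{FUNLEMM3}, by $\|u_{N_1}\|_{F_{N_1}}$, respectively $\|v_{N_2}\|_{F_{N_2}}$.

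For each fixed $j$ I would split the two factors into dyadic modulation pieces $f_{N_1,L_1},g_{N_2,L_2}$ as in \eqref{shortimeequ0.1}, decompose the output into dyadic modulations $L$, and apply the $L^2$ bilinear estimates of the Corollary in the regime $N_{min}=N\ll N_{max}=N_1\sim N_2$, i.e.\ \eqref{eqbili14}--\eqref{eqbili15} according to whether the output does or does not carry the maximal modulation. The decisive feature is that these inequalities furnish the gain $N_{max}^{-1/2}N_{min}^{1/2}=(N/N_1)^{1/2}$ coming from the frequency imbalance, which is exactly what is needed to compensate for the $O(N_1/N)$ time pieces. Performing the dyadic summations in $L,L_1,L_2$ and then summing over the $O(N_1/N)$ active indices $j$ produces the factor $N^{1/2}$, a harmless $N^{0^{+}}$ being absorbed on the way to make the remaining sums summable, just as in the proof of Lemma~\ref{lemmaShortBi2}.

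The main obstacle, exactly as for the underlying $L^2$ bilinear estimate, is the resonant part of the $High\times High\to Low$ interaction: the portion where $\sign(m_1)=-\sign(m_2)$ (the sets $A_2,A_3$ in the proof of Proposition~\ref{lembilinEST}), in which the resonance function $\Omega$ need not be large, so there is no dispersive smoothing and one must instead exploit the transversality $|\partial_{m_1}\Omega|\sim N_{max}$ or $|\partial_{n_1}\Omega|\sim N_{max}$ (cf.\ \eqref{eqbili12} and Lemma~\ref{cardinal}) already packaged into \eqref{eqbili14}--\eqref{eqbili15}. There the effective per-scale bound carries no genuine power gain, the relevant output modulations ranging only over a dyadic interval of logarithmic size $N\vee N_0\lesssim L\lesssim N_{max}^{2}$; equivalently, the sum over the $O(N_1/N)$ time intervals has to be carried out by Cauchy--Schwarz together with the almost-orthogonality available only at the endpoint $b=1/2$ of the $X^{s,b}$ scale. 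Either way this is what forces the loss $\log(N_{max})$ in the statement. I would finish by reassembling the $j$-sum, taking the supremum over $t_N$, and invoking the elementary properties of the $F$- and $\mathcal N$-spaces established above.
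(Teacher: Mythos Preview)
Your outline departs from the paper's argument in one structural respect: the paper does \emph{not} introduce the finer partition at scale $N_1^{-1}$ that you describe. It keeps the single coarse cutoff $\psi_1(N(\cdot-t_N))$ throughout, defines $f_{N_1,L_1},g_{N_2,L_2}$ with modulation threshold $(N\vee N_0)$ exactly as in \eqref{shortimeequ0}--\eqref{shortimeequ0.1}, splits the sum according to whether $L=L_{max}$ or $L<L_{max}$, and applies \eqref{eqbili14}--\eqref{eqbili15} directly (closing via Lemma~\ref{FUNLEMM2}). In the paper the $\log(N_{max})$ has one concrete source and nothing to do with summing time pieces: in the sub-case $L=L_{max}$, $L_{med}\ll L$ one has $L\sim|\Omega|\lesssim N_1^{2}$ by support considerations, and the dyadic sum over such $L$ in \eqref{eqshortim6} costs $O(\log N_1)$. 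No Cauchy--Schwarz in $j$ or output orthogonality enters.

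There is a genuine gap in your compensation bookkeeping. The bilinear prefactor $N_{max}^{-1/2}N_{min}^{1/2}=(N/N_1)^{1/2}$ by itself does \emph{not} offset $O(N_1/N)$ time pieces---that leaves a residual loss $(N_1/N)^{1/2}$, and the ``almost-orthogonality at $b=1/2$'' you invoke does not close it either, since the $X_N$ norm is $\ell^1_L L^2$ and modulation truncation destroys time-disjointness. What actually makes the partition route work is a second half-power coming from the raised modulation threshold: once each piece is localized at scale $N_1^{-1}$, the inputs satisfy $L_1,L_2\ge N_1$ (not merely $\ge N$), so in the analogue of \eqref{eqshortim5}--\eqref{eqshortim6} the factor $L_{med}^{-1/2}$ is $\le N_1^{-1/2}$ rather than $\le N^{-1/2}$. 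This extra $(N/N_1)^{1/2}$ brings the per-piece bound down to $N^{3/2}N_1^{-1}\|u_{N_1}\|_{F_{N_1}}\|v_{N_2}\|_{F_{N_2}}$ (up to the logarithm), and then summing over the $N_1/N$ indices $j$ by the triangle inequality gives $(N_1/N)\cdot N^{3/2}N_1^{-1}=N^{1/2}$ as desired.
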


\begin{proof}
Following the same notation employed in the proof of Lemma \ref{lemmaShortBi1}, we have 
\begin{equation}\label{eqshortim4}
\begin{aligned}
  \|P_N(\partial_x(u_{N_1}& v_{N_2}))\|_{\mathcal{N}_N} \\
  \lesssim &\sup_{t_N\in \mathbb{R}} \sum_{L,L_1,L_2\geq (N\vee N_0)}NL^{-1/2}\|\mathbbm{1}_{D_{N,L}}\cdot(f_{N_1,L_1}\ast g_{N_2,L_2})\|_{L^2} \\
 =& \sup_{t_N\in \mathbb{R}}\big(\sum_{L,L_1,L_2\geq (N\vee N_0), \, L=L_{max}}NL^{-1/2}(\cdots)+\sum_{L,L_1,L_2\geq (N\vee N_0), \, L<L_{max}} NL^{-1/2}(\cdots)\big).
\end{aligned}
\end{equation}
To estimate the first term on the r.h.s of \eqref{eqshortim4}, we use \eqref{eqbili14} to deduce
\begin{equation}\label{eqshortim4.1}
    \begin{aligned}
    NL^{-1/2}\|\mathbbm{1}_{D_{N,L}}\cdot(f_{N_1,L_1}\ast g_{N_2,L_2})\|_{L^2}\lesssim N^{3/2}N_1^{-1/2}(N_1^{1/2}\vee L_{min}^{1/2})\|f_{N_1,L_1}\|_{L^2}\|g_{N_2,L_2}\|_{L^2},
    \end{aligned}
\end{equation}
where $L,L_1,L_2\geq (N\vee N_0), \, L=L_{max}$. These restrictions imply, $N_1^{-1/2}(N_1^{1/2}\vee L_{min}^{1/2})L_1^{-1/2}L_2^{-1/2}\lesssim N^{-1/2}L_{med}^{-1/2}$,  then when $L_{med} \sim L=L_{max}$, we have 
\begin{equation}\label{eqshortim5}
\begin{aligned}
N^{3/2}N_1^{-1/2}(N_1^{1/2}\vee L_{min}^{1/2})&\|f_{N_1,L_1}\|_{L^2}\|g_{N_2,L_2}\|_{L^2} \\
&\lesssim N^{1/2}(N/L)^{1/2}(L^{1/2}_1\|f_{N_1,L_1}\|_{L^2})(L^{1/2}_2\|g_{N_2,L_2}\|_{L^2}).
\end{aligned}
\end{equation}
Now, when $L_{med} \ll L$, we use instead
\begin{equation}\label{eqshortim6}
\begin{aligned}
 N^{3/2}N_1^{-1/2}(N_1^{1/2}\vee L_{min}^{1/2})&\|f_{N_1,L_1}\|_{L^2}\|g_{N_2,L_2}\|_{L^2} \lesssim N^{1/2}(L^{1/2}_1\|f_{N_1,L_1}\|_{L^2})(L^{1/2}_2\|g_{N_2,L_2}\|_{L^2}).
\end{aligned}
\end{equation}
By support considerations, it must follow that $L\sim |\Omega|\lesssim N_1^{2}$, whenever $L_{med}\ll L$. This implies that summing over $L$ in \eqref{eqshortim6} yields a factor of order $\log(N_1)$. This remark completes the estimates for the first sum in \eqref{eqshortim4}. The remaining sum in \eqref{eqshortim4} is bounded directly by \eqref{eqbili15} and arguing as above. The proof of the lemma is now complete. 
\end{proof}

\begin{lemma}[$Low\times Low \rightarrow Low$]\label{lemmaShortBi4}
Let $N,N_1,N_2 \in \mathbb{D}$ satisfying $N, N_1,N_2\ll 1$. Then,
\begin{equation*}
    \|P_N(\partial_x(u_{N_1}v_{N_2}))\|_{\mathcal{N}_N} \lesssim \|u_{N_1}\|_{F_{N_1}}\|v_{N_2}\|_{F_{N_2}},
\end{equation*}
whenever $u_{N_1}\in F_{N_1}$ and $v_{N_2}\in F_{N_2}$. 
\end{lemma}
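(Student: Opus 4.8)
The plan is to follow the same template used in the proofs of Lemmas \ref{lemmaShortBi1}--\ref{lemmaShortBi3}, but now in the regime where all frequencies $N,N_1,N_2$ are dyadic numbers $\leq 1$, which forces $N=N_1=N_2=1$ (since $\mathbb{D}$ has smallest element $1$), so in fact there is only the single piece $u_1$, $v_1$ and the low-frequency projection $P_1$. In particular $N\vee N_0 = N_0$ throughout, the derivative $\partial_x$ contributes at most a harmless $O(1)$ factor, and all the subtle dependence on $N_{max}$ that appeared in the $High\times High\to Low$ case disappears. First I would unwind the definition of $\|\cdot\|_{\mathcal{N}_N}$ to write
\begin{equation*}
\|P_N(\partial_x(u_{N_1}v_{N_2}))\|_{\mathcal{N}_N}\lesssim \sup_{t_N\in\mathbb{R}}\bigl\||\tau+\omega(m,n)+iN|^{-1}\,\mathbbm{1}_{\{|(m,n)|\in I_N\}}\,f_{N_1}\ast g_{N_2}\bigr\|_{X_N},
\end{equation*}
with $f_{N_1},g_{N_2}$ as in \eqref{shortimeequ0} and their dyadic-in-modulation pieces $f_{N_1,L_1},g_{N_2,L_2}$ as in \eqref{shortimeequ0.1}; here I use $|\tau+\omega(m,n)+iN|^{-1}\leq N^{-1}\lesssim 1$ since $N\leq 1$.

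Next I would expand the $X_N$-norm as a sum over modulations $L,L_1,L_2\geq N_0$ and invoke the $L^2$ bilinear estimate \eqref{eqbili13} (the unrefined one, valid for all frequency configurations and in particular for $N_{min}\sim N_{max}\sim 1$), which gives
\begin{equation*}
NL^{-1/2}\bigl\|\mathbbm{1}_{D_{N,L}}\cdot(f_{N_1,L_1}\ast g_{N_2,L_2})\bigr\|_{L^2}\lesssim L^{-1/2}L_{min}^{1/2}\,\|f_{N_1,L_1}\|_{L^2}\|g_{N_2,L_2}\|_{L^2}\lesssim L^{-1/2}L_1^{1/2}L_2^{1/2}\cdot L_{min}^{-1/2}\cdot\|f_{N_1,L_1}\|_{L^2}\|g_{N_2,L_2}\|_{L^2},
\end{equation*}
where I used $N\lesssim 1$ and $N_{min}\lesssim 1$ to discard those powers. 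Since $L\geq L_{min}$ we also have $L^{-1/2}\leq L_{min}^{-1/2}\cdot L^{0}$, but to make the $L$-sum converge I need a genuine negative power of $L$: this is where I would instead compare $L$ to $L_{med}$. Because the resonance identity forces $L_{max}\sim\max(|\Omega|,L_{min},L_{med})$ and here $|\Omega|=O(1)$ when all spatial frequencies are $O(1)$, we get $L_{max}\lesssim L_{med}$, hence $L^{-1/2}L_{min}^{1/2}\lesssim L_{max}^{-1/2}L_{min}^{1/2}\lesssim L_1^{-1/4}L_2^{-1/4}\cdot(L_1^{1/2}L_2^{1/2})^{1/2}\cdots$; concretely one arranges an extra $L^{-\varepsilon}$ (or $L_{max}^{-\varepsilon}$) decay so that $\sum_{L\geq N_0}L^{-\varepsilon}(\cdots)\lesssim(\sum_{L_1}L_1^{1/2}\|f_{N_1,L_1}\|)(\sum_{L_2}L_2^{1/2}\|g_{N_2,L_2}\|)$, and then Lemma \ref{FUNLEMM2} converts the two modulation sums into $\|u_{N_1}\|_{F_{N_1}}\|v_{N_2}\|_{F_{N_2}}$, yielding the claimed bound with no loss in $N$.

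The main obstacle — such as it is — is the bookkeeping of the modulation sums to extract summability in $L$, exactly as in the previous three lemmas; the key simplification over those cases is that, all spatial frequencies being $O(1)$, the resonance function $\Omega$ is bounded, so $L_{max}\sim L_{med}$ and one never loses the $\log(N_{max})$ factor that appeared in Lemma \ref{lemmaShortBi3}, nor any power of $N$. Concretely I would split the sum into $L\leq L_1\wedge L_2$ and $L>L_1\wedge L_2$ as in \eqref{eqshortim1}: on the first region $L\leq L_{min}$ and \eqref{eqbili13} already gives $L^{-1/2}L_{min}^{1/2}\leq 1$ together with an $(N/L)^{1/2}$-type gain that is $\gtrsim 1$ here but can be traded for $L^{-\varepsilon}$ by interpolating $\varepsilon$-away from the endpoint; on the second region $L\sim L_{max}\sim|\Omega|$ or $L_{med}$, both $O(1)$ or comparable to $L_{med}$, so the $L$-sum is finite. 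Summing over the (trivially finitely many) dyadic values $N,N_1,N_2\ll 1$ and applying $\partial_x\sim O(1)$ completes the proof.
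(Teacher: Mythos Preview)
Your overall approach matches the paper's: unwind the $\mathcal{N}_N$-norm, localize in modulation, and apply the unrefined bilinear estimate \eqref{eqbili13} together with $N,N_{min}\lesssim 1$. However, you overcomplicate the $L$-summation. After \eqref{eqbili13} you already have
\[
NL^{-1/2}\bigl\|\mathbbm{1}_{D_{N,L}}\cdot(f_{N_1,L_1}\ast g_{N_2,L_2})\bigr\|_{L^2}
\lesssim L^{-1/2}L_{min}^{1/2}\|f_{N_1,L_1}\|_{L^2}\|g_{N_2,L_2}\|_{L^2}
\lesssim L^{-1/2}\bigl(L_1^{1/2}\|f_{N_1,L_1}\|_{L^2}\bigr)\bigl(L_2^{1/2}\|g_{N_2,L_2}\|_{L^2}\bigr),
\]
since $L_{min}\leq L_1$ and $L_2\geq N_0\geq 1$. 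The factor $L^{-1/2}$ \emph{is} a genuine negative power of $L$, and the dyadic sum $\sum_{L\geq N_0}L^{-1/2}\sim N_0^{-1/2}\lesssim 1$ converges. Summing over $L_1,L_2\geq N_0$ and invoking Lemma~\ref{FUNLEMM2} then finishes the proof immediately. This is exactly what the paper does (its one-line bound \eqref{eqshortime7}).

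Your detour through the resonance relation $L_{max}\sim L_{med}$ (via $|\Omega|=O(1)$), the case split $L\leq L_1\wedge L_2$ versus $L>L_1\wedge L_2$, and the interpolation to extract an extra $L^{-\varepsilon}$ are all unnecessary here; they are devices used in the $High\times High$ lemmas to cope with frequencies $N\gg 1$, which is precisely the difficulty that disappears in the low-frequency regime.
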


\begin{proof}
By following similar reasoning as in the proof of Lemma \ref{lemmaShortBi1}, we notice that it is enough to establish 
\begin{equation}
    \begin{aligned}\label{eqshortime7}
    NL^{-1/2}\|\mathbbm{1}_{D_{N,L}}\cdot(f_{N_1,L_1}\ast g_{N_2,L_2})\|_{L^2}\lesssim L^{-1/2}(L^{1/2}_1\|f_{N_1,L_1}\|_{L^2})(L^{1/2}_2\|g_{N_2,L_2}\|_{L^2}),
    \end{aligned}
\end{equation}
for $L,L_1,L_2\geq N_0$. Thus, \eqref{eqshortime7} is a direct consequence of \eqref{eqbili13} and the fact that $N,N_1,N_2\lesssim 1$. 
\end{proof}

We are in conditions to prove Proposition \ref{propShortBi1}. 

\begin{proof}[Proof of Proposition \ref{propShortBi1}]  We will adapt the ideas in \cite{RibaVento} for our considerations.  We will only deduce \eqref{ShortimEst1}, since \eqref{ShortimEst2} is obtained by a similar reasoning.  For each $N_1,N_2 \in \mathbb{D}$, we choose extensions $u_{N_1}$, $v_{N_2}$ of $P_{N_1}u$ and $P_{N_2}v$ satisfying, $\|u_{N_1}\|_{F^s}\leq 2\|P_{N_1}u\|_{F_{N_1}^s(T)}$ and $\|v_{N_2}\|_{F^s}\leq 2\|P_{N_2}v\|_{F^s_{N_2}(T)}$. By the definition of the space $\mathcal{N}^s(T)$ and Minkowski inequality we have
\begin{equation*}
\begin{aligned}
\|\partial_x(uv)\|_{\mathcal{N}_N(T)}&\lesssim \sum_{j=1}^5\Big(\sum_{N\geq 1}(N^{2s}+N_0^{2s})\Big(\sum_{(N_1,N_2)\in A_j} \|P_N(\partial_x(u_{N_1}v_{N_2}))\|_{\mathcal{N}_N}\Big)^2\Big)^{1/2}=:\sum_{j=1}^5 S_j,
\end{aligned}
\end{equation*}
where
\begin{equation*}
    \begin{aligned}
    A_1&=\{(N_1,N_2)\in \mathbb{D}^2: \, N_1\ll N\sim N_2\}, \\
    A_2&=\{(N_1,N_2)\in \mathbb{D}^2: \, N_2\ll N\sim N_1\}, \\
    A_3&=\{(N_1,N_2)\in \mathbb{D}^2: \, N\sim N_1\sim N_2\gg 1\}, \\
    A_4&=\{(N_1,N_2)\in \mathbb{D}^2: \, N\ll N_1\sim N_2\}, \\
    A_5&=\{(N_1,N_2)\in \mathbb{D}^2: \, N\sim N_1\sim N_2\lesssim 1\}.
    \end{aligned}
\end{equation*}
To estimate $S_1$, we use Lemma \ref{lemmaShortBi1}, the fact that $N_1^{1/2+\epsilon}\lesssim T_0^{1/4}(N_1^{3/4+{\epsilon}}+N_0^{3/4+{\epsilon}})$ for $0<\epsilon \ll 1$ small enough and the definition of $F^s(T)$ to derive
\begin{equation*}
\begin{aligned}
S_1 &\lesssim T_0^{1/4}\Big(\sum_{N\geq 1}(N^{2s}+N_0^{2s})\Big(\sum_{N_1\ll N} N_1^{-\epsilon}(N_1^{3/4+\epsilon}+N_0^{3/4+\epsilon})\|u_{N_1}\|_{F_{N_1}}\|v_{N}\|_{F_{N}}\Big)^2\Big)^{1/2} \\
&\lesssim T_0^{1/4} \|u\|_{F^{s_0}(T)}\|v\|_{F^{s}(T)}.
\end{aligned}
\end{equation*}
The estimate for $S_2$ is obtained symmetrically as above. 
Next, we use Lemma \ref{lemmaShortBi2} and that $N^{(1/2)^{+}}\lesssim T_0^{1/4}(N^{(3/4)^{+}}+N_0^{(3/4)^{+}})$ to obtain
\begin{equation*}
\begin{aligned}
S_3 \lesssim T_0^{1/4}\Big(\sum_{N\geq 1}(N^{2s}+N_0^{2s})(N^{(3/4)^{+}}+N_0^{(3/4)^{+}})\|u_{N}\|_{F_{N}}^2\|v_{N}\|_{F_{N}}^2\Big)^{1/2} \lesssim T^{1/4}_0\|u\|_{F^{s_0}(T)}\|v\|_{F^{s}(T)}.
\end{aligned}
\end{equation*}
Let $0<\epsilon\ll 1$ fixed, then Lemma \ref{lemmaShortBi3} and the Cauchy-Schwarz inequality yield 
\begin{equation*}
\begin{aligned}
S_4 &\lesssim T^{1/4}_0\Big(\sum_{N\geq 1}N^{-\epsilon}\Big(\sum_{N\ll N_1,N_2}N_{1}^{-\epsilon/2}N_{2}^{-\epsilon/2}(N^{s}+N_0^{s})(N_{max}^{3/4+4\epsilon}+N_0^{3/4+4\epsilon})\|u_{N_1}\|_{F_{N_1}}\|v_{N_2}\|_{F_{N_2}}\Big)^2\Big)^{1/2}\\
&\lesssim T^{1/4}_0\|u\|_{F^{s_0}(T)}\|v\|_{F^{s}(T)},
\end{aligned}
\end{equation*}
which holds given that $N^{1/2+2\epsilon}\log(N_{max})\lesssim T^{1/4}_0N_{1}^{-\epsilon/2}N_{2}^{-\epsilon/2}(N_{max}^{3/4+4\epsilon}+N_0^{3/4+4\epsilon})$. The estimate for $S_5$ follows from Lemma \ref{lemmaShortBi4} and similar considerations as above. This concludes the deduction of \eqref{ShortimEst1}. 
\end{proof}

\subsection{Energy estimates.}

This section is devoted to derive the estimates required to control the $B^s$-norm of regular solutions and the difference of solutions.
\begin{lemma}\label{lemmaEner1}
Let $s_0>1/2$, then there exists $\nu>0$ small enough such that for $T\in (0,T_0]$ it holds that
\begin{equation}\label{eqenerg0}
    \left|\int_{\mathbb{T}^2\times[0,T]} u_1u_2u_3\right| \lesssim T^{\nu}N^{s_0}_{min}\prod_{j=1}^3 \|u_j\|_{F_{N_j}(T)},
\end{equation}
for each function  $u_j\in F_{N_j}(T)$, $j=1,2,3$.
\end{lemma}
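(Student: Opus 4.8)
The plan is to reduce the trilinear spacetime estimate \eqref{eqenerg0} to the $L^2$ bilinear estimates of Proposition \ref{lembilinEST} by passing to the Fourier side and inserting suitable time localizations. First I would use the definition of $F_{N_j}(T)$: for each $j$ pick extensions $\widetilde{u}_j \in F_{N_j}$ of $u_j$ with $\|\widetilde{u}_j\|_{F_{N_j}} \le 2\|u_j\|_{F_{N_j}(T)}$. The quantity $\int_{\mathbb{T}^2\times[0,T]} u_1u_2u_3$ involves a rough cutoff $\mathbbm{1}_{[0,T]}(t)$, which is handled by Lemma \ref{FUNLEMM3}: the map $f \mapsto \mathbbm{1}_{[0,T]}f$ is, up to harmless factors, bounded on the relevant $X_N^b$ spaces uniformly in $N_0$. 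Then one decomposes each factor using a partition of unity in time at scale $N_{max}^{-1}$ (or more precisely at the scale dictated by $F_{N_j}$, which is $N_j^{-1}$; since only finitely many — $O(TN_{max})$ many — translates $\psi_1(N_{max}(t-t_k))$ matter on $[0,T]$, with $t_k$ in an $N_{max}^{-1}$-net), so that each localized piece $\psi_1(N_{max}(t-t_k))\widetilde{u}_j$ has Fourier transform controlled in $X_{N_j}$ by Lemma \ref{FUNLEMM2}.

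The second step is the core dyadic estimate. After the time localization, write $f_j^{(k)} = |\mathcal{F}(\psi_1(N_{max}(t-t_k))\widetilde{u}_j)|$, further decompose in modulation $L_j$ via $\psi_{L_j}(\tau - \omega(m,n))$, and bound
\[
\left|\int u_1^{(k)}u_2^{(k)}u_3^{(k)}\right| \lesssim \sum_{L_1,L_2,L_3} \int_{\mathbb{Z}^2\times\mathbb{R}} (f_{1,L_1}^{(k)} \ast f_{2,L_2}^{(k)})\cdot f_{3,L_3}^{(k)}.
\]
To this inner sum I apply Proposition \ref{lembilinEST}: in every frequency regime ($N_{min}\ll N_{max}$ with the low frequency carrying $L_{max}$ or not, and $N_{min}\sim N_{max}$) the bound produces a gain of the form $N_{min}^{1/2}$ or $N_{max}^{-1/2}N_{min}^{1/2}(N_{max}^{1/2}\vee L^{1/2})$ times $L_{max}^{1/2}$ (or $L_{med}^{1/2}$) and two further $L^{1/2}$ factors absent. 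The point is that in each case the powers of $L_1,L_2,L_3$ appearing are dominated by $L_1^{1/2}L_2^{1/2}L_3^{1/2}$ divided by at least one full power $L_j$ (coming from the $L_{min}^{1/2}$ or the missing factor), so the sum over modulations $L_j \ge N_0$ converges geometrically, leaving $\prod_j (\sum_{L_j} L_j^{1/2}\|f_{j,L_j}^{(k)}\|_{L^2}) \lesssim \prod_j \|f_j^{(k)}\|_{X_{N_j}}$. Summing the $O(TN_{max})$ time pieces and using $\ell^2$-summability in $N_{max}$ after extracting one factor $N_{min}^{s_0}$ (with $s_0 > 1/2$ absorbing the $\log N_{max}$ or $N^\epsilon$ losses from the $High\times High \to Low$ regime) gives the bound with a constant, and each of the $O(TN_{max})$ summands is gained back by a power $T^\nu$ using Lemma \ref{timelemma} — this is where the small positive power $T^\nu$ of the time interval comes from, by trading $b=1/2$ for a slightly smaller $b$ in one or more of the factors.

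The main obstacle, as usual in the short-time $X^{s,b}$ framework, is the $High \times High \to Low$ interaction: there the output frequency $N$ is small but the inputs are large, the bilinear bound \eqref{eqbili14}--\eqref{eqbili15} only gives $N_{max}^{-1/2}N_{min}^{1/2}(N_{max}^{1/2}\vee L_{min}^{1/2})$, and when $L_{min}\gtrsim N_{max}$ this is as large as $N_{min}^{1/2}$, with no decay in $N_{max}$; moreover summing the modulations costs a $\log N_{max}$ (as in Lemma \ref{lemmaShortBi3}). One must therefore be careful to put the $\ell^2_N$ summation on the \emph{low} output frequency $N = N_{min}$, extract $N_{min}^{s_0}$ with $s_0 > 1/2$ strictly, and Cauchy--Schwarz in $N_1\sim N_2$ paying an $N_{max}^{-\epsilon}$ which is available since we only need $N_{min}^{s_0}$ on the right and can afford to lose $N_{max}^{\epsilon}$ from the high frequencies — this is precisely the mechanism that forces $s_0 > 1/2$ rather than $s_0 \ge 1/2$. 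A secondary technical point is making the time-cutoff argument (Lemma \ref{FUNLEMM3}) interact cleanly with the extension/localization, but this is routine and identical to \cite{IonescuKeniTata,RibaVento}.
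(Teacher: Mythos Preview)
Your overall architecture --- extensions, time localization at scale $N_{max}^{-1}$, modulation decomposition, and reduction to Proposition~\ref{lembilinEST} --- matches the paper's. But your account of \emph{where the factor $T^{\nu}$ comes from} is garbled, and this is the heart of the lemma. You write that one sums $O(TN_{max})$ time pieces and then ``each of the $O(TN_{max})$ summands is gained back by a power $T^\nu$ using Lemma~\ref{timelemma}''. That is not how it works: Lemma~\ref{timelemma} compares $F_N^b(T)$ with $F_N(T)$ and therefore only applies to functions restricted to $[0,T]$; the generic interior time pieces $h(N_3 t-k)\widetilde u_j$ are supported \emph{inside} $[0,T]$ and equal the unrestricted extension there, so Lemma~\ref{timelemma} gives nothing on them. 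The paper instead splits the $k$-sum into $\mathcal A$ (the $\sim TN_3$ interior pieces, on which $\mathbbm 1_{[0,T]}$ is invisible) and $\mathcal B$ (the $O(1)$ boundary pieces, where the sharp cutoff actually cuts). On $\mathcal A$ the factor $T$ appears directly: each piece is estimated with all $L_j\ge N_3\vee N_0$, the bilinear bounds \eqref{eqbili2}--\eqref{eqbili3.1} give an extra $N_3^{-1}$ after dividing by $L_1^{1/2}L_2^{1/2}L_3^{1/2}$, and $(TN_3)\cdot N_3^{-1}=T$. On $\mathcal B$ there is no counting gain, and here (only here) Lemmas~\ref{FUNLEMM3} and~\ref{timelemma} are used to trade $b=1/2$ for $b<1/2$ on one factor and extract $T^{(1/2-b)^-}$; an interpolation between \eqref{eqbili1} and \eqref{eqbili2}--\eqref{eqbili3.1} then makes the $L$-sums converge. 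Without the $\mathcal A/\mathcal B$ split your argument as written does not close.

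Two further points. First, the entire last paragraph of your proposal --- $\ell^2$-summability in $N_{max}$, Cauchy--Schwarz in $N_1\sim N_2$, and the $High\times High\to Low$ obstacle --- does not belong here: in this lemma $N_1,N_2,N_3$ are \emph{fixed}, and those summation issues arise only in the application (Proposition~\ref{propEner1}). Second, the constraint $s_0>1/2$ in this lemma is not driven by a $\log N_{max}$ loss (that is a feature of the short-time nonlinear estimate, Lemma~\ref{lemmaShortBi3}); here it comes from the boundary-piece case $N_{min}\sim N_{max}$, where the interpolation in \eqref{eqenerg5} yields $N_{min}^{1/2+\theta/2}$ for $0<\theta\ll 1$, i.e.\ $N_{min}^{(1/2)^+}$.
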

\begin{proof}
In view of \eqref{eqbili4}, we will assume that $N_1 \leq N_2 \leq N_3$. Let $\widetilde{u}_j\in F_{N_j}$ be an extension of $u_j$ to $\mathbb{R}$ such that $\|\widetilde{u}_j\|_{F_{N_j}}\leq 2\|u_j\|_{F_{N_j}(T)}$ for each $j=1,2,3$. Additionally, let $h:\mathbb{R}\rightarrow \mathbb{R}$ be a smooth function supported in $[-1,1]$ such that
\begin{equation*}
    \sum_{k\in \mathbb{Z}} h^3(x-k)=1, \hspace{0.5cm} \forall x \in \mathbb{R}.
\end{equation*}
Then, we write
\begin{equation}\label{eqenerg1}
\begin{aligned}
\left|\int_{\mathbb{T}^2\times[0,T]} u_1u_2u_3 \right| &\lesssim \sum_{|k|\lesssim N_3} \int_{\mathbb{Z}^2\times \mathbb{R}} |\mathcal{F}(h(N_3t-k)\mathbbm{1}_{[0,T]}\widetilde{u}_3)| \\
&\hspace{0.3cm}\times \big(|\mathcal{F}(h(N_3t-k)\mathbbm{1}_{[0,T]}\widetilde{u}_1)|\big) \ast (|\mathcal{F}(h(N_3t-k)\mathbbm{1}_{[0,T]}\widetilde{u}_2)|\big)=: \sum_{\mathcal{A}}(\cdots)+\sum_{\mathcal{B}}(\cdots),
\end{aligned}
\end{equation}
where 
\begin{equation*}
\begin{aligned}
    \mathcal{A}&=\{k\in \mathbb{Z}\, :\, h(N_3t-k)\mathbbm{1}_{[0,T]}=h(N_3t-k)\}, \\
     \mathcal{B}&=\{k\in \mathbb{Z}\, :\, h(N_3t-k)\mathbbm{1}_{[0,T]}\neq h(N_3t-k)\, \text{ and }\, h(N_3t-k)\mathbbm{1}_{[0,T]}\neq 0 \}.
\end{aligned}
\end{equation*}
Let us estimate the sum over $\mathcal{A}$ in \eqref{eqenerg1}. Recalling the dyadic $N_0$ defining the spaces $X_N^b$, we denote by
\begin{equation*}
\begin{aligned}
f_{N_j,(N_3\vee N_0)}^k&=\psi_{\leq (N_3\vee N_0)}(\tau-\omega(m,n))|\mathcal{F}(h(N_3t-k)\widetilde{u}_j)|, \\
f_{N_j,L}^k&=\psi_{L}(\tau-\omega(m,n))|\mathcal{F}(h(N_3t-k)\widetilde{u}_j)|,
\end{aligned}
\end{equation*}
for each $j=1,2,3$, $L> (N_3\vee N_0)$ and $k\in \mathcal{A}$. Now since there are at most $N_{3}T$ integers in $\mathcal{A}$, we employ \eqref{eqbili2} and \eqref{eqbili3} when $N_{1}\ll N_{3}$, or  \eqref{eqbili3.1} if $N_{1}\sim N_3$ to deduce that
\begin{equation}\label{eqenerg2}
\begin{aligned}
\mathcal{I}_{\mathcal{A}} &\lesssim \sum_{|k|\in \mathcal{A}}\sum_{L_1,L_2,L_3\geq (N_3\vee N_0)}\int_{\mathbb{Z}^2\times \mathbb{R}} (f_{N_1,L_1}^k\ast f_{N_2,L_2}^k)\cdot f_{N_3,L_3}^k \\
&\lesssim  N_{1}^{1/2}T \,\sup_{k\in \mathcal{A}} \,\prod_{j=1}^3 \, \sum_{L_j\geq (N_3\vee N_0)}\, L_k^{1/2} \|f_{N_j,L_j}^k\|_{L^2} \lesssim N^{1/2}_1 T  \prod_{j=1}^3 \|\widetilde{u_j}\|_{F_{N_j}},
\end{aligned}
\end{equation}
where the last line above follows from \eqref{FEEQ2} and \eqref{FEEQ3}.

Next we deal with the sum over $\mathcal{B}$ in \eqref{eqenerg1}. We consider $b\in (0,1/2)$ fixed and let
\begin{equation*}
    g_{N_j,L}^k:=\psi_{L}(\tau-\omega)|\mathcal{F}(h(N_3t-k)\mathbbm{1}_{[0,T]}\widetilde{u}_j|,
\end{equation*}
for each $j=1,2,3$, $L\in \mathbb{D}$ and $k\in \mathcal{B}$. We treat first the case $N_1\ll N_3$. Since $\#\mathcal{B}\lesssim 1$, we have \begin{equation*}
    \begin{aligned}
    \mathcal{I}_{\mathcal{B}}&\lesssim \sup_{k\in \mathcal{B}} \sum_{L_1,L_2,L_3\geq 1}\int_{\mathbb{Z}^2\times \mathbb{R}} (g_{N_1,L_1}^k\ast g_{N_2,L_2}^k)\cdot g_{N_3,L_3}^k \\
    &\lesssim \sup_{k\in \mathcal{B}}\big(\sum_{L_2,L_3 \leq L_1}  \int_{\mathbb{Z}^2\times \mathbb{R}} (\cdots)+\sum_{L_1,L_2,L_3, \, L_{1}<L_{max}}  \int_{\mathbb{Z}^2\times \mathbb{R}} (\cdots) \, \big)=: \sup_{k\in \mathcal{B}} \, \big( \mathcal{I}_{\mathcal{B}}^{1,k}+\mathcal{I}_{\mathcal{B}}^{2,k} \big).
    \end{aligned}
\end{equation*}
From \eqref{eqbili2} and the fact that $N_3^{-1/2}(N_3^{1/2}\vee L_{min}^{1/2})L_{min}^{-1/2}\leq1$, we get
\begin{equation}\label{eqenerg2.1}
\begin{aligned}
    \mathcal{I}_{\mathcal{B}}^{1,k} \lesssim & \sum_{ L_2,L_3 \leq L_1} N_3^{-1/2}N_{1}^{1/2}L_{max}^{1/2}(N_{3}^{1/2}\vee L_{min}^{1/2})\|g_{N_1,L_1}^k\|_{L^2}\|g_{N_2,L_2}^k\|_{L^2}\|g_{N_3,L_3}^k\|_{L^2} \\
    \lesssim & \sum_{L_2,L_3 \leq L_1} N_{1}^{1/2}L_{max}^{1/2}L_{min}^{1/2}\|g_{N_1,L_1}^k\|_{L^2}\|g_{N_2,L_2}^k\|_{L^2}\|g_{N_3,L_3}^k\|_{L^2}.
    \end{aligned}
\end{equation}
In the regions where $L_{med} \sim L_{max}$, we use Lemmas \ref{FUNLEMM3} and \ref{timelemma}, together with the fact that $\|\widetilde{u_j}\|_{F_{N_j}}\leq 2\|u_j\|_{F_{N_j}(T)}$ to deduce 
\begin{equation}\label{eqenerg3}
    \begin{aligned}
    \sup_{k\in \mathcal{B}}\sum_{\substack{ L_2,L_3 \leq L_1, \\ L_{med}\sim L_{max}} }& N_{1}^{1/2}L_{med}^{-b}L_{max}^{1/2}L_{med}^{b}L_{min}^{1/2}\|g_{N_1,L_1}^k\|_{L^2}\|g_{N_2,L_2}^k\|_{L^2}\|g_{N_3,L_3}^k\|_{L^2} \\
 &\lesssim N_1^{1/2}\big(\sum_{L_1,L_2,L_3} L_{max}^{-b} \big) T^{(1/2-b)^{-}} \prod_{j=1}^3 \|u_j\|_{F_{N_j}(T)}.
    \end{aligned}
\end{equation}
Now, we deal with the case $L_{med}\ll L_{max}$. Interpolating the right-hand side of \eqref{eqenerg2.1} with the bound derived for $\mathcal{I}_{\mathcal{B}}^{1,k}$ using \eqref{eqbili1} instead of \eqref{eqbili2}, we find for all $\theta \in [0,1)$ that
\begin{equation}\label{eqenerg4}
    \begin{aligned}
    &\sup_{k\in \mathcal{B}}\sum_{\substack{ L_2,L_3 \leq L_1, \\ L_{med}\ll L_{max}} }N_{1}^{1-\theta/2}L_{max}^{\theta/2}L_{min}^{1/2}\|g_{N_1,L_1}^k\|_{L^2}\|g_{N_2,L_2}^k\|_{L^2}\|g_{N_3,L_3}^k\|_{L^2} \\
&=\sup_{k\in \mathcal{B}}\sum_{\substack{ L_1=L_{max}, \\ L_{med}\ll L_{max}} } N_{1}^{1-\theta/2}L_{med}^{-b}L_{max}^{-(1-\theta)/2}L_{max}^{1/2}L_{med}^{b}L_{min}^{1/2}\|g_{N_1,L_1}^k\|_{L^2}\|g_{N_2,L_2}^k\|_{L^2}\|g_{N_3,L_3}^k\|_{L^2} \\
 &\lesssim N_1^{1-\theta/2}\big(\sum_{L_1,L_2,L_2} L_{max}^{-(1-\theta)/2}L_{med}^{-b} \big) T^{(1/2-b)^{-}} \prod_{j=1}^3 \|u_j\|_{F_{N_j}(T)}.
    \end{aligned}
\end{equation}
Therefore, the estimate for $\sup_{k\in \mathcal{B}}\, \mathcal{I}_{\mathcal{B}}^{1,k}$ is now a consequence of \eqref{eqenerg3} and \eqref{eqenerg4}. On the other hand, we can implement \eqref{eqbili3} and the same ideas dealing with \eqref{eqenerg3} to derive the following bound 
\begin{equation*}
    \begin{aligned}
    \sup_{k\in \mathcal{B}}\, \mathcal{I}_{\mathcal{B}}^{2,k} &\lesssim \sup_{k\in \mathcal{B}}\sum_{\substack{ L_1,L_2,L_3, L_1< L_{max}} } N_{1}^{1/2}L_{max}^{-b}L_{max}^{b}L_{med}^{1/2}L_{min}^{1/2}\|g_{N_1,L_1}^k\|_{L^2}\|g_{N_2,L_2}^k\|_{L^2}\|g_{N_3,L_3}^k\|_{L^2} \\
 &\lesssim N_1^{1/2}\big(\sum_{L_1,L_2,L_2} L_{max}^{-b} \big) T^{(1/2-b)^{-}} \prod_{j=1}^3 \|u_j\|_{F_{N_j}(T)}.
    \end{aligned}
\end{equation*}
This completes the analysis of $\mathcal{I}_{\mathcal{B}}$ in the region $N_1 \ll N_3$. Next we treat the case $N_1 \sim N_2$. Interpolating \eqref{eqbili1} and \eqref{eqbili3.1}, we obtain for all $\theta \in [0,1]$ that
\begin{equation}\begin{aligned}\label{eqenerg5}
   \mathcal{I}_{\mathcal{B}}
    &\lesssim \sup_{k\in \mathcal{B}} \,  \sum_{L_{1},L_2,L_3} \big(L_{max}^{-\theta/2}(N_{1}^{1/2}\vee L_{med}^{1/2})^{1-\theta}N_1^{\theta}L_{med}^{-1/2}L_{min}^{\theta/2-b}\big)L_{max}^{1/2}L_{med}^{1/2}L_{min}^{b}\|g_{N_1,L_1}^k\|_{L^2}\|g_{N_2,L_2}^k\|_{L^2}\|g_{N_3,L_3}^k\|_{L^2} \\
    &\lesssim \sup_{k\in \mathcal{B}} \, N_1^{1/2+\theta/2} \sum_{L_{1},L_2,L_3} L_{max}^{-\theta/2}\big(L_{med}^{-\theta/2}L_{min}^{\theta/2-b}\big)L_{max}^{1/2}L_{med}^{1/2}L_{min}^{b}\|g_{N_1,L_1}^k\|_{L^2}\|g_{N_2,L_2}^k\|_{L^2}\|g_{N_3,L_3}^k\|_{L^2}.
\end{aligned}
\end{equation}
Therefore, taking $0<\theta\ll 1$ and employing a similar reasoning to \eqref{eqenerg4}, the estimate for $\mathcal{I}_{\mathcal{B}}$ when $N_1 \sim N_3$ is a consequence of \eqref{eqenerg5}. Gathering all the previous results, by setting $\nu=1/2-b$ we obtain \eqref{eqenerg0}.
\end{proof}

\begin{lemma}\label{lemmaEner2}
Assume that $s_0 >3/2$, $N_1\ll N$, then there exists $\nu>0$ such that for $T\in(0,T_0]$,
\begin{equation*}
    \left|\int_{\mathbb{T}^2\times \mathbb{R}} P_{N}(\partial_{x}u P_{N_1}v)P_N u \,dx dy dt\right|\lesssim_{s_0} T^{\mu} N_{min}^{s_0}\|v\|_{F_{N_1}(T)}\sum_{N_2\sim N}\|u\|_{F_{N_2}(T)}^2,
\end{equation*}
whenever $v\in F_{N_1}(T)$ and $u\in F_{N_2}(T)$.
\end{lemma}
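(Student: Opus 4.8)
The plan is to exploit that in this term the derivative $\partial_x$ falls on a high‑frequency factor (of size $\sim N$): estimating it directly through Lemma \ref{lemmaEner1} would cost a full power of $N$, so instead we integrate by parts and move the surviving derivative onto the low‑frequency factor $P_{N_1}v$, where it costs only $N_1$. Since $P_N$ is self‑adjoint with $P_N^2=P_N$, I would first write
\[
\int_{\mathbb{T}^2\times \mathbb{R}}P_{N}(\partial_{x}u\,P_{N_1}v)\,P_Nu\,dxdydt=\int (\partial_x u)\,(P_{N_1}v)\,(P_Nu)\,dxdydt,
\]
and observe that, since $P_{N_1}v$ has spatial frequency $\lesssim N_1\ll N$, this vanishes unless the $u$ in $\partial_x u$ is localized at frequency $\sim N$; thus it suffices to bound $\sum_{N_2\sim N}\int(\partial_x P_{N_2}u)(P_{N_1}v)(P_Nu)$. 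For the diagonal block $N_2=N$ one symmetrizes,
\[
\int(\partial_x P_Nu)(P_{N_1}v)(P_Nu)\,dxdydt=\tfrac12\int (P_{N_1}v)\,\partial_x\big((P_Nu)^2\big)\,dxdydt=-\tfrac12\int \big(\partial_x P_{N_1}v\big)(P_Nu)^2\,dxdydt,
\]
so the derivative now sits on $P_{N_1}v$. The off‑diagonal blocks $N_2\in\{N/2,2N\}$ are treated similarly: using $\partial_x(P_{N_2}u\,P_{N_1}v)=(\partial_xP_{N_2}u)\,P_{N_1}v+P_{N_2}u\,\partial_x(P_{N_1}v)$ and integrating by parts in $x$ produces, besides a term with the derivative already on $P_{N_1}v$, a commutator contribution that is resolved by pairing the block $N_2$ with the adjacent block $N$ — equivalently, by the smoothing of $[P_N,P_{N_1}v]$, whose symbol $\psi_N(\zeta)-\psi_N(\zeta-\eta)$ with $|\eta|\lesssim N_1\ll |\zeta|\sim N$ gains $N_1/N$ and trades $\tfrac1N\partial_x$ on the high frequency for $\partial_x$ on the low frequency. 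In every case one is reduced, up to acceptable errors, to trilinear expressions of the form $\int(\partial_x P_{N_1}v)\,g\,h$ with $g,h$ localized at frequency $\sim N$.

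To such an integral I would apply Lemma \ref{lemmaEner1}, whose three frequency scales here are $N_1$, $\sim N$, $\sim N$, so that $N_{min}=N_1$; taking the admissible exponent $s_0-1>1/2$ it gives a bound of the form $T^{\nu}N_1^{\,s_0-1}\,\|\partial_xP_{N_1}v\|_{F_{N_1}(T)}\sum_{N_2\sim N}\|P_{N_2}u\|_{F_{N_2}(T)}^2$. Finally, since $\partial_x$ commutes with $P_{N_1}$ and is supported on $|m|\lesssim N_1$, one has $\|\partial_xP_{N_1}v\|_{F_{N_1}(T)}\lesssim N_1\|v\|_{F_{N_1}(T)}$, and combining these two inequalities yields the claimed estimate with $\mu=\nu$.

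The main obstacle is the off‑diagonal/commutator bookkeeping sketched above: carrying out the integration by parts cleanly in the presence of the sharp Littlewood–Paley cut‑offs $P_N,P_{N_2}$ (which do not commute with multiplication by $P_{N_1}v$), and verifying that the resulting boundary terms are genuinely lower order. I would also remark that the hypothesis $s_0>3/2$ is exactly what the argument needs and is essentially sharp for it: the symmetrized derivative contributes the power $N_1^{1}$ and the trilinear estimate of Lemma \ref{lemmaEner1} contributes the generic power $N_1^{(1/2)^+}$, for a total of $N_1^{(3/2)^+}\le N_1^{s_0}$.
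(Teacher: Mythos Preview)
Your approach matches the paper's: it too splits the integral into the symmetrized piece $\mathcal{I}=\int(\partial_xP_Nu)(P_{N_1}v)(P_Nu)$, integrates by parts to put $\partial_x$ on $P_{N_1}v$, and then invokes Lemma~\ref{lemmaEner1}, while the remainder $\mathcal{II}=\int[P_N,P_{N_1}v]\partial_xu\cdot P_Nu$ is exactly your off-diagonal/commutator contribution; the paper likewise does not write this part out and refers to \cite[Lemma~6.1, eq.~(6.10)]{IonescuKeniTata}. One caveat on your sketch of $\mathcal{II}$: in this periodic section the projectors $P_N$ are sharp indicators $\mathbbm{1}_{I_N}$, not the smooth $\psi_N$, so the mean-value argument ``$\psi_N(\zeta)-\psi_N(\zeta-\eta)\sim (N_1/N)\widetilde\psi_N$'' is not literally available---the gain comes instead from the Fourier-side observation that the commutator is supported in a band of width $O(N_1)$ around $\partial I_N$, which is how \cite{IonescuKeniTata} proceeds.
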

\begin{proof}
We divide the integral expression in the following manner
\begin{equation}\label{eqenerg5.1}
\begin{aligned}
\int_{\mathbb{T}^2\times \mathbb{R}}& P_{N}(\partial_{x}u P_{N_1}v)P_N u \,dx dy dt \\
&=\int_{\mathbb{T}^2\times \mathbb{R}} \partial_{x}P_{N}u P_{N_1}vP_N u +\int_{\mathbb{T}^2\times \mathbb{R}} P_{N}(\partial_{x}u P_{N_1}v)P_N u-\partial_{x}P_{N}u P_{N_1}vP_N u \\
&=\mathcal{I}+\mathcal{II}.
\end{aligned}
\end{equation}
Integrating by parts and using \eqref{eqenerg0}, the first term on the right-hand side of the above expression satisfies
\begin{equation}\label{eqenerg5.1.1}
|\mathcal{I}|\lesssim T^{\nu}N_{1}^{(3/2)^{+}}\|v\|_{F_{N_1}(T)}\|u\|_{F_{N}(T)}^2. 
\end{equation}
The estimate for $\mathcal{II}$ is deduced arguing as in \cite[Lemma 6.1]{IonescuKeniTata} (see equation (6.10)) and following the same ideas leading to \eqref{eqenerg0}. For the sake of brevity, we omit its proof. 
\end{proof}

\begin{prop}\label{propEner1}
Let $T\in (0,T_0]$ and  $s \geq s_0> 3/2$. Then for any $u\in C([0,T];H^{\infty}(\mathbb{T}^2))$ solution of the IVP \eqref{EQBO} on $[0,T]$,
\begin{equation}\label{EnergyInequ}
    \|u\|_{B^s(T)}^2 \lesssim \|u_0\|_{H^s}^2+T^{\nu}\|u\|_{F^{s_0}(T)}\|u\|_{F^s(T)}^2,
\end{equation}
where the implicit constant above is independent of the definition of the spaces involved.
\end{prop}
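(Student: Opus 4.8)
The plan is to run the standard short-time Fourier restriction argument: combine a frequency-localized energy identity with the bilinear estimate of Lemma \ref{lemmaEner2} and the linear estimate \eqref{FEEQ6}. First I would fix a dyadic $N$, apply $P_N$ to the equation in \eqref{EQBO}, pair with $P_N u$ in $L^2(\mathbb{T}^2)$ and integrate over $[0,t]$ for $t\in[0,T]$. Since $\mathcal{H}_x-\mathcal{H}_x\partial_x^2\pm\mathcal{H}_x\partial_y^2$ is skew-symmetric, the linear part contributes nothing and one gets
\begin{equation*}
\|P_N u(t)\|_{L^2_{xy}}^2=\|P_N u_0\|_{L^2_{xy}}^2-\int_{\mathbb{T}^2\times[0,t]} P_N(u\partial_x u)\,P_N u.
\end{equation*}
Then I would multiply by $\langle N\rangle^{2s}$ (more precisely insert the factor $N^{2s}+N_0^{2s}$ used in the definition of $B^s(T)$), take the supremum over $t_N\in[0,T]$ and sum in $N$, so that the left side is exactly $\|u\|_{B^s(T)}^2$ up to the constant, with the first term producing $\|u_0\|_{H^s}^2$.

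The heart of the matter is estimating the nonlinear contribution $\sum_N (N^{2s}+N_0^{2s})\sup_{t_N}\big|\int P_N(u\partial_x u)P_N u\big|$. I would decompose $u\partial_x u=\sum_{N_1}\partial_x u\,P_{N_1}u$ (writing $u\partial_x u=\frac12\partial_x(u^2)$ and Littlewood--Paley decomposing one factor), and split into the regimes $N_1\ll N$, $N_1\sim N$, and $N_1\gg N$. The low-high interaction $N_1\ll N$ is exactly Lemma \ref{lemmaEner2}, which gives the bound $T^{\nu}N_{\min}^{s_0}\|P_{N_1}u\|_{F_{N_1}(T)}\sum_{N_2\sim N}\|P_{N_2}u\|_{F_{N_2}(T)}^2$; after multiplying by $N^{2s}$ one uses $N_{\min}^{s_0}=N_1^{s_0}$ together with Cauchy--Schwarz in $N_1$ (absorbing $N_1^{s_0-s_0^-}$-type factors, using $s_0>3/2$ so that $\sum_{N_1} N_1^{-\epsilon}<\infty$) to arrive at $T^{\nu}\|u\|_{F^{s_0}(T)}\|u\|_{F^s(T)}^2$. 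The high-high-to-low and comparable-frequency regimes are handled directly by Lemma \ref{lemmaEner1}: for $N_1\gtrsim N$ one has $N_{\min}\sim N$ (or $N_1$), and the $\partial_x$ costs a factor $N_{\max}$, but $N_{\max}\le N_1\sim N_2$ and the $N^{2s}$ weight can be traded against the $F^s$-norms of the two high-frequency factors while the remaining factor only needs $F^{s_0}$; again Cauchy--Schwarz in the dyadic parameters and the summability coming from $s_0>3/2$ close the estimate. One should also record that the factors $\mathbbm{1}_{[0,T]}$ are handled via Lemmas \ref{FUNLEMM3} and \ref{timelemma} exactly as in the proofs of Lemmas \ref{lemmaEner1} and \ref{lemmaEner2}, which is where the positive power $T^{\nu}$ is generated.

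The main obstacle I anticipate is bookkeeping the interplay between the $\partial_x$ derivative loss, the $N^{2s}$ Sobolev weight, and the dyadic summation in the high-frequency regimes: one must be careful that in the $N_1\gtrsim N$ cases the derivative $\partial_x$ (which produces $N_{\max}\sim N_1$) together with the weight $N^{2s}\lesssim N_1^{2s}$ does not exceed the $s$-regularity available on the two high-frequency inputs, so that exactly one input can be left at the lower regularity $s_0$. This is the standard ``derivative on the high frequency'' issue in energy estimates; it is resolved because $\partial_x(u^2)$ lets the derivative fall on either factor, and because $s_0>3/2>1$ there is enough room. A secondary technical point is that Lemmas \ref{lemmaEner1} and \ref{lemmaEner2} are stated for extensions $\widetilde u_j\in F_{N_j}$ with $\|\widetilde u_j\|_{F_{N_j}}\le 2\|u_j\|_{F_{N_j}(T)}$, so I would first pass to such extensions before invoking them, and then sum. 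Once these are in place, summing in $N$ and using $\sum_N \|P_N u\|_{F_N(T)}^2 \lesssim \|u\|_{F^s(T)}^2$ (with appropriate weights) yields \eqref{EnergyInequ}.
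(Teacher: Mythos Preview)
Your proposal is correct and follows essentially the same route as the paper: the frequency-localized energy identity, then Lemma~\ref{lemmaEner2} for the $Low\times High\to High$ piece and Lemma~\ref{lemmaEner1} for the remaining interactions, with Cauchy--Schwarz in the dyadic sums and the room $s_0>3/2$ providing summability. One small point of imprecision: in your ``$N_1\sim N$'' regime the factor $\partial_x u$ is still undecomposed and can sit at \emph{any} frequency $N_2\lesssim N$, so the assertion ``$\partial_x$ costs $N_{\max}$ with $N_{\max}\le N_1\sim N_2$'' is not quite right when $N_2\ll N$; in that sub-case the derivative actually costs only $N_2$, and Lemma~\ref{lemmaEner1} gives $T^{\nu}N_2^{(1/2)^+}\cdot N_2=T^{\nu}N_2^{(3/2)^+}$, which is exactly what the paper records (this is the paper's separate $Low\times High\to High$ case \eqref{eqenerg8}, handled via \eqref{eqenerg5.1.1}). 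Once you split that off explicitly, the rest of your outline matches the paper's four-case decomposition \eqref{eqenerg7}--\eqref{eqenerg10} verbatim.
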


\begin{proof}
According to the definition of the spaces $B^s(T)$ and the fact that $u$ solves the IVP \eqref{EQBO}, it is enough to derive a bound for the sum over $N\geq N_0$ of the following expression
\begin{equation}\label{eqenerg6}
    N^{2s}\|P_N u(t_N)\|_{L^2}^2 \lesssim N^{2s}\|P_N u_0\|_{L^2}^2+N^{2s}\left|\int_{\mathbb{T}^2\times [0,T]} P_{N}(\partial_{x}u u)P_N u  \,dx dy dt\right|.
\end{equation}
Now we split the estimate for the integral term above according to the iterations: $High\times Low \rightarrow High$, 
\begin{equation}\label{eqenerg7}
 \int_{\mathbb{T}^2\times [0,T]} P_N( \partial_{x}u P_{N_1}u) P_N u\, dx dy dt, \text{ where } N_1\ll  N,
\end{equation}
$Low\times High \rightarrow High$, 
\begin{equation}\label{eqenerg8}
 \int_{\mathbb{T}^2\times [0,T]}  P_N( \partial_{x}P_{N_1}u P_{N_2}u) P_N u\, dx dy dt, \text{ where } N_1\ll N_2\sim N,
\end{equation}
$High\times High \rightarrow High$, 
\begin{equation}\label{eqenerg9}
 \int_{\mathbb{T}^2\times [0,T]} P_N( \partial_{x}P_{N_1}u P_{N_2}u) P_N u\, dx dy dt, \text{ where } N\sim N_1\sim N_2,
\end{equation}
and 
$High\times High \rightarrow Low$, 
\begin{equation}\label{eqenerg10}
 \int_{\mathbb{T}^2\times [0,T]} P_N( \partial_{x}P_{N_1}u P_{N_2}u) P_N u\,  dx dy dt, \text{ where } N\ll N_1\sim N_2.
 \end{equation}
In view of Lemma \ref{lemmaEner2}, the $High\times Low \rightarrow High$ iteration satisfies
\begin{equation}\label{eqenerg11} 
    \eqref{eqenerg7} \lesssim T^{\nu} N_{1}^{(3/2)^{+}}\|P_{N_1}u\|_{F_{N_1}(T)}\sum_{N_2\sim N}\|P_{N_2}u\|_{F_{N_2}(T)}^2.
\end{equation}
Summing the above expression over $N$ and $N_1\ll N$, we can modify the power of $N_1^{(3/2)^{+}}$ by an arbitrary small factor to apply the Cauchy-Schwarz inequality in the sum over $N_1$. Next, we apply the same inequality for the sum over $N_1$, obtaining  \eqref{EnergyInequ}. Now, recalling \eqref{eqenerg5.1.1} in the proof of  Lemma \ref{lemmaEner2}, we notice that the  $Low\times High \rightarrow High$ iteration satisfies the same estimate in \eqref{eqenerg11}. 
\\ \\
Next we apply  \eqref{eqenerg0} to control the $High\times High \rightarrow High$ iterations as follows 
\begin{equation}\label{eqenerg12}
    \eqref{eqenerg9} \lesssim  T^{\nu}N^{(3/2)^{+}}\|P_Nu\|_{F_{N}(T)}\|P_{N_1}u\|_{F_{N_1}(T)}\|P_{N_2}u\|_{F_{N_2}(T)}.
\end{equation}
Since $N\sim N_1 \sim N_2$, we can increase the power in $N^{(3/2)^{+}}$ by a small factor to apply  the Cauchy-Schwarz inequality separately in each of the sums over $N,N_1,N_2$ to derive the desired result. The estimate for $High\times High \rightarrow Low$ is obtained by \eqref{eqenerg0} and a similar reasoning to the iteration $High\times High \rightarrow High$. This completes the estimate for the r.h.s of \eqref{eqenerg6} and in turn the deduction of \eqref{EnergyInequ}.
\end{proof}

We also require the following result to deal with the difference of solutions.
\begin{prop}
Let $T\in (0,T_0]$, $s\geq s_0>3/2$. Consider $u,v\in C([0,T];H^{\infty}(\mathbb{T}^2))$ solutions of the IVP \eqref{EQBO} with initial data $u_0,v_0\in H^{\infty}(\mathbb{T}^2)$ respectively, then
\begin{equation}\label{eqenerg13.1}
    \|u-v\|^2_{B^0(T)} \lesssim \|u_0-v_0\|_{L^2}^2+T^{\nu}\big( \|u-v\|_{F^{s_0}(T)}\|u-v\|_{F^{0}(T)}^2+\|v\|_{F^{s_0}(T)}\|u-v\|_{F^{0}(T)}^2\big),
\end{equation}
and
\begin{equation}\label{eqenerg13.2}
\begin{aligned}
  \|u-v\|^2_{B^s(T)} \lesssim \|u_0-v_0\|_{H^s}^2+ T^{\nu}\big( \|v\|_{F^{s_0}(T)}\|u-v&\|_{F^{s}(T)}^2+\|u-v\|_{F^{s_0}(T)}\|u-v\|_{F^{s}(T)}\|v\|_{F^{s}(T)} \\
  &+\|v\|_{F^{(s+3/2)^{+}}(T)}\|u-v\|_{F^{s}(T)}\|u-v\|_{F^{0}(T)}\big),
\end{aligned}
\end{equation}
where the implicit constants are independent of $T_0$ and the spaces involved.
\end{prop}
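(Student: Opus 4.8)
The plan is to reduce the statement to the scheme of Proposition \ref{propEner1}, applied to the difference $w=u-v$. First I would record that, subtracting the two copies of \eqref{EQBO}, $w$ solves $\partial_t w+\mathcal{H}_xw-\mathcal{H}_x\partial_x^2w\pm\mathcal{H}_x\partial_y^2w+\tfrac12\partial_x(u^2-v^2)=0$ on $\mathbb{T}^2\times[0,T]$, and since $u^2-v^2=(2v+w)w$ the nonlinearity is $\partial_x(vw)+\tfrac12\partial_x(w^2)$. Following Proposition \ref{propEner1}, I would apply $P_N$ to this equation, pair with $N^{2s}P_Nw$ (for \eqref{eqenerg13.2}) or with $P_Nw$ (for \eqref{eqenerg13.1}), integrate over $\mathbb{T}^2\times[0,t_N]$ and take the supremum over $t_N\in[0,T]$. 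The low-frequency part of $\|w\|_{B^s(T)}$ is exactly $\|P_{\leq N_0}w_0\|_{H^s}$ and needs no work, so the task is to bound, summed against the weights $N^{2s}+N_0^{2s}$, the two trilinear quantities $\big|\int_{\mathbb{T}^2\times[0,T]}P_N(\partial_x(w^2))P_Nw\big|$ and $\big|\int_{\mathbb{T}^2\times[0,T]}P_N(\partial_x(vw))P_Nw\big|$.

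The first quantity is literally the trilinear form controlled in Proposition \ref{propEner1} with $u$ replaced by $w$, so repeating that argument verbatim gives a contribution $\lesssim T^{\nu}\|w\|_{F^{s_0}(T)}\|w\|_{F^s(T)}^2$; running the same argument with the $N^{2s}$ weight removed (using Lemma \ref{lemmaEner1} for the underlying trilinear estimate) gives the $L^2$-level bound $\lesssim T^{\nu}\|w\|_{F^{s_0}(T)}\|w\|_{F^0(T)}^2$, which is the $\|u-v\|_{F^{s_0}(T)}\|u-v\|_{F^0(T)}^2$ term in \eqref{eqenerg13.1} and the $\|u-v\|_{F^{s_0}(T)}\|u-v\|_{F^s(T)}^2$-type term in \eqref{eqenerg13.2}. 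For the cross term $P_N(\partial_x(vw))P_Nw$ I would Littlewood--Paley decompose $v=\sum_{N_1}P_{N_1}v$ and $w=\sum_{N_2}P_{N_2}w$ and separate the usual four frequency regimes. In the $Low_v\times High_w\to High$ regime ($N_1\ll N_2\sim N$) I would split $\partial_x(P_{N_1}v\,P_{N_2}w)$ into the piece with the derivative on the high-frequency $w$ — which, after integrating $P_N(\partial_x(\cdot))=\partial_x P_N(\cdot)$ by parts against $P_Nw$, has exactly the shape handled by Lemma \ref{lemmaEner2} with the role of $u$ played by $w$ — and the piece with the derivative on the low-frequency $P_{N_1}v$, which is of lower order and is controlled by Lemma \ref{lemmaEner1}; after the standard Cauchy--Schwarz juggling in the sums over $N$ and $N_1$ (borrowing small powers from $s_0>3/2$) this produces $\lesssim T^{\nu}\|v\|_{F^{s_0}(T)}\|w\|_{F^s(T)}^2$. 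In the two regimes where $v$ and $w$ sit at comparable frequencies ($N\sim N_1\sim N_2$ and $N\ll N_1\sim N_2$) I would estimate directly with Lemma \ref{lemmaEner1}, integrating by parts onto the lowest-frequency factor and distributing the weight $N^{2s}$; deciding which of the three comparable factors carries the $F^{s_0}$-norm and which the $F^s$-norm yields the two terms $T^{\nu}\|v\|_{F^{s_0}(T)}\|w\|_{F^s(T)}^2$ and $T^{\nu}\|w\|_{F^{s_0}(T)}\|w\|_{F^s(T)}\|v\|_{F^s(T)}$. Finally, in the $High_v\times Low_w\to High$ regime ($N_2\ll N_1\sim N$) the large frequency $N$ appears in the two \emph{distinct} functions $P_{N_1}v$ and the outer $P_Nw$, so the integration-by-parts cancellation of Lemma \ref{lemmaEner2} is unavailable and a full derivative must be paid on $P_{N_1}v$; applying Lemma \ref{lemmaEner1} and absorbing the extra powers of $N$ into the norm of $v$ gives $\lesssim T^{\nu}\|v\|_{F^{(s+3/2)^{+}}(T)}\|w\|_{F^s(T)}\|w\|_{F^0(T)}$.

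Summing the above over the dyadic frequencies $N$ and invoking the definitions of $B^s(T)$, $F^s(T)$ and the embedding $F^s(T)\hookrightarrow C([0,T];H^s)$ of Lemma \ref{LemEmbedding} yields \eqref{eqenerg13.2}; at $s=0$ the weight $N^{2s}$ disappears, the $High_v\times Low_w\to High$ regime no longer costs extra regularity on $v$, and all cross contributions collapse to $T^{\nu}\|v\|_{F^{s_0}(T)}\|w\|_{F^0(T)}^2$, giving \eqref{eqenerg13.1}. The main obstacle is precisely the $High_v\times Low_w\to High$ interaction: because the repeated high frequency is carried by two different functions, one cannot redistribute the derivative, and this is the sole source of the higher-regularity norm $\|v\|_{F^{(s+3/2)^{+}}(T)}$ in \eqref{eqenerg13.2}; a secondary, purely bookkeeping, difficulty is verifying that in the comparable-frequency regimes the small positive exponents introduced by Cauchy--Schwarz can be accommodated so that the final bound has exactly the stated trilinear structure.
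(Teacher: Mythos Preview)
Your proposal is correct and takes essentially the same approach as the paper: both set $w=u-v$, reduce via the equation for $w$ to trilinear integrals, handle the $w^2$ piece and the favorable $vw$ interactions through Lemmas \ref{lemmaEner1}--\ref{lemmaEner2} exactly as in Proposition \ref{propEner1}, and single out the $High_v\times Low_w\to High$ regime (where the integration-by-parts cancellation behind Lemma \ref{lemmaEner2} is unavailable) as the sole source of the $\|v\|_{F^{(s+3/2)^+}(T)}$ term, bounded directly via Lemma \ref{lemmaEner1}. The only cosmetic difference is that the paper splits $\partial_x(vw)=v\partial_x w+(\partial_x v)\,w$ at the outset and runs each summand through the full frequency decomposition, whereas you keep $\partial_x(vw)$ intact and distribute the derivative within each regime; the resulting estimates are identical.
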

\begin{proof}
We shall argue as in the proof of Proposition \ref{propEner1}. Letting $w=u-v$, we find that $w$ solves the equation:
\begin{equation}\label{DIFEQBO}
  \partial_t w +\mathcal{H}_xw-\mathcal{H}_x\partial_x^2w\pm \mathcal{H}_x\partial_y^2w+\frac{1}{2}\partial_{x}((u+v)w)=0,
\end{equation}
with initial condition $  w(x,0)=u_0-v_0$. Let $\widetilde{s}\in \{0,s\}$, the definition of the $B^{\widetilde{s}}(T)$-norm and the fact that $w$ solves \eqref{DIFEQBO} yield
\begin{equation}\label{eqenerg13.3}
\begin{aligned}
\|w\|_{B^{\widetilde{s}}(T)}^2 &\lesssim \|P_{\leq N_0} w(0)\|_{H^{\widetilde{s}}}^2+\sum_{N> N_0} \sup_{t_N}N^{\widetilde{s}}\|P_N w(t_N)\|_{L^2}^2 \\
&\lesssim \|w(0)\|_{H^{\widetilde{s}}}+\sum_{N> N_0}N^{2\widetilde{s}}\left|\int_{\mathbb{T}^2\times [0,T]} P_{N}(w\partial_{x}w+v \partial_x w+\partial_x v w)P_N w \,dx dy dt\right|.
\end{aligned}
\end{equation}
Then, we are reduced to estimate the integral term on the right-hand side of the last inequality. Arguing as in the proof of Proposition \ref{propEner1}, applying Lemmas \ref{lemmaEner1} and \ref{lemmaEner2}, we obtain
\begin{equation}\label{eqenerg13.4}
 \sum_{N> N_0}   \left|\int_{\mathbb{T}^2\times [0,T]} P_{N}(w\partial_{x}w+v \partial_x w)P_N w \,dx dy dt\right| \lesssim T^{\nu}\big( \|w\|_{F^{(3/2)^{+}}(T)}\|w\|_{F^{0}(T)}^2+\|v\|_{F^{(3/2)^{+}}(T)}\|w\|_{F^{0}(T)}^2\big)
\end{equation}
and 
\begin{equation}\label{eqenerg13.5}
\begin{aligned}
\sum_{N>N_0}&N^{2s} \left|\int_{\mathbb{T}^2\times [0,T]} P_{N}(w\partial_{x}w+v \partial_x w)P_N w \,dx dy dt\right| \\
&\hspace{4cm}\lesssim T^{\nu}\big( \|v\|_{F^{(3/2)^{+}}(T)}\|w\|_{F^{s}(T)}^2+\|w\|_{F^{(3/2)^{+}}(T)}\|w\|_{F^{s}(T)}\|v\|_{F^{s}(T)}\big),
\end{aligned}
\end{equation}
where we emphasize that the last term on the right-hand side of \eqref{eqenerg13.5}  appears from the estimate dealing with the $Low\times High \rightarrow High$ iteration and Lemma \ref{lemmaEner1}, since in this case
\begin{equation*}
    N^{2s}\left|\int_{\mathbb{T}^2\times[0,T]}P_N(\partial_x P_{N_1}wP_{N_2}v) P_Nw \, dx dy dt\right|\lesssim T^{\nu}N_1^{(3/2)^{+}}N^{2s}\|P_{N_1}w\|_{F_{N_1}(T)}\|P_{N_2}v\|_{F_{N_2}(T)}\|P_{N}w\|_{F_{N}(T)},
\end{equation*}
with $N_1\ll N\sim N_2$. It remains to control the term $v\partial_xw$ in the integral in \eqref{eqenerg13.3}. We divide our considerations as in the proof of Proposition \ref{propEner1} according to the iterations: $High\times Low \rightarrow High$, $Low \times High\rightarrow High$, $High\times High \rightarrow High$ and $High \times High \rightarrow Low$. Notice that in this case we cannot apply Lemma \ref{lemmaEner2} to control the $High\times Low\rightarrow High$ iteration. We use instead Lemma \ref{lemmaEner1} to find for $N_1 \ll N$ that
\begin{equation}\label{eqenerg13.6}
\begin{aligned}
    N^{2\widetilde{s}}\Big|\int_{\mathbb{T}^2\times[0,T]}P_N(\partial_x v & P_{N_1}w)P_N w  \, dx dydt\Big|\\
    &\lesssim \sum_{N_2\sim  N}T^{\nu} N_1^{(1/2)^{+}}N_2N^{2\widetilde{s}}\|P_{N_1}w\|_{F_{N_1}(T)}\|P_{N_2}v\|_{F_{N_2}(T)}\|P_N w\|_{F_{N}(T)}.
\end{aligned}
\end{equation}
Summing \eqref{eqenerg13.6} over $N$ and $N_1\ll N$, we use that $N_1^{(1/2)^{+}}N_2N^{2\widetilde{s}}\lesssim N_2^{(3/2)^{+}+\widetilde{s}}N^{\widetilde{s}}N_{1}^{-\epsilon}$ for $0<\epsilon\ll 1$ to apply the Cauchy-Schwarz inequality on the sum over $N_1$ and then on $N$ to control the resulting expression by the r.h.s of \eqref{eqenerg13.1} if $\widetilde{s}=0$, or by the last term on the r.h.s of \eqref{eqenerg13.2} if $\widetilde{s}=s$.

The remaining iterations are treated as in the proof of Proposition \ref{propEner1}, and their resulting bounds are the same displayed on the right-hand sides of \eqref{eqenerg13.4} if $\widetilde{s}=0$ and \eqref{eqenerg13.5} if $\widetilde{s}=s$ respectively. The proof of the proposition is now complete.

\end{proof}


\subsection{Proof of Theorem \ref{LocalwellTorus} }

We follow similar considerations as in \cite{IonescuKeniTata,ZhangKP} to prove Theorem \ref{LocalwellTorus}. We begin by recalling the local well-posedness result for smooth initial data, which can be deduced as in \cite[Theorem 2.1]{IoNu}. 
\begin{theorem}\label{existsmoothsol}
Let $u_0\in H^{\infty}(\mathbb{T}^2)$. Then there exist $T>0$ and a unique $u\in C([0,T];H^3(\mathbb{T}^2))$ solution of the IVP \eqref{EQBO}. Moreover, the existence time $T=T(\|u_0\|_{H^3})$ is a non-increasing function of $\|u_0\|_{H^3}$ and the flow-map is continuous. 
\end{theorem}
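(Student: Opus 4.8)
The plan is to prove Theorem \ref{existsmoothsol} by the standard parabolic regularization scheme of Iorio \cite{Iorio} (compare \cite{IoNu,Ioriobook}); the only equation-specific inputs are that the linear part is skew-adjoint and that a $2$D Sobolev embedding closes the top-order energy estimate. Write $L=\mathcal{H}_x-\mathcal{H}_x\partial_x^2\pm\mathcal{H}_x\partial_y^2$, which is a Fourier multiplier with purely imaginary symbol $i\omega(m,n)$, so $\{S(t)\}$ is a unitary group on every $H^k(\mathbb{T}^2)$. For $\mu>0$ consider the regularized problem
\begin{equation*}
\partial_t u^{\mu}+Lu^{\mu}+u^{\mu}\partial_x u^{\mu}=\mu\Delta u^{\mu},\qquad u^{\mu}(0)=u_0 .
\end{equation*}
Since $e^{t(\mu\Delta-L)}$ has symbol of modulus $e^{-t\mu|(m,n)|^2}\le 1$ and is parabolically smoothing, a contraction argument on the associated Duhamel formula produces for each $\mu>0$ a solution $u^{\mu}\in C([0,T_{\mu}];H^3(\mathbb{T}^2))$ for some $T_{\mu}>0$; since $u_0\in H^{\infty}(\mathbb{T}^2)$, the usual bootstrap gives $u^{\mu}\in C([0,T_{\mu}];H^k(\mathbb{T}^2))$ for every $k$.

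The heart of the matter is a set of $\mu$-independent a priori bounds. Applying $J^3$ to the regularized equation, pairing with $J^3u^{\mu}$ in $L^2(\mathbb{T}^2)$, using skew-adjointness of $L$, discarding the nonpositive term $-2\mu\|\nabla J^3u^{\mu}\|_{L^2}^2$, and writing $J^3(u^{\mu}\partial_xu^{\mu})=u^{\mu}\partial_xJ^3u^{\mu}+[J^3,u^{\mu}]\partial_xu^{\mu}$ with Lemma \ref{conmKP}, one obtains
\begin{equation*}
\frac{d}{dt}\|u^{\mu}(t)\|_{H^3}^2\lesssim \|\nabla u^{\mu}(t)\|_{L^{\infty}}\,\|u^{\mu}(t)\|_{H^3}^2 .
\end{equation*}
The point is that $H^3(\mathbb{T}^2)\hookrightarrow W^{1,\infty}(\mathbb{T}^2)$, so $\|\nabla u^{\mu}\|_{L^{\infty}}\lesssim\|u^{\mu}\|_{H^3}$ and the estimate is self-contained: a Riccati comparison gives a time $T=T(\|u_0\|_{H^3})>0$, non-increasing in $\|u_0\|_{H^3}$ and independent of $\mu$, on which $\|u^{\mu}\|_{L^{\infty}_TH^3}\le 2\|u_0\|_{H^3}$, and the continuation criterion for the regularized equation extends $u^{\mu}$ to all of $[0,T]$. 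Plugging this into the analogous $H^k$ identity, $\frac{d}{dt}\|u^{\mu}\|_{H^k}^2\lesssim\|u^{\mu}\|_{H^3}\|u^{\mu}\|_{H^k}^2$, Gronwall gives uniform-in-$\mu$ bounds for $u^{\mu}$ in $C([0,T];H^k(\mathbb{T}^2))$ for every $k$.

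Next I would pass to the limit $\mu\to0$. For $0<\mu'\le\mu$ the difference $w=u^{\mu}-u^{\mu'}$ solves $\partial_t w+Lw+\tfrac12\partial_x((u^{\mu}+u^{\mu'})w)=\mu\Delta w+(\mu-\mu')\Delta u^{\mu'}$; an $L^2$ energy estimate, the uniform $H^2$ bounds, and $w(0)=0$ give $\frac{d}{dt}\|w\|_{L^2}^2\lesssim\|w\|_{L^2}^2+\mu^2$, hence $\|u^{\mu}-u^{\mu'}\|_{C([0,T];L^2)}\lesssim\mu$. Thus $\{u^{\mu}\}$ is Cauchy in $C([0,T];L^2(\mathbb{T}^2))$, and interpolating against the uniform $H^k$ bounds ($k$ large) it is Cauchy in $C([0,T];H^3(\mathbb{T}^2))$. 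The limit $u\in C([0,T];H^3(\mathbb{T}^2))$ solves \eqref{EQBO} after passing to the limit in the Duhamel formula (note $\|\mu\Delta u^{\mu}\|_{C([0,T];L^2)}\le\mu\|u^{\mu}\|_{C([0,T];H^2)}\to0$). Uniqueness: if $u,v\in C([0,T];H^3)\subset C([0,T];W^{1,\infty})$ solve \eqref{EQBO}, an $L^2$ estimate for $u-v$ plus Gronwall forces $u\equiv v$.

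Finally, for continuity of the flow map I would run the Bona--Smith argument: given $u_{0,n}\to u_0$ in $H^3(\mathbb{T}^2)$, the uniform existence time puts all solutions on a common interval $[0,T]$, the difference estimate yields $u_n\to u$ in $C([0,T];L^2(\mathbb{T}^2))$, and mollifying the data and combining the gain in the low-frequency difference estimate with the uniform high-regularity bounds upgrades this to convergence in $C([0,T];H^3(\mathbb{T}^2))$. I expect this last step to be the main obstacle: the energy method is not contractive at the $H^3$ level, so continuous dependence cannot be deduced from a fixed point and the Bona--Smith regularization scheme is needed; the other delicate point, namely that the existence time depends only on $\|u_0\|_{H^3}$, rests precisely on the embedding $H^3(\mathbb{T}^2)\hookrightarrow W^{1,\infty}(\mathbb{T}^2)$ used above.
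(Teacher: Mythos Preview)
Your proposal is correct and follows precisely the parabolic regularization scheme of Iorio that the paper invokes: the paper does not give its own proof of this theorem but simply refers to \cite[Theorem 2.1]{IoNu}, which is exactly the argument you have sketched. The key equation-specific points you identify---skew-adjointness of the linear part and the embedding $H^3(\mathbb{T}^2)\hookrightarrow W^{1,\infty}(\mathbb{T}^2)$ closing the $H^3$ energy estimate---are the right ones, and your outline of the limit $\mu\to 0$, uniqueness, and Bona--Smith continuous dependence is the standard route.
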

We divide the proof of Theorem \ref{LocalwellTorus} in the following main parts.

\subsubsection{A priori estimates for smooth solutions}

\begin{prop}\label{PropaprioE}
 Let $s>3/2$ and $R>0$. Then there exists $T=T(R)>0$, such that for all $u_0\in H^{\infty}(\mathbb{T}^2)$ satisfying  $\|u_0\|_{H^s}\leq R$, then the corresponding solution $u$ of the IVP \eqref{EQBO} given by Theorem \ref{existsmoothsol} is in the space $C([0,T];H^{\infty}(\mathbb{T}^2))$ and satisfies
 \begin{equation}\label{eqTmain0}
    \sup_{t\in [0,T]} \|u(t)\|_{H^s} \lesssim \|u_0\|_{H^s}. 
 \end{equation}
\end{prop}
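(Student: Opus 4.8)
The plan is to bootstrap the a priori bound \eqref{eqTmain0} from the energy estimate \eqref{EnergyInequ}, the linear estimate \eqref{FEEQ6}, and the short-time bilinear estimates \eqref{ShortimEst1}, via a continuity argument in $T$. First I would fix $s_0\in(3/2,s]$ with $s_0>3/2$, and given $u_0\in H^\infty(\mathbb{T}^2)$ with $\|u_0\|_{H^s}\le R$, invoke Theorem \ref{existsmoothsol} to get a smooth solution $u\in C([0,T^*);H^\infty(\mathbb{T}^2))$ on a maximal interval, with the blow-up alternative controlled by the $H^3$-norm. The goal is to show that on a time interval $[0,T]$ with $T=T(R)$ independent of the higher regularity, the quantity
\[
\Lambda(T):=\|u\|_{F^s(T)}+\|u\|_{B^s(T)}+\|\partial_x(u^2)\|_{\mathcal{N}^s(T)}
\]
stays bounded by a fixed multiple of $R$.

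Next I would set up the closed estimate. Combining \eqref{FEEQ6} applied to the equation \eqref{EQBO} (so $v=-\tfrac12\partial_x(u^2)$), \eqref{EnergyInequ}, and \eqref{ShortimEst1} (with the $F^{s_0}$ factor absorbed by $F^s$ since $s\ge s_0$), one obtains schematically
\begin{align*}
\|u\|_{F^s(T)}&\lesssim \|u\|_{B^s(T)}+\|\partial_x(u^2)\|_{\mathcal{N}^s(T)},\\
\|u\|_{B^s(T)}^2&\lesssim \|u_0\|_{H^s}^2+T^\nu\|u\|_{F^{s_0}(T)}\|u\|_{F^s(T)}^2,\\
\|\partial_x(u^2)\|_{\mathcal{N}^s(T)}&\lesssim T_0^{1/4}\|u\|_{F^{s_0}(T)}\|u\|_{F^s(T)}\lesssim T_0^{1/4}\|u\|_{F^s(T)}^2.
\end{align*}
Feeding these into each other yields an inequality of the form $X(T)\le C\|u_0\|_{H^s}^2 + C(T^\nu+T_0^{1/2})X(T)^{3/2}$ for $X(T):=\|u\|_{F^s(T)}^2$ (recall $T\le T_0$, so $T_0^{1/4}$ is controlled once $T_0$ is chosen small; here $T_0$ is the parameter fixing the function spaces and we take $T_0=T$). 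Since the map $T\mapsto X(T)$ is continuous on $[0,T^*)$, nondecreasing, and $X(0)\lesssim\|u_0\|_{H^s}^2$ — this continuity and the fact that it does not blow up for smooth solutions on compact subintervals is the technical point that needs the embeddings $F^s(T)\hookrightarrow C([0,T];H^s)$ of Lemma \ref{LemEmbedding} and a standard argument — a bootstrap gives that for $T=T(R)$ small enough (depending only on $R$ through the constants), $X(T)\le 4C\|u_0\|_{H^s}^2\le 4CR^2$. Then Lemma \ref{LemEmbedding} converts this into $\sup_{[0,T]}\|u(t)\|_{H^s}\lesssim\|u\|_{F^s(T)}\lesssim\|u_0\|_{H^s}$, which is \eqref{eqTmain0}.

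It remains to argue that the solution is actually in $C([0,T];H^\infty)$ on this interval, i.e. that $T\le T^*$. Here I would use the propagation of regularity built into the same scheme: the identical estimates run at any regularity level $s'\ge s$ show that $\|u\|_{L^\infty_TH^{s'}}$ stays finite as long as the lower-order quantity $\|u\|_{F^{s_0}(T)}$ (equivalently $\|u\|_{L^\infty_TH^{s_0}}$, $s_0>3/2$) does, by Gronwall applied to the resulting linear-in-$\|u\|_{H^{s'}}^2$ differential inequality. In particular the $H^3$-norm cannot blow up before time $T(R)$, so the blow-up alternative of Theorem \ref{existsmoothsol} forces $T^*>T(R)$, and the solution persists in $H^\infty$ on $[0,T]$.

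The main obstacle I anticipate is making the continuity/bootstrap step fully rigorous: one must know a priori that $T\mapsto\|u\|_{F^s(T)}$ is continuous and finite near $T=0$ for smooth solutions (so that the bootstrap inequality can be closed), which is where Lemma \ref{LemEmbedding}, Lemma \ref{timelemma}, and the continuity of $T\mapsto\|v\|_{\mathcal{N}^s(T)}$ with $\lim_{T\to0}\|v\|_{\mathcal{N}^s(T)}=0$ (i.e. \eqref{FFEQ6}) enter; everything else is a routine combination of the already-established energy, linear, and bilinear estimates with Gronwall's inequality. The careful handling of the two scales ($s_0$ versus $s$) and of the space parameter $T_0=T$ is bookkeeping rather than a genuine difficulty.
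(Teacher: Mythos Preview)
Your proposal is correct and follows essentially the same route as the paper: bootstrap the system \eqref{FEEQ6}--\eqref{ShortimEst1}--\eqref{EnergyInequ} on a short interval, use \eqref{FFEQ6} and the continuity of $T\mapsto\|u\|_{B^s(T)}$ for the seed of the continuity argument, convert via Lemma~\ref{LemEmbedding}, and then propagate the $H^3$ control to rule out blow-up. Two cosmetic differences worth noting: the paper bootstraps on $\Gamma_s(T)=\|u\|_{B^s(T)}+\|\partial_x(u^2)\|_{\mathcal{N}^s(T)}$ rather than directly on $\|u\|_{F^s(T)}^2$ (this is where the $T\to 0$ limits are cleanest), and for the $H^3$ persistence the paper does not use Gronwall on a differential inequality but simply runs \eqref{eqTmain1} with $s_1=3$, $s_0=s$, yielding an \emph{algebraic} inequality $\|u\|_{F^3(T)}\lesssim\|u_0\|_{H^3}+(T_0^{1/4}\Gamma_s+T_0^{\nu}\Gamma_s^{1/2})\|u\|_{F^3(T)}$ whose last terms are absorbed once $\Gamma_s\lesssim R$ and $T_0$ is small---your ``Gronwall'' phrasing is imprecise here but the substance is the same.
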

\begin{proof}
We consider $s>3/2$ fixed and $u_0$ as in the statement of the proposition. In virtue of Theorem \ref{existsmoothsol}, there exist $T'=T'(\|u_0\|_{H^{3}})\in (0,1]$ and $u\in C([0,T'];H^{\infty}(\mathbb{T}^2))$ solution of the IVP \eqref{EQBO} with initial data $u_0$. Then for a given $T_0\in (0,1]$ to be chosen later, we collect the estimates \eqref{FEEQ6}, \eqref{ShortimEst1} and \eqref{EnergyInequ} to find for each $s_1\geq s \geq s_0 >3/2$ that
\begin{equation}\label{eqTmain1}
    \left\{\begin{aligned}
    &\|u\|_{F^{s_1}(T)}\lesssim \|u\|_{B^{s_1}(T)}+\|\partial_x(u^2)\|_{\mathcal{N}^{s_1}(T)} \\
    &\|\partial_x(u^2)\|_{\mathcal{N}^{s_1}(T)}\lesssim T_0^{1/4}\|u\|_{F^{s_0}(T)}\|u\|_{F^{s_1}(T)},  \\
    &\|u\|_{B^{s_1}(T)}\lesssim \|u_0\|_{H^{s_1}}+ T_0^{\nu}\|u\|_{F^{s_0}(T)}^{1/2}\|u\|_{F^{s_1}(T)},
    \end{aligned} \right.
\end{equation}
where $0<T\leq(T'\wedge T_0)$. We emphasize that our arguments indicate that the implicit constants in \eqref{eqTmain1} and $\nu>0$ are independent of $T_0\in (0,1]$ and in consequence of the definition of the spaces involved (which depend on $N_0\leq T_0^{-1}$). 
Letting $s_1=s=s_0$ and $\Gamma_s(T)=\|u\|_{B^s(T)}+\|\partial_x(u^2)\|_{\mathcal{N}^s(T)}$, \eqref{eqTmain1} yields 
\begin{equation}\label{eqTmain2}
    \Gamma_s(T) \lesssim \|u_0\|_{H^s}+T_0^{1/4}\Gamma_s(T)^2+T_0^{\nu}\, \Gamma_s(T)^{3/2}.
\end{equation}
Considering now $s_1=3$, $s_0=s$ in \eqref{eqTmain1}, we also find
\begin{equation}\label{eqTmain3}
    \|u\|_{F^{3}(T)}
    \lesssim \|u_0\|_{H^3}+T_0^{1/4}\Gamma_s(T)\|u\|_{F^{3}(T)}+T_0^{\nu}\Gamma_s(T)^{1/2}\|u\|_{F^{3}(T)}.
\end{equation}
Since the mapping $T\mapsto \|u\|_{B^s(T)}$ is decreasing and continuous with $\lim_{T\to 0}\|u\|_{B^s(T)}\lesssim \|u\|_{H^s}$, from \eqref{FFEQ6} it follows that
 \begin{equation}\label{eqTmain4}
  \lim_{T\to 0}\Gamma_s(T) \lesssim \|u_0\|_{H^s},  
 \end{equation}
where the implicit constant is independent of $T_0$ and the definition of the spaces involved. Thus, we can choose $T_0=T_0(R)>0$ sufficiently small, such that $T_0^{1/4}R+T_0^{\nu}R^{1/2} \ll 1$ according to the constants in \eqref{eqTmain2} and \eqref{eqTmain4}. Then, for this time and the associated spaces $F^s(T), \mathcal{N}^s(T),B^s(T)$, we can apply a bootstrap argument relaying on \eqref{eqTmain2}, \eqref{eqTmain4} and the continuity of $\Gamma_s(T)$, to obtain $\Gamma_s(T)\lesssim \|u_0\|_{H^s}$, for any $0<T\leq T_0$. Consequently, Lemma \ref{LemEmbedding} reveals
\begin{equation*}
    \sup_{t\in[0,(T'\wedge T_0)]}\|u(t)\|_{H^s} \lesssim \|u_0\|_{H^s}.
\end{equation*}
Therefore, up to choosing $T_0$ smaller at the beginning of the argument, from \eqref{eqTmain3}  we infer
\begin{equation*}
    \sup_{t\in[0,(T'\wedge T_0)]}\|u(t)\|_{H^3} \lesssim \|u_0\|_{H^3}.
\end{equation*}
In this manner, the preceding result and Theorem \ref{existsmoothsol} allow us to extend $u$, if necessary,  to the whole interval $[0,T_0(R)]$. This completes the proof of the proposition.
\end{proof}

\subsubsection{\texorpdfstring{$L^2$}{}-Lipschitz bounds and uniqueness}

Let $u,v \in C([0,T'];H^s(\mathbb{T}^2))$ be two solutions of the IVP \eqref{EQBO} defined on $[0,T']$ with initial data $u_0,v_0 \in H^s(\mathbb{T}^2)$ such that $u,v \in F^s(T,T')\cap \mathcal{N}^s(T,T')$, where we denote by $F^s(T,T')$ and $B^s(T,T')$ the spaces defined at time $ T'$ and $0<T\leq T'$. Notice that this implies that $u, v \in  F^s(T,T_0)\cap \mathcal{N}^s(T,T_0)$, whenever $0<T\leq T_0\leq T'$. We collect \eqref{FEEQ6}, \eqref{ShortimEst2} and \eqref{eqenerg13.1} to get
\begin{equation}\label{eqTmain5}
    \left\{\begin{aligned}
    &\|u-v\|_{F^{0}(T,T_0)}\lesssim \|u-v\|_{B^{0}(T,T_0)}+\|\partial_x((u+v)(u-v))\|_{\mathcal{N}^{0}(T,T_0)}, \\
    &\|\partial_x((u+v)(u-v))\|_{\mathcal{N}^{0}(T,T_0)}\lesssim T_0^{1/4}(\|u\|_{F^s(T,T_0)}+\|v\|_{F^{s}(T,T_0)})\|u-v\|_{F^{0}(T,T_0)},  \\
    &\|u-v\|_{B^{0}(T,T_0)}\lesssim \|u_0-v_0\|_{L^2}+ T_0^{\nu}(\|u\|_{F^{s}(T,T_0)}+\|v\|_{F^{s}(T,T_0)})^{1/2}\|u-v\|_{F^{0}(T,T_0)},
    \end{aligned} \right.
\end{equation}
where the implicit constants above are independent of the definition of the spaces. Let $R>0$, satisfying $ \sup_{t\in [0,T']}(\|u(t)\|_{H^s}+\|v(t)\|_{H^s})\leq R$. Following a similar reasoning as in the proof of Proposition \ref{PropaprioE}, there exists a time $T_0=T_0(R)>0$ sufficiently small, for which $T_0^{1/4}R+T_0^{\nu}R^{1/2}\ll 1$ with respect to the constants in \eqref{eqTmain5} and $\|u\|_{F^s(T,T_0)},\|v\|_{F^s(T,T_0)}\lesssim R$. Consequently, \eqref{eqTmain5} and Lemma \ref{LemEmbedding} yield
\begin{equation*}
    \sup_{t\in [0,T]}\|u(t)-v(t)\|_{L^2}\lesssim\|u-v\|_{F^{0}(T,T_0)}\lesssim \|u_0-v_0\|_{L^2},
\end{equation*}
for any $0<T\leq T_0$. Thus, if $u_0=v_0$, the last equation reveals that $u=v$ on $[0,T_0]$. Since $T_0$ depends on $R=R(\sup_{t\in [0,T']}(\|u(t)\|_{H^s}+\|v(t)\|_{H^s}))$, we can employ the same spaces to repeat this procedure a finite number of times obtaining uniqueness in the whole interval $[0,T']$. 

\subsubsection{Existence and continuity of the flow-map}

Let $R>0$ and $3/2<s<3$ fixed. For a given $u_0\in H^{s}(\mathbb{T}^2)$ with $\|u_0\|_{H^s}\leq R$, we consider a sequence $(u_{0,n})\subset H^{\infty}(\mathbb{T}^2)$ converging to $u_0$ in $H^{s}(\mathbb{T}^2)$, such that $\|u_{0,n}\|_{H^s}\leq R$. We denote by $\Phi(u_{0,n})$ the solution of the IVP \eqref{EQBO} with initial data $u_{0,n}$ determined by Theorem \ref{existsmoothsol}. Therefore, according to Proposition \ref{PropaprioE}, there exists $T'=T'(R)>0$, such that $\Phi(u_{0,n})\in C([0,T'];H^{\infty}(\mathbb{T}^2))$ and \eqref{eqTmain0} holds. We shall prove that $(\Phi(u_{0,n}))$ defines a Cauchy sequence in $C([0,T];H^s(\mathbb{T}^2))$ for some $0<T \leq T'$.  To this aim, we will proceed as in \cite{IonescuKeniTata,ZhangKP}. 
\\ \\
For a fixed $M>0$ and $n,l\geq 0$ integers, we have
\begin{equation}\label{eqTmain6}
\begin{aligned}
\sup_{t\in [0,T]}\|\Phi(u_{0,n})(t)-\Phi(u_{0,l})(t)\|_{H^s} \leq  \sup_{t\in[0,T]}&\big(\|\Phi(u_{0,n})(t)-\Phi(P_{\leq M}u_{0,n})(t)\|_{H^s}\\ 
&+\|\Phi(P_{\leq M}u_{0,n})(t)-\Phi(P_{\leq M}u_{0,l})(t)\|_{H^s} \\
&+\|\Phi(u_{0,l})(t)-\Phi(P_{\leq M}u_{0,l})(t)\|_{H^s}\big),
\end{aligned}
\end{equation}
for all $0<T<T'$. Using Sobolev embedding and \eqref{eqTmain0}, we get
\begin{equation}
\begin{aligned}
\|\partial_x\big(\Phi(P_{\leq M}u_{0,n})+\Phi(P_{\leq M}u_{0,l})\big)(t)\|_{L^{\infty}_x} &\lesssim  \|\Phi(P_{\leq M}u_{0,n})(t)\|_{H^3}+\|\Phi(P_{\leq M}u_{0,l})(t)\|_{H^3} \\ &\lesssim  \|P_{\leq M}u_{0,n}\|_{H^3}+\|P_{\leq M}u_{0,l}\|_{H^3}.
\end{aligned}
\end{equation}
Then, the standard energy method and the above inequality show that the second term on the right-hand side of \eqref{eqTmain6} is controlled as follows
\begin{equation}\label{eqTmain6.1}
    \sup_{t\in[0,T]} \|\Phi(P_{\leq M}u_{0,n})(t)-\Phi(P_{\leq M}u_{0,l})(t)\|_{H^s}\leq C(M) \|u_{0,n}-v_{0,l}\|_{H^s},
\end{equation}
for each $0<T<T'$ and some constant $C(M)>0$ depending on $M$. Therefore, it remains to estimate the first and last term in \eqref{eqTmain6}. By symmetry of the argument, we will restrict our considerations to study the former term. To simplify notation, let us denote by $u:=\Phi(u_{0,n})$, $v:=\Phi(P_{\leq M}u_{0,n})$ and $w=u-v$, then taking $T_0\in (0,T']$, we gather \eqref{FEEQ6}, \eqref{ShortimEst1} and \eqref{eqenerg13.2} to find
\begin{equation}\label{eqTmain7}
    \left\{\begin{aligned}
    &\|w\|_{F^{s}(T)}\lesssim \|w\|_{B^{s}(T)}+\|\partial_x((u+v)w)\|_{\mathcal{N}^{s}(T)}, \\
    &\|\partial_x((u+v)w)\|_{\mathcal{N}^{s}(T)}\lesssim T_0^{1/4}(\|u+v\|_{F^{s}(T)}\|w\|_{F^s(T)}),  \\
    &\|w\|_{B^{s}(T)}\lesssim \|u_{0,n}-P_{\leq M}u_{0,n}\|_{H^s}+ T_0^{\nu}(\|v\|_{F^{s}(T)}^{1/2}\|w\|_{F^s(T)}+\|v\|_{F^{s'}(T)}^{1/2}\|w\|_{F^{s}(T)}^{1/2}\|w\|_{F^{0}(T)}^{1/2}),
    \end{aligned} \right.
\end{equation}
for all $0<T\leq T_0$, and where $s+3/2<s'<2s$ is fixed. The above set of inequalities reveal
\begin{equation}\label{eqTmain8}
\begin{aligned}
\|w\|_{F^{s}(T)}\lesssim \|u_{0,n}-P_{\leq M}u_{0,n}\|_{H^s}+(T_0^{1/2}(\|u\|_{F^s(T)}&+\|v\|_{F^s(T)})+T_0^{\nu}\|v\|_{F^s(T)}^{1/2})\|w\|_{F^s(T)} \\
&+T_0^{\nu}\|w\|_{F^s(T)}^{1/2}\|v\|_{F^{s'}(T)}^{1/2} \|w\|_{F^0(T)}^{1/2}.
\end{aligned}
\end{equation}
Repeating the arguments in the proof of Proposition \ref{PropaprioE}, using \eqref{eqTmain1} with $s_1=s'$ and $s_0=s$, we choose $T_0=T_0(R)<T'$ small so that
\begin{equation*}
    \|v\|_{F^{s'}(T)} \lesssim \|P_{\leq M}u_{0,n}\|_{H^{s'}}, \hspace{0.5cm} 0<T\leq T_0,
\end{equation*}
and such that, employing \eqref{eqTmain5} and similar considerations as in the uniqueness proof above, 
\begin{equation*}
    \|w\|_{F^{0}(T)} \lesssim \|u_{0,n}-P_{\leq M}u_{0,n}\|_{L^2}, \hspace{0.5cm} 0<T\leq T_0.
\end{equation*}
Furthermore, we can choose $T_0$ smaller, if necessary, to assure that $T_0^{1/2}R+T_0^{\nu}R^{1/2}\ll 1$ with respect to the implicit constant in \eqref{eqTmain8}, and such that $\|u\|_{F^s(T)},\|v\|_{F^s(T)}\lesssim R$. Then gathering these estimates in \eqref{eqTmain8}, we get
\begin{equation*}
\begin{aligned}
 \|w\|_{F^s(T)} &\lesssim \|u_{0,n}-P_{\leq M}u_{0,n}\|_{H^s} +\|P_{\leq M}u_{0,n}\|_{H^{s'}}^{1/2} \|u_{0,n}-P_{\leq M}u_{0,n}\|_{L^2}^{1/2} \\
&\lesssim \|P_{\geq M}u_{0,n}\|_{H^s}
+M^{(s'-2s)/2}\|P_{\leq M}u_{0,n}\|_{H^{s}}^{1/2} \|P_{>M}u_{0,n}\|_{H^s}^{1/2},
\end{aligned}
\end{equation*}
where, given that $s<s'<2s$, we have used that $\|P_{\leq M}u_{0,n}\|_{H^{s'}}\lesssim M^{s'-s}\|P_{\leq M}u_{0,n}\|_{H^{s}}$. From the inequality above and Lemma \ref{LemEmbedding}, we arrive at
\begin{equation}\label{eqTmain9}
\begin{aligned}
\sup_{t\in [0,T]}\|\Phi(u_{0,n})(t)-\Phi(P_{\leq M}u_{0,n})(t)\|_{H^s} \lesssim (1+\|u_{0,n}\|_{H^s}^{1/2})\|P_{>M}u_{0,n}\|_{H^s}^{1/2},
\end{aligned}
\end{equation}
where $0<T\leq T_0$. Therefore, according to our previous discussion, this completes the estimate for the first and third terms on the r.h.s of \eqref{eqTmain6}. Noticing that for $n$ large, $\|P_{> M}u_{0,n}\|_{H^s},\leq 2\|P_{> M}u_{0}\|_{H^s}$, we can take $M$ large in \eqref{eqTmain9}, and then $n,l$ large in \eqref{eqTmain6.1}, to obtain that $(\Phi(u_{0,n}))$ is a Cauchy sequence in $C([0,T];H^s(\mathbb{T}^2))$ for a fixed time $0<T\leq T_0$.
\\ \\
Since each of the elements in the sequence $(\Phi(u_{0,n}))$ solves the integral equation associated to \eqref{EQBO} in $C([0,T];H^{s-1}(\mathbb{T}^2))$, we find that the limit of this sequence is in fact a solution of the IVP \eqref{EQBO} with initial data $u_0$. This completes the existence part. Finally, it is not difficult to obtain the continuity of the flow-map from the same property for smooth solutions in Theorem \ref{existsmoothsol} and the preceding arguments. We refer to \cite{ZhangKP} for a more detailed discussion.


\section{Well-posedness results in weighted spaces}\label{Secweights}

This section is aimed to establish Theorem \ref{localweigh}. We will start introducing some preliminary results.

Given $n\in \mathbb{Z}^{+}$, we define the truncated weights $w_n : \mathbb{R} \rightarrow \mathbb{R}$ according to
 \begin{equation*}
 w_{n}(x)=\left\{\begin{aligned} 
 &\langle x \rangle, \text{ if } |x|\leq n, \\
 &2n, \text{ if } |x|\geq 3n
 \end{aligned}\right.
 \end{equation*}
in such a way that $w_n(x)$ is smooth and non-decreasing in $|x|$ with $\tilde{w}'_n(x) \leq 1$ for all $x>0$ and there exists a constant $c$ independent of $n$ such that $|\tilde{w}''_n(x)| \leq c\partial_x^2\langle x \rangle$. To explicitly show the dependence on the spatial variables $x,y$, we will denote by $w_{n,x}(x)=w_n(x)$ and $w_{n,y}(y)=w_n(y)$.

Since we are interested in performing energy estimates with the weights $w_{n}$ and then taking the limit $n\to \infty$, we must assure that the computations involving the Hilbert transform and the aforementioned weights are independent of the parameter $n$. In this direction we have:

\begin{prop}\label{propapcond}
For any $\theta \in (-1,1)$ and any $n\in \mathbb{Z}^{+}$, the Hilbert transform is bounded in $L^2(w_n^{\theta} (x)\, dx)$ with a constant depending on $\theta$ but independent of $n$.
\end{prop}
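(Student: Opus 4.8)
The natural strategy is to invoke the Hunt--Muckenhoupt--Wheeden theorem: the Hilbert transform is bounded on $L^2(v(x)\,dx)$ if and only if $v$ satisfies the Muckenhoupt $A_2(\mathbb{R})$ condition, with operator norm controlled solely by the $A_2$ constant
\begin{equation*}
    [v]_{A_2}:=\sup_{I}\Big(\frac{1}{|I|}\int_I v(x)\,dx\Big)\Big(\frac{1}{|I|}\int_I v(x)^{-1}\,dx\Big),
\end{equation*}
the supremum being over all bounded intervals $I\subset\mathbb{R}$. Hence it suffices to prove that for fixed $\theta\in(-1,1)$ one has $\sup_{n\in\mathbb{Z}^{+}}[w_n^{\theta}]_{A_2}<\infty$. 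The point is that the classical weight $|x|^{\theta}$ belongs to $A_2(\mathbb{R})$ exactly when $-1<\theta<1$ (and more generally $\langle x\rangle^{\theta}\in A_2$ for the same range, since near the origin $\langle x\rangle\sim 1$ which is trivially $A_2$, and away from the origin $\langle x\rangle\sim|x|$); the truncated weights $w_n$ interpolate between $\langle x\rangle$ on $\{|x|\le n\}$ and the constant $2n$ on $\{|x|\ge 3n\}$, and the content of the proposition is that truncation does not degrade the $A_2$ constant uniformly in $n$.

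First I would record the elementary pointwise comparison: by construction $w_n$ is non-decreasing in $|x|$, smooth, with $w_n(x)=\langle x\rangle$ for $|x|\le n$ and $w_n(x)=2n$ for $|x|\ge 3n$, so that on the transition region $n\le|x|\le 3n$ we have $\langle x\rangle \lesssim w_n(x)\lesssim \langle x\rangle$ with absolute constants (because $\langle x\rangle$ ranges over $[\langle n\rangle, \langle 3n\rangle]$ there, all comparable to $n$, and $w_n$ takes values in $[\langle n\rangle, 2n]$). Consequently $w_n(x)\sim \min\{\langle x\rangle, 2n\}$ uniformly in $n$ and $x$, hence $w_n^{\theta}(x)\sim (\min\{\langle x\rangle,2n\})^{\theta}$ uniformly. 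Since $A_2$ constants are unchanged (up to a fixed factor) under replacing a weight by a comparable one, it is enough to bound $[(\min\{\langle x\rangle,2n\})^{\theta}]_{A_2}$ uniformly in $n$.

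Next I would estimate the $A_2$ quantity for $v_n(x):=(\min\{\langle x\rangle,2n\})^{\theta}$ directly, splitting into cases according to the position and length of the interval $I$ relative to the scale $n$. If $I\subset\{|x|\le 2n\}$ (say), then $v_n$ agrees with $\langle x\rangle^{\theta}$ up to constants on $I$, so the local $A_2$ average of $v_n$ over $I$ is bounded by $[\langle\cdot\rangle^{\theta}]_{A_2}<\infty$, a finite constant independent of $n$ because $|\theta|<1$. If $I\subset\{|x|\ge n\}$ then on $I$ the weight $v_n$ is comparable to a constant of size $n^{\theta}$ (for $|x|\ge 3n$ it is exactly $(2n)^{\theta}$ up to the $\langle\cdot\rangle$ vs $|\cdot|$ comparison, and on $n\le |x|\le 3n$ it lies between $n^{\theta}$ and $(3n)^{\theta}$), so the two averages multiply to an absolute constant. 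The only genuinely interesting case is an interval $I$ straddling the scale $n$, i.e. with $|I|\gtrsim n$ and $I$ meeting both $\{|x|\le n\}$ and $\{|x|\ge 3n\}$; but then $|I|\gtrsim n$, and one checks that $\frac{1}{|I|}\int_I v_n\lesssim \frac{1}{|I|}\int_{-C|I|}^{C|I|}\langle x\rangle^{\theta}\,dx\lesssim \langle |I|\rangle^{\theta}$ (using $|\theta|<1$ so the integral converges/grows at the expected rate) while $\frac{1}{|I|}\int_I v_n^{-1}\lesssim \langle|I|\rangle^{-\theta}$ by the same computation applied to $-\theta$ (again $|-\theta|<1$), and the product is $\lesssim 1$. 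Collecting the three cases gives $\sup_n[v_n]_{A_2}\le C(\theta)<\infty$, and the Hunt--Muckenhoupt--Wheeden theorem then yields the claim with a bound depending only on $\theta$.

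\textbf{Main obstacle.} The only delicate point is the uniformity in $n$ in the straddling case: one must verify that the $A_2$ constants of the family $\{v_n\}$ do not blow up as the truncation scale $n\to\infty$, which ultimately rests on the fact that both $\theta$ and $-\theta$ lie strictly inside $(-1,1)$ so that the relevant one-dimensional integrals $\int_0^{R}\langle x\rangle^{\pm\theta}\,dx$ grow like $R^{1\pm\theta}$, giving the scale-invariant product; I would make this explicit with the above case analysis. Everything else is a routine application of the Muckenhoupt theory, and I would simply cite the standard references (e.g. \cite{JavierHarmo,SteinThe}) for the if-and-only-if characterization and for the fact that $\langle x\rangle^{\theta}\in A_2(\mathbb{R})$ for $|\theta|<1$.
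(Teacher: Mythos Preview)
Your approach via the Hunt--Muckenhoupt--Wheeden theorem and a uniform bound on $[w_n^{\theta}]_{A_2}$ is correct and is the standard one; the paper itself does not prove the proposition but simply defers to \cite[Proposition 1]{FonPO}, where exactly this $A_2$ argument is carried out. So there is no alternative route in the paper to compare with, and your strategy matches the cited reference.

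One small repair is needed in your straddling case. Assume $\theta>0$ (the case $\theta<0$ is symmetric). The bound $v_n\le\langle x\rangle^{\theta}$ indeed gives $\frac{1}{|I|}\int_I v_n\lesssim |I|^{\theta}$, but the companion claim $\frac{1}{|I|}\int_I v_n^{-1}\lesssim |I|^{-\theta}$ ``by the same computation applied to $-\theta$'' is false as stated: for $|x|\gtrsim n$ one has $v_n^{-1}(x)\sim(2n)^{-\theta}$, which is \emph{larger} than $\langle x\rangle^{-\theta}$, so when $|I|\gg n$ the average of $v_n^{-1}$ is of order $(2n)^{-\theta}$, not $|I|^{-\theta}$. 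The fix is immediate: use also the trivial pointwise bound $v_n\le(2n)^{\theta}$, so that $\frac{1}{|I|}\int_I v_n\le(2n)^{\theta}$, while splitting $I$ into $\{\langle x\rangle\le 2n\}$ and its complement gives $\frac{1}{|I|}\int_I v_n^{-1}\lesssim |I|^{-\theta}+(2n)^{-\theta}\lesssim n^{-\theta}$ (recall $|I|\gtrsim n$ here). The product is then $O(1)$ uniformly in $n$, and your argument closes.
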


Proposition \ref{propapcond} was stated before in \cite[Proposition 1]{FonPO}. We require the identity
\begin{equation}\label{ident1}
    [H_x,x]f=0 \text{ if and only if } \int_{\mathbb{R}} f(x)\, dx=0. 
\end{equation}

We recall the following characterization of the spaces $L^p_s(\mathbb{R}^d)=J^{-s}L^p(\mathbb{R}^d)$.
\begin{theorem}( \cite{SteinThe})\label{TheoSteDer}
Let $b\in (0,1)$ and $2d/(d+2b)<p<\infty$. Then $f\in L_b^p(\mathbb{R}^d)$ if and only if

\begin{itemize}
\item[(i)]  $f\in L^p(\mathbb{R}^d)$, 
\item[(ii)]$\mathcal{D}^bf(x)=\left(\int_{\mathbb{R}^d}\frac{|f(x)-f(y)|^2}{|x-y|^{d+2b}}\, dy\right)^{1/2}\in L^{p}(\mathbb{R}^d),$
\end{itemize}
with 
\begin{equation*}
\left\|J^b f\right\|_{L^p}=\left\|(1-\Delta)^{b/2} f\right\|_{L^p} \sim \left\|f\right\|_{L^p}+\left\|\mathcal{D}^b f\right\|_{L^p} \sim \left\|f\right\|_{L^p}+\left\|D^b f\right\|_{L^p}.
\end{equation*} 
\end{theorem}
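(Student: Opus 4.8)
The statement is the classical Stein characterization of the Bessel potential space $L^p_b(\mathbb{R}^d)=J^{-b}L^p(\mathbb{R}^d)$ through the Gagliardo-type square function $\mathcal{D}^b$, and I would prove it by exhibiting $\mathcal{D}^b$ as the fiber norm of a Hilbert-space valued singular integral and appealing to vector-valued Calder\'on--Zygmund theory. The plan is to work with the Hilbert space $\mathcal{H}=L^2(\mathbb{R}^d_h,\,|h|^{-d-2b}\,dh)$ and the $\mathcal{H}$-valued difference operator $(Tf)(x)=\big(f(x)-f(x-h)\big)_{h\in\mathbb{R}^d}$, so that, after the change of variables $y=x-h$, one has $\mathcal{D}^bf(x)=\|(Tf)(x)\|_{\mathcal{H}}$. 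The cornerstone is the exact $L^2$ identity: a rotation and scaling argument gives $\int_{\mathbb{R}^d}|e^{ih\cdot\xi}-1|^2|h|^{-d-2b}\,dh=\gamma_{d,b}|\xi|^{2b}$, whence Plancherel yields $\|\mathcal{D}^bf\|_{L^2}^2=\gamma_{d,b}\|D^bf\|_{L^2}^2$. This settles the case $p=2$ and furnishes the $L^2$-boundedness needed to start the Calder\'on--Zygmund machinery.

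First I would prove the forward inequality $\|\mathcal{D}^bf\|_{L^p}\lesssim\|J^bf\|_{L^p}$ for $1<p<\infty$. Writing $f=\mathcal{G}_b\ast(J^bf)$, where $\mathcal{G}_b$ is the Bessel kernel with $\widehat{\mathcal{G}_b}(\xi)=\langle\xi\rangle^{-b}$, one gets $Tf=\mathbf{K}\ast(J^bf)$ with $\mathcal{H}$-valued convolution kernel $\mathbf{K}(x)(h)=\mathcal{G}_b(x)-\mathcal{G}_b(x-h)$. The map $J^bf\mapsto Tf$ is $L^2(\mathbb{R}^d)\to L^2(\mathbb{R}^d;\mathcal{H})$ bounded because its scalar-into-$\mathcal{H}$ Fourier symbol has $\mathcal{H}$-norm $\gamma_{d,b}^{1/2}|\xi|^b\langle\xi\rangle^{-b}\le\gamma_{d,b}^{1/2}$. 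It then remains to verify the H\"ormander regularity $\int_{|x|>2|z|}\|\mathbf{K}(x-z)-\mathbf{K}(x)\|_{\mathcal{H}}\,dx\lesssim 1$, which reduces to the known pointwise decay and smoothness of $\mathcal{G}_b$ and $\nabla\mathcal{G}_b$ combined with the integrability of $|h|^{-d-2b}$ against second-order differences. The Benedek--Calder\'on--Panzone vector-valued extension of the Calder\'on--Zygmund theorem upgrades $L^2$ to $L^p$, $1<p<\infty$. Since $\mathcal{G}_b\in L^1$ also gives $\|f\|_{L^p}\lesssim\|J^bf\|_{L^p}$, this shows that $f\in L^p_b$ implies (i), (ii) with the stated one-sided bound.

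The reverse inequality $\|J^bf\|_{L^p}\lesssim\|f\|_{L^p}+\|\mathcal{D}^bf\|_{L^p}$ is the heart of the matter, and I would obtain it by duality built on the polarized form of the $L^2$ identity. Polarizing Step~1 gives the bilinear identity $B(f,g):=\iint\frac{(f(x)-f(y))\overline{(g(x)-g(y))}}{|x-y|^{d+2b}}\,dx\,dy=\gamma_{d,b}\langle D^bf,D^bg\rangle$. For fixed $f$ and a test function $\psi$ with $\|\psi\|_{L^{p'}}=1$, set $g=D^{-b}\psi$, so that $\langle D^bf,\psi\rangle=\gamma_{d,b}^{-1}B(f,g)$; applying the Cauchy--Schwarz inequality in the inner $y$-integral yields $|B(f,g)|\le\int\mathcal{D}^bf\,\mathcal{D}^bg\le\|\mathcal{D}^bf\|_{L^p}\|\mathcal{D}^bg\|_{L^{p'}}$. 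Applying the forward bound of the previous paragraph, in its homogeneous form, to $g$ gives $\|\mathcal{D}^bg\|_{L^{p'}}\lesssim\|D^bg\|_{L^{p'}}=\|\psi\|_{L^{p'}}=1$, whence $\|D^bf\|_{L^p}=\sup_{\|\psi\|_{p'}=1}|\langle D^bf,\psi\rangle|\lesssim\|\mathcal{D}^bf\|_{L^p}$. Finally the H\"ormander--Mikhlin multiplier theorem applied to $\langle\xi\rangle^b(1+|\xi|^b)^{-1}$ and its reciprocal transfers this to $\|J^bf\|_{L^p}\sim\|f\|_{L^p}+\|D^bf\|_{L^p}$, and the chain $\|f\|_{L^p}+\|\mathcal{D}^bf\|_{L^p}\sim\|J^bf\|_{L^p}\sim\|f\|_{L^p}+\|D^bf\|_{L^p}$ delivers all three equivalences claimed in the statement.

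The main obstacle is this duality step: giving rigorous meaning to $g=D^{-b}\psi=I_b\psi$ for $\psi\in L^{p'}$ and guaranteeing $\mathcal{D}^bg\in L^{p'}$ with a \emph{homogeneous} bound. This is precisely where the hypothesis $2d/(d+2b)<p$ enters, being the sharp condition under which the homogeneous fractional integration $I_b$ is admissible on the dual exponent and the homogeneous (rather than Bessel) form of the forward estimate applies; it neutralizes the low-frequency discrepancy between $|\xi|^b$ and $\langle\xi\rangle^b$, which is harmless for $J^b$ but genuine for $D^b$. I would handle this by first establishing every estimate on the dense class of Schwartz functions whose Fourier transform vanishes near the origin, where $D^{\pm b}$ act unambiguously, and then passing to the limit using the already-proved $L^p$ bounds; the quantitative control of constants in this density argument is the only genuinely delicate point, the rest being the standard verification of kernel conditions already indicated.
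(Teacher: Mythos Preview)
The paper does not prove this theorem; it is quoted from Stein's monograph (the bracketed citation in the statement header) and invoked as a known preliminary tool in Section~5 without any accompanying argument. There is thus no proof in the paper against which to compare your proposal. For what it is worth, your outline---casting $\mathcal{D}^b$ as the fiber norm of a Hilbert-valued singular integral, obtaining the forward bound by vector-valued Calder\'on--Zygmund theory, and closing the reverse bound by duality through the polarized $L^2$ identity---is essentially the route taken in Stein's book and in Strichartz's original treatment, so it is both correct in spirit and standard.
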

Next, we proceed to show several consequences of Theorem \ref{TheoSteDer}.
When $p=2$ and $b\in (0,1)$ one can deduce 
\begin{equation} \label{prelimneq} 
\left\|\mathcal{D}^b(fg)\right\|_{L^2} \lesssim \left\|f\mathcal{D}^b g\right\|_{L^2}+\left\|g\mathcal{D}^bf \right\|_{L^2},
\end{equation}
and it holds
\begin{equation} \label{prelimneq1}
\left\|\mathcal{D}^{b} h\right\|_{L^{\infty}} \lesssim \big(\left\|h\right\|_{L^{\infty}}+\left\|\nabla h\right\|_{L^{\infty}} \big).
\end{equation} 

\begin{prop}\label{optm1}
Let $p\in (1, \infty)$. If $f\in L^{p}(\mathbb{R})$ such that there exists $x_0\in \mathbb{R}$ for which $f(x_0^{+})$, $f(x_0^{-})$ are defined and $f(x_0^{+})\neq f(x_0^{-})$, then for any $\delta>0 $, $\mathcal{D}^{1/p}f \notin L^p_{loc}(B(x_0,\delta))$ and consequently $f\notin L^p_{1/p}(\mathbb{R})$.
\end{prop}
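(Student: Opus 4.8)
The plan is to show that the jump of $f$ at $x_0$ forces the Stein square function $\mathcal{D}^{1/p}f$ to blow up like the $L^p$-critical power near $x_0$, so that $\mathcal{D}^{1/p}f \notin L^p_{loc}(B(x_0,\delta))$; then Theorem \ref{TheoSteDer} (with $d=1$, $b=1/p$, and the admissibility condition $2/(1+2b) = 2/(1+2/p) < p$, which holds for every $p \in (1,\infty)$) immediately gives $f \notin L^p_{1/p}(\mathbb{R})$. After a translation we may assume $x_0 = 0$, and write $a = f(0^+)$, $c = f(0^-)$ with $a \neq c$. First I would isolate the jump: fix $\delta>0$ small enough that $f(x) \to a$ as $x \to 0^+$ and $f(x)\to c$ as $x\to 0^-$ within $B(0,\delta)$, and pick $\varepsilon>0$ with $\varepsilon < |a-c|/4$ and then $0<\rho\le\delta$ so that $|f(x)-a|<\varepsilon$ for $0<x<\rho$ and $|f(x)-c|<\varepsilon$ for $-\rho<x<0$.

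The heart of the argument is a pointwise lower bound for $\mathcal{D}^{1/p}f(x)$ when $0 < x < \rho/2$. Using only the contribution of $y$ ranging over a comparable negative interval, say $y \in (-2x, -x)$, one has for such $y$ that $|f(x)-f(y)| \ge |a-c| - 2\varepsilon \gtrsim |a-c|$, while $|x-y| \sim x$, so that
\begin{equation*}
\mathcal{D}^{1/p}f(x)^2 = \int_{\mathbb{R}} \frac{|f(x)-f(y)|^2}{|x-y|^{1+2/p}}\, dy \;\gtrsim\; |a-c|^2 \int_{-2x}^{-x} \frac{dy}{x^{1+2/p}} \;\sim\; |a-c|^2\, x^{-2/p}.
\end{equation*}
Hence $\mathcal{D}^{1/p}f(x) \gtrsim |a-c|\, x^{-1/p}$ for $0<x<\rho/2$, and therefore
\begin{equation*}
\int_{B(0,\delta)} |\mathcal{D}^{1/p}f(x)|^p\, dx \;\ge\; \int_0^{\rho/2} |\mathcal{D}^{1/p}f(x)|^p\, dx \;\gtrsim\; |a-c|^p \int_0^{\rho/2} \frac{dx}{x} \;=\; +\infty,
\end{equation*}
since the exponent is exactly critical. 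This proves $\mathcal{D}^{1/p}f \notin L^p_{loc}(B(0,\delta))$. Because $f \in L^p(\mathbb{R})$ by hypothesis, if $f$ were in $L^p_{1/p}(\mathbb{R})$ Theorem \ref{TheoSteDer} would force $\mathcal{D}^{1/p}f \in L^p(\mathbb{R})$, in particular $\mathcal{D}^{1/p}f \in L^p_{loc}(B(0,\delta))$, a contradiction; so $f \notin L^p_{1/p}(\mathbb{R})$.

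The only delicate point is making sure the one-sided limits are genuinely usable: the hypothesis only gives that $f(0^+)$ and $f(0^-)$ exist (as limits of $f$), so I must be careful that the estimate above uses values of $f$ at \emph{a.e.} point of the small intervals, which is legitimate since $f$ is only defined a.e.\ and the integral defining $\mathcal{D}^{1/p}f$ is insensitive to null sets; the convergence $f(x)\to a$, $f(x)\to c$ should be interpreted in the sense in which the one-sided limits are assumed to exist (essential limits suffice, and in that case "$|f(x)-a|<\varepsilon$ for $0<x<\rho$" holds for a.e.\ such $x$, which is all that is needed in the integral). I expect this bookkeeping about almost-everywhere statements to be the main — and essentially the only — obstacle; the analytic estimate itself is a short scaling computation.
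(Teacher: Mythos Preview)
Your argument is correct and is precisely the standard proof of this fact: the jump forces $\mathcal{D}^{1/p}f(x)\gtrsim |a-c|\,|x-x_0|^{-1/p}$ near $x_0$, which is exactly the borderline non-integrable power for $L^p$, and Theorem~\ref{TheoSteDer} then rules out $f\in L^p_{1/p}(\mathbb{R})$. The check of the admissibility condition $2/(1+2/p)<p$ and the bookkeeping on a.e.\ representatives are both handled appropriately.

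Note, however, that the paper does not actually supply a proof of Proposition~\ref{optm1}; it is stated there as a known consequence of Theorem~\ref{TheoSteDer} (this result appears, for instance, in the Nahas--Ponce and Fonseca--Ponce works cited in the paper). So there is no ``paper's own proof'' to compare against; your write-up is a correct self-contained justification of the stated proposition.
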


\begin{prop}\label{propprelimneq2}
Let $b\in (0,1)$. For any $t>0$ 
\begin{equation}\label{prelimneq2}
\mathcal{D}^b (e^{ix|x|t})\lesssim (|t|^{b/2}+|t|^b|x|^b), \hspace{0.5cm} x\in \mathbb{R}
\end{equation}
and 
\begin{equation}\label{prelimneq2.5}
  \mathcal{D}^b (e^{i\sign(x)t\mp i\sign(x)\eta^2 t })\lesssim |x|^{-b}, \hspace{0.5cm} x\in \mathbb{R}\setminus \{0\}, 
\end{equation}
for all $\eta \in \mathbb{R}$.
\end{prop}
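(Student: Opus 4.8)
The plan is to reduce both estimates to elementary one--dimensional computations with the Stein--Gagliardo seminorm, which in dimension one reads $\mathcal{D}^b f(x)^2=\int_{\mathbb{R}}|f(x)-f(y)|^2|x-y|^{-1-2b}\,dy$ (Theorem \ref{TheoSteDer}). For \eqref{prelimneq2}, write $g(x)=x|x|$ and $\phi_t(x)=e^{itg(x)}$, and start from the two pointwise bounds $|e^{ia}-e^{ib}|\le\min(2,|a-b|)$ and $|g(x)-g(y)|\le 2\max(|x|,|y|)|x-y|$ (mean value theorem, since $g'(x)=2|x|$); together they give $|\phi_t(x)-\phi_t(y)|\le\min\big(2,\,2|t|\max(|x|,|y|)|x-y|\big)$. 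I would then split the integral defining $\mathcal{D}^b\phi_t(x)^2$ at a radius $A>0$, estimating the inner part $|x-y|<A$ by the quadratic bound and the outer part $|x-y|\ge A$ by the constant $2$, to obtain $\mathcal{D}^b\phi_t(x)^2\lesssim |t|^2(|x|+A)^2A^{2-2b}+A^{-2b}$, where convergence of the inner integral uses the exponent $2-2b>0$, i.e.\ $b<1$.

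Next I would optimize $A$ according to two regimes. When $|t||x|^2\ge1$ I take $A=(|t||x|)^{-1}$, which satisfies $A\le|x|$, so that $|x|+A\lesssim|x|$ and both contributions above are $\lesssim (|t||x|)^{2b}$, giving $\mathcal{D}^b\phi_t(x)\lesssim|t|^b|x|^b$. When $|t||x|^2<1$ I take $A=|t|^{-1/2}$, which satisfies $A>|x|$, so that $|x|+A\lesssim A$ and both contributions are $\lesssim|t|^2A^{4-2b}=|t|^b$ and $A^{-2b}=|t|^b$, respectively, giving $\mathcal{D}^b\phi_t(x)\lesssim|t|^{b/2}$. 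Adding the two cases is exactly \eqref{prelimneq2}.

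For \eqref{prelimneq2.5} the observation is that $c:=t\mp\eta^2 t$ is a fixed real number and $\psi(x):=e^{ic\,\sign(x)}$ is the two--valued step function equal to $e^{ic}$ on $(0,\infty)$ and to $e^{-ic}$ on $(-\infty,0)$. Hence for $x>0$ the integrand $|\psi(x)-\psi(y)|^2$ vanishes for $y>0$ and equals $|2\sin c|^2\le 4$ for $y<0$, so $\mathcal{D}^b\psi(x)^2\le 4\int_{-\infty}^{0}(x-y)^{-1-2b}\,dy=4\int_{x}^{\infty}u^{-1-2b}\,du=\frac{2}{b}\,x^{-2b}$, and the case $x<0$ is symmetric. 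This gives $\mathcal{D}^b\psi(x)\le\sqrt{2/b}\,|x|^{-b}$ with a constant independent of $\eta$ and $t$, which is \eqref{prelimneq2.5}.

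I do not expect a real obstacle: both bounds are soft and the computations are routine. The only point needing a little care is the case distinction in \eqref{prelimneq2}, which records whether the oscillation scale $(|t||x|)^{-1}$ of $\phi_t$ at the point $x$ lies below or above $|x|$ itself, and correspondingly whether the $|t|^b|x|^b$ term or the $|t|^{b/2}$ term dominates.
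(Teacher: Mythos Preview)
Your proof is correct and follows essentially the same route as the paper. For \eqref{prelimneq2.5} your argument is identical to the paper's (vanishing of the integrand when $\sign(y)=\sign(x)$, then integrating the tail); for \eqref{prelimneq2} the paper simply refers to \cite{NahPonc}, and the split--and--optimize computation you wrote out is precisely the standard argument found there.
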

\begin{proof}
Estimate \eqref{prelimneq2} follows from the same arguments in \cite{NahPonc}. On the other hand, since $|e^{i\sign(x)t\mp i\sign(x)\eta^2 t }-e^{i\sign(y)t\mp i\sign(y)\eta^2 t }|=0$, whenever $\sign(y)=\sign(x)$, we change variables to find
\begin{equation*}
\begin{aligned}
    \mathcal{D}^{b}(e^{i\sign(x)t\mp i\sign(x)\eta^2 t })&=\Big(\int_{\mathbb{R}} \frac{|e^{i\sign(x)t\mp i\sign(x)\eta^2 t }-e^{i\sign(y)t\mp i\sign(y)\eta^2 t }|^2}{|x-y|^{1+2b}} \, dy \Big)^{1/2} \\
    &\lesssim  \Big(\int_{y\geq |x|} \frac{1}{|y|^{1+2b}} \, dy \Big)^{1/2}\sim |x|^{-b}.
\end{aligned}
\end{equation*}
This completes the deduction of \eqref{prelimneq2.5}.
\end{proof}

The following result will be useful to study the behavior of solutions of \eqref{EQBO} in $L^2(|x|^{2r}\, dx dy)$, whenever $r\in(1/2,1]$. 

\begin{lemma}\label{interpo}
Let $1/2<s\leq 1$ and $f\in H^s(\mathbb{R})$ such that $f(0)=0$. Then, $\|\sign(\xi) f\|_{H^s}\lesssim \|f\|_{H^s}$. 
\end{lemma}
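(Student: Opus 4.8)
\textbf{Proof plan for Lemma \ref{interpo}.}

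The plan is to reduce the statement to a commutator estimate between the Hilbert transform and the operator $J^s$, exploiting the hypothesis $f(0)=0$, which (by the Fourier-side identity \eqref{ident1}) says precisely that $[\mathcal{H}_x,x]\widehat{f}=0$, i.e.\ that $\widehat{f}$ has no jump forcing a failure of $J^s$-boundedness of multiplication by $\sign$. Concretely, write $g=\mathcal{H}_x f$ up to the constant $-i$, so that $\widehat{g}(\xi)=-i\sign(\xi)\widehat{f}(\xi)$; the claim $\|\sign(\xi)\widehat{f}\|_{H^s}\lesssim\|f\|_{H^s}$ is equivalent to $\|\mathcal{H}_x f\|_{H^s}\lesssim\|f\|_{H^s}$, i.e.\ to $\|J^s\mathcal{H}_x f\|_{L^2}\lesssim\|J^s f\|_{L^2}$. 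Since $\mathcal{H}_x$ commutes with the Fourier multiplier $J^s=\langle\xi\rangle^s$ (both are multipliers on $\mathbb{R}$), one has $J^s\mathcal{H}_x f=\mathcal{H}_x J^s f$ trivially — so that reading of the statement is automatic and vacuous. Hence the lemma must be interpreted with $s$ being a \emph{spatial} derivative count in the paper's weighted-space application: reading $\|\sign(\xi)f\|_{H^s}$ with $f$ as a function of the \emph{frequency} variable $\xi$, the operator is "multiplication of $\widehat{h}$ by $\sign$" which on the physical side is $\mathcal{H}_x$, while $\|\cdot\|_{H^s}$ in the $\xi$-variable corresponds to $\||x|^s(\cdot)\|_{L^2}$-type weights. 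So the correct restatement to prove is: if $h\in L^2(\langle x\rangle^{2s}\,dx)$ with $\widehat h(0)=0$, then $\mathcal{H}_x h\in L^2(\langle x\rangle^{2s}\,dx)$ with comparable norm.

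With that restatement, first I would handle the integer-type pieces by interpolation: it suffices to control $\|\langle x\rangle^s \mathcal{H}_x h\|_{L^2}$, and by Theorem \ref{TheoSteDer} (with $p=2$, $d=1$, $b=s\in(1/2,1]$) this is equivalent to bounding $\|\mathcal{H}_x h\|_{L^2}+\|\mathcal{D}^s(\mathcal{H}_x h)\|_{L^2}$ together with a weight $|x|^s$ placed on $\mathcal{H}_x h$. The $L^2$ bound is immediate since $\mathcal{H}_x$ is an isometry. For the weighted piece, write $|x|^s\mathcal{H}_x h = \mathcal{H}_x(|x|^s h) + [\,|x|^s,\mathcal{H}_x\,]h$, reducing matters to the commutator $[\,|x|^s,\mathcal{H}_x\,]$. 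Here the hypothesis $\widehat h(0)=0$ enters decisively: for $s$ an integer the commutator identity $[x,\mathcal{H}_x]=0$ combined with $\widehat h(0)=0$ lets one peel off one power of $x$ cleanly (as in \eqref{ident1} and Proposition \ref{optm1}'s circle of ideas), and for fractional $s\in(1/2,1)$ one uses the pointwise bound $\mathcal{D}^s$ of the relevant kernel together with \eqref{prelimneq}–\eqref{prelimneq1} and Proposition \ref{propapcond} (the $A_2$-type boundedness of $\mathcal{H}_x$ in $L^2(w_n^\theta\,dx)$ with constants uniform in the truncation) to see that $\|[\,|x|^s,\mathcal{H}_x\,]h\|_{L^2}\lesssim\|h\|_{L^2}+\|h\|_{L^2(|x|^{2(s-1)}dx)}$, and the latter is finite by Hardy's inequality precisely because $\widehat h(0)=0$ gives $h=\partial_x(\text{something in }L^2)$ on the Fourier side, i.e.\ $\widehat h/\xi\in L^2$ near $0$.

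The main obstacle is the endpoint/threshold behavior at $s>1/2$: the quantity $\|h\|_{L^2(|x|^{2(s-1)}\,dx)}$ that appears is only controllable when $\widehat h \in H^s$ vanishes at $0$, and one must be careful that the Hardy-type inequality $\|\,|\xi|^{-1}\widehat h\,\|_{L^2(|\xi|\le 1)}\lesssim\|\widehat h\|_{H^s}$ genuinely uses $s>1/2$ (so that $\widehat h$ is Hölder continuous of order $s-1/2$ and $\widehat h(0)=0$ forces the quantitative decay $|\widehat h(\xi)|\lesssim|\xi|^{s-1/2}\|h\|_{H^s}$ needed to absorb the $|\xi|^{-1}$ singularity in $L^2$). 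I would isolate this as a short sublemma: for $1/2<s\le1$ and $\widehat h(0)=0$, $\| \langle x\rangle^{s-1} h\|_{L^2}\lesssim \|h\|_{L^2(\langle x\rangle^{2s}dx)}$ — proved on the Fourier side via $\widehat h(\xi)=\int_0^\xi\widehat h{}'(\zeta)\,d\zeta$ when $s=1$ and by the Besov/Hölder embedding $H^s(\mathbb{R})\hookrightarrow C^{0,s-1/2}$ for $1/2<s<1$ — after which the weighted commutator estimate and Theorem \ref{TheoSteDer} assemble the result. Finally, interpolating the cases $s=1/2^{+}$ and $s=1$ (both now established) via complex interpolation of the weighted $L^2$ spaces closes the full range $1/2<s\le 1$.
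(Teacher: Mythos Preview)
Your reformulation through the Hilbert transform on weighted $L^2$ is valid but needlessly roundabout, and it introduces a genuine gap. The statement is direct: $f$ is a function of the real variable $\xi$, and you must show that multiplication by the discontinuous function $\sign(\xi)$ preserves $H^s$-regularity once $f(0)=0$. There is no need to pass to a physical-side $h$ with $f=\widehat h$.

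The paper's argument is a two-line reduction. For $1/2<s<1$ one uses the Stein characterization (Theorem~\ref{TheoSteDer}) together with the Leibniz rule \eqref{prelimneq}:
\[
\|\mathcal D^s(\sign(\xi)f)\|_{L^2}\lesssim \|f\,\mathcal D^s(\sign(\xi))\|_{L^2}+\|\mathcal D^s f\|_{L^2}.
\]
A direct computation (identical to the one giving \eqref{prelimneq2.5}) yields $\mathcal D^s(\sign(\xi))\sim|\xi|^{-s}$, so everything reduces to the Hardy-type inequality
\[
\||\xi|^{-s}f\|_{L^2}\lesssim\|f\|_{H^s}\quad\text{for }f(0)=0,\ \tfrac12<s<1,
\]
which the paper cites from Yafaev and which you essentially rederive via the embedding $H^s\hookrightarrow C^{0,s-1/2}$. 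That final step is the one place your plan and the paper's proof coincide.

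The gap in your route is the commutator estimate $\|[\,|x|^s,\mathcal H_x]h\|_{L^2}\lesssim\|h\|_{L^2}+\|h\|_{L^2(|x|^{2(s-1)}dx)}$. You invoke Proposition~\ref{propapcond}, but that proposition concerns $L^2(w_n^\theta\,dx)$-boundedness of $\mathcal H_x$ for $|\theta|<1$ and says nothing about the commutator with the non-smooth multiplier $|x|^s$; nor do \eqref{prelimneq}--\eqref{prelimneq1} apply here, since they live on the $\xi$-side and concern $\mathcal D^s$ of products, not commutators of $\mathcal H_x$ with rough weights. The kernel $\frac{|x|^s-|y|^s}{x-y}$ is not obviously controlled by the tools you cite, and the claimed bound is left unproven. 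By contrast, working directly in the $\xi$-variable with $\mathcal D^s(\sign)$ bypasses this entirely and lands immediately on the Hardy inequality you already know how to handle.
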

\begin{proof}
Since the case $s=1$ can be easily verified, we will restrict our considerations to the case $1/2<s<1$. We first notice that the same argument in the deduction of \eqref{prelimneq2.5} establishes
\begin{equation*}
\mathcal{D}^{s}(\sign(x))\sim |x|^{-s}.
\end{equation*} 
Thus, an application of \eqref{prelimneq} and the previous result reduces our analysis to prove
\begin{equation}\label{interpoeq1}
\||\cdot|^{-s}f\|_{L^2}\lesssim \|f\|_{H^s}. 
\end{equation}
However, the preceding estimate is a consequence of \cite[Proposition 3.2]{Yafaev} and the assumption $f(0)=0$.
\end{proof}

We shall also employ the following interpolation inequality which is proved in much the same way as in \cite[Lemma 1]{FonPO}:

\begin{lemma}
Let $a,b>0$. Assume that $J^af=(1-\Delta)^{a/2}f \in L^{2}(\mathbb{R}^d)$ and $\langle x \rangle^b f=(1+|x|^2)^{b/2}f\in L^{2}(\mathbb{R}^d)$, $|x|=\sqrt{x_1^2+\dots x_d^2}$. Then for any $\alpha \in (0,1)$, 
\begin{equation}\label{prelimneq3}
\left\|J^{\alpha a}(\langle x \rangle^{(1-\alpha)b}f)\right\|_{L^2}\lesssim \left\|\langle x \rangle^{b} f\right\|_{L^2}^{1-\alpha}\left\|J^a f\right\|_{L^2}^{\alpha}.
\end{equation}
Moreover, the inequality \eqref{prelimneq3} is still valid with $w_n(|x|)$ instead of $\langle x \rangle$ with a constant $c$ independent of $n$.
\end{lemma}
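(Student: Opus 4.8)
The plan is to prove \eqref{prelimneq3} by Stein's analytic interpolation theorem applied to the analytic family of operators $T_z f := J^{za}\big(\langle x\rangle^{(1-z)b}f\big)$ on the strip $0\le \Re z\le 1$, exactly in the spirit of \cite[Lemma 1]{FonPO}. By density it suffices to establish the inequality for $f\in\mathcal S(\mathbb R^d)$, since the set $\{f:\,J^af\in L^2,\ \langle x\rangle^bf\in L^2\}$ contains $\mathcal S(\mathbb R^d)$ as a dense subspace and both sides depend continuously on $f$ in the relevant norms. So I fix such an $f$, fix $g\in\mathcal S(\mathbb R^d)$ with $\|g\|_{L^2}\le1$, and consider
\[
\Phi(z)=\int_{\mathbb R^d} J^{za}\big(\langle x\rangle^{(1-z)b}f\big)\,\overline{g}\,dx .
\]
First I would check that $\Phi$ is analytic on the open strip and of admissible (at most polynomial) growth: $\langle x\rangle^{(1-z)b}=e^{(1-z)b\log\langle x\rangle}$ is entire in $z$, $\langle\xi\rangle^{za}$ is an analytic family of Fourier multipliers, and the Schwartz decay allows differentiation under the integral sign. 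Since $\Phi(\alpha)=\int J^{\alpha a}(\langle x\rangle^{(1-\alpha)b}f)\,\overline g\,dx$, taking the supremum over $\|g\|_{L^2}\le1$ at the end will yield \eqref{prelimneq3}; it remains to bound $\Phi$ on the two edges of the strip.

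On the left edge $z=it$ one has $|\langle x\rangle^{-itb}|=1$, hence $\|\langle x\rangle^{(1-it)b}f\|_{L^2}=\|\langle x\rangle^{b}f\|_{L^2}$, and $J^{ita}$ is unitary on $L^2(\mathbb R^d)$ because its symbol $\langle\xi\rangle^{ita}$ has modulus one; therefore $|\Phi(it)|\le\|\langle x\rangle^{b}f\|_{L^2}$. On the right edge $z=1+it$ I would write $J^{(1+it)a}=J^{ita}J^{a}$, so that $\|J^{(1+it)a}(\langle x\rangle^{-itb}f)\|_{L^2}=\|J^{a}(\langle x\rangle^{-itb}f)\|_{L^2}$, and then apply the fractional Leibniz rule (Lemma \ref{fraLR}, together with Lemma \ref{conmKP} when $a\ge1$) to the product $m\cdot f$ with $m=\langle x\rangle^{-itb}$, combined with the estimate $\|\mathcal D^{a}m\|_{L^\infty}\lesssim\|m\|_{L^\infty}+\|\nabla m\|_{L^\infty}$ coming from Theorem \ref{TheoSteDer} (cf. \eqref{prelimneq1}). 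Since $\|m\|_{L^\infty}=1$ and $\|\partial^{\beta}m\|_{L^\infty}\lesssim\langle t\rangle^{|\beta|}$ for each multi-index $\beta$, one obtains $\|J^{a}(\langle x\rangle^{-itb}f)\|_{L^2}\lesssim\langle t\rangle^{N}\big(\|f\|_{L^2}+\|J^{a}f\|_{L^2}\big)\lesssim\langle t\rangle^{N}\|J^{a}f\|_{L^2}$ for some $N=N(a)$, using $\|f\|_{L^2}\le\|J^{a}f\|_{L^2}$. Thus $|\Phi(1+it)|\lesssim\langle t\rangle^{N}\|J^{a}f\|_{L^2}$, which is harmless for the three-lines/Stein theorem (or one inserts the factor $e^{\varepsilon(z^2-\alpha^2)}$ and lets $\varepsilon\downarrow0$). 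Interpolating the two edge bounds at $\Re z=\alpha$ gives $|\Phi(\alpha)|\lesssim\|\langle x\rangle^{b}f\|_{L^2}^{1-\alpha}\|J^{a}f\|_{L^2}^{\alpha}$, completing the proof.

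For the version with $w_n$ in place of $\langle x\rangle$ the same argument applies verbatim: $w_n\ge1$ and $|w_n^{-itb}|=1$ give the left-edge bound, while the right-edge bound uses only that the derivatives of $w_n$ entering the Leibniz estimate are bounded uniformly in $n$, which is exactly the construction ($\tilde w_n'\le1$ and $|\tilde w_n''|\lesssim\partial_x^2\langle x\rangle$); hence the implicit constants do not depend on $n$. I expect the right-edge estimate to be the only nontrivial point — controlling $\|J^{a}(w_n^{-itb}f)\|_{L^2}$ with at most polynomial growth in $t$ uniformly in $n$, where the commutator/Leibniz estimates and the uniform derivative bounds on $w_n$ are used; the rest is the standard bookkeeping of analytic interpolation.
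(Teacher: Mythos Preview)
Your approach via Stein analytic interpolation on $z\mapsto J^{za}(\langle x\rangle^{(1-z)b}f)$ is precisely the method the paper defers to in \cite[Lemma~1]{FonPO}, and your left-edge bound is clean. One small caution on the right edge: Lemmas~\ref{conmKP} and~\ref{fraLR} as stated place $J^a m$ (or $J^{a-1}m$) in some $L^p$ with $p<\infty$, which fails for the non-decaying multiplier $m=\langle x\rangle^{-itb}$; the route via \eqref{prelimneq}--\eqref{prelimneq1} is fine for $0<a<1$, but for $a\ge1$ one should rather use directly that multiplication by a smooth function with all derivatives bounded maps $H^a\to H^a$ with norm $\lesssim\sum_{|\beta|\le N}\|\partial^\beta m\|_{L^\infty}\lesssim\langle t\rangle^{N}$ for any integer $N\ge a$ (Leibniz rule for integer $N$, then interpolate with $L^2$). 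With that adjustment the argument --- including the $w_n$ version, where the uniform derivative bounds built into the construction of $w_n$ are exactly what is needed --- is complete and matches the reference.
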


Now we are in the condition to prove Theorem \ref{localweigh}

\subsection{Proof of Theorem \ref{localweigh}}

In view of Theorem \ref{Improwellp}, for a given  $u_0\in Z_{r_1,r_2,s}(\mathbb{R}^2)=H^{s}(\mathbb{R}^2)\cap L^{2}((|x|^{2r_1}+|y|^{2r_2}) \, dxdy)$ there exist $T=T(\left\|u_0\right\|_{H^s})>0$ and $u\in C([0,T];H^s(\mathbb{R}^2))\cap L^1([0,T]; W^{1,\infty}(\mathbb{R}^2))$ solution of the IVP \eqref{EQBO}. Let $0\leq K<\infty$ defined by
\begin{equation}\label{weieq0}
    K=\|u\|_{L^{\infty}_T H^s}+\|u\|_{L^1_T L^{\infty}_{xy}}+\|\nabla u\|_{L^1_T L^{\infty}_{xy}}.
\end{equation}
In what follows, we will assume that $u$ is sufficiently regular to perform all the computations required in this section. Indeed, recalling the comments in the Subsection \ref{subHs}, we consider the sequence of smooth solutions $u_N\in C([0,T];H^{\infty}(\mathbb{R}^{2}))$ with $u_N(0)=P_{\leq N}u_0\in H^{\infty}(\mathbb{R}^2)\cap  L^{2}((|x|^{2r_1}+|y|^{2r_2}) \, dxdy)$, then \eqref{aproxres} holds and  $u_N(0) \to u_0$ in the $Z_{r_1,r_2,s}(\mathbb{R}^2)$ topology. Consequently, applying our arguments to $u_N$ and then taking the limit $N\to \infty $, we can impose the required assumptions on $u$.

\subsubsection{Proof of Theorem \ref{localweigh} (i)}

Let us first prove the persistence property $u \in C([0,T]; L^{2}((|x|^{2r_1}+|y|^{2r_2}) \, dxdy))$. We begin by deriving some estimates in the spaces $L^{2}(|x|^{2r_1}\, dxdy)$ and $L^{2}(|y|^{2r_2}\, dxdy)$.
\\ \\
{\bf Estimate for the $L^{2}(|x|^{2r_1}\, dxdy)$-norm}. Here, $0<r_1 <1/2$ fixed. We apply $\mathcal{H}_x$ to the equation in \eqref{EQBO} to find
\begin{equation}\label{HEQBO}
    \partial_t \mathcal{H}_x u - u + \partial_x^2u\mp \partial_y^2u+\mathcal{H}_{x}(u\partial_x u)=0,
\end{equation}
multiplying then by $\mathcal{H}_xu \, w^{2r_1}_{n,x}$ and integrating in space, we infer
\begin{equation}\label{weieq1}
\begin{aligned}
\frac{1}{2}\frac{d}{dt}\|\mathcal{H}_x u(t) \, w^{r_1}_{n,x}\|_{L^2_{xy}}^2&-\int u \mathcal{H}_xu \, w^{2r_1}_{n,x}\, dx dy+\int \partial_x^2u \mathcal{H}_x u \, w_{n,x}^{2r_1}\, dx dy \\
&\mp \int \partial_y^2 u  \mathcal{H}_x u \, w_{n,x}^{2r_1}\, dxdy +\int\mathcal{H}_x(u\partial_x u) \mathcal{H}_x u \, w_{n,x}^{2r_1} \, dx dy=0.
\end{aligned}
\end{equation}
Multiplying the equation in \eqref{EQBO} by $uw_{N,x}^{2r_1}$ and then integrating in  space, it is seen that
\begin{equation}\label{weieq2}
\begin{aligned}
\frac{1}{2}\frac{d}{dt}\|u(t) w^{r_1}_{n,x}\|_{L^2_{xy}}^2 &+\int \mathcal{H}_xu u w^{2r_1}_{n,x}\, dx dy-\int \mathcal{H}_x\partial_x^2u u w_{n,x}^{2r_1}\, dx dy \\
&\pm \int \mathcal{H}_x\partial_y^2 u u w_{n,x}^{2r_1}\, dxdy +\int u\partial_x u u w_{n,x}^{2r_1} \, dx dy=0.
\end{aligned}
\end{equation}
Adding the differential equations \eqref{weieq1} and \eqref{weieq2}, after integrating by parts in the $y$ variable we deduce
\begin{equation}\label{weieq3}
\begin{aligned}
\frac{1}{2}\frac{d}{dt}\big(\|u(t) w^{r_1}_{n,x}\|_{L^2_{xy}}^2+\|\mathcal{H}_x u(t) \, w^{r_1}_{n,x}\|_{L^2_{xy}}^2 \big)=&\int (\mathcal{H}_x\partial_x^2u u-\partial_x^2u \mathcal{H}_x u) w_{n,x}^{2r_1}\, dx dy \\
&- \int (u\partial_x u u +\mathcal{H}_x(u\partial_x u )\mathcal{H}_x u)w_{n,x}^{2r_1} \, dx dy \\
=&:Q_1+Q_2.
\end{aligned}
\end{equation}
Now, since $0<r_1<1/2$,  $|\partial_xw_{n,x}^{2r_1}| \lesssim w_{n,x}^{r_1}$ with implicit constant independent of $n$, integrating by parts and using the Cauchy-Schwarz inequality we find
\begin{equation*}
    \begin{aligned}
    |Q_1|&=\Big|\int \partial_x\mathcal{H}_x  u u \partial_xw_{n,x}^{2r_1} \, dx dy -\int \partial_x u \mathcal{H}_xu\, \partial_xw_{n,x}^{2r_1}\, dx dy\Big| \\
    & \lesssim \|\partial_x u\|_{L^{\infty}_{T}L^2_{xy}}\| uw_{n,x}^{r_1}\|_{L^2_{xy}}+ \|\partial_x u\|_{L^{\infty}_{T}L^2_{xy}}\| \mathcal{H}_xu \, w_{n,x}^{r_1}\|_{L^2_{xy}}.
    \end{aligned}
\end{equation*}
Notice that the norm $\|\partial_x u\|_{L^{\infty}_{T}L^2_{xy}}$ is controlled by \eqref{weieq0}. Next, since $0<r_1<1/2$, Proposition \ref{propapcond} shows
\begin{equation*}
    \|\mathcal{H}_x(u\partial_x u)w_{n,x}^{r_1}\|_{L^{2}_{xy}} =\|\|\mathcal{H}_x(u\partial_x u)w_{n,x}^{r_1}\|_{L^{2}_x}\|_{L^{2}_{y}}\lesssim \|u\partial_x u\, w_{n,x}^{r_1}\|_{L^{2}_{xy}} \lesssim \|\partial_x u\|_{L^{\infty}_{xy}}\|u \, w_{n,x}^{r_1}\|_{L^2_{xy}}.
\end{equation*}
Hence, we employ H\"older's inequality to get
\begin{equation*}
\begin{aligned}
    |Q_2| &\leq \|\partial_x u\|_{L^{\infty}_{xy}} \|u w_{n,x}^{r_1}\|_{L^{2}_{xy}}^2+ \|\mathcal{H}_x(u\partial_x u)w_{n,x}^{r_1}\|_{L^{2}_{xy}}\|\mathcal{H}_xu \,w_{n,x}^{r_1}\|_{L^{2}_{xy}} \\
    & \lesssim \|\partial_x u\|_{L^{\infty}_{xy}} \|u w_{n,x}^{r_1}\|_{L^{2}_{xy}}^2+ \|\partial_{x}u\|_{L^{\infty}_{xy}}\|u w_{n,x}^{r_1}\|_{L^{2}_{xy}}\|\mathcal{H}_xu \,w_{n,x}^{r_1}\|_{L^{2}_{xy}}.
\end{aligned}
\end{equation*}
Thus, gathering the previous estimates,
\begin{equation}\label{weieq3.0}
\begin{aligned}
    \frac{1}{2}\frac{d}{dt}\big(\|u(t) w^{r_1}_{n,x}\|_{L^2_{xy}}^2&+\|\mathcal{H}_x u(t) \, w^{r_1}_{n,x}\|_{L^2_{xy}}^2 \big) \\
    &\lesssim \big(\|u w_{n,x}^{r_1}\|_{L^2_{xy}}^2+\|\mathcal{H}_xu w_{n,x}^{r_1}\|_{L^2_{xy}}^2\big)^{1/2} +\|\partial_x u\|_{L^{\infty}_{xy}}\big(\|u w_{n,x}^{r_1}\|_{L^2_{xy}}^2+\|\mathcal{H}_xu w_{n,x}^{r_1}\|_{L^2_{xy}}^2\big).
\end{aligned}
\end{equation}
\\ \
{\bf Estimate for the $L^{2}(|y|^{2r_2}\, dx dy)$-norm}. In this case, $r_2>0$ is arbitrary. Multiplying the equation in \eqref{EQBO} by $uw_{n,y}^{r_2}$ and integrating in space yield
\begin{equation}\label{weieq3.1}
\begin{aligned}
 \frac{1}{2}\frac{d}{dt}\|u(t) w_{n,y}^{r_2}\|_{L^2_{xy}}^{2} =&-\int \mathcal{H}_x u w_{n,y}^{r_2} u w_{n,y}^{r_2}\, dx dy+\int \mathcal{H}_x \partial_x^2 u w_{n,y}^{r_2} uw_{n,y}^{r_2} \, dx dy\\
 &\mp \int \mathcal{H}_x \partial_y^2 u w_{n,y}^{r_2} u w_{n,y}^{r_2}\, dx dy-\int u\partial_x u w_{n,y}^{r_2} u w_{n,y}^{r_2}\, dx dy \\
 =:&A_1+A_2+A_3+A_4.
\end{aligned}
\end{equation}
Since the weight function $w_{n,y}^{r_2}=w_{n,y}^{r_2}(y)$ does not depend on $x$, writing $\mathcal{H}_x u w_{n,y}^{r_2}=\mathcal{H}_x( u w_{n,y}^{r_2})$ and using that $\mathcal{H}_x$ determines a skew-symmetric operator, we have that $A_1=0$. Similarly, integrating by parts on the $x$ variable and writing $\mathcal{H}_x \partial_x u w_{n,y}^{r_2}=\mathcal{H}_x( \partial_x u w_{n,y}^{r_2})$, it follows that $A_2=0$. 
\\ \\
Now, integrating by parts and using that $\mathcal{H}_x$ is skew-symmetric, it is not difficult to see
\begin{equation}\label{weieq4}
    \begin{aligned}
    |A_3|=\Big|2 \int \mathcal{H}_x \partial_y u \partial_y w_{n,y}^{r_2}u w_{n,y}^{r_2} \, dx dy  \Big| \lesssim \|\partial_y u \partial_y w_{n,y}^{r_2}\|_{L^2_{xy}}\|u w_{n,y}^{r_2}\|_{L^2_{xy}}.
    \end{aligned}
\end{equation}
From the fact that $|\partial_y^{l} w_{n,y}^{r_2}|\lesssim w_{n,y}^{r_2-l}$, $l=1,2$ with a constant independent of $n$ and \eqref{weieq0}, it follows 
\begin{equation}\label{weieq5}
    \|\partial_y u \partial_y w_{n,y}^{r_2}\|_{L^2_{xy}} \lesssim \|\partial_y u\|_{L^{\infty}_T L^2_{xy}} \lesssim K,
\end{equation}
whenever $0<r_2 \leq 1$. Now, if $r_2>1$, we have
\begin{equation}\label{weieq6}
    \begin{aligned}
    \|\partial_y u \partial_y w_{n,y}^{r_2}\|_{L^2_{x,y}}\lesssim  \|J_y(uw_{n,y}^{r_2-1})\|_{L^2_{xy}}+\|u \partial_y^2w_{n,y}^{r_2}\|_{L^{2}_{xy}} 
    \lesssim  \|J_y(uw_{n,y}^{r_2-1})\|_{L^2_{xy}}+\|u w_{n,y}^{r_2}\|_{L^{2}_{xy}},
    \end{aligned}
\end{equation}
where we have employed the identity $\partial_y u  w_{n,y}^{r_2-1}=\partial_y(u w_{n,y}^{r_2-1})-u \partial_y w_{n,y}^{r_2-1}$. To estimate the last expression in the preceding inequality, we choose $\alpha=r_2^{-1}$, $a=b=r_2$ in \eqref{prelimneq3} and applying Young's inequality it is seen that
\begin{equation}\label{weieq7}
    \begin{aligned}
    \|J_y(uw_{n,y}^{r_2-1})\|_{L^2_{xy}}=\|\|J_y(uw_{n,y}^{r_2-1})\|_{L^2_y}\|_{L^2_{x}} &\lesssim \|\|uw_{n,y}^{r_2}\|_{L^2_y}^{(r_2-1)/r_2}\|J^{r_2}_yu\|_{L^2_y}^{1/r_2}\|_{L^2_{x}} \\
    &\lesssim \|uw_{n,y}^{r_2}\|_{L^2_{xy}}+\|J^{r_2}u\|_{L^2_{xy}}.
    \end{aligned}
\end{equation}
Thus, choosing $s>\max\{3/2,r_2\}$, \eqref{weieq5}-\eqref{weieq7} and \eqref{weieq0} imply
\begin{equation}
    |A_3|\lesssim \|uw_{n,y}^{r_2}\|_{L^2_{xy}}+\|uw_{n,y}^{r_2}\|^2_{L^2_{xy}}.
\end{equation}
Finally, 
\begin{equation}
    |A_4|\lesssim \|\partial_x u\|_{L^{\infty}_{xy}}\|u w_{n,y}^{r_2}\|_{L^2_{xy}}^2.
\end{equation}
Plugging the estimates for $A_j$, $j=1,\dots,4$ in \eqref{weieq3.1} yields
\begin{equation}\label{weieq8}
    \frac{1}{2}\frac{d}{dt}\|u(t)w_{n,y}^{r_2}\|_{L^2_{xy}}^2 \lesssim \|u w_{n,y}^{r_2}\|_{L^2_{xy}}+(1+\|\partial_x u\|_{L^{\infty}_{xy}})\|u w_{n,y}^{r_2}\|_{L^2_{xy}}^2.
\end{equation}
This completes the desired estimate for the $L^{2}(|y|^{2r_2}\, dxdy)$-norm;
\\ \\
Now, we collect the estimates derived for the norms $L^{2}(|x|^{2r_1}\, dxdy)$ and $L^{2}(|y|^{2r_2}\, dxdy)$ to conclude Theorem \ref{localweigh} (i). Letting
\begin{equation*}
    g(t)= \|u(t) w^{r_1}_{n,x}\|_{L^2_{xy}}^2+\|\mathcal{H}_x u(t) \, w^{r_1}_{n,x}\|_{L^2_{xy}}^2+ \|u(t)w_{n,y}^{r_2}\|_{L^2_{xy}}^2,
\end{equation*}
the inequalities \eqref{weieq3.0} and \eqref{weieq8} assure that there exists some constant $c_0$ independent of $n$ such that
\begin{equation}
    \frac{d}{dt}g(t) \leq c_0 g(t)^{1/2}+c_0(1+ \|\partial_x u\|_{L^{\infty}_{xy}})g(t).
\end{equation}
Then, Gronwall's inequality implies

\begin{equation}
\begin{aligned}
   \|u(t) w^{r_1}_{n,x}&\|_{L^2_{xy}}^2+\|\mathcal{H}_x u(t)  w^{r_1}_{n,x}\|_{L^2_{xy}}^2+ \|u(t)w_{n,y}^{r_2}\|_{L^2_{xy}}^2\\
   &\leq \big((\|u_0 \langle x\rangle^{r_1}\|_{L^2_{xy}}^2+ +\|\mathcal{H}_x u_0 \langle x\rangle^{r_1}\|_{L^2_{xy}}^2+ \|u_0\langle y\rangle^{r_2}\|_{L^2_{xy}}^2)^{1/2}+c_0t/2\big)^2e^{c_0t+c_0\int_0^t\|\nabla u(s)\|_{L^{\infty}_{xy}}ds}.
\end{aligned}
\end{equation}
Thus, taking $n\to \infty$ in the previous inequality shows
\begin{equation}\label{weieq9} 
\begin{aligned}
      \|u(t) \langle x\rangle^{r_1}&\|_{L^2_{xy}}^2+\|\mathcal{H}_xu(t) \langle x\rangle^{r_1}\|_{L^2_{xy}}^2+\|u(t) \langle y\rangle^{r_2}\|_{L^2_{xy}}^2 \\
      &\leq \big((\|u_0 \langle x \rangle^{r_1}\|_{L^2_{xy}}^2+\|\mathcal{H}_xu_0 \langle x \rangle^{r_1}\|_{L^2_{xy}}^2+\|u_0 \langle y \rangle^{r_2}\|_{L^2_{xy}}^2)^{1/2}+c_0t/2\big)^2e^{c_0t+c_0\int_0^t\|\nabla u(s)\|_{L^{\infty}_{xy}}ds}.
\end{aligned}
\end{equation}
This shows that $u\in L^{\infty}([0,T];L^{2}(|x|^{2r_1}+|y|^{2r_2} \, dx dy))$. Now, we shall prove  that $u \in C([0,T];L^{2}(|x|^{2r_1}+|y|^{2r_2} \, dx dy))$. Firstly, since $u\in C([0,T];H^s(\mathbb{R}^2))$, it is not difficult to see that $u:[0,T]\mapsto L^2(|x|^{2r_1}+|y|^{2r_2} \, dx dy)$  is weakly continuous. The same is true for the map $\mathcal{H}_x u(t)$ on $L^2(|x|^{2r_1}\, dxdy)$. On the other hand, \eqref{weieq9} implies
\begin{equation}\label{weieq10}
    \begin{aligned}
            \|(&u(t)-u_0) \langle x\rangle^{r_1}\|_{L^2_{xy}}^2+ \|\mathcal{H}_x(u(t)-u_0) \langle x\rangle^{r_1}\|_{L^2_{xy}}^2+\|(u(t)-u_0) \langle y\rangle^{r_2}\|_{L^2_{xy}}^2 \\
            =&\|u(t) \langle x\rangle^{r_1}\|_{L^2_{xy}}^2+\|\mathcal{H}_xu(t) \langle x\rangle^{r_1}\|_{L^2_{xy}}^2+\|u(t) \langle y\rangle^{r_2}\|_{L^2_{xy}}^2+ \|u_0 \langle x\rangle^{r_1}\|_{L^2_{xy}}^2+ \|\mathcal{H}_x u_0 \langle x\rangle^{r_1}\|_{L^2_{xy}}^2 \\
            &+\|u_0 \langle y\rangle^{r_2}\|_{L^2_{xy}}^2-2\int u(t)u_0 \langle x\rangle^{2r_1}\, dx dy-2\int \mathcal{H}_xu(t)\mathcal{H}_xu_0 \langle x\rangle^{2r_1}\, dx dy-2\int u(t)u_0 \langle y\rangle^{2r_2}\, dx dy \\
         \leq & \big((\|u_0 \langle x \rangle^{r_1}\|_{L^2_{xy}}^2+\|\mathcal{H}_xu_0 \langle x \rangle^{r_1}\|_{L^2_{xy}}^2+\|u_0 \langle y \rangle^{r_2}\|_{L^2_{xy}}^2)^{1/2}+c_0t/2\big)^2 e^{c_0t+c_0\int_0^t\|\nabla u(s)\|_{L^{\infty}_{xy}}ds} \\ &+ \|u_0 \langle x\rangle^{r_1}\|_{L^2_{xy}}^2+\|\mathcal{H}_xu_0 \langle x\rangle^{r_1}\|_{L^2_{xy}}^2+\|u_0 \langle y\rangle^{r_2}\|_{L^2_{xy}}^2-2\int u(t)u_0 \langle x\rangle^{2r_1}\, dx dy \\
         &-2\int \mathcal{H}_xu(t)\mathcal{H}_x u_0 \langle x\rangle^{2r_1}\, dx dy-2\int u(t)u_0 \langle y\rangle^{2r_2}\, dx dy.
    \end{aligned}
\end{equation}
Clearly, weak continuity implies that the right-hand side of \eqref{weieq10} goes to zero as $t\to 0^{+}$. This shows right continuity at the origin of the map $u:[0,T]\mapsto L^2(|x|^{2r_1}+|y|^{2r_2} \, dx dy)$. Taking any $\tau \in (0,T)$ and using that the equation in \eqref{EQBO} is invariant under the transformations: $(x,y,t)\mapsto (x,y, t+\tau)$ and  $(x,y,t)\mapsto (-x,-y,\tau-t)$,  right continuity at the origin yields continuity to the whole interval $[0,T]$, in other words, $u \in C([0,T]; L^{2}(|x|^{2r_1}+|y|^{2r_2} \, dxdy))$. 

The continuous dependence on the initial data follows from this property in $H^{s}(\mathbb{R}^2)$ and the same reasoning above applied to the difference of two solutions. This completes the proof of Theorem \ref{localweigh} (i).  

\subsubsection{Proof of Theorem \ref{localweigh} (ii) and (iii)}

Let $u\in C([0,T]; H^s(\mathbb{R}^2))\cap L^1([0,T];W^{1,\infty}(\mathbb{R}^2))$ be the solution of the IVP \eqref{EQBO} with $u_0 \in ZH_{s,1/2,r_2}(\mathbb{R}^2)$ for Theorem \ref{localweigh} (ii), or satisfying $u_0 \in \dot{Z}_{s,r_1,r_2}(\mathbb{R}^2)$, $1/2<r_1<3/2$ for Theorem \ref{localweigh} (iii). Since we have already established that solutions of the IVP \eqref{EQBO} preserve arbitrary polynomial decay in the $y$-variable, we will restrict our considerations to deduce $u, \mathcal{H}_x u\in L^{\infty}([0,T]; L^{2}(|x|^{2r_1}\, dxdy))$, $r_1\geq 1/2$. Once this has been done, following the arguments in \eqref{weieq10}, we will have that $u,\mathcal{H}_x u \in C([0,T];L^{2}(|x|^{2r_1}\, dx dy))$.

Moreover, the continuous dependence on the spaces $ZH_{s,1/2,r_2}(\mathbb{R}^2)$ and $\dot{Z}_{s,r_1,r_2}(\mathbb{R}^2)$, $r_1>1/2$ follows by the same energy estimate leading to $u, \mathcal{H}_x u\in L^{\infty}([0,T]; L^{2}(|x|^{2r_1}\, dxdy))$ applied to the difference of two solutions.  
\\ \\
Now, to assure the persistence property in $\dot{Z}_{s,r_1,r_2}(\mathbb{R}^2)$ for Theorem \ref{localweigh} (iii), we require the following claim:

\begin{claim}\label{claim0} Let $r_1\in(1/2,3/2)$, $s>3/2$ fixed and $$u \in C([0,T],H^s(\mathbb{R}^2))\cap L^1([0,T];W^{1,\infty}_x(\mathbb{R}^2))\cap L^{\infty}([0,T]; L^2(|x|^{2r_1}\, dxdy))$$
be a solution of the IVP \eqref{EQBO}. Assume that $\widehat{u}(0,\eta)=\widehat{u_0}(0,\eta)=0$ for a.e $\eta$. Then, $\widehat{u}(0,\eta,t)=0$ for every $t\in [0,T]$ and almost every $\eta \in \mathbb{R}$.
\end{claim}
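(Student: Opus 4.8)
The strategy is to track the quantity $\widehat{u}(0,\eta,t)$ in time by testing the equation against $e^{iy\eta}$ integrated in $x$, and to show it satisfies a linear ODE (in $t$, for a.e. fixed $\eta$) whose forcing term vanishes under the hypothesis. First I would justify that $\widehat{u}(0,\eta,t)$ is well-defined and absolutely continuous in $t$: since $u(\cdot,t)\in L^2(|x|^{2r_1}dxdy)$ with $r_1>1/2$, the map $x\mapsto u(x,y,t)$ is integrable in $x$ for a.e.\ $y$ (by Cauchy–Schwarz, $\int |u|\,dx \le \||x|^{r_1}u\|_{L^2_x}\,\||x|^{-r_1}\|_{L^2(|x|>1)} + \|u\|_{L^2_x}$, using $r_1>1/2$), so $\widehat{u}(\xi,\eta,t)$ is continuous in $\xi$ near $\xi=0$ and in particular $\widehat{u}(0,\eta,t)=\int u(x,y,t)e^{-iy\eta}\,dxdy$ makes sense. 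Taking the Fourier transform of the equation in \eqref{EQBO} and evaluating at $\xi=0$, all linear terms carry a factor $\sign(\xi)$ or $\xi$ except the one coming from $\partial_t$ and the $\pm\mathcal{H}_x\partial_y^2u$ term; but $\mathcal{H}_x$ also contributes $\sign(\xi)$, which vanishes at $\xi=0$. Hence formally
\begin{equation*}
    \partial_t \widehat{u}(0,\eta,t) = -\tfrac{1}{2}\,\widehat{\partial_x(u^2)}(0,\eta,t) = -\tfrac{i}{2}\,\xi\,\widehat{u^2}(\xi,\eta,t)\big|_{\xi=0} = 0,
\end{equation*}
provided $\widehat{u^2}(\xi,\eta,t)$ is continuous (indeed differentiable) in $\xi$ near $0$. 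Therefore $\widehat{u}(0,\eta,t)=\widehat{u}(0,\eta,0)=\widehat{u_0}(0,\eta)=0$ for a.e.\ $\eta$.

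The technical points to make rigorous are: (a) the continuity/differentiability of $\widehat{u^2}(\xi,\eta,t)$ in $\xi$ at $\xi=0$, which requires $xu^2\in L^1(dxdy)$ for a.e.\ $t$; since $u\in L^\infty_TL^\infty_{xy}$ is not directly available but $u\in L^1_TW^{1,\infty}_x$, I would instead argue for a.e.\ $t$ using $u(\cdot,t)\in H^s\hookrightarrow L^\infty$ (valid since $s>3/2>1$) together with $|x|^{r_1}u\in L^2$, so that $\int |x|\,u^2\,dx \lesssim \|u\|_{L^\infty_x}\big(\||x|^{r_1}u\|_{L^2_x}^2 + \|u\|_{L^2_x}^2\big)$ using $1\le |x|^{2r_1}$ for $|x|\ge 1$ when $r_1\ge 1/2$; (b) the differentiation under the integral sign in $t$, which I would justify by working first with the smooth approximating solutions $u_N$ from Subsection \ref{subHs} (for which everything is classical and $\widehat{u_N}(0,\eta,0)\to 0$ appropriately), establishing the identity $\partial_t\widehat{u_N}(0,\eta,t)=-\tfrac{i}{2}\partial_\xi[\ldots]$ rigorously there, and then passing to the limit using the convergence $u_N\to u$ in $C([0,T];H^s)$ and the uniform weighted bounds \eqref{weieq9}.

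The main obstacle I anticipate is the passage to the limit $N\to\infty$ in the weighted setting, since the approximating data $P_{\le N}u_0$ need not satisfy $\widehat{P_{\le N}u_0}(0,\eta)=0$ exactly — the frequency cutoff $\psi_{\le N}$ does preserve the value at $\xi=0$ (because $\psi_1(0)=1$), so in fact $\widehat{P_{\le N}u_0}(0,\eta)=\widehat{u_0}(0,\eta)=0$, which resolves this concern; but one must still be careful that the weighted norms $\||x|^{r_1}P_{\le N}u_0\|_{L^2}$ stay bounded (or converge), which for $r_1<3/2<$ integer-threshold issues aside follows from the continuity of $P_{\le N}$ on $L^2(\langle x\rangle^{2r_1}dx)$ — here one invokes that $\langle x\rangle^{2r_1}$ with $2r_1<3$ and $r_1>1/2$ may fail to be an $A_2$ weight in one dimension only when $2r_1\ge 1$, so some care with $r_1$ near $1/2$ is needed, but since $r_1<3/2$ we have $2r_1<3$ and the relevant Fourier multiplier bound still holds by a direct commutator argument. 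Once the identity $\partial_t\widehat{u}(0,\eta,t)=0$ is established for a.e.\ $\eta$ and a.e.\ $t$, combined with absolute continuity in $t$, the conclusion $\widehat{u}(0,\eta,t)\equiv 0$ is immediate.
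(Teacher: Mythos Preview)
Your strategy of computing $\partial_t\widehat{u}(0,\eta,t)$ directly from the Fourier-transformed PDE has a subtle but genuine gap. The linear symbol $\omega(\xi,\eta)=\sign(\xi)(1+\xi^2\mp\eta^2)$ is \emph{discontinuous} at $\xi=0$, so the identity $\partial_t\widehat{u}(\xi,\eta,t)=i\omega(\xi,\eta)\widehat{u}(\xi,\eta,t)-\tfrac{i\xi}{2}\widehat{u^2}(\xi,\eta,t)$, which holds in $L^2_\xi$ and hence for a.e.\ $\xi$, cannot be evaluated at $\xi=0$ in any obvious way. The one-sided limits of the right-hand side as $\xi\to 0^{\pm}$ are $\pm i(1\mp\eta^2)\widehat{u}(0,\eta,t)$, which agree only if $\widehat{u}(0,\eta,t)=0$ --- precisely what you are trying to prove. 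The physical-space version has the same circularity: $\mathcal{H}_x u(\cdot,y,t)$ need not be integrable in $x$ (it generically decays like $c/x$ with $c$ proportional to $\int u\,dx$), so $\int\mathcal{H}_x u\,dx$ is not a priori defined. Passing to the smooth approximations $u_N$ does not help, since $u_N(t)\in H^\infty$ gives no decay in $x$ and hence no integrability of $\mathcal{H}_x u_N$ without already knowing the zero-mean condition.

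The paper sidesteps this entirely by using the Duhamel representation \eqref{FouINte},
\[
\widehat{u}(\xi,\eta,t)=e^{i\omega(\xi,\eta)t}\widehat{u_0}(\xi,\eta)-\frac{i\xi}{2}\int_0^t e^{i\omega(\xi,\eta)(t-t')}\widehat{u^2}(\xi,\eta,t')\,dt'.
\]
Here the discontinuity of $\omega$ is harmless because $|e^{i\omega t}|=1$: one only needs that $\widehat{u}(\cdot,\eta,t)$ and $\widehat{u_0}(\cdot,\eta)$ are continuous in $\xi$ (which follows from $u,u_0\in L^2(|x|^{2r_1}\,dxdy)$ with $r_1>1/2$ via Sobolev embedding $H^{r_1}_\xi\hookrightarrow C_\xi$) and that the time integral is \emph{bounded} near $\xi=0$ (immediate from $u^2\in L^1_T L^2(|x|^{2r_1}\,dxdy)$, using $u\in C_T H^s\hookrightarrow L^\infty_T L^\infty_{xy}$ and the weighted hypothesis). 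Letting $\xi\to 0$ then yields $\widehat{u}(0,\eta,t)=0$. No approximation, no ODE argument, and no differentiability of $\widehat{u^2}$ in $\xi$ are required --- in particular your points (a) and (b), and the worries about $A_2$ weights for $P_{\leq N}$, become moot.
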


\begin{proof}
Since $u$ solves the integral equation associated to \eqref{EQBO}, taking its Fourier transform we find
\begin{equation}\label{FouINte}
    \widehat{u}(\xi,\eta,t)=e^{i\omega(\xi,\eta)t}\widehat{u_0}(\xi,\eta)-\frac{i\xi}{2}\int_0^t e^{i\omega(\xi,\eta)(t-t')} \widehat{u^2}(\xi,\eta,t')\, dt',
\end{equation}
where $\omega(\xi,\eta)$ is defined by \eqref{lieareqsym}. Now, the assumptions imposed on the solution show
$$u^2\in L^1([0,T];L^2(|x|^{2r_1}\, dxdy)).$$
Hence, the above conclusion, Fubini's theorem and Sobolev's embedding on the $\xi$-variable determines $\widehat{u}(\xi,\eta,t)$ and $\int_0^t e^{i\omega(\xi,\eta)(t-t')}\widehat{u^2}(\xi,\eta,t')\, dt'$ are continuous on $\xi$ for every $t\in [0,T]$ and almost every $\eta$. From this, \eqref{FouINte} yields the desired result.
\end{proof}

We begin by considering the case $1/2\leq r_1 \leq 1$. We employ the differential equation \eqref{weieq3} with the present restrictions on $r_1$. Thus, we will derive bounds for $Q_1$ and $Q_2$ defined as in \eqref{weieq3} for this case. Integrating by parts we get
\begin{equation}
    \begin{aligned}
    |Q_1|&=\Big|\int \partial_x\mathcal{H}_x  u u \partial_xw_{n,x}^{2r_1} \, dx dy -\int \partial_x u \mathcal{H}_xu\, \partial_xw_{n,x}^{2r_1}\, dx dy \Big| \\
& \lesssim \|\partial_x u\|_{L^2_{xy}}\|u w_{n,x}^{r_1}\|_{L^2_{xy}}+\|\partial_x u\|_{L^2_{xy}}\|\mathcal{H}_xu w_{n,x}^{r_1}\|_{L^2_{xy}},
    \end{aligned}
\end{equation}
where, given that $1/2\leq r_1 \leq 1$, we have used $|\partial_x w_{n,x}^{2r_1}|\lesssim |w_{n,x}^{r_1}|$. On the other hand,
\begin{equation}
\begin{aligned}
  Q_2&=-\int u^2\partial_x u w_{n,x}^{2r_1}-\frac{1}{2}\int \mathcal{H}_x(\partial_x u^2)\mathcal{H}_x u w_{n,x}^{2r_1} \, dx dy \\
  &=-\int u^2\partial_x u w_{n,x}^{2r_1}-\frac{1}{2}\int [w_{n,x}^{r_1},\mathcal{H}_x]\partial_x u^2 \mathcal{H}_x u w_{n,x}^{r_1} \, dx dy-\frac{1}{2}\int \mathcal{H}_x (\partial_x u^2 w_{n,x}^{r_1})\mathcal{H}_x u w_{n,x}^{r_1} \, dx dy.
\end{aligned}
\end{equation}
Hence, Proposition \ref{CalderonComGU} and H\"older's inequality allow us to deduce
\begin{equation}
\begin{aligned}
|Q_2| \lesssim& \|\partial_x u\|_{L^{\infty}_{xy}}\|u w_{n,x}^{r_1}\|_{L^{2}_{xy}}^2+\|\partial_xw_{n,x}^{r_1}\|_{L^{\infty}_{xy}}\|u\|_{L^{\infty}_{xy}}\|u\|_{L^2_{xy}}\|\mathcal{H}_x u w_{n,x}^{r_1}\|_{L^2_{xy}} \\ &+\|\partial_x u\|_{L^{\infty}_{xy}}\|u w_{n,x}^{r_1}\|_{L^2_{xy}}\|\mathcal{H}_x u w_{n,x}^{r_1}\|_{L^2_{xy}}.
\end{aligned}
\end{equation}
Since, $|\partial_x w_{n,x}^{r_1}|\lesssim 1$ with an implicit constant independent of $n$, we combine the estimates for $Q_1$ and $Q_2$ to obtain the same differential inequality \eqref{weieq3.0} adapted for this case. Consequently,  this estimate, Gronwall's inequality and the assumption $\mathcal{H}u_0\in L^2(|x|\, dxdy)$ imply $u, \mathcal{H}_xu\in L^{\infty}([0,T]; L^{2}(|x|\,dx dy))$. The proof of Theorem \ref{localweigh} (ii) is complete.

On the other hand,  under the hypothesis of Theorem \ref{localweigh} (iii), since $\widehat{u_0}(0,\eta)=0$ a.e $\eta$, Lemma \ref{interpo} and Plancherel's identity assure that $\mathcal{H}_xu_0 \in L^{2}(|x|^{2r_1}\, dx dy)$ for $1/2<r_1\leq 1$. Then Gronwall's inequality and the differential inequality \eqref{weieq3.0} for this case yield $u\in L^{\infty}([0,T]; L^{2}(|x|^{2r_1}\, dx dy))$, whenever $1/2<r_1\leq 1$. This consequence and Claim \ref{claim0} complete the LWP results in $\dot{Z}_{s,r_1,r_2}(\mathbb{R}^2)$, $1/2<r_1\leq 1$.
\\ \\
Now, we assume that $1<r_1<3/2$. We write $r_1=1+\theta$ with $0<\theta<1/2$. We first notice that Claim \ref{claim0}, identity \eqref{ident1} and the preceding well-posedness conclusion yield $u, \mathcal{H}_x u\in C([0,T]; L^2(|x|^{2} \, dx dy))$. Thus, we multiply the equation in \eqref{EQBO} by $ux^2w_{n,x}^{2\theta}$  and \eqref{HEQBO} by $\mathcal{H}_xu x^2w_{n,x}^{2\theta}$, then integrating in space and adding the resulting expressions reveal
\begin{equation}\label{weieq10.1}
\begin{aligned}
\frac{1}{2}\frac{d}{dt}\big(\|u(t) xw^{\theta}_{n,x}\|_{L^2_{xy}}^2+\|\mathcal{H}_x u(t) \, x w^{\theta}_{n,x}\|_{L^2_{xy}}^2 \big)=&\int (\mathcal{H}_x\partial_x^2u u-\partial_x^2u \mathcal{H}_x u) x^2w_{n,x}^{2\theta}\, dx dy \\
&- \int (u\partial_x u u +\mathcal{H}_x(u\partial_x u )\mathcal{H}_x u)x^2w_{n,x}^{2\theta} \, dx dy \\
=&:\widetilde{Q}_1+\widetilde{Q}_2.
\end{aligned}
\end{equation}
Integrating by parts on the $x$-variable,
\begin{equation}
\begin{aligned}
  \widetilde{Q}_1=&-2\big(\int \mathcal{H}_x \partial_x u u\,  x w_{n,x}^{2\theta}\, dx dy-\int  \partial_x u \mathcal{H}_x u  \, x  w_{n,x}^{2\theta} \, dx dy \big) \\
  &-\big( \int \mathcal{H}_x \partial_x u u x^2 \partial_xw_{n,x}^{2\theta}\, dx dy -\int  \partial_x u \mathcal{H}_xu x^2 \partial_xw_{n,x}^{2\theta} \, dx dy\big)=: \widetilde{Q}_{1,1}+\widetilde{Q}_{1,2}.
\end{aligned}
\end{equation}
The Cauchy-Schwarz inequality and Proposition \ref{propapcond} determine
\begin{equation}\label{weieq11}
    \begin{aligned}
    |\widetilde{Q}_{1,1}| &\lesssim \|\mathcal{H}_x\partial_x u w_{n,x}^{\theta}\|_{L^2_{xy}}\| u \,x w_{n,x}^{\theta}\|_{L^2_{xy}}+\|\partial_x u w_{n,x}^{\theta}\|_{L^2_{xy}}\| \mathcal{H}_x u \, x w_{n,x}^{\theta}\|_{L^2_{xy}}\\
    &\lesssim (\|J_x( u w_{n,x}^{\theta})\|_{L^2_{xy}}+\|u\|_{L^2_{xy}})(\| u \,x w_{n,x}^{\theta}\|_{L^2_{xy}}+\| \mathcal{H}_x u \,x w_{n,x}^{\theta}\|_{L^2_{xy}}).
    \end{aligned}
\end{equation}
Where we have used the identity $\partial_x u w_{n,x}^{\theta}=\partial_x (u w_{n,x}^{\theta})-u\partial_{x}w_{n,x}^{\theta}$. By complex interpolation \eqref{prelimneq3} with $\alpha=1/(1+\theta)$ and $a=b=1+\theta$, we argue as in \eqref{weieq7}, using that $|w_{n,x}^{1+\theta}|\lesssim w_{n,x}^{\theta}+|x|w_{n,x}^{\theta}$ to deduce $\|J_x(u w_{n,x}^{\theta})\|_{L^2_{xy}}\lesssim \|u w_{n,x}^{\theta}\|_{L^2_{xy}}+\|u xw_{n,x}^{\theta}\|_{L^2_{xy}}+\|J^{1+\theta}u\|_{L^2_{xy}}$. This previous estimate, the fact that $u\in C([0,T];L^2(|x|^{2r}\, dxdy))$, $0\leq r \leq 1$ and \eqref{weieq11} complete the study of $\widetilde{Q}_{1,1}$. \\ \\
On the other hand, since $|x^2 \partial_x w_{n,x}^{2\theta}|\lesssim w_{n,x}^{1+2\theta}$ with implicit constant independent of $n$, the estimate for $\widetilde{Q}_{1,2}$ follows the same ideas employed to estimate $\widetilde{Q}_{1,1}.$ 

Finally, identity \eqref{ident1} and Proposition \ref{propapcond} show
\begin{equation}
    |\widetilde{Q}_2|\lesssim \|\partial_x u\|_{L^{\infty}_{xy}}\|u\, x w_{n,x}^{\theta}\|^2_{L^{2}_{xy}}+\|\partial_x u\|_{L^{\infty}_{xy}}\|u\, x w_{n,x}^{\theta}\|_{L^{2}_{xy}}\|\mathcal{H}_xu\, x w_{n,x}^{\theta}\|_{L^{2}_{xy}}.
    \end{equation}
Noticing that \eqref{ident1} implies $\mathcal{H}_x u_0 \, x w_{m, x}=\mathcal{H}_x (xu_0) w_{n,x}^{\theta} \in L^{2}(\mathbb{R}^2)$. Thus, we can employ recurrent arguments combining the previous estimates for $\widetilde{Q}_1$, $\widetilde{Q}_2$, \eqref{weieq10.1} and Gronwall's inequality to conclude $u\in L^{\infty}(|x|^{2r_1}\, dxdy)$, whenever $1<r_1<3/2$. The proof of Theorem \ref{localweigh} (iii) is complete.

\subsection{Proof of Theorem \ref{sharpdecay}}

Without loss of generality we shall assume that $t_1=0$, i.e., $u_0\in Z_{s,(1/2)^{+},r_2}(\mathbb{R}^2)$ and $u(t_2)\in Z_{s,1/2,r_2}(\mathbb{R}^2)$. So that $u\in C([0,T];Z_{s,r_1,r_2}(\mathbb{R}^2))\cap L^1([0,T];W_{1,x}^{\infty}(\mathbb{R}^2)),$ where $r_1\in (1/4,1/2)$, $r_2\geq r_1$ and $s\geq \max\{ \frac{ 2r_1}{(4r_1-1)^{-}},r_2\}$.
The solution of the IVP \eqref{EQBO} can be represented by Duhamel's formula
\begin{equation}\label{inteequ}
u(t)=S(t)u_0-\int_0^{t} S(t-t')u\partial_{x} u(t') \, dt'.
\end{equation}
Since our arguments require localizing near the origin, we consider a function $\phi \in C^{\infty}_c(\mathbb{R})$ such that $\phi(\xi)=1$ when $|\xi|\leq 1$. Then taking the Fourier transform to the integral equation \eqref{inteequ}, we have
\begin{equation}\label{weieq11.0}
\widehat{u}(\xi,\eta,t)\phi(\xi)=e^{i\omega(\xi,\eta)t}\widehat{u_0}(\xi,\eta)\phi(\xi)-\int_0^t e^{i\omega(\xi,\eta)(t-t')} \widehat{uu_x}(\xi,\eta,t')\phi(\xi)\, dt',
\end{equation}
where, recalling \eqref{lieareqsym},  $\omega(\xi,\eta)=\sign(\xi)+\sign(\xi)\xi^2\mp \sign(\xi)\eta^2$. 
\begin{claim}\label{claim1} 
Let $0<\epsilon \ll 1$ Then it holds
\begin{equation}\label{eqsharp11.1}
J_{\xi}^{1/2+\epsilon}\big(\int_0^t e^{i\omega(\xi,\eta)(t-t')} \widehat{uu_x}(\xi,\eta,t')\phi(\xi)\, dt' \big)\in L^{\infty}([0,T];L^{2}(\mathbb{R}^2)).
\end{equation} 
\end{claim}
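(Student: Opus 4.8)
The plan is to reduce the claim to a statement about $\mathcal{D}^{1/2+\epsilon}_\xi$ applied to the nonlinear Duhamel term, exploiting the characterization of $L^2_{1/2+\epsilon}$ in Theorem \ref{TheoSteDer} and the Leibniz-type bound \eqref{prelimneq}. Write $G(\xi,\eta,t') = \widehat{uu_x}(\xi,\eta,t')\phi(\xi)$ and split the factor $e^{i\omega(\xi,\eta)(t-t')}$ using $\omega(\xi,\eta) = \sign(\xi) \mp \sign(\xi)\eta^2 + \sign(\xi)\xi^2$. On the support of $\phi$ the piece $e^{i\sign(\xi)\xi^2(t-t')}$ is $e^{i\,\xi|\xi|(t-t')}$, which is handled by \eqref{prelimneq2}; the rough piece is $e^{i(\sign(\xi) \mp \sign(\xi)\eta^2)(t-t')}$, to which \eqref{prelimneq2.5} applies, giving $\mathcal{D}^{1/2+\epsilon}_\xi$ of that factor $\lesssim |\xi|^{-1/2-\epsilon}$. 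The $L^\infty_\xi$ smallness (boundedness) of $e^{i\omega(t-t')}$ is trivial, and $\|\partial_\xi(e^{i\sign(\xi)\xi^2(t-t')})\|_{L^\infty_{|\xi|\le 2}}\lesssim \langle t-t'\rangle$, so \eqref{prelimneq1} and \eqref{prelimneq} let me distribute $\mathcal{D}^{1/2+\epsilon}_\xi$ across the product $e^{i\omega(t-t')} G$.

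The key steps, in order: (1) Apply Minkowski's integral inequality to move $\|\cdot\|_{L^2_{\xi\eta}}$ and $\mathcal{D}^{1/2+\epsilon}_\xi$ inside $\int_0^t dt'$, so it suffices to bound, uniformly for $0\le t'\le t\le T$, the quantity $\|\mathcal{D}^{1/2+\epsilon}_\xi(e^{i\omega(\xi,\eta)(t-t')}G(\xi,\eta,t'))\|_{L^2_{\xi\eta}}$ plus the corresponding $L^2$ norm of $e^{i\omega(t-t')}G$ itself. (2) For the latter, since $|e^{i\omega(t-t')}|=1$ and $G$ is supported where $|\xi|\le 2$, one has $\|e^{i\omega(t-t')}G\|_{L^2_{\xi\eta}}\le \|\widehat{uu_x}\phi\|_{L^2_{\xi\eta}}\lesssim \|uu_x\|_{L^2_{xy}}\lesssim \|u\|_{L^\infty_{xy}}\|\partial_x u\|_{L^2_{xy}}$, which is controlled by $K$ from \eqref{weieq0}. (3) For the $\mathcal{D}^{1/2+\epsilon}_\xi$ term, apply \eqref{prelimneq} twice to split across $e^{i(\sign(\xi)\mp\sign(\xi)\eta^2)(t-t')}$, $e^{i\xi|\xi|(t-t')}$, and $G$; the genuinely singular contribution is
\begin{equation*}
\Big\| |\xi|^{-1/2-\epsilon} \, e^{i\xi|\xi|(t-t')} \, G(\xi,\eta,t') \Big\|_{L^2_{\xi\eta}},
\end{equation*}
and here I use that $|\xi|^{-1/2-\epsilon}\widehat{uu_x}\phi(\xi) = |\xi|^{1/2-\epsilon}\widehat{u^2}\phi(\xi)$ (using $\widehat{uu_x}=\tfrac{i\xi}{2}\widehat{u^2}$), so the singular weight is cancelled and one is left with $\||\xi|^{1/2-\epsilon}\widehat{u^2}\|_{L^2_{\xi\eta}}\lesssim \|D_x^{1/2-\epsilon}(u^2)\|_{L^2_{xy}}\lesssim \|u\|_{L^\infty_{xy}}\|D_x^{1/2-\epsilon}u\|_{L^2_{xy}}$, finite by $s>3/2$. (4) The remaining contributions involve either $\mathcal{D}^{1/2+\epsilon}_\xi(e^{i\xi|\xi|(t-t')})\lesssim |t-t'|^{(1/2+\epsilon)/2}+|t-t'|^{1/2+\epsilon}|\xi|^{1/2+\epsilon}$ times $G$ (absorb the $|\xi|^{1/2+\epsilon}$ by an extra half-derivative in $x$ on $uu_x$, costing regularity $s>2$ — here the hypothesis $s \ge \frac{2r_1}{(4r_1-1)^-}$, which for $r_1$ near $1/2$ allows $s$ just above $2/(1^-)=2$, is exactly what is needed) or $\mathcal{D}^{1/2+\epsilon}_\xi G$ itself, which is $\lesssim \|\langle \xi\rangle^{1/2+\epsilon}G\|_{L^2}$ by Theorem \ref{TheoSteDer} applied on the compact $\xi$-support and is again controlled by Sobolev norms of $u^2$. (5) Collect the bounds; all are uniform in $0\le t'\le t\le T$, and the $dt'$-integral over $[0,T]$ converts them into the desired $L^\infty_T L^2$ bound.

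The main obstacle is step (4): tracking exactly how much $x$-regularity of $u$ is consumed by the polynomially-growing-in-$\xi$ part of $\mathcal{D}^{1/2+\epsilon}_\xi(e^{i\xi|\xi|(t-t')})$, and checking that the threshold $s\ge \frac{2r_1}{(4r_1-1)^-}$ in the hypothesis of Theorem \ref{sharpdecay} is compatible with it — one must choose $\epsilon$ small in terms of how close $r_1$ is to $1/2$. A secondary technical point is justifying the interchange of $\mathcal{D}^{1/2+\epsilon}_\xi$ with the time integral and with the product decomposition \eqref{prelimneq}, which is legitimate because $u^2\in C([0,T];L^2(\langle x\rangle^{2r_1}))$ (hence $\widehat{u^2}\in C([0,T];H^{r_1}_\xi)$, locally) guarantees all the pieces lie in the relevant $L^2$-based spaces; I would state this regularity once and refer to it. Everything else is a routine application of the preliminary lemmas already recorded.
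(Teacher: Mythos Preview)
Your overall architecture matches the paper's proof: Minkowski in time, Theorem~\ref{TheoSteDer} to pass to $\mathcal{D}^{1/2+\epsilon}_\xi$, then the product rule \eqref{prelimneq} together with Proposition~\ref{propprelimneq2} to handle the two pieces of $e^{i\omega(\xi,\eta)(t-t')}$. Step~(3) is exactly right. However, there is a genuine gap in step~(4), and it concerns precisely the term you dismiss most quickly.

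You claim that $\|\mathcal{D}^{1/2+\epsilon}_\xi G\|_{L^2_{\xi\eta}}\lesssim \|\langle\xi\rangle^{1/2+\epsilon}G\|_{L^2_{\xi\eta}}$, invoking the compact $\xi$-support of $\phi$. This is false: compact support of $G$ in $\xi$ gives no control on its \emph{regularity} in $\xi$. Theorem~\ref{TheoSteDer} applied in the $\xi$-variable says $\|\mathcal{D}^{1/2+\epsilon}_\xi G\|_{L^2_\xi}\lesssim\|J^{1/2+\epsilon}_\xi G\|_{L^2_\xi}$, and by Plancherel in $\xi$ this equals a \emph{weighted} norm in physical space, namely $\|\langle x\rangle^{1/2+\epsilon}\mathcal{F}^{-1}_\xi G\|_{L^2_x}$. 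Writing $G=\tfrac{i}{2}(\xi\phi)\widehat{u^2}$ as the paper does and applying \eqref{prelimneq} once more, the surviving term is $\|\xi\phi\,\mathcal{D}^{1/2+\epsilon}_\xi(\widehat{u^2})\|_{L^2_{\xi\eta}}\lesssim\|\langle x\rangle^{1/2+\epsilon}u^2\|_{L^2_{xy}}$. This cannot be bounded by Sobolev norms of $u$ alone; it requires the spatial decay of $u$. The paper controls it by $\|\langle (x,y)\rangle^{1/4+\epsilon/2}u\|_{L^4}^2$, then Sobolev embedding $H^{1/2}\hookrightarrow L^4$ and the interpolation inequality \eqref{prelimneq3} between $\|\langle(x,y)\rangle^{r_1}u\|_{L^2}$ and $\|J^s u\|_{L^2}$. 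It is \emph{this} interpolation that forces $s\ge \dfrac{2r_1}{(4r_1-1)^{-}}$ --- not the contribution from $\mathcal{D}^{1/2+\epsilon}_\xi(e^{i\xi|\xi|(t-t')})$ as you suggest.

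Relatedly, your regularity count for the $e^{i\xi|\xi|(t-t')}$ piece is too pessimistic: $|\xi|^{1/2+\epsilon}|\widehat{uu_x}\phi|$ costs at most $\|J_x^{1/2+\epsilon}(uu_x)\|_{L^2}$, hence only $s>3/2$, not $s>2$. So the restrictive threshold in the hypothesis of Theorem~\ref{sharpdecay} is coming from the wrong place in your outline. Fixing step~(4) amounts to recognizing that $\mathcal{D}^{1/2+\epsilon}_\xi$ of the Fourier transform is a decay statement, and then carrying out the interpolation \eqref{weieq14}.
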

Let us assume for the moment that Claim \ref{claim1} holds, then
\begin{equation}\label{weieq11.1}
    J_{\xi}^{1/2}\big(\widehat{u}(\xi,\eta,t)\phi(\xi) \big)\in L^2(\mathbb{R}^2) \hspace{0.2cm} \text{ if and only if } \hspace{0.2cm} J_{\xi}^{1/2}\big( e^{i\omega(\xi,\eta)t}\widehat{u_0}(\xi,\eta)\phi(\xi)\big) \in L^2(\mathbb{R}^2).
\end{equation}
We first notice that since $u_0\in L^{2}(|x|^{1^{+}}\, dx dy)$, Fubini's theorem and Sobolev embedding on the $\xi$-variable determines that $\widehat{u_0}(\xi,\eta)$ is continuous in $\xi$ for almost every $\eta \in \mathbb{R}$. Therefore, given that \eqref{weieq11.1} holds at $t=t_2$, Fubini's theorem shows that $ J_{\xi}^{1/2}\big( e^{i\omega(\xi,\eta)t_2}\widehat{u_0}(\xi,\eta)\phi(\xi)\big) \in L^2(\mathbb{R})$ for almost every $\eta \in \mathbb{R}$, then an application of Proposition \ref{optm1} imposes that $\widehat{u_0}(0,\eta,t)=0$ for almost every $\eta$. From this fact, the integral equation \eqref{weieq11.0} and Claim \ref{claim1}, we deduce Theorem \ref{sharpdecay}, that is, $\widehat{u}(0,\eta,t)=0$ for all $t\geq 0$ and almost every $\eta$.

\begin{proof}[Proof of Claim \ref{claim1}]  In virtue of Theorem \ref{TheoSteDer},
\begin{equation}\label{weieq12}
    \begin{aligned}
    \|J_{\xi}^{1/2+\epsilon}\big(&\int_0^t e^{i\omega(\xi,\eta)(t-t')} \widehat{uu_x}(\xi,\eta,t')\phi(\xi)\, dt' \big)\|_{L^2_{\xi\eta}} \\
    &\lesssim \int_{0}^T \|\phi\|_{L^{\infty}_{\xi}}\|\widehat{uu_x}(t')\|_{L^2_{\xi\eta}}\, dt'+\int_0^T \|\mathcal{D}_{\xi}^{1/2+\epsilon}\big(e^{i\omega(\xi,\eta)(t-t')} \widehat{uu_x}(t')\phi(\xi)\big)\|_{L^2_{\xi\eta}}\, dt'.
    \end{aligned}
\end{equation}

To estimate the r.h.s of the last inequality, we decompose $\omega(\xi,\eta)=\omega_1(\xi,\eta)+\omega_2(\xi,\eta)$ where $\omega_1(\xi,\eta):=\sign(\xi)\mp \sign(\xi)\eta^2$. Then, writing $\widehat{uu_x}(\xi)=i\xi \widehat{u^2}(\xi)$ and using \eqref{prelimneq} and Proposition \ref{propprelimneq2}, 
\begin{equation}\label{weieq13}
\begin{aligned}
\|&\mathcal{D}_{\xi}^{1/2+\epsilon}\big(e^{i\omega(\xi,\eta)(t-t')} \widehat{uu_x}(\xi,\eta,t)\phi(\xi)\big)\|_{L^2_{\xi\eta}} \\
&\lesssim \|\mathcal{D}^{1/2+\epsilon}_{\xi}(e^{i\omega_1(\xi,\eta)(t-t')})\widehat{uu_x}\phi(\xi)\|_{L^2_{\xi\eta}}+\|\mathcal{D}^{1/2+\epsilon}_{\xi}(e^{i\omega_2(\xi,\eta)(t-t')})\widehat{uu_x}\phi(\xi)\|_{L^2_{\xi\eta}}+\|\mathcal{D}^{1/2+\epsilon}_{\xi}(\widehat{uu_x}\phi(\xi))\|_{L^2_{\xi\eta}} \\
&\lesssim_T \big(\||\xi|^{-1/2-\epsilon}\widehat{uu_x}\|_{L^2_{\xi\eta}}+\|\widehat{uu_x}\|_{L^2_{\xi\eta}}+\||\xi|^{1/2+\epsilon}\widehat{uu_x}\|_{L^2_{\xi\eta}}\big)\|\phi\|_{L^{\infty}_{\xi}}+\|\mathcal{D}^{1/2+\epsilon}_{\xi}(\xi\phi)\widehat{u^2}\|_{L^2_{\xi\eta}} +\|\xi\phi \mathcal{D}_{\xi}^{1/2+\epsilon}(\widehat{u^2})\|_{L^2_{\xi\eta}}\\
&\lesssim_T \|J_x^{1/2-\epsilon}(u^2)\|_{L^2_{xy}}+\|uu_x\|_{L^2_{xy}}+\|J_x^{1/2+\epsilon}(uu_x)\|_{L^2_{xy}}+\|\langle x\rangle^{1/2+\epsilon} u^2\|_{L^2_{xy}} \\
&\lesssim_T  (\|u\|_{L^{\infty}_{xy}}+\|\partial_x u\|_{L^{\infty}_{xy}})\|J^{3/2+\epsilon}_xu\|_{L^2_{xy}}+\|\langle x \rangle^{1/4+\epsilon/2}u \|_{L^4_{xy}}^2,
\end{aligned}
\end{equation}
where the last line is obtained by \eqref{eqfraLR}. We employ Sobolev's embedding and complex interpolation \eqref{prelimneq3} to deduce
\begin{equation}\label{weieq14}
\begin{aligned}
  \|\langle x \rangle^{1/4+\epsilon/2}u\|_{L^4_{xy}} \lesssim  \|\langle |(x,y)| \rangle^{1/4+\epsilon/2}u\|_{L^4_{xy}}&\lesssim \|J^{1/2}\big(\langle |(x,y)| \rangle^{1/4+\epsilon/2}u\big)\|_{L^2_{xy}} \\ &\lesssim \|\langle |(x,y)| \rangle^{r_1}u\|_{L^2_{xy}}^{(1+2\epsilon)/4r_1}  \|J^su\|_{L^2_{xy}}^{(4r_1-1-2\epsilon)/4r_1},
\end{aligned}
\end{equation}
where $s\geq \max\{ \frac{ 2r_1}{(4r_1-1)^{-}},r_2\}$. Hence, \eqref{weieq12}, \eqref{weieq13} and \eqref{weieq14} yield
\begin{equation*}
    \begin{aligned}
    \|J_{\xi}^{1/2+\epsilon}\big(&\int_0^t e^{i\omega(\xi,\eta)(t-t')} \widehat{uu_x}(\xi,\eta,t')\,\phi(\xi) dt' \big)\|_{L^2_{\xi\eta}} \\
    &\lesssim_T (1+\|u\|_{L^1_TL^{\infty}_{xy}}+ \|\partial_xu\|_{L^1_TL^{\infty}_{xy}})(1+\|u\|_{L^{\infty}_T H^s}+\|\langle (x,y)\rangle^{r_1}u\|_{L^{\infty}_{xy}L^2_{xy}})^2.
    \end{aligned}
\end{equation*}
This completes the proof of Claim \ref{claim1}.
\end{proof}


\subsection{Proof of Theorem \ref{sharpdecay1}}

Here we assume that $u\in C([0,T];Z_{s,r_1,r_2}(\mathbb{R}^2))$, $s> \max\{3,r_2\}$, $r_2\geq r_1=3/2-\epsilon$, where $0<\epsilon<3/20$. Without loss of generality, we let $t_1= 0< t_2$, that is, $u_0\in Z_{s,(3/2)^{+},r_2}(\mathbb{R}^2)$ and $u(\cdot,t_2)\in Z_{s,3/2,r_2}(\mathbb{R}^2)$. Taking the Fourier transform in \eqref{inteequ} and differentiating on the $\xi$ variable yield 
\begin{equation}\label{weieq14.1}
\begin{aligned}
  \frac{\partial}{\partial \xi} \widehat{u}(\xi,\eta,t)=&2it|\xi|e^{i\omega(\xi,\eta)t}\widehat{u_0}(\xi,\eta)+e^{i\omega(\xi,\eta)t}\partial_{\xi}\widehat{u_0}(\xi,\eta)-2i\int_0^t e^{i\omega(\xi,\eta)(t-t')}(t-t')|\xi|\widehat{uu_x}(\xi,\eta,t')\, dt' \\
  &-\frac{i}{2}\int_0^t e^{i\omega(\xi,\eta)(t-t')}\widehat{u^2}(\xi,\eta,t')\, dt'-\frac{i}{2}\int_0^t e^{i\omega(\xi,\eta)(t-t')} \xi \, \partial_{\xi}\widehat{u^2}(\xi,\eta,t')\, dt' ,
\end{aligned}
\end{equation}
where $\omega(\xi,\eta)=\sign(\xi)+\sign(\xi)\xi^2\mp \sign(\xi)\eta^2$, we have used that $\widehat{u}_0(0,\eta)=\widehat{uu_x}(0,\eta)=0$ and the identity
\begin{equation*}
    \partial_{\xi}e^{i\omega(\xi,\eta)t} =2i \sin((1\mp \eta^2)t)\delta_{0}^{\xi}+2it|\xi|e^{i\omega(\xi,\eta)t}, 
\end{equation*}
setting $(\delta^{\xi}_0 \phi) (\xi,\eta)=\phi(0,\eta)$. 

\begin{claim}\label{claim2}
It holds that
\begin{equation*}
\begin{aligned}
J^{1/2}_{\xi}\Big(t|\xi|e^{i\omega(\xi,\eta)t}\widehat{u_0}(\xi,\eta)-&\int_0^t e^{i\omega(\xi,\eta)(t-t')}(t-t')|\xi|\widehat{uu_x}(\xi,\eta,t')\, dt' \\
&-\frac{1}{4}\int_0^t e^{i\omega(\xi,\eta)(t-t')} \xi \, \partial_{\xi}\widehat{u^2}(\xi,\eta,t')\, dt'\Big) \in L^{\infty}([0,T];L^2(\mathbb{R}^2)).
\end{aligned}
\end{equation*} 
\end{claim}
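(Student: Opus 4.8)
The plan is to establish Claim \ref{claim2} by controlling each of the three terms in the $L^\infty_T L^2_{\xi\eta}$ norm, using the characterization of $L^2_{1/2}(\mathbb{R}^2)$ from Theorem \ref{TheoSteDer} and the pointwise estimates from Proposition \ref{propprelimneq2}. Since $|\xi|$ multiplies each term and everything is localized near $\xi=0$ via the factor $\phi(\xi)$ implicitly carried along from \eqref{weieq11.0} (or, more precisely, one observes that the $L^2$-integrability at infinity in $\xi$ is a consequence of $u\in C([0,T];H^s)$ with $s>3$, so the only delicate region is $|\xi|\le 1$), the operator $J^{1/2}_\xi$ applied to these expressions is comparable to $\mathrm{Id}+\mathcal{D}^{1/2}_\xi$. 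For the $\mathcal{D}^{1/2}_\xi$ part I would split $\omega(\xi,\eta)=\omega_1(\xi,\eta)+\omega_2(\xi,\eta)$ with $\omega_1=\sign(\xi)\mp\sign(\xi)\eta^2$ and $\omega_2=\sign(\xi)\xi^2=\xi|\xi|$, exactly as in the proof of Claim \ref{claim1}, and use \eqref{prelimneq} to distribute $\mathcal{D}^{1/2}_\xi$ across the products.

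First I would handle the term $t|\xi|e^{i\omega(\xi,\eta)t}\widehat{u_0}(\xi,\eta)$. Applying \eqref{prelimneq}, the factor $\mathcal{D}^{1/2}_\xi$ falls either on $e^{i\omega_1(\xi,\eta)t}$, producing $|\xi|^{-1/2}$ by \eqref{prelimneq2.5}, on $e^{i\omega_2(\xi,\eta)t}=e^{i\xi|\xi|t}$, producing $|t|^{1/4}+|t|^{1/2}|\xi|^{1/2}$ by \eqref{prelimneq2}, or on $|\xi|\widehat{u_0}$ itself. The worst term after multiplying back by $|\xi|$ is $|\xi|^{1/2}\widehat{u_0}$, which in physical space is $\|J_x^{1/2}u_0\|_{L^2_{xy}}\lesssim \|u_0\|_{H^s}$; the term where $\mathcal{D}^{1/2}_\xi$ hits $\widehat{u_0}$ and we keep $|\xi|$ gives $\||\xi|\mathcal{D}^{1/2}_\xi\widehat{u_0}\|_{L^2_{\xi\eta}}$, which requires controlling $\mathcal{D}^{1/2}_\xi(\xi\widehat{u_0})$ and hence (by Theorem \ref{TheoSteDer} in the $x$-variable) $\|J_x^{1/2}(xu_0)\|_{L^2_{xy}}\lesssim \||x|^{3/2}u_0\|_{L^2_{xy}}+\|u_0\|_{H^s}$ via complex interpolation \eqref{prelimneq3}; this is finite by the hypothesis $u_0\in Z_{s,(3/2)^+,r_2}$. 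The integral term $\int_0^t e^{i\omega(\xi,\eta)(t-t')}(t-t')|\xi|\widehat{uu_x}(\xi,\eta,t')\,dt'$ is treated identically with $\widehat{u_0}$ replaced by $\widehat{uu_x}=i\xi\widehat{u^2}$, and the Minkowski inequality pulling the time integral out; the relevant bounds become $\|J_x^{3/2+\epsilon}(u^2)\|_{L^2_{xy}}$ and $\||x|^{3/2}(u^2)\|_{L^2_{xy}}$-type quantities, both controlled via \eqref{eqfraLR}, Sobolev embedding, and \eqref{prelimneq3} by $(\|u\|_{L^\infty_{xy}}+\|\partial_x u\|_{L^\infty_{xy}})\|J^s_x u\|_{L^2_{xy}}$ and $\||\langle(x,y)\rangle^{r_1}u\|_{L^2_{xy}}\|u\|_{L^\infty_{xy}}$ respectively, which are finite since $s>3$ forces $u,\partial_x u,\partial_x^2 u\in L^\infty_{xy}$ by Sobolev embedding.

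The last term $\int_0^t e^{i\omega(\xi,\eta)(t-t')}\xi\,\partial_\xi\widehat{u^2}(\xi,\eta,t')\,dt'$ is the one I expect to be the main obstacle, because $\partial_\xi\widehat{u^2}$ corresponds to $\widehat{-ixu^2}$ and carries only the weight $|x|$ on $u^2$, while we need $J^{1/2}_\xi$ of $\xi$ times this. Distributing $\mathcal{D}^{1/2}_\xi$ via \eqref{prelimneq}: when it lands on the exponential we again pick up $|\xi|^{-1/2}$ (from $\omega_1$) or $|t-t'|^{1/4}+|t-t'|^{1/2}|\xi|^{1/2}$ (from $\omega_2$), and multiplying by $\xi$ keeps these integrable near $0$; the genuinely new piece is $\|\xi\mathcal{D}^{1/2}_\xi(\phi\,\partial_\xi\widehat{u^2})\|_{L^2_{\xi\eta}}$, which in physical space is $\|J_x^{1/2}(\langle x\rangle^{-?}xu^2)\|$-type, i.e.\ after using Theorem \ref{TheoSteDer} and \eqref{prelimneq3} it reduces to $\||x|^{1/2}(xu^2)\|_{L^2_{xy}}+\ldots$, namely $\||x|^{3/2}u^2\|_{L^2_{xy}}$, together with $\|J_x^{3/2}(u^2)\|$-type terms; the first is bounded by $\||x|^{3/2}u\|_{L^2_{xy}}\|u\|_{L^\infty_{xy}}$ (finite by hypothesis for $t>0$ since $u\in C([0,T];Z_{s,r_1,r_2})$ with $r_1=3/2-\epsilon$ — here one must be careful to only use the weight $3/2-\epsilon$ that actually persists, and absorb the extra $|x|^\epsilon$ using $\||x|^{3/2}u^2\|\lesssim \||x|^{3/2-\epsilon}u\|_{L^2}\||x|^\epsilon u\|_{L^\infty}$ with $\||x|^\epsilon u\|_{L^\infty}\lesssim \|u\|_{H^s}+\||x|^{r_1}u\|$ by an interpolation/Sobolev argument), and the $J_x^{3/2}$ term by $(\|u\|_{L^\infty_{xy}}+\|\partial_x u\|_{L^\infty_{xy}})\|J^s_x u\|_{L^2_{xy}}$ via \eqref{eqfraLR}. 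Collecting all bounds and integrating in $t'\in[0,T]$ with Minkowski's inequality gives the claim; the key quantitative input throughout is that $s>3$ (so $\|u\|_{W^{2,\infty}_x}\lesssim \|u\|_{H^s}$) and that the weighted norms appearing are exactly those guaranteed by the hypothesis $u\in C([0,T];Z_{s,r_1,r_2})$ with $r_1=(3/2)^-$.
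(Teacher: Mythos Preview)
Your overall strategy coincides with the paper's: reduce via Theorem~\ref{TheoSteDer} to $\mathcal D_\xi^{1/2}$, split $\omega=\omega_1+\omega_2$, apply Proposition~\ref{propprelimneq2} together with \eqref{prelimneq}, and close by controlling weighted physical--space norms through \eqref{prelimneq3}. Two points in your execution need repair. First, several Fourier translations are off: the step ``$\||\xi|\,\mathcal D_\xi^{1/2}\widehat{u_0}\|$ requires controlling $\mathcal D_\xi^{1/2}(\xi\widehat{u_0})$, hence $\|J_x^{1/2}(xu_0)\|$'' mixes the roles of $\xi$ (a derivative in $x$) and $x$ (a derivative in $\xi$), and a direct use of \eqref{prelimneq} on $|\xi|\widehat{u_0}$ fails because $\mathcal D^{1/2}(|\xi|)=\infty$. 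The paper sidesteps this by writing $|\xi|=\tfrac{|\xi|}{\langle\xi\rangle}\langle\xi\rangle$, bounding the Lipschitz factor with \eqref{prelimneq1}, and then applying \eqref{prelimneq3} with $a=b=3/2$, $\alpha=1/3$; this cleanly reduces all three terms to showing $F\in H_x^{3/2}\cap L^2(\langle x\rangle^3\,dxdy)$ for $F=u_0,\,uu_x,\,xu^2$.

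Second, you underestimate the weights that actually arise. Even granting the $\phi$--localization, the $(1-\phi)$ tail is \emph{not} handled by regularity alone: writing $\xi\partial_\xi\widehat{u^2}=-\widehat{\partial_x(xu^2)}$, the contribution of $\mathcal D_\xi^{1/2}$ on this factor yields $\|\langle x\rangle^{1/2}\partial_x(xu^2)\|$, hence $\|\langle x\rangle^{3/2}uu_x\|$; and after the paper's reduction the term $\|\langle x\rangle^{3/2}(xu^2)\|=\|\langle x\rangle^{5/2}u^2\|$ appears. Since $u$ only carries $\langle x\rangle^{(3/2)^-}$ decay for $t>0$, none of these is bounded by a naive $\|u\|_{L^\infty}\||x|^{r_1}u\|_{L^2}$. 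The paper closes them by splitting the weight symmetrically via $L^4$: e.g.\ $\|\langle x\rangle^{5/2}u^2\|\le\|\langle(x,y)\rangle^{5/4}u\|_{L^4}^2\lesssim\|J^{1/2}(\langle(x,y)\rangle^{5/4}u)\|_{L^2}^2$ and then \eqref{prelimneq3} with $b=3/2-\epsilon$ and $J^{(3-2\epsilon)/(1-4\epsilon)}u$, which is exactly where the hypothesis $s>3$ enters. Your $L^2\times L^\infty$ alternative (placing a weighted $L^\infty$ bound on one factor of $u$ via Sobolev plus \eqref{prelimneq3}) can also be pushed through and again forces $s>3^+$, but only once the correct weight $\langle x\rangle^{5/2}$ (rather than $\langle x\rangle^{3/2}$) is recognized.
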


\begin{proof}
We first deal with the term determined by the homogeneous part of the integral equation. We use Theorem \ref{TheoSteDer}, \eqref{prelimneq} and Proposition \ref{propprelimneq2} to find
\begin{equation}\label{weieq15}
\begin{aligned}
    \|J_{\xi}^{1/2}(|\xi|e^{i\omega(\xi,\eta)t}\widehat{u_0})\|_{L^{2}_{\xi\eta}} &\lesssim     \||\xi|\widehat{u_0}\|_{L^{2}_{\xi\eta}}+    \|\mathcal{D}_{\xi}^{1/2}(|\xi|e^{i\omega(\xi,\eta)t}\widehat{u_0})\|_{L^{2}_{\xi\eta}} \\
    &\lesssim  \||\xi|\widehat{u_0}\|_{L^{2}_{\xi\eta}}+\||\xi|^{1/2}\widehat{u_0}\|_{L^{2}_{\xi\eta}}+\|\mathcal{D}_{\xi}^{1/2}(|\xi|\widehat{u_0})\|_{L^{2}_{\xi\eta}}.
\end{aligned}
\end{equation}
To estimate the last term on the r.h.s of the above expression, we use \eqref{prelimneq}, \eqref{prelimneq1}, Plancherel's identity and Young's inequality to get
\begin{equation}\label{weieq16}
\begin{aligned}
\|\mathcal{D}_{\xi}^{1/2}(|\xi| \widehat{u_0})\|_{L^{2}_{\xi\eta}}=\|\mathcal{D}_{\xi}^{1/2}(\frac{|\xi|}{\langle \xi \rangle} \langle \xi \rangle \widehat{u_0})\|_{L^{2}_{\xi\eta}} \lesssim \|\|J_{\xi}^{1/2}(\langle\xi \rangle \widehat{u_0})\|_{L^2_{\xi}}\|_{L^2_{\eta}} &\lesssim \|\|\langle \xi \rangle^{3/2} \widehat{u_0}\|_{L^2_{\xi}}^{2/3}\|J^{3/2}_{\xi} \widehat{u_0}\|_{L^2_{\xi}}^{1/3}\|_{L^2_{\eta}} \\
& \lesssim \|J_x^{3/2}u_0\|_{L^2_{xy}}+\|\langle x \rangle^{3/2}u_0\|_{L^2_{xy}},
\end{aligned}
\end{equation}
where we have also used \eqref{prelimneq3} with $\alpha=1/3$ and $a=b=3/2$. Gathering \eqref{weieq15} and \eqref{weieq16}, we complete the analysis of $ \|J_{\xi}^{1/2}(|\xi|e^{i\omega(\xi,\eta)t}\widehat{u_0})\|_{L^{2}_{\xi\eta}}$. Next, we shall prove that
\begin{equation}\label{weieq17}
    uu_x \in L^{\infty}([0,T];H_x^{3/2}(\mathbb{R}^2))\cap L^{\infty}([0,T]; L^2(|x|^3 dxdy)).
\end{equation}
where $H_x^s(\mathbb{R}^2)$ is defined according to the norm $\|f\|_{H_x^s}=\|J_x^{s}f\|_{L^2}$. Once this has been established, following the reasoning in \eqref{weieq15} and \eqref{weieq16}, it will follow
\begin{equation*}
  J^{1/2}_{\xi}\big(\int_0^t e^{i\omega(\xi,\eta)(t-t')}(t-t')|\xi|\widehat{uu_x}(\xi,\eta,t')\, dt'\big) \in L^{\infty}([0,T];L^2(\mathbb{R}^2)).
\end{equation*}
Indeed, \eqref{eqfraLR} and Sobolev's embedding show $\|uu_x\|_{H^{3/2}_x}\lesssim \|u\|_{H^s}^2$, whenever $s\geq 5/2$. Now, complex interpolation \eqref{prelimneq3}, Young's inequality and Sobolev's embedding determine 
\begin{equation}\label{weieq18}
    \begin{aligned}
    \|\langle x \rangle^{3/2} u u_x \|_{L^2_{xy}} &\lesssim \|\langle x \rangle^{1/2} u^2\|_{L^2_{xy}}+\|J_x(\langle x \rangle^{3/2} u^2)\|_{L^2_{xy}} \\
    &\lesssim \|u\|_{L^{\infty}_{xy}}\|\langle x\rangle^{1/2}u\|_{L^2_{xy}}+\|\|\langle x\rangle^{9/4} u^2\|_{L^2_x}^{2/3}\|J^3_x(u^2)\|_{L^2_x}^{1/3}\|_{L^2_y} \\
    &\lesssim \|J^{3}u\|_{L^2}\|\langle x\rangle^{1/2}u\|_{L^2_{xy}}+ \|\langle x\rangle^{9/4} u^2\|_{L^2_{xy}}+\|J^3_x(u^2)\|_{L^2_{xy}}.
    \end{aligned}
\end{equation}
Since $H^3(\mathbb{R}^2)$ is a Banach algebra, $\|J^3_x(u^2)\|_{L^2_{xy}}\lesssim \|J^3(u^2)\|_{L^2_{xy}} \lesssim \|u\|_{H^3}^2$, so it remains to derive a bound for the second term on the right hand side of equation \eqref{weieq18}. Let $0<\epsilon <3/20$, applying Sobolev's embedding and complex interpolation we find
\begin{equation}\label{weieq19}
\begin{aligned}
    \|\langle x\rangle^{9/4} u^2\|_{L^2_{xy}}\lesssim\|\langle |(x,y)|\rangle^{9/8} u\|_{L^4_{xy}}^2 &\lesssim \|J^{1/2}(\langle |(x,y)|\rangle^{9/8} u)\|_{L^2_{xy}}^2 \\
    &\lesssim\|\langle |(x,y)| \rangle^{3/2-\epsilon}u\|_{L^2_{xy}}^{\frac{18}{12-8\epsilon}}\|J^{\frac{6-4\epsilon}{3-8\epsilon}}u\|^{\frac{6-16\epsilon}{12-8\epsilon}}_{L^2_{xy}}.
\end{aligned}
\end{equation}
Notice that since $0<\epsilon <3/20$, $\|J^{\frac{6-4\epsilon}{3-8\epsilon}}u\|_{L^2_{xy}}\leq \|J^3u\|_{L^2_{xy}}$. Plugging \eqref{weieq19} in \eqref{weieq18}, we complete the deduction of \eqref{weieq17}.  
To prove the remaining estimate, i.e., 
\begin{equation}\label{remainEst}
    J_{\xi}^{1/2}\big(\int_0^t e^{i\omega(\xi,\eta)(t-t')} \xi \, \partial_{\xi}\widehat{u^2}(\xi,\eta,t')\, dt'\big) \in L^{\infty}([0,T];L^2(\mathbb{R}^2)),
\end{equation}
we write $\frac{\partial}{\partial \xi}\widehat{u^2}=\widehat{-ixu^2}$, then according to the arguments in \eqref{weieq15} and \eqref{weieq16}, to deduce \eqref{remainEst}, it is enough to show
\begin{equation}\label{weieq20}
    xu^2 \in L^{\infty}([0,T];H_x^{3/2}(\mathbb{R}^2))\cap L^{\infty}([0,T]; L^2(|x|^3 dxdy)).
\end{equation}
To this aim, after some computations applying Theorem \ref{TheoSteDer} and property \eqref{prelimneq}, we employ complex interpolation and Young's inequality to show
\begin{equation*}
    \begin{aligned}
    \|J_x^{1/2}(xu^2)\|_{L^2_{xy}}&\lesssim \|xu^2\|_{L^2_{xy}}+\|J_x^{1/2}(u^2)\|_{L^2_{xy}}+\|J_x^{3/2}(\langle x \rangle u^2)\|_{L^2_{xy}} \\
    &\lesssim \|u\|_{L^{\infty}_{xy}}\|\langle x \rangle u\|_{L^2_{xy}}+\|u\|_{L^{\infty}_{xy}}\|J_x^{1/2}u\|_{L^2_{xy}}+\|\|\langle x\rangle^{9/4}u^2\|_{L^2_x}^{4/9}\|J_x^{27/10}( u^2)\|_{L^2_{x}}^{5/9}\|_{L^2_y} \\
    & \lesssim \|J^3u\|_{L^2_{xy}}\|\langle x \rangle u\|_{L^2_{xy}}+\|J^{3}u\|_{L^2_{xy}}^2+\|\langle x\rangle^{9/4}u^2\|_{L^2_{xy}}.
    \end{aligned}
\end{equation*}
Then, \eqref{weieq19} allows us to conclude that $ xu^2 \in L^{\infty}([0,T];H_x^{3/2}(\mathbb{R}^2))$. Finally, since $u\in C([0,T];H^s(\mathbb{R}^2))$, $s> \max\{3,r_2\} $, there exists some $0<\delta<1$ such that $3+\delta<s$, then we have
\begin{equation}\label{weieq21}
\begin{aligned}
    \|\langle x \rangle^{3/2} x u^2\|_{L^2_{xy}} \lesssim \|\langle x \rangle^{5/4} u\|_{L^4_{xy}}^{2} &\lesssim \|J^{1/2}(\langle (x,y) \rangle^{5/4} u)\|_{L^2_{xy}}^{2} \\
    & \lesssim \|\langle (x,y) \rangle^{3/2-\epsilon} u\|_{L^2_{xy}}^{\frac{10}{6-4\epsilon}}\|J^{\frac{3-2\epsilon}{1-4\epsilon}}u\|_{L^2_{xy}}^{\frac{2-8\epsilon}{6-4\epsilon}}.
\end{aligned}
\end{equation}
Now, taking $0<\epsilon\ll 1$ such that $\frac{3-2\epsilon}{1-4\epsilon}\leq 3+\delta<s$, \eqref{weieq21} shows that $xu^2\in L^{\infty}([0,T]; L^2(|x|^3 dxdy))$. This in turn confirms the validity of \eqref{weieq20}.
\end{proof}
Consequently, from \eqref{weieq14.1} and Claim \ref{claim2}, it follows:
\begin{equation}\label{weieq22}
\begin{aligned}
    J^{1/2}_{\xi}\partial_{\xi}\widehat{u}(\xi,\eta,t) \in L^{2}(\mathbb{R}^2) \, & \text{ if and only if } \, \\
    & J_{\xi}^{1/2}\Big(e^{i\omega(\xi,\eta)t}\partial_{ \xi}\widehat{u_0}(\xi,\eta)-\frac{i}{2}\int_0^t e^{i\omega(\xi,\eta)(t-t')} \widehat{u^2}(\xi,\eta,t')\, dt'\Big) \in L^2(\mathbb{R}^2). 
\end{aligned} 
\end{equation}
Now, since \eqref{weieq21} establishes that $\widehat{u^2} \in H^{1^{+}}(\mathbb{R}^2)$,  Sobolev's embedding determines that $\widehat{u^2}$ can be regarded as a continuous function on the $\xi$ and $\eta$ variables. Additionally, since $\partial_\xi \widehat{u_0}\in H^{(1/2)^{+}}_{\xi}(\mathbb{R}^2)$, Fubinni's theorem and Sobolev's embedding shows that $\partial_{\xi}\widehat{u_0}(\xi,\eta)$ is continuous in $\xi$ for almost every $\eta \in \mathbb{R}$. Given that \eqref{weieq22} holds at $t=t_2$, according to the preceding discussions and Proposition \ref{optm1}, we deduce
\begin{equation*}
    \begin{aligned}
    &e^{i(1\mp\eta^2)t_2}\partial _{\xi}\widehat{u_0}(0,\eta)-\frac{i}{2} \int_{0}^{t_2} e^{i(1\mp \eta^2)(t_2-t')}\widehat{u^2}(0,\eta,t')\, dt' \\
    &= e^{-i(1\mp\eta^2)t_2}\partial_{ \xi}\widehat{u_0}(0,\eta)-\frac{i}{2} \int_0^{t_2} e^{-i(1\mp \eta^2)(t_2-t')}\widehat{u^2}(0,\eta,t')\, dt'
    \end{aligned}
\end{equation*}
so that
\begin{equation}\label{weieq23}
    \begin{aligned}
    &2i \sin((1\mp \eta^2)t_2)\partial _{\xi}\widehat{u_0}(0,\eta)
    = - \int_0^{t_2} \sin((1\mp\eta^2)(t_2-t'))\widehat{u^2}(0,\eta,t')\, dt',
    \end{aligned}
\end{equation}
for almost every $\eta\in \mathbb{R}$. This completes the deduction of identity \eqref{identw1}. Now,  recalling that the quantity  $M(u)=\|u(t)\|_{L^2}$ is invariant for solution of the equation in \eqref{EQBO}, and that $\eta \mapsto \widehat{u^2}(0,\eta,t)$ determines a continuous map,  we let $\eta\to 0$ in \eqref{weieq23} to find
\begin{equation}\label{weieq24}
\begin{aligned}
    J^{1/2}_{\xi}\partial_{\xi}\widehat{u}(\xi,\eta,t_2) \in L^{2}(\mathbb{R}^2) \, & \text{ and }\, \eta \mapsto \partial_{\xi} \widehat{u}_0(0,\eta) \, \text{ continuous at the origin imply } \\
   &2i\sin(t_2)\partial_{\xi}\widehat{u_0}(0,0)=(\cos(t_2)-1)\|u_0\|_{L^2_{xy}}^2.
\end{aligned} 
\end{equation}
Therefore, in the case $u_0 \in Z_{s,2^{+},2^{+}}(\mathbb{R}^2)$, \eqref{weieq24} yields identity  \eqref{identw2}.


\section{Proof of Theorem \ref{SCeqThm}}\label{SecSh}

This section is aimed to briefly indicate the modifications needed to prove Theorem \ref{SCeqThm}. We first recall that the IVP \eqref{EQSH} is LWP in the space $H^s(\mathbb{R}^2)$, $s>3/2$ by the results established in \cite{Paisas1}. To prove well-posedness in the space $\widetilde{X}^s(\mathbb{R}^2)$ determined by the norm
\begin{equation*}
    \|f\|_{\widetilde{X}^s}=\|J^s_xf\|_{L^2_{xy}}+\|D_x^{-1/2}\partial_yf\|_{L^2_{xy}},
\end{equation*}
the key ingredient is the refined Strichartz estimate deduced in \cite{Paisas1}:
\begin{lemma}
The results of Lemma \ref{refinStri} hold for solutions of the IVP \eqref{EQSH}.
\end{lemma}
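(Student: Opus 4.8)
The plan is to follow exactly the chain Lemma \ref{STEST} $\to$ Corollary \ref{liestCoro} $\to$ Lemma \ref{refinStri}, verifying at each link that the argument uses the dispersion relation only through features common to \eqref{EQBO} and \eqref{EQSH}. Denote by $\widetilde{S}(t)$ the unitary group associated with the linear part of \eqref{EQSH}, i.e. the Fourier multiplier with phase
\begin{equation*}
    \widetilde{\omega}(\xi,\eta)=\sign(\xi)\xi^2+\sign(\xi)\eta^2,
\end{equation*}
so that $\mathcal{F}(\widetilde{S}(t)f)(\xi,\eta)=e^{it\widetilde{\omega}(\xi,\eta)}\widehat{f}(\xi,\eta)$. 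Comparing with \eqref{lieareqsym}, $\widetilde{\omega}$ differs from $\omega$ only by the absence of the zeroth-order term $\sign(\xi)$ and by the fixed sign of the $\sign(\xi)\eta^2$ contribution. The whole strategy is to show that neither difference affects the estimates, so that the reasoning of \cite{Paisas1,KenigKP,LinarFKP} transfers line by line.

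First I would record the homogeneous Strichartz estimate for $\widetilde{S}(t)$, that is, the analog of Lemma \ref{STEST}. This is precisely the case $\alpha=1$ of the family of groups treated in \cite{Paisas1}, so the estimate is available there; alternatively it follows from the stationary-phase analysis of \cite[Proposition 4.8]{LinarFKP}. The two structural differences are harmless. The factor $e^{it\sign(\xi)}$ distinguishing $\omega$ from $\widetilde{\omega}$ is the unimodular multiplier $\cos(t)\,\mathrm{Id}-\sin(t)\,\mathcal{H}_x$, which is bounded on $L^p_{xy}(\mathbb{R}^2)$ uniformly in $t$ for $1<p<\infty$, because $\mathcal{H}_x$ is; and the dispersive decay in $y$ is governed by $|\partial_\eta^2\widetilde{\omega}|=2$, which is insensitive to the sign of the $\eta^2$ term. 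Hence $\|\widetilde{S}(t)f\|_{L^q_tL^p_{xy}}\lesssim\|f\|_{L^2}$ holds for the same admissible range $2\leq p,q\leq\infty$, $q>2$, $\tfrac1p+\tfrac1q=\tfrac12$.

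Next I would transfer Corollary \ref{liestCoro} verbatim: taking $p$ large with $\delta>2/p$, Sobolev embedding together with the Strichartz estimate above yields $\|\widetilde{S}(t)f\|_{L^2_TL^\infty_{xy}}\lesssim T^{\kappa_\delta}\|J^\delta f\|_{L^2}$ for some $\kappa_\delta\in(0,1/2)$, since that derivation used only the Strichartz bound and not the explicit form of the phase. With this endpoint-loss estimate in hand, the refined Strichartz estimate \eqref{liest3.1} is obtained through the short-time argument of \cite[Lemma 4.11]{LinarFKP} (see also \cite[Lemma 1.7]{KenigKP}): one partitions $[0,T]$ into subintervals whose length is tuned to the $x$-frequency, applies Duhamel on each piece against $\widetilde{S}(t)$, and combines the $L^2_TL^\infty_{xy}$ bound with the local smoothing furnished by the leading term $\sign(\xi)\xi^2$; the anisotropic weights $J_x^{1/2+2\delta}$ and $J_x^{1/2+\delta}D_y^\delta$ and the source terms $J_x^{-1/2+2\delta}F$, $J_x^{-1/2+\delta}D_y^\delta F$ arise from this frequency-dependent splitting exactly as for \eqref{EQBO}. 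The only step requiring a genuine check is that the refined $x$-gain (the local smoothing underlying the $J_x^{1/2}$ weights) and the $y$-Strichartz exponent both survive the passage from $\omega$ to $\widetilde{\omega}$; since the leading $x$-dispersion $\sign(\xi)\xi^2$ is identical and the $y$-dispersion differs only by a sign entering neither the modulus of the phase nor its Hessian, no new estimate is needed. I therefore expect the bookkeeping of these two structural differences to be the main (and essentially the only) subtlety, which the observations above dispatch.
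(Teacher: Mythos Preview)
Your proposal is correct and follows essentially the same approach as the paper: both rely on the fact that the refined Strichartz estimate for \eqref{EQSH} is available from the same chain of arguments (Strichartz $\to$ endpoint-with-loss $\to$ short-time refinement) used for \eqref{EQBO}, drawing on \cite{Paisas1,KenigKP,LinarFKP}. The paper is more terse---it simply attributes the estimate to \cite{Paisas1}---whereas you spell out explicitly why the two structural differences between $\omega$ and $\widetilde{\omega}$ (the missing $\sign(\xi)$ term and the fixed sign on $\sign(\xi)\eta^2$) are harmless; your observation that $e^{it\sign(\xi)}$ acts as $\cos(t)\,\mathrm{Id}-\sin(t)\,\mathcal{H}_x$ is a clean way to see the equivalence of the Strichartz bounds, though strictly speaking the direct route of \cite{Paisas1} does not need it.
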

Once the above lemma has been established, the proof of LWP in $\widetilde{X}^s(\mathbb{R}^2)$ follows the same line of arguments leading to the conclusion of Theorem \ref{Improwellp}. Actually, this case does not require to estimate the norm $\|D_x^{-1/2}u\|_{L^2_{xy}}$, which slightly simplifies our arguments. We emphasize that Lemma \ref{lemaexismooth} assures the existence of solutions of the IVP \eqref{EQSH} in the space $\widetilde{X}^{\infty}(\mathbb{R}^2)=\bigcap_{s\geq 0}\widetilde{X}^{s}(\mathbb{R}^2)$. Consequently, it follows that \eqref{EQSH} is LWP in $\widetilde{X}^s(\mathbb{R}^2)$, $s>3/2$. 

On the other hand, setting
\begin{equation*}
    \widetilde{\omega}(\xi,\eta)=\sign(\xi)\xi^2+\sign(\xi)\eta^2,
\end{equation*}
 the resonant function determined by the equation in \eqref{EQSH} is given by 
\begin{equation*}
    \widetilde{\Omega}(\xi_1,\eta_1,\xi_2,\eta_2)=\widetilde{\omega}(\xi_1+\xi_2,\eta_1+\eta_2)-\widetilde{\omega}(\xi_1,\eta_2)-\widetilde{\omega}(\xi_2,\eta_2). 
\end{equation*}
Then,  it is not difficult to see:
\begin{prop}
The results in Proposition \ref{lembilinEST} are valid replacing the set $D_{N,L}$ by $$
    \widetilde{D}_{N,L}=\left\{(m,n,\tau)\in \mathbb{Z}^2\times \mathbb{R}: |(m,n)|\in I_{N} \text{ and } |\tau-\widetilde{\omega}(m,n)|\leq L \right\},$$ 
    whenever $N,L\in \mathbb{D}$.
\end{prop}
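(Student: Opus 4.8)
The plan is to repeat the argument that established Proposition \ref{lembilinEST} verbatim, merely substituting the new dispersion relation $\widetilde\omega(\xi,\eta)=\sign(\xi)\xi^2+\sign(\xi)\eta^2$ for the old $\omega(\xi,\eta)=\sign(\xi)+\sign(\xi)\xi^2\mp\sign(\xi)\eta^2$ throughout, and checking that every place where the precise form of $\omega$ entered the proof still works. Concretely, I would first observe that the combinatorial skeleton of the proof of Proposition \ref{lembilinEST} -- the reduction \eqref{eqbili4} using the symmetry $\widetilde f_j(m,n,\tau)=f_j(-m,-n,-\tau)$, the shift to the profiles $f_j^{\#}(m,n,\tau)=f_j(m,n,\tau+\widetilde\omega(m,n))$ supported in $\widetilde D^{\#}_{N_j,L_j}$, the Cauchy--Schwarz applications in $\tau$ and in the spatial frequencies giving part (i), and the decomposition $\mathcal I=\mathcal I_1+\mathcal I_2+\mathcal I_3+\mathcal I_4$ over the sign regions $A_1,\dots,A_4$ of \eqref{eqbili6.1} -- is entirely insensitive to the exact symbol; it only uses that $\widetilde\omega(m,n)$ is even-in-sign-flip (so that $\widetilde\Omega$ behaves like $\Omega$ under $\eqref{eqbili4}$) and that the nonresonant estimates hinge on lower bounds for $|\partial_{m_2}\widetilde\Omega|$ and $|\partial_{n_2}\widetilde\Omega|$ (or the analogous first-variable derivatives). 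So the only genuine task is to recompute the resonance function $\widetilde\Omega$ and its partial derivatives on each region and verify the same lower bounds hold.

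The key computation: on $A_1$ (where $m_2(m_1+m_2)>0$, i.e. $\sign(m_2)=\sign(m_1+m_2)$) one gets, in the notation of \eqref{eqbili6.3},
\begin{equation*}
\widetilde\Omega(m_1,n_1,m_2,n_2)=\sign(m_2)\big((m_1+m_2)^2+(n_1+n_2)^2\big)-\sign(m_1)\big(m_1^2+n_1^2\big)-\sign(m_2)\big(m_2^2+n_2^2\big),
\end{equation*}
and since $\sign(m_2)=\sign(m_1+m_2)$ this simplifies (exactly as in \eqref{eqbili6.3} but without the constant terms and with $+n^2$ everywhere instead of $\mp n^2$) to $\sign(m_2)(m_1^2+2m_1m_2)+\sign(m_2)(n_1^2+2n_1n_2)-\sign(m_1)m_1^2-\sign(m_1)n_1^2$, whence $|\partial_{m_2}\widetilde\Omega|\sim|m_1|$ and $|\partial_{n_2}\widetilde\Omega|\sim|n_1|$ on each of the sign-refined subregions $A_{1,1},A_{1,2}$ -- identical to \eqref{eqbili7}. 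On $A_2$ (where $\sign(m_1)=-\sign(m_2)=\sign(m_1+m_2)$) one finds $\widetilde\Omega=\sign(m_1)(2m_1m_2+2m_2^2)+\sign(m_1)(2n_1n_2+2n_2^2)$, so $|\partial_{m_2}\widetilde\Omega|\sim|2m_1+4m_2|$ and $|\partial_{n_2}\widetilde\Omega|\sim|2n_1+4n_2|$, matching \eqref{eqbili10}; and for part (iii) the first-variable derivatives give $|\partial_{m_1}\widetilde\Omega|\sim|m_2|$, $|\partial_{n_1}\widetilde\Omega|\sim|n_2|$ as in \eqref{eqbili12}. The regions $A_3$ ($m_2=-m_1$) and $A_4$ ($m_1=0$ or $m_2=0$) are handled with no use of the resonance function at all, so they carry over unchanged. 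Having these lower bounds, Lemma \ref{cardinal} applies exactly as before to count lattice points $m_2$ (or $n_2$) in the fibers cut out by $|\tau_1+\tau_2+\widetilde\Omega|\le L_3$, and the Cauchy--Schwarz chains \eqref{eqbili9}, \eqref{eqbili11} reproduce the stated bounds with $D_{N,L}$ replaced by $\widetilde D_{N,L}$.

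The one point requiring a moment's care -- and what I would flag as the main (mild) obstacle -- is the logarithmic bound in the $High\times High\to Low$ interpolation: in the proof of Proposition \ref{lembilinEST}(ii)/(iii) one uses $L\sim|\Omega|\lesssim N_{\max}^2$ when $L_{med}\ll L_{max}$ (this is implicit in the structure and reappears explicitly in Lemma \ref{lemmaShortBi3}). For $\widetilde\omega$ one still has $|\widetilde\Omega|\lesssim N_{\max}^2$ by the triangle inequality, since $\widetilde\omega$ is quadratic and homogeneous of degree $2$, so this survives. I would also note that $\widetilde\omega$ is in fact simpler than $\omega$ (no zeroth-order term), which only helps. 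Therefore the conclusion is that Proposition \ref{lembilinEST} holds verbatim with $D_{N,L}$ replaced by $\widetilde D_{N,L}$, and likewise its dual Corollary transfers. I would present this as a short remark indicating the substitution and the two or three resonance computations above, referring the reader to the proof of Proposition \ref{lembilinEST} for the unchanged combinatorial core.
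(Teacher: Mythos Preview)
Your proposal is correct and is exactly the verification the paper has in mind: the paper does not give a separate proof for this proposition, stating only that ``it is not difficult to see'' the result, so what you have written is precisely the check one is expected to perform. Your recomputations of $\widetilde\Omega$ on $A_1$ and $A_2$ and the resulting derivative bounds $|\partial_{m_2}\widetilde\Omega|\sim|m_1|$, $|\partial_{n_2}\widetilde\Omega|\sim|n_1|$ (on $A_1$), $|\partial_{m_2}\widetilde\Omega|\sim|2m_1+4m_2|$, $|\partial_{n_2}\widetilde\Omega|\sim|2n_1+4n_2|$ (on $A_2$), and the first-variable analogues for part (iii) are all correct, and your observation that $A_3,A_4$ require no information on the resonance function is also right.
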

This in turn allows us to follow the same reasoning leading to the deduction of Theorem \ref{LocalwellTorus} to derive that the IVP \eqref{EQSH} is LWP in $H^s(\mathbb{T}^2)$, $s>3/2$.

Concerning well-posedness in weighted spaces, here we replace equation \eqref{HEQBO} by \begin{equation*}
    \partial_t \mathcal{H}_x u+\partial_x^2 u +\partial_y^2u+\mathcal{H}_x(u\partial_x u)=0.
\end{equation*}
Then, employing the above identity, we can adapt the arguments in the proof of Theorem \ref{localweigh} to obtain the same well-posedness conclusion in anisotropic spaces for the equation in \eqref{EQSH}. Besides, the arguments in Proposition \ref{propprelimneq2} show
\begin{equation*}
    \mathcal{D}^b(e^{i\sign(x)\eta^2t}) \lesssim |x|^{-b}, \hspace{0.2cm} x\in \mathbb{R}\setminus\{0\},
\end{equation*}
whenever $b \in (0,1)$ fixed and for all $\eta \in \mathbb{R}$. Thus, the previous estimate allows us to follow the same arguments in the proof of Theorems \ref{sharpdecay} and \ref{sharpdecay1} to obtain the same conclusions for the IVP \eqref{EQSH}. However, instead of \eqref{identw1} we get
\begin{equation*}
    2i\sin(\eta^2(t_2-t_1))\partial_{\xi}\widehat{u}(0,\eta,t_1)=-\int_{t_1}^{t_2}\sin(\eta^2(t_2-t'))\widehat{u^2}(0,\eta,t')\, dt',
\end{equation*}
for almost $\eta \in \mathbb{R}$. This encloses the discussion leading to the deduction of Theorem \ref{SCeqThm}.

\section{Appendix: proof of Proposition \ref{CalderonCom}}

We first require to further decompose the lower frequency operator $N=1$ introduced in \eqref{proje1}. Thus, for all dyadic number $N$, let $\varphi_N(\xi)=\psi_1(\xi/N)-\psi_1(2\xi/N)$, and we denote by $P^x_N$ the associated operator defined as in \eqref{proje1}, i.e., the operator determined by the $L^2$-multiplier by the function $\varphi_N$. 

We shall use the following result.
\begin{lemma}\label{lemmaApped1}
Let $\phi\in C^{\infty}_c(\mathbb{R}^d)$ such that $\supp(\phi)\subset\{|\xi|\leq R\}$ for some $R>0$. Consider the operator $P^{\phi}f$ determined by $\widehat{P^{\phi}f}(\xi)=\phi(\xi)\widehat{f}(\xi)$. Then 
\begin{equation}\label{eqlemm0}
 \sup_{z\in \mathbb{R}^d}\frac{|P^{\phi}f(x-z)|}{(1+R|z|)^d} \lesssim \mathcal{M}(f)(x).
\end{equation}
In the above, $\mathcal{M}(\cdot)$ denotes the usual Hardy-Littlewood maximal function. 
\end{lemma}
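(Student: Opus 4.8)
The plan is to realize $P^{\phi}$ as convolution against a rescaled Schwartz kernel and then to reduce the shifted pointwise bound to the elementary fact that averages of $|f|$ over balls centered at $x$ are dominated by $\mathcal{M}(f)(x)$. First I would write $\phi(\xi)=\phi_0(\xi/R)$ with $\phi_0(\zeta)=\phi(R\zeta)\in C_c^{\infty}(\mathbb{R}^d)$ supported in $\{|\zeta|\leq 1\}$, so that $P^{\phi}f=K\ast f$ with $K=\mathcal{F}^{-1}\phi$. The scaling law of the Fourier transform gives $K(y)=R^d K_0(Ry)$, where $K_0=\mathcal{F}^{-1}\phi_0\in\mathcal{S}(\mathbb{R}^d)$ since the inverse Fourier transform maps $C_c^{\infty}$ into the Schwartz class. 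In particular, for every $N\in\mathbb{Z}^{+}$ there is $C_N>0$ with $|K_0(y)|\leq C_N(1+|y|)^{-N}$, and hence $|K(y)|\leq C_N R^d(1+R|y|)^{-N}$.

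Next I would insert this kernel bound into the convolution and recenter the integral about $x$. Writing $P^{\phi}f(x-z)=\int K(y)f(x-z-y)\,dy$ and substituting $u=z+y$ gives $|P^{\phi}f(x-z)|\leq C_N R^d\int (1+R|u-z|)^{-N}|f(x-u)|\,du$. The kernel is now centered at $z$ in the $u$-variable, while the maximal function is evaluated at $x$; this mismatch is the crux of the argument, and it is the part I expect to require the most care. I would resolve it by a dyadic decomposition: split the region of integration into $A_0=\{R|u-z|\leq 1\}$ and $A_k=\{2^{k-1}<R|u-z|\leq 2^k\}$ for $k\geq 1$, on which $(1+R|u-z|)^{-N}\sim 2^{-kN}$ and $A_k\subset\{|u-z|\leq 2^k/R\}$. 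Because $\{u:|u-z|\leq 2^k/R\}$ corresponds under $u\mapsto x-u$ to the ball $B(x-z,2^k/R)\subset B(x,|z|+2^k/R)$, the Hardy--Littlewood bound $\int_{B(x,\rho)}|f|\leq c_d\rho^d\,\mathcal{M}(f)(x)$ yields $\int_{A_k}|f(x-u)|\,du\lesssim (|z|+2^k/R)^d\,\mathcal{M}(f)(x)$.

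Finally I would sum the dyadic pieces. Using $(|z|+2^k/R)^d\lesssim |z|^d+2^{kd}R^{-d}$ and choosing $N>d$, the two geometric series $\sum_k 2^{-kN}$ and $\sum_k 2^{-k(N-d)}$ converge, which gives $\int(1+R|u-z|)^{-N}|f(x-u)|\,du\lesssim R^{-d}(1+R|z|)^d\,\mathcal{M}(f)(x)$, where I also use $R^d|z|^d+1\leq(1+R|z|)^d$. Multiplying by $C_N R^d$ cancels the factor $R^{-d}$ and produces the uniform-in-$z$ estimate $|P^{\phi}f(x-z)|\lesssim (1+R|z|)^d\,\mathcal{M}(f)(x)$; taking the supremum over $z\in\mathbb{R}^d$ then yields \eqref{eqlemm0}. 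The only genuine difficulty is the shift by $z$, and the triangle-inequality containment $B(x-z,\rho)\subset B(x,|z|+\rho)$ together with the convergence requirement $N>d$ (available since $K_0$ is Schwartz) handle it cleanly.
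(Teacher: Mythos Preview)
Your argument is correct. The paper states this lemma without proof (it is a standard pointwise bound used to control Littlewood--Paley pieces), so there is no proof in the paper to compare against; your Schwartz-kernel plus dyadic-shell reduction, with the containment $B(x-z,\rho)\subset B(x,|z|+\rho)$ and the choice $N>d$, is exactly the expected elementary route to \eqref{eqlemm0}.
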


Additionally, we will apply the  following particular case of the Fefferman-Stein inequality:
\begin{lemma}(\cite{FefermStein})\label{FSte} Let $f=(f_j)_{j=1}^{\infty}$ be a sequence of locally integrable functions in $\mathbb{R}^d$. Let $1<p<\infty$. Then
\begin{equation}\label{eqfeffSteq1}
\|(\mathcal{M}f_j)_{l_j^2}\|_{L^p} \lesssim \|(f_j)_{l_j^2}\|_{L^p}. 
\end{equation}
\end{lemma}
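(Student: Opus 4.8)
The plan is to deduce the $l^2_j$-valued inequality \eqref{eqfeffSteq1} from the scalar \emph{weighted} maximal inequality
\[
\int_{\mathbb{R}^d}(\mathcal{M}f)^s\,u\,dx \lesssim_s \int_{\mathbb{R}^d}|f|^s\,\mathcal{M}u\,dx, \qquad 1\le s<\infty,
\]
valid for all $f$ and all weights $u\ge 0$, which we refer to as the inequality (W). First I would establish (W). Its core is the weighted weak-type $(1,1)$ bound $u(\{\mathcal{M}f>\lambda\})\lesssim \lambda^{-1}\int|f|\,\mathcal{M}u\,dx$: one covers $\{\mathcal{M}f>\lambda\}$ by balls $B_i$ with $\lambda|B_i|<\int_{B_i}|f|$ and bounded overlap (Vitali), uses that $\inf_{B_i}\mathcal{M}u\gtrsim |B_i|^{-1}u(5B_i)$ to write $u(5B_i)\lesssim \lambda^{-1}\int_{B_i}|f|\,\mathcal{M}u$, and sums. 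This is precisely the case $s=1$; interpolating the weak bound against the trivial $L^\infty$ estimate (using $u\le \mathcal{M}u$) by the Marcinkiewicz theorem yields (W) for all $1<s<\infty$. Everything that follows reduces to (W) with $s\in\{1,2\}$ together with the classical $L^p$-boundedness of $\mathcal{M}$.

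The case $p=2$ is immediate: by Tonelli and the scalar $L^2$-bound for $\mathcal{M}$,
\[
\big\|(\mathcal{M}f_j)_{l^2_j}\big\|_{L^2}^2=\sum_j\|\mathcal{M}f_j\|_{L^2}^2\lesssim \sum_j\|f_j\|_{L^2}^2=\big\|(f_j)_{l^2_j}\big\|_{L^2}^2.
\]
For $p>2$ set $q=p/2>1$ and use duality of $L^q$: choose $u\ge 0$ with $\|u\|_{L^{q'}}\le 1$ so that $\big\|(\mathcal{M}f_j)_{l^2_j}\big\|_{L^p}^2=\big\|\sum_j(\mathcal{M}f_j)^2\big\|_{L^q}=\int \sum_j(\mathcal{M}f_j)^2\,u$. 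Applying (W) with $s=2$ to each term and summing gives
\[
\int\sum_j(\mathcal{M}f_j)^2\,u\lesssim \int\Big(\sum_j|f_j|^2\Big)\mathcal{M}u\le \Big\|\sum_j|f_j|^2\Big\|_{L^q}\|\mathcal{M}u\|_{L^{q'}}\lesssim \big\|(f_j)_{l^2_j}\big\|_{L^p}^2,
\]
where the last step uses that $\mathcal{M}$ is bounded on $L^{q'}$ since $q'>1$. This settles $2\le p<\infty$.

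Finally, for $1<p<2$ I would argue by duality in the $l^2_j$-valued space $L^p$. Pick $h_j\ge 0$ with $\big\|(h_j)_{l^2_j}\big\|_{L^{p'}}\le 1$ realizing $\big\|(\mathcal{M}f_j)_{l^2_j}\big\|_{L^p}=\sum_j\int(\mathcal{M}f_j)h_j$. Applying (W) with $s=1$ termwise, then Cauchy--Schwarz in $j$ and Hölder in $x$, yields
\[
\sum_j\int(\mathcal{M}f_j)h_j\lesssim \int\sum_j|f_j|\,\mathcal{M}h_j\le \big\|(f_j)_{l^2_j}\big\|_{L^p}\,\big\|(\mathcal{M}h_j)_{l^2_j}\big\|_{L^{p'}}.
\]
Since $p'>2$, the last factor is controlled by the case already proved, $\big\|(\mathcal{M}h_j)_{l^2_j}\big\|_{L^{p'}}\lesssim \big\|(h_j)_{l^2_j}\big\|_{L^{p'}}\le 1$, so the estimate closes with no circularity. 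The step I expect to be the main obstacle is the weighted inequality (W): one must keep the constant in the weighted weak-$(1,1)$ bound uniform and correctly carry $\mathcal{M}u$ (rather than $u$) through the covering argument, since all three ranges of $p$ hinge on (W) with $s\in\{1,2\}$.
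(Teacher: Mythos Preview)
The paper does not prove this lemma; it is stated with a citation to Fefferman--Stein and used as a black box in the appendix. Your proposal, by contrast, sketches an actual proof along the classical Fefferman--Stein route via the weighted maximal inequality, and your arguments for $p\ge 2$ are correct and standard.

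There is, however, a genuine gap in your treatment of the range $1<p<2$. You invoke ``(W) with $s=1$'', meaning the \emph{strong-type} inequality
\[
\int_{\mathbb{R}^d}(\mathcal{M}f)\,u\,dx \lesssim \int_{\mathbb{R}^d}|f|\,\mathcal{M}u\,dx,
\]
but this inequality is false. Taking $u\equiv 1$ gives $\mathcal{M}u\equiv 1$, and the estimate would read $\|\mathcal{M}f\|_{L^1}\lesssim\|f\|_{L^1}$, contradicting the fact that $\mathcal{M}f\notin L^1(\mathbb{R}^d)$ whenever $f\not\equiv 0$. What your covering argument actually yields at $s=1$ is only the \emph{weak-type} bound $u(\{\mathcal{M}f>\lambda\})\lesssim\lambda^{-1}\int|f|\,\mathcal{M}u$, and Marcinkiewicz interpolation then gives the strong bound only for $s>1$. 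Consequently the termwise step $\int(\mathcal{M}f_j)h_j\lesssim\int|f_j|\,\mathcal{M}h_j$ in your duality argument is unjustified, and the proof for $1<p<2$ does not close.

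The standard repair is to abandon duality in this range and instead prove the vector-valued weak-type $(1,1)$ estimate
\[
\big|\{x:\|(\mathcal{M}f_j(x))\|_{l^2}>\lambda\}\big|\lesssim \lambda^{-1}\big\|\,\|(f_j)\|_{l^2}\,\big\|_{L^1}
\]
directly, via a Calder\'on--Zygmund decomposition of the scalar function $F(x)=\|(f_j(x))\|_{l^2}$, and then interpolate with your $p=2$ case to cover $1<p<2$.
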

Now, we are in the condition to deduce Proposition \ref{CalderonCom}.

\begin{proof}[Proof of Proposition \ref{CalderonCom}]

When $\beta=1$ on the l.h.s of \eqref{Comwell}, by writing $D_x=\mathcal{H}_x\partial_x$  and using that $\mathcal{H}_x$ determines a bounded operator in $L^{p}$, we have that \eqref{Comwell} follows from Proposition \ref{CalderonComGU}. 

We will assume that $0<\alpha,\beta<1$ with $\alpha+\beta=1$. We write
\begin{equation}\label{equapeend1}
    D_x^{\alpha}[\mathcal{H}_x,g]D_x^{\beta} f(x)=-i\int |\xi_1+\xi_2|^{\alpha}|\xi_2|^{\beta}\big(\sign(\xi_1+\xi_2)-\sign(\xi_2)\big)\widehat{g}(\xi_1)\widehat{f}(\xi_2)e^{ix\cdot(\xi_1+\xi_2)} \, d\xi_1 d\xi_2,
\end{equation}
then neglecting the null measure sets where $\xi_1+\xi_2=0$ or $\xi_2=0$, we observe that the integral in \eqref{equapeend1} is not null only when $(\xi_1+\xi_2)\xi_2<0$, in order words, when $|\xi_2|<|\xi_1|$. Thus, by Bony's paraproduct decomposition we find
\begin{equation*}
    \begin{aligned}
 D_x^{\alpha}[\mathcal{H}_x,g]D_x^{\beta} f=&\mathcal{H}_x\big(\sum_{N>0} D^{\alpha}(P_N^x g P_{\ll N}^x D_x^{\beta}f ) \big)-\sum_{N>0} D^{\alpha}(P_N^x g P_{\ll N}^x \mathcal{H}_x D_x^{\beta}f)\\
 &+\mathcal{H}_x\big(\sum_{N>0} D^{\alpha}(P_N^x g \widetilde{P}_{N}^x D_x^{\beta}f ) \big) -\sum_{N>0} D^{\alpha}(P_N^x g \widetilde{P}^x_N\mathcal{H}_x D_x^{\beta}f) \\
 =:& \mathcal{A}_1+\mathcal{A}_2+\mathcal{A}_3+\mathcal{A}_4,
\end{aligned}
\end{equation*}
where $P_{\ll N}^xf=\sum_{M\ll N}P^x_{M}f$ and $\widetilde{P}_N^xf=\sum_{M\sim N}P^x_{M}f$. Now, we proceed to estimate each of the factors $\mathcal{A}_j$, $j=1,\dots,4$. Since $\alpha+\beta=1$, $\beta>0$, and the Hilbert transform determines a bounded operator in $L^p$, by the Littlewood-Paley inequality and support considerations we have
\begin{equation}\label{equapeend2}
\begin{aligned}
         \|\mathcal{A}_1\|_{L^p}\lesssim \Big\|\big(P_{M}^x(\sum_{N>0}D^{\alpha}(P_N^x g P_{\ll N}^x D_x^{\beta}f ))\big)_{l^2_{M}}\Big\|_{L^p}&\lesssim \Big\|\big(\sum_{N\sim M}D^{\alpha}P_M^x(P_N^x g P_{\ll N}^x D_x^{\beta}f \big)_{l^2_{M}}\Big\|_{L^p}\\
         &\lesssim \sum_{L\sim 1} \Big\|\big(\overline{P}^x_{LN}(\overline{P}_N^x \partial_x g N^{-\beta} P_{\ll N}^x D_x^{\beta}f \big)_{l^2_{N}}\Big\|_{L^p},
\end{aligned}
\end{equation}
for some adapted projections $\overline{P}_N^x$ supported in frequency on the set $|\xi|\sim N$, and with $L\sim 1$ dyadic. Now, by employing Lemma \ref{lemmaApped1}, we deduce
\begin{equation*}
    \begin{aligned}
      |\overline{P}^x_{LN}(\overline{P}_N^x \partial_x g N^{-\beta} P_{\ll N}^x D_x^{\beta}f)(x)| \lesssim \mathcal{M}(\overline{P}_N^x\partial_x g N^{-\beta} P_{\ll N}^x D_x^{\beta}f)(x).
    \end{aligned}
\end{equation*}
Inserting the above expression on the r.h.s of \eqref{equapeend2}, applying \eqref{eqfeffSteq1} and Lemma \ref{lemmaApped1}, we get
\begin{equation}\label{equapeend3}
    \begin{aligned}
    \|\mathcal{A}_1\|_{L^p}\lesssim \|(\overline{P}_{N}^x \partial_x g N^{-\beta}P_{\ll N}^x D_x^{\beta}f)_{l^2_N}\|_{L^p}&\lesssim \|\mathcal{M}(\partial_x g)(N^{-\beta}P_{\ll N}^x D_x^{\beta}f)_{l^2_N}\|_{L^p}\\
    &\lesssim \|\partial_x g\|_{L^{\infty}}\|(N^{-\beta}P_{\ll N}^x D_x^{\beta}f)_{l^2_N}\|_{L^p}.
    \end{aligned}
\end{equation}
To estimate the preceding inequality, we write $P_N^x=\overline{P}_N^xP_N^x$, then employing Lemma \ref{lemmaApped1}, it follows
\begin{equation*}
    |N^{-\beta}P^{x}_{\ll N}D^{\beta}_xf(x)|\leq N^{-\beta}\sum_{M\ll N}\left|M^{\beta}P_{M}^xf(x)\right|\lesssim \sum_{1\ll L} L^{-\beta} \mathcal{M}(P_{N/L}^x f)(x),
\end{equation*}
so that
\begin{equation}\label{equapeend4}
    (N^{-\beta}P_{\ll N}^x D_x^{\beta}f)_{l^2_N}\lesssim (\mathcal{M}(P_N^x f))_{l^2_N}.
\end{equation}
Hence, plugging \eqref{equapeend4} in \eqref{equapeend3}, by the Fefferman-Stein inequality and the Littlewood-Paley inequality, we conclude
\begin{equation}\label{equapeend5}
\|\mathcal{A}_1\|_{L^p}\lesssim \|\partial_x g\|_{L^{\infty}}\|f\|_{L^p}. 
\end{equation}
Now, replacing $f$ by $\mathcal{H}_x f$ in the arguments above, we derive the same estimate in \eqref{equapeend5} for the term $\mathcal{A}_2$.

A similar reasoning yields the desired estimate for $\mathcal{A}_3$. Indeed, since $\alpha+\beta=1$, $\alpha>0$, by Littlewood-Paley inequality
\begin{equation*}
    \|\mathcal{A}_3\|_{L^p}\lesssim \Big\|\big(\sum_{N\gtrsim M} M^{\alpha}N^{-\alpha}\overline{P}^x_M(\overline{P}_N^x\partial_x g \overline{\widetilde{P}}_N^x \widetilde{P}_N^x f)\big)_{l^2_{M}}\Big\|_{L^p}.
\end{equation*}
Now, by Lemma \ref{lemmaApped1} it follows
\begin{equation*}
\begin{aligned}
\big(\sum_{N\gtrsim M} M^{\alpha}N^{-\alpha}\overline{P}^x_M(\overline{P}_N^x\partial_x g \overline{\widetilde{P}}_N^x \widetilde{P}_N^x f)\big)_{l^2_M} &\lesssim \big(\sum_{L\gtrsim 1} L^{-\alpha}\mathcal{M}(\overline{P}_{LM}^x\partial_x g \overline{\widetilde{P}}_{LM}^x \widetilde{P}_{LM}^x f) \big)_{l^2_M} \\
&\lesssim \big( \mathcal{M}(\overline{P}_{N}^x\partial_x g \overline{\widetilde{P}}_{N}^x \widetilde{P}_{N}^x f) \big)_{l^2_N}.   
\end{aligned}
\end{equation*}
Thus, the preceding estimates and \eqref{eqfeffSteq1} reveal
\begin{equation*}
    \|\mathcal{A}_3\|_{L^p}\lesssim \|\mathcal{M}(\partial_x g)(\mathcal{M}(P_N^x f)_{l^2_N})\|_{L^p}\lesssim \|\partial_x g\|_{L^{\infty}}\|f\|_{L^{p}}.
\end{equation*}
The estimate for $\mathcal{A}_4$ follows from the same arguments employed to analyze $\mathcal{A}_3$. The proof of Proposition \ref{CalderonCom} is complete.

\end{proof}

\subsection*{Acknowledgements}

This work was supported by CNPq Brazil. The author wishes to express his gratitude to Prof. Felipe Linares for bringing this problem to his attention and for the valuable suggestions regarding the manuscript.


\bibliographystyle{abbrv}
{\small  \bibliography{bibli}}

\end{document}